\newtheorem{theorem}{Theorem}[section]
\newtheorem{lemma}[theorem]{Lemma}
\newtheorem{proposition}[theorem]{Proposition}
\newtheorem{corollary}[theorem]{Corollary}
\theoremstyle{definition}
\newtheorem{definition}[theorem]{Definition}
\newtheorem{example}[theorem]{Example}
\newtheorem{notation}{Notation}
\newtheorem{remark}[theorem]{Remark}
\numberwithin{equation}{section}
\newcommand{\op}[1]{\textrm{\upshape #1}}
\newcommand{\join}{\vee}
\newcommand{\meet}{\wedge}
\newcommand{\la}{\langle}
\newcommand{\ra}{\rangle}
\newcommand{\alg}[1]{{\textbf{\upshape #1}}}  
\newcommand{\vv}[1]{\mathsf {#1}}
\renewcommand{\a}{\alpha}
\renewcommand{\b}{\beta}
\renewcommand{\d}{\delta}
\newcommand{\f}{\varphi}
\newcommand{\g}{\gamma}
\newcommand{\e}{\varepsilon}
\renewcommand{\th}{\theta}
\renewcommand{\o}{\omega}
\newcommand{\cg}{\vartheta}
\newcommand{\sse}{\subseteq}
\newcommand{\app}{\approx}
\newcommand{\HH}{{\mathbf H}}  
\newcommand{\II}{{\mathbf I}} 
\newcommand{\SU}{{\mathbf S}} 
\newcommand{\PP}{{\mathbf P}}   
\newcommand{\VV}{{\mathbf V}}   
\newcommand{\QQ}{{\mathbf Q}}
\newcommand{\MTL}{\vv M\vv T\vv L}
\newcommand{\ib}{\item[$\bullet$]}
\newcommand{\Con}[1]{\operatorname{Con}(\alg #1)}
\newcommand{\vuc}[2]{#1_1,\dots,#1_{#2}}
\newcommand{\imp}{\rightarrow}
\newcommand{\lr}{ {\slash}}
\newcommand{\rr}{ {\backslash}}
\newcommand{\cc}[1]{\mathcal{#1}}
\begin{document}
\title[Structural and universal completeness in algebra and logic]{Structural and universal completeness\\  in algebra and logic}

\author{Paolo Aglian\`{o}}
\address{DIISM, Universit\`a di Siena, Siena, Italy}
\email{agliano@live.com}

\author{Sara Ugolini}
\address{Artificial Intelligence Research Institute (IIIA), CSIC, Barcelona, Spain}
\email{sara@iiia.csic.es}

\begin{abstract}
	In this work we study the notions of structural and universal completeness both from the algebraic and logical point of view. In particular, we provide new algebraic characterizations of quasivarieties that are actively and passively universally complete, and passively structurally complete. We apply these general results to varieties of bounded lattices and to quasivarieties related to substructural logics. In particular we show that a substructural logic satisfying weakening is passively structurally complete if and only if every classical contradiction is explosive in it. Moreover, we fully characterize the passive structurally complete varieties of $\mathsf{MTL}$-algebras, i.e., bounded commutative integral residuated lattices generated by chains.
\end{abstract}
\maketitle

\section{Introduction}
The main aim of this paper is to explore some connections between algebra and logic; mainly, we try to produce some {\em bridge theorems}. A bridge theorem is a statement connecting logical (and mostly synctactical) features of deductive systems and properties of classes of algebras; this connection is usually performed using the tools of general algebra and the rich theory that is behind it. The main reason behind this kind of exploration is in the further understanding one can gain by connecting two apparently distant fields. In this way, we can explore logical properties in purely algebraic terms; at the same time statements can be imported from logic  that have an important and often new algebraic meaning.

The set of logical problems we want to explore is connected with the concept of {\em structural completeness} of a deductive system, in the different ways it can be declined. For a deductive system, being structurally complete means that each of its proper extensions admits new theorems. This notion can be formalized in a more rigorous way, using the concept of {\em admissible rule}. A rule is admissible in a logic if, whenever there is a substitution making its premises a theorem, such substitution also makes the conclusion a theorem. A logic is then structurally complete if all its admissible rules are derivable in the system. It is well-known that classical logic is structurally complete; intuitionistic logic is not but it satisfies a weaker although still interesting notion: it is {\em passively} structurally complete.  We will see that this is not just a feature of intuitionism but it can be explained in a much more general framework, and it is connected to the way the contradictions of classical logic are treated. In more details, passive structural completeness means that all rules that do not apply to theorems are derivable. Naturally, the dual notion of {\em active} structural completeness also arises, which instead isolates the derivability of those rules for which there exists a substitution making their premises a theorem. The latter notion has been explored in generality in \cite{DzikStronkowski2016}. Structural completeness and its hereditary version have been deeply studied in the literature: e.g., in general algebraic terms in \cite{Bergman1991}, in substructural logics in \cite{OlsonRafteryVanAlten2008}, in fuzzy logics in \cite{CintulaMetcalfe2009}, in intermediate logics in \cite{Citkin1978}. 

A natural extension of this kind of problems is to consider {\em clauses} instead of rules. A clause is a formal pair $\Sigma \Rightarrow \Delta$, where both $\Sigma$ and $\Delta$ are finite sets of formulas over a suitable language. A clause is then admissible if a substitution making all the formulas in $\Sigma$ into theorems makes at least one of the formulas in $\Delta$ a theorem. Likewise, a clause is derivable if at least one of the formulas in $\Delta$ is derivable from $\Sigma$. A logic is {\em universally complete} if every admissible clause is derivable in it. It is then also possible to investigate the situation in which admissible clauses are active or passive in a deductive system, and thus the corresponding notions of universal completeness. Universal completeness in connection to admissible clauses has been studied in \cite{CabrerMetcalfe2015a}.

The way in which our bridge theorems will be created exploits the machinery of the so-called Blok-Pigozzi connection \cite{BlokPigozzi1989}. Without going into details, this machinery allows us to express purely logical concepts in an algebraic language. The advantage of doing so is evident: on one hand we can use the entire wealth of results about classes of algebras and various algebraic operators. On the other hand, very often by mean of this translation one ends up with algebraic results that are interesting in their own nature, irregardless of their logical origin.

While not every logical system admits this translation, many interesting and/or classical systems do: classical and intuitionistic logic, relevance logics, substructural logics in general, many-valued logics, many modal logics and so on. In this framework, one can translate the previously described notions of structural and universal completeness into properties of the quasiequational or universal theory of a quasivariety of algebras. In this setting, we will rephrase the notions of interest not in terms of formulas, but in terms of equations in a suitable language.

In this manuscript our aim is twofold; on one side we will try to describe in a complete and organic way (as much as it is possible) the phenomena mentioned above and the relations among them. In particular, we will recall the existing results trying to put them in a coherent perspective, which we believe is currently lacking, and we will provide many examples. On the other side, we will provide new results and novel characterizations of those notions that are missing an effective algebraic description. More specifically, we will first show how the characterization of active structural completeness in \cite{DzikStronkowski2016} can be extended to describe active universal completeness. Moreover, we will give algebraic descriptions of the notions of passive universal and structural completeness and the latter will result in an effective characterization. As a particularly interesting consequence, we show that a substructural logic satisfying the weakening rule is passively structurally complete if and only if every contradiction of classical logic is explosive in it. This generalizes and explains the passive structural completeness of intuitionistic logic. Moreover, it entails that all substructural logics (with weakening) with the {\em Glivenko property} with respect to classical logic are passively structurally complete. Further specializing the general result, we build on it to provide a clear characterization (and an axiomatization) of the minimal passive structurally complete logic that is an axiomatic extension of the t-norm based logic $\mathcal{MTL}$. From the algebraic side, this means that we characterize the passive structurally complete quasivarieties of bounded commutative integral residuated lattices generated by chains.

The techniques we will employ in our study are the ones proper of general algebra. In particular, we will use the understanding of algebraic objects such as projective and exact algebras. The same objects are known to be relevant for the algebraic study of unification problems in algebraizable logics \cite{Ghilardi1997}. In fact, we will show how the notion of unifiability of a set of formulas (or, equivalently, a set of equations) plays a major role in our results.

The structure of this manuscript is as follows. In the next section we will discuss the needed preliminary notions. In particular, the Blok-Pigozzi connection, projective and exact algebras, algebraic unification, and finally, we define the notions of structural and universal completeness. Section \ref{univquasi} is devoted to universal completeness, and Section \ref{structprim} to structural completeness, both in their various declinations. The last section is devoted to a deeper understanding of some relevant examples from the realms of algebra and (algebraic) logic respectively. In particular, in Subsection \ref{sec:lattices} we apply our results to the variety of (bounded) lattices; finally, in Subsection \ref{sec:FL}, we prove the aforementioned results and more about substructural logics.

\section{Preliminaries}\label{sec: start}
\subsection{Universal algebra and the Blok-Pigozzi connection}\label{subsec:universal} Let $\vv K$ be a class of algebras; we denote by $\II,\HH,\PP,\SU,\PP_u$ the class operators sending $\vv K$ in the class of all isomorphic copies, homomorphic images, direct products, subalgebras and ultraproducts of members of $\vv K$. The operators can be composed in the obvious way; for instance $\SU\PP(\vv K)$ denotes all algebras that are embeddable in a direct product of members of $\vv K$; moreover there are relations among the classes resulting from applying operators in a specific orders, for instance $\PP\SU(\vv K) \sse \SU\PP(\vv K)$ and $\HH\SU\PP(\vv K)$ is the largest class we can obtain composing the operators. We will use all the known relations without further notice, but the reader can consult \cite{Pigozzi1972} or \cite{BurrisSanka} for a textbook treatment.

 If $\rho$ is a type of algebras, an {\em equation} is a pair $p,q$ of $\rho$-terms (i.e. elements of the absolutely free algebra $\alg T_\rho(\o)$) that we write suggestively as $p \app q$; a {\em universal sentence} or {\em clause} in $\rho$ is a formal pair $(\Sigma, \Gamma)$ that we write as  $\Sigma \Rightarrow \Gamma$, where $\Sigma,\Gamma$ are finite sets of equations; a universal sentence is  a {\em quasiequation} if $|\Gamma| = 1$ and it is is {\em negative} if $\Delta= \emptyset$. Clearly an  equation is a quasiequation in which $\Sigma = \emptyset$.
 
Given any set of variables $X$, an assignment of $X$ into an algebra $A$ of type $\rho$ is a function $h$ mapping each variable $x \in X$ to an element of $\alg A$, that extends (uniquely) to a homomorphism (that we shall also call $h$) from the term algebra $\alg T_\rho(\o)$ to $\alg A$. 
An algebra $\alg A$ satisfies an equation $p \app q$ with an assignment $h$ (and we write $\alg A, h \models p \approx q$) if $h(p) = h(q)$ in $\alg A$.  An equation $p \app q$ is {\em valid} in $\alg A$ (and we write $\alg A \vDash p \approx q$) if for all assignments $h$ in $\alg A$, $\alg A, h \models p \approx q$; if $\Sigma$ is a set of equations then
 $\alg A \vDash \Sigma$ if $\alg A \vDash \sigma$ for all $\sigma \in \Sigma$.
 A universal sentence is {\em valid} in $\alg A$ (and we write $\alg A \vDash \Sigma \Rightarrow \Delta$) if for all assignments $h$ to $\alg A$, $h(p) = h(q)$ for all $p \approx q \in \Sigma$ implies that there is an identity $s \app t \in \Delta$ with
 $h(s) = h(t)$; in other words a universal sentence can be understood as the formula $\forall \mathbf x(\bigwedge \Sigma \imp \bigvee \Delta)$.  An equation or a universal sentence  is {\em valid} in a class $\vv K$ if it is valid in all  algebras in $\vv K$.

A class of algebras is a variety if it is closed under $\HH, \SU$ and $\PP$,  a quasivariety if it is closed under $\II$,$\SU$,$\PP$ and $\PP_u$ and a universal class if it is closed under $\II\SU\PP_u$.
 The following facts were essentially discovered by A. Tarski , J. \L\`os and A. Lyndon in the pioneering phase of model theory; for proof of this and similar statements the reader can consult \cite{ChangKeisler}.

 \begin{lemma}\label{lemma:ISP}
 Let $\vv K$ be any class of algebras. Then:
 \begin{enumerate}
 \item  $\vv K$ is a universal class if and only if $\II\SU\PP_u(\vv K) = \vv K$ if and only if it is the class of algebras in which a set  of universal sentences is valid;
\item  $\vv K$ is a quasivariety if and only if $\II\SU\PP\PP_u(\vv K) = \vv K$ if and only if it is the class of algebras in which a set  of quasiequations  is valid;
\item $\vv K$ is a variety if and only if $\HH\SU\PP(\vv K) = \vv K$ if and only if it is the class of algebras in which a set  of equations  is valid.
\end{enumerate}
  \end{lemma}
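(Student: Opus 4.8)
The plan is to split each of the three biconditionals into (i) the equivalence between the closure condition on $\vv K$ and the corresponding operator fixed-point identity, and (ii) the equivalence between that identity and axiomatizability by formulas of the prescribed shape. The point is that (i) is purely formal, so all the mathematical content sits in (ii).

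For (i), recall that each of $\II,\HH,\SU,\PP,\PP_u$ is monotone and extensive, hence so is every composite of them. Therefore, if $\vv K$ is closed under each of $\HH,\SU,\PP$ (resp.\ under $\II,\SU,\PP,\PP_u$; resp.\ under $\II,\SU,\PP_u$), applying these operators to $\vv K$ in the order in which they occur in the composite never leaves $\vv K$, so $\HH\SU\PP(\vv K)\sse\vv K$ (resp.\ $\II\SU\PP\PP_u(\vv K)\sse\vv K$; resp.\ $\II\SU\PP_u(\vv K)\sse\vv K$), while the reverse inclusion is just extensivity of the composite. Conversely, if, say, $\HH\SU\PP(\vv K)=\vv K$, then extensivity of $\SU$ and $\PP$ gives $\HH(\vv K)\sse\HH\SU\PP(\vv K)=\vv K$, and symmetrically $\SU(\vv K)\sse\vv K$ and $\PP(\vv K)\sse\vv K$, so $\vv K$ is a variety; the same argument settles the quasivariety and universal-class cases. (No commutation laws between the operators, and no idempotency of the composites, are needed for the lemma as stated.)

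For (ii) I would invoke the three classical preservation theorems: Birkhoff's theorem for the equational case, the \L o\'s--Tarski theorem for universal sentences, and the standard analogue for quasiequations. In each case, writing $\Sigma$ for the set of all formulas of the relevant shape valid in $\vv K$, the inclusion $\vv K\sse\op{Mod}(\Sigma)$ is a routine induction: equations are preserved by $\HH$ (and trivially by $\SU$ and $\PP$), quasiequations by $\SU,\PP,\PP_u$, and universal sentences by $\SU,\PP_u$, the only points worth isolating being that surjective homomorphisms preserve equations and that ultraproducts preserve all first-order formulas (\L o\'s's theorem). The opposite inclusion $\op{Mod}(\Sigma)\sse\vv K$ is obtained by the diagram / term-algebra method: an algebra $\alg A\vDash\Sigma$ is, in the equational case, a homomorphic image of the $\vv K$-free algebra on a generating set of $\alg A$, which itself embeds into a product of members of $\vv K$; in the quasiequational case one separates each pair $a\neq b$ of $\alg A$ by a homomorphism into some $\alg B_{a,b}\in\vv K$ extracted from the positive atomic diagram of $\alg A$, then embeds $\alg A$ into $\prod_{a\neq b}\alg B_{a,b}$; in the universal case one argues similarly with the full quantifier-free diagram of $\alg A$ (the obstructing finite configurations now being clauses rather than quasiequations) and embeds $\alg A$ into a member of $\PP_u(\vv K)$. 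In all three cases, part (i) identifies the class $\HH\SU\PP(\vv K)$, $\SU\PP(\vv K)$, resp.\ $\SU\PP_u(\vv K)$ in which $\alg A$ thereby lies with $\vv K$ itself.

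The step I expect to be the main obstacle is the inclusion $\op{Mod}(\Sigma)\sse\vv K$ in items (1) and (2): passing from ``$\alg A$ satisfies every quasiequation (resp.\ universal sentence) valid in $\vv K$'' to an explicit construction of $\alg A$ from members of $\vv K$ requires a compactness argument for the class $\vv K$, which is exactly where closure under $\PP_u$ is used --- it is needed to handle the fact that any quasiequation or clause failing in $\vv K$ involves only finitely many atomic facts of $\alg A$, and (for item (1)) to realise finitely generated substructures of $\alg A$ inside $\SU\PP_u(\vv K)$. The remaining verifications are standard bookkeeping, for which I would refer to \cite{ChangKeisler} and \cite{BurrisSanka}.
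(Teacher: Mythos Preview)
Your proposal is correct and follows the standard textbook route (Birkhoff for equations, the Mal'cev/\L o\'s--Tarski characterizations for quasiequations and universal sentences). The paper itself does not prove this lemma at all: it simply states the result as classical and refers the reader to \cite{ChangKeisler}, so there is no in-house argument to compare yours against; your sketch is exactly the kind of proof one finds in the cited sources.
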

\begin{notation}
	We will often write $\VV$ for $\HH\SU\PP$ and $\QQ$ for $\II\SU\PP\PP_u$.
\end{notation}
For the definition of free algebras in a class $\vv K$ on a set $X$ of generators, in symbols $\alg F_\vv K(X)$, we refer to \cite{BurrisSanka}; we merely observe that every free algebra on a class $\vv K$ belongs to
$\II\SU\PP(\vv K)$. It follows that every free algebra in $\vv K$ is free in $\II\SU\PP(\vv K)$ and therefore for any quasivariety $\vv Q$, $\alg F_\vv Q(X) = \alg F_{\VV(\vv Q)}(X)$.

There are two fundamental results that we will be using many times and deserve a spotlight.
Let $\alg B, (\alg A_i)_{i \in I}$ be algebras in the same signature; we say that $\alg B$ {\em embeds} in $\prod_{i \in I} \alg A_i$  if $\alg B \in \II\SU(\prod_{i\in I} \alg A_i)$.
Let $p_i$ be the $i$-th projection, or better, the composition of the embedding and the $i$-th projection, from $\alg B$ to $\alg A_i$; the embedding is
{\em subdirect} if for all $i \in I$, $p_i(\alg B) = \alg A_i$ and in this case we will write
$$
\alg B \le_{sd} \prod_{i \in I} \alg A_i.
$$
An algebra $\alg B$ is {\em subdirectly irreducible} if it is nontrivial and for any subdirect embedding
$$
\alg B \le_{sd} \prod_{i \in I} \alg A_i
$$
there is an $i \in I$ such that $\alg B$ and $\alg A_i$ are isomorphic.  It can be shown that $\alg A$ is {\em subdirectly irreducible} if and only if the congruence lattice $\Con A$ of  $\alg A$ has a unique minimal element different from the trivial congruence. If $\vv V$ is a variety we denote by $\vv V_{si}$ the class of subdirectly irreducible algebras in $\vv V$.

\begin{theorem}\label{birkhoff} \begin{enumerate}
 \item (Birkhoff \cite{Birkhoff1944}) Every algebra can be subdirectly embedded in a product of subdirectly irreducible algebras. So if $\alg A \in \vv V$, then $\alg A$ can be subdirectly embedded in a product of members of $\vv V_{si}$.
\item (J\'onsson's Lemma \cite{Jonsson1967}) Suppose that $\vv K$ is a class of algebras such that $\vv V(\vv K)$ is congruence distributive;
then $\vv V_{si} \sse \HH\SU\PP_u(\vv K)$.
\end{enumerate}
\end{theorem}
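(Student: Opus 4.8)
The statement packages two classical theorems, so the plan is to prove them separately.

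\emph{Part (1): Birkhoff's subdirect representation.} I would fix an algebra $\alg A$ and, for each pair $a\neq b$ of its elements, apply Zorn's Lemma to the poset of congruences of $\alg A$ that do not relate $a$ and $b$: this poset contains the identity congruence and is inductive, since a union of a chain of congruences omitting $(a,b)$ is again a congruence omitting $(a,b)$. Let $\theta_{a,b}$ be a maximal such congruence. One checks that $\alg A/\theta_{a,b}$ is subdirectly irreducible: its congruences correspond to the congruences of $\alg A$ above $\theta_{a,b}$, and maximality of $\theta_{a,b}$ forces each of those except $\theta_{a,b}$ itself to relate $a$ and $b$, so the congruence of $\alg A/\theta_{a,b}$ generated by the pair of classes of $a$ and $b$ is its unique minimal nontrivial congruence, i.e.\ the monolith. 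Since $a\neq b$ was arbitrary, $\bigcap_{a\neq b}\theta_{a,b}$ is the identity congruence, so the canonical map $\alg A\to\prod_{a\neq b}\alg A/\theta_{a,b}$ is a subdirect embedding into a product of subdirectly irreducible algebras; and if $\alg A$ lies in a variety $\vv V$, each factor $\alg A/\theta_{a,b}$ lies in $\vv V$ because varieties are closed under $\HH$, hence lies in $\vv V_{si}$.

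\emph{Part (2): J\'onsson's Lemma, setup.} Assume $\vv V(\vv K)$ is congruence distributive and take a subdirectly irreducible $\alg A\in\vv V(\vv K)=\HH\SU\PP(\vv K)$. Then there are algebras $\alg A_i\in\vv K$ ($i\in I$), an embedding $\alg B\hookrightarrow\prod_{i\in I}\alg A_i$, and a surjection $h\colon\alg B\twoheadrightarrow\alg A$; set $\theta=\ker h$, so $\alg A\cong\alg B/\theta$. Since $\alg A$ is subdirectly irreducible, $\theta$ is meet-irreducible in $\Con B$ and is distinct from the full congruence. For $J\sse I$ put
$$
\theta_J=\{(x,y)\in B\times B : x_i=y_i\ \text{for all}\ i\in J\},
$$
a congruence of $\alg B$; the embedding makes $\theta_I$ the identity congruence, $\theta_\emptyset$ is the full congruence, and one checks at once that $\theta_{J_1\cup J_2}=\theta_{J_1}\cap\theta_{J_2}$. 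Consider $X=\{J\sse I : \theta_J\sse\theta\}$; then $I\in X$, $\emptyset\notin X$, and $X$ is upward closed under inclusion.

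\emph{Part (2): the crux and conclusion.} The key step is that $\mathcal P(I)\setminus X$ is a proper ideal of the Boolean algebra $\mathcal P(I)$: it is downward closed, does not contain $I$, and is closed under finite unions exactly because $\Con B$ is distributive. Indeed, if $J_1\cup J_2\in X$ then $\theta_{J_1}\cap\theta_{J_2}=\theta_{J_1\cup J_2}\sse\theta$, so distributivity gives $(\theta\join\theta_{J_1})\cap(\theta\join\theta_{J_2})=\theta\join(\theta_{J_1}\cap\theta_{J_2})=\theta$, and meet-irreducibility of $\theta$ forces $\theta_{J_i}\sse\theta$ for some $i$, i.e.\ $J_i\in X$; contrapositively, $\mathcal P(I)\setminus X$ is closed under binary unions. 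Extending this proper ideal to a prime ideal and taking complements yields an ultrafilter $\mathcal U$ on $I$ with $\mathcal U\sse X$. Finally the congruence $\theta_{\mathcal U}=\{(x,y)\in B\times B : \{i\in I : x_i=y_i\}\in\mathcal U\}$ equals $\bigcup_{J\in\mathcal U}\theta_J$ and hence is contained in $\theta$; as $\theta_{\mathcal U}$ is the kernel of the composite $\alg B\hookrightarrow\prod_{i\in I}\alg A_i\twoheadrightarrow\prod_{i\in I}\alg A_i/\mathcal U$, the algebra $\alg A\cong\alg B/\theta$ is a homomorphic image of the subalgebra $\alg B/\theta_{\mathcal U}$ of the ultraproduct $\prod_{i\in I}\alg A_i/\mathcal U$, so $\alg A\in\HH\SU\PP_u(\vv K)$.

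I expect the crux to be the main obstacle: one must guess the right combinatorial object on the index set $I$ (here $X$, or rather its complement) and observe that congruence distributivity is used for the single purpose of turning that complement into an ideal, after which the desired ultrafilter is produced for free by the Boolean prime ideal theorem, with no compactness or finiteness argument needed since $\theta_{\mathcal U}$ is literally the union $\bigcup_{J\in\mathcal U}\theta_J$. Part (1) offers no comparable difficulty beyond the routine Zorn's Lemma argument and the recognition of the monolith.
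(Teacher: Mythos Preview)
The paper does not prove this theorem at all: it is stated as background with citations to Birkhoff \cite{Birkhoff1944} and J\'onsson \cite{Jonsson1967}, and used as a black box throughout. Your arguments for both parts are correct and follow the classical proofs found in those references (and in standard textbooks such as \cite{BurrisSanka}): the Zorn's Lemma construction of the $\theta_{a,b}$'s for Part~(1), and for Part~(2) the passage from meet-irreducibility of $\theta=\ker h$ plus congruence distributivity of $\op{Con}(\alg B)$ to the primeness of the filter $X=\{J\sse I:\theta_J\sse\theta\}$, followed by extension to an ultrafilter. There is nothing to compare against in the paper itself.
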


If $\vv Q$ is a quasivariety and $\alg A \in \vv Q$, a {\em relative congruence} of $\alg A$ is a congruence $\th$ such that $\alg A/\th \in \vv Q$; relative congruences
form an algebraic lattice $\op{Con}_\vv Q(\alg A)$. 
Moreover, for an algebra $\alg A$ and a set $H \sse A\times A$ there exists the least  relative congruence $\theta_{\vv Q}(H)$ on $\alg A$ containing $H$. When $H = \{(a,b)\}$, we just write $\theta_{\vv Q}(a,b)$. When $\vv Q$ is a variety we simplify the notation by dropping the subscript $\vv Q$.

For any congruence lattice property $P$ we say that $\alg A \in \vv Q$ is {\em relative $P$} if $\op{Con}_\vv Q(\alg A)$ satisfies
$P$. So for instance $\alg A$ is {\em relative subdirectly irreducible} if $\op{Con}_\vv Q(\alg A)$ has a unique minimal element; since clearly $\op{Con}_\vv Q(\alg A)$ is a
meet subsemilattice of $\Con A$, any subdirectly irreducible algebra is relative subdirectly irreducible for any quasivariety to which it belongs. For a quasivariety $\vv Q$ we denote by
$\vv Q_{rsi}$ the class of relative subdirectly irreducible algebras in $\vv Q$.
We have the equivalent of Birkhoff's and J\'onsson's results for quasivarieties:

\begin{theorem}\label{quasivariety} Let $\vv Q$ be any quasivariety.
\begin{enumerate}
\item (Mal'cev \cite{Malcev1956}) Every $\alg A \in \vv Q$ is subdirectly embeddable in a product of algebras  in $\vv Q_{rsi}$.
\item (Czelakowski-Dziobiak \cite{CzelakowskiDziobiak1990}) If $\vv Q = \QQ(\vv K)$, then $\vv Q_{rsi} \sse \II\SU\PP_u(\vv K)$.
\end{enumerate}
\end{theorem}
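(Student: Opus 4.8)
The displayed statement assembles two classical results, and for each I would run the corresponding argument for varieties (Theorem~\ref{birkhoff}), systematically replacing the full congruence lattice $\Con A$ by the relative congruence lattice $\op{Con}_\vv Q(\alg A)$ and the class $\vv V_{si}$ by $\vv Q_{rsi}$.

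For part~(1), the plan is the relativized Birkhoff subdirect decomposition. Let $\alg A \in \vv Q$ be nontrivial and fix a pair $a \neq b$ in $\alg A$. The family of relative congruences of $\alg A$ omitting the pair $(a,b)$ is nonempty (it contains the trivial congruence) and, because $\op{Con}_\vv Q(\alg A)$ is algebraic, closed under unions of chains; so Zorn's Lemma yields a maximal such relative congruence $\th_{a,b}$. I would then check that $\alg A/\th_{a,b} \in \vv Q_{rsi}$: relative congruences of $\alg A/\th_{a,b}$ correspond to relative congruences of $\alg A$ above $\th_{a,b}$, and maximality of $\th_{a,b}$ forces every nontrivial one of these to contain $\theta_\vv Q(a,b) \join \th_{a,b}$; hence the image of $\theta_\vv Q(a,b)$ is the monolith of $\alg A/\th_{a,b}$, nontrivial precisely because $(a,b)\notin\th_{a,b}$. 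Finally, the natural map $\alg A \to \prod_{a \neq b}\alg A/\th_{a,b}$ is injective, since the pair $a,b$ survives in the $(a,b)$-th coordinate, and each projection is onto, so it is a subdirect embedding into a product of members of $\vv Q_{rsi}$.

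For part~(2), suppose $\vv Q = \QQ(\vv K) = \II\SU\PP\PP_u(\vv K)$ and let $\alg A \in \vv Q_{rsi}$ with monolith $\mu$. Since $\alg A \in \vv Q$, there is an embedding $f\colon\alg A\to\prod_{i\in I}\alg B_i$ with each $\alg B_i\in\PP_u(\vv K)$; write $\pi_i$ for the $i$-th projection. For each $i$ the quotient $\alg A/\ker(\pi_i\circ f)$ embeds into $\alg B_i$, and since $\PP_u(\vv K)\sse\vv Q$ and $\vv Q$ is closed under $\SU$, this quotient lies in $\vv Q$; hence every $\ker(\pi_i\circ f)$ is a relative congruence of $\alg A$. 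Their meet is $\ker f$, the trivial congruence, so they cannot all contain $\mu$: pick $i_0$ with $\mu\not\sse\ker(\pi_{i_0}\circ f)$. As $\mu$ is the least nontrivial relative congruence, a relative congruence not above $\mu$ must be trivial; thus $\ker(\pi_{i_0}\circ f)$ is trivial, $\pi_{i_0}\circ f$ embeds $\alg A$ into $\alg B_{i_0}$, and therefore $\alg A\in\II\SU\PP_u(\vv K)$.

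Both arguments are short, and the only points demanding care are bookkeeping: in~(1), that a chain of relative congruences has a relative congruence as its union (this is exactly the algebraicity of $\op{Con}_\vv Q(\alg A)$ recalled above) and that the correspondence theorem restricts correctly to relative congruences; in~(2), that each $\ker(\pi_i\circ f)$ is genuinely a \emph{relative} congruence, which is where closure of $\vv Q$ under $\SU$ enters. I do not expect a serious obstacle beyond faithfully transporting the variety-level proofs through the relative-congruence machinery, the point being that $\vv Q_{rsi}$ plays inside $\vv Q$ the role that $\vv V_{si}$ plays inside a variety.
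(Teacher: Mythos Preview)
The paper does not prove this theorem: it is stated as a citation of classical results (Mal'cev for part~(1), Czelakowski--Dziobiak for part~(2)), with no argument given. Your proposal is correct and is precisely the standard proof of both parts; the points you flag as needing care (closure of $\op{Con}_\vv Q(\alg A)$ under directed unions, and that each $\ker(\pi_i\circ f)$ is a \emph{relative} congruence) are exactly the ones that matter, and you handle them correctly.
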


The following fact will be used in the sequel.

\begin{lemma} \label{lemma: Q(A) variety} Let $\alg A$ be an  algebra, such that $\VV(\alg A)$ is congruence distributive. Then
$\QQ(\alg A) = \VV(\alg A)$ if and only if every subdirectly irreducible algebra in $\HH\SU\PP_u(\alg A)$ is  in $\II\SU\PP_u\alg A$.
\end{lemma}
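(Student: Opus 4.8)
The plan is to prove both implications by unwinding the definitions and using the two representation theorems quoted above (Birkhoff's subdirect decomposition and J\'onsson's Lemma), together with the elementary fact that $\QQ(\alg A) \sse \VV(\alg A)$ always holds, so that the content of the statement is a criterion for the reverse inclusion $\VV(\alg A) \sse \QQ(\alg A)$. Throughout, congruence distributivity of $\VV(\alg A)$ is the standing hypothesis that makes J\'onsson's Lemma available; note also that $\HH\SU\PP_u(\alg A) \sse \VV(\alg A)$, so every subdirectly irreducible algebra in $\HH\SU\PP_u(\alg A)$ is in particular a subdirectly irreducible algebra of $\VV(\alg A)$, i.e. lies in $\VV(\alg A)_{si}$.

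For the forward direction, assume $\QQ(\alg A) = \VV(\alg A)$. Let $\alg B$ be a subdirectly irreducible algebra in $\HH\SU\PP_u(\alg A)$. Then $\alg B \in \VV(\alg A) = \QQ(\alg A) = \II\SU\PP\PP_u(\alg A)$. Now I would invoke the following standard fact: a subdirectly irreducible algebra that embeds into a product $\prod_{i\in I}\alg C_i$ must embed into one of the factors $\alg C_i$ (because the kernel of the subdirect projection onto the non-trivial component, intersected appropriately, is forced to be trivial by subdirect irreducibility — more precisely, if $\alg B \le_{sd} \prod_{i\in I}\alg D_i$ with each $\alg D_i$ a quotient of $\alg B$, then one of the projections is an isomorphism). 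Applying this to $\alg B \in \II\SU\PP\PP_u(\alg A)$: $\alg B$ embeds into a product of members of $\PP_u(\alg A)$, hence subdirectly embeds into a subproduct, and by subdirect irreducibility $\alg B$ embeds into a single ultrapower, i.e. $\alg B \in \II\SU\PP_u(\alg A)$. This gives the desired conclusion.

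For the converse, assume every subdirectly irreducible algebra in $\HH\SU\PP_u(\alg A)$ lies in $\II\SU\PP_u(\alg A)$. It suffices to show $\VV(\alg A) \sse \QQ(\alg A)$. Take $\alg B \in \VV(\alg A)$. By Birkhoff's theorem (Theorem~\ref{birkhoff}(1)), $\alg B$ is subdirectly embeddable in a product $\prod_{i\in I}\alg B_i$ of subdirectly irreducible algebras $\alg B_i \in \VV(\alg A)_{si}$. Since $\VV(\alg A)$ is congruence distributive, J\'onsson's Lemma (Theorem~\ref{birkhoff}(2)) gives $\VV(\alg A)_{si}\sse \HH\SU\PP_u(\alg A)$, so each $\alg B_i$ is a subdirectly irreducible algebra in $\HH\SU\PP_u(\alg A)$, and by hypothesis $\alg B_i \in \II\SU\PP_u(\alg A)$. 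Therefore $\alg B \in \II\SU\PP\big(\II\SU\PP_u(\alg A)\big) \sse \II\SU\PP\PP_u(\alg A) = \QQ(\alg A)$, using the standard absorption identities among the class operators. This proves $\VV(\alg A) \sse \QQ(\alg A)$, and combined with the trivial inclusion we get equality.

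The only genuinely delicate point is the lemma used in the forward direction — that a (relatively, but here absolutely) subdirectly irreducible algebra embedded in a product embeds into one factor. The subtlety is that an arbitrary embedding $\alg B \hookrightarrow \prod_{i\in I}\alg C_i$ need not be subdirect; one must first replace each $\alg C_i$ by the image $\alg D_i := p_i(\alg B)$ to obtain a subdirect embedding $\alg B \le_{sd}\prod_{i\in I}\alg D_i$, note that each $\alg D_i$ is a homomorphic image of $\alg B$ and hence still in the relevant class (here $\II\SU\PP_u(\alg A)$, since $\HH\PP_u(\alg A)$-images of subalgebras of ultrapowers behave well), and only then apply subdirect irreducibility to conclude one $p_i$ is an isomorphism. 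I would state this as a small preparatory observation (or cite it from \cite{BurrisSanka}) and then the main argument is a short chain of operator manipulations. No other step should present difficulty.
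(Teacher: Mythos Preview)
Your proof is correct and follows essentially the same route as the paper; the converse direction is identical, and for the forward direction the paper simply invokes Theorem~\ref{quasivariety}(2) (Czelakowski--Dziobiak) as a black box where you unpack the argument by hand.

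One clarification on the point you flag as delicate: your parenthetical reasoning is muddled and unnecessary. You do \emph{not} need that $\alg D_i := p_i(\alg B)$ stays in $\II\SU\PP_u(\alg A)$ because it is a homomorphic image of $\alg B$ (and the phrase ``$\HH\PP_u(\alg A)$-images of subalgebras of ultrapowers behave well'' is not meaningful). The correct observation is simpler: $\alg D_i$ is a \emph{subalgebra} of $\alg C_i \in \PP_u(\alg A)$, so $\alg D_i \in \SU\PP_u(\alg A)$ trivially. Subdirect irreducibility then gives $\alg B \cong \alg D_j$ for some $j$, whence $\alg B \in \II\SU\PP_u(\alg A)$. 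No $\HH$ enters at all.
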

\begin{proof}

Suppose first that $\QQ(\alg A) = \VV(\alg A)$, and let $\alg A$ be a subdirectly irreducible algebra in $\HH\SU\PP_u(\alg A)$. Thus $\alg A$ is subdirectly irreducible in $\VV(\alg A) = \QQ(\alg A)$, and by Theorem \ref{quasivariety} $\alg A \in \II\SU\PP_u(\alg A)$.

Conversely assume that every subdirectly irreducible algebra in $\HH\SU\PP_u(\alg A)$ is in $\II\SU\PP_u\alg A$. Since $\VV(\alg A)$ is congruence distributive, by Theorem \ref{birkhoff}(2) every subdirectly irreducible algebra in $\VV(\alg A)$ is in $\HH\SU\PP_u(\alg A)$, thus in $\II\SU\PP_u\alg A$. Now every algebra in $\VV(\alg A)$ is subdirectly embeddable in a product of subdirectly irreducible algebras in $\VV(\alg A)$ (Theorem \ref{birkhoff}(1)). Therefore, $\VV(\alg A) \sse \II\SU\PP\II\SU\PP_u(\alg A) \sse \II \SU\PP\PP_u(\alg A) = \QQ(\alg A)$ and thus equality holds.
\end{proof}

In this work we are particularly interested in quasivarieties that are the equivalent algebraic semantics of a logic in the sense of Blok-Pigozzi  \cite{BlokPigozzi1989}.
We will spend some time illustrating the machinery of {\em Abstract Algebraic Logic} that establishes a Galois connection between {\em algebraizable logics} and {\em quasivarieties of logic}, since it is relevant to understand our results. For the omitted details we refer the reader to \cite{BlokPigozzi1989,Font2016}.

By a \emph{logic} $\cc L$ in what follows we mean a substitution invariant consequence relation $\vdash$ on the set of terms $\alg T_{\rho}(\omega)$ (also called \emph{algebra of formulas}) of some algebraic language $\rho$. 
In loose terms, to establish the algebraizability of a logic $\cc L$ with respect to a quasivariety of algebras $\vv Q_{\cc L}$ over the same language $\rho$, one needs a finite set of one-variable equations $$\tau(x) =\{\d_i(x) \app \e_i(x): i = 1,\dots,n\}$$ over terms of type $\rho$ and a finite set of formulas of $\cc L$
in two variables $$\Delta(x,y)=\{\f_1(x,y),\dots,\f_m(x,y)\}$$ that allow to transform  equations, quasiequations and universal sentences in $\vv Q_{\cc L}$ into formulas, rules and clauses of $\cc L$; moreover this transformation must respect both the consequence relation of the logic and the semantical consequence of the quasivariety. More precisely, for all sets of formulas $\Gamma$ of $\cc L$ and formulas $\f \in \alg T_{\rho}(\omega)$
$$
\Gamma \vdash_{\cc L} \f\quad\text{iff}\quad \tau(\Gamma) \vDash_{\vv Q_{\cc L}} \tau(\f)
$$
where $\tau(\Gamma)$ is a shorthand for $\{\tau(\gamma): \gamma \in \Gamma\}$, and also
$$
 (x \app y) \Dashv \vDash_{\vv Q_{\cc L}}\tau(\Delta(x,y)).
$$
A quasivariety $\vv Q$ is a {\em quasivariety of logic} if it is the equivalent algebraic semantics for some logic $\cc L_\vv Q$; the Galois connection between algebraizable logics and quasivarieties of logic is given by
$$
\cc L_{\vv Q_{\cc L}} = \cc L \qquad\qquad \vv Q_{\cc L_\vv Q} = \vv Q.
$$

Not every quasivariety is a quasivariety of logic; for instance no {\em idempotent quasivariety}, such as any quasivariety of lattices, can be a quasivariety of logics. Nonetheless quasivarieties of logic are plentiful. In fact any ideal determined variety is such, as well as any quasivariety coming from a congruential variety with normal ideals (see \cite{OSV3} for details). Moreover, every quasivariety is {\em categorically equivalent} to a quasivariety of logic \cite{MoraschiniRaftery2019}. This means that if an algebraic concept is expressible through notions that are invariant under categorical equivalence, and it holds for a quasivariety $\vv Q$, then it holds for its categorically equivalent quasivariety of logic $\vv Q'$; and hence in can be transformed into a logical concept in $\cc L_{\vv Q'}$ using the Blok-Pigozzi connection.

\begin{definition}
	If $\vv Q$ is any quasivariety,
	with an abuse of notation, we will denote by $\cc L_\vv Q$ a logic whose equivalent algebraic semantics is categorically equivalent to $\vv Q$.
\end{definition}
The following result hints at what kind of properties can be transferred by categorical equivalence.
\begin{theorem}[\cite{BankstonFox1983}]\label{thm:banks}
	Let $\vv K$ be a class closed under subalgebras and direct products; If $\vv K$ is categorically equivalent to a quasivariety $\vv Q$, then $\vv K$ is a quasivariety.
\end{theorem}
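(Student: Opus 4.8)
Since $\vv K$ is closed under isomorphic copies, subalgebras and direct products, it is automatically closed under all limits, computed as in the category of all algebras of its type; so by Lemma~\ref{lemma:ISP}(2) the only point left to establish is that $\vv K$ is closed under ultraproducts. The plan is to reduce this to closure under directed colimits, using the classical description of an ultraproduct as a filtered colimit of partial direct products: for $(\alg A_i)_{i\in I}$ in $\vv K$ and an ultrafilter $\mathcal U$ on $I$,
$$
\prod_{i\in I}\alg A_i/\mathcal U \;\cong\; \varinjlim_{X\in\mathcal U}\ \prod_{i\in X}\alg A_i ,
$$
the colimit being taken over $\mathcal U$ ordered by reverse inclusion, with the projections as transition maps, and computed in the category of all algebras of the type of $\vv K$. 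Since each $\prod_{i\in X}\alg A_i$ lies in $\PP(\vv K)=\vv K$, it will suffice to prove that $\vv K$ is closed under directed colimits formed in that ambient category.

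To do this I would fix an equivalence $F\colon\vv Q\to\vv K$ with quasi-inverse $G$, and a directed diagram $(\alg B_t,f_{ts})$ in $\vv K$. Applying $G$ yields a directed diagram in $\vv Q$; since $\vv Q$ is a finitary quasivariety it is closed under directed colimits, and these are computed as in the variety $\VV(\vv Q)$, hence set-theoretically on underlying sets (any finite tuple of elements of the colimit, together with finitely many equations among their images, already lives at a single stage, so every quasi-identity valid at all stages is valid in the colimit). Thus $\alg C:=\varinjlim_t G\alg B_t$ exists in $\vv Q$, with a jointly surjective colimit cocone. Equivalences preserve all colimits and $FG\cong\mathrm{id}_{\vv K}$, so transporting the colimit cocone of $\alg C$ along $F$ and this natural isomorphism exhibits an object $F\alg C\in\vv K$ as a colimit of $(\alg B_t,f_{ts})$ \emph{in the category $\vv K$}.

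The crux — and the step I expect to be the main obstacle — will then be to show that this colimit inside $\vv K$ agrees with the colimit $\alg B$ of the same diagram computed in the category of all algebras of the type of $\vv K$; equivalently, that the canonical comparison homomorphism $\varphi\colon\alg B\to F\alg C$ is an isomorphism. Surjectivity should be routine: the cocone of $\alg B$ is jointly surjective, the closure of $\vv K$ under $\SU$ forces the image of any cocone in $\vv K$ to generate its vertex, and the image of a directed cocone is already a subalgebra, so chasing these facts through $\varphi$ gives $\varphi(\alg B)=F\alg C$. Injectivity is where the hypothesis on $\vv Q$ must genuinely be used: one has to rule out that the passage from the ambient colimit to the colimit inside $\vv K$ identifies elements distinct in $\alg B$. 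Here I would exploit that, $\vv Q$ being a finitary quasivariety, $\alg F_{\vv Q}(1)$ is a finitely presentable, regular projective, regular generator of $\vv Q$; these properties are categorical, so $F\alg F_{\vv Q}(1)$ is such an object of $\vv K$, and this is exactly the data needed to compute the underlying set of $F\alg C=F(\varinjlim_t G\alg B_t)$ as the directed colimit of the underlying sets of the $FG\alg B_t\cong\alg B_t$ — which is the underlying set of $\alg B$ — so that $\varphi$ is forced to be injective. (An alternative is to invoke the intrinsic characterization of the categories equivalent to finitary quasivarieties as the cocomplete categories possessing a finitely presentable, regular projective, regular generator, and then to check directly that a class of algebras closed under $\SU$ and $\PP$ carrying such a generator is closed under directed colimits.) Once $\varphi$ is known to be an isomorphism, $\vv K$ is closed under directed colimits, hence under ultraproducts, and being also closed under $\II$, $\SU$ and $\PP$ it is a quasivariety by Lemma~\ref{lemma:ISP}(2). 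The difficulty, as flagged, lies entirely in transferring the ``directed colimits are computed on underlying sets'' behaviour across the equivalence $F$, which need not be a concrete functor.
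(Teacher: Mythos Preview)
The paper does not prove this theorem; it is simply quoted from \cite{BankstonFox1983} and used as a black box. So there is no ``paper's own proof'' to compare against, and your proposal must be judged on its own merits.

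Your overall architecture is the right one and is indeed a standard route to the result: reduce closure under $\PP_u$ to closure under directed colimits via the presentation of an ultraproduct as a filtered colimit of partial products, manufacture the colimit inside $\vv K$ using the equivalence with $\vv Q$, and then compare it with the ambient colimit in $\op{Alg}_\tau$. Your surjectivity argument for the comparison map $\varphi\colon \alg B\to F\alg C$ is essentially correct: directed colimit cocones in $\vv Q$ are jointly surjective, joint surjectivity can be phrased as ``no proper subobject of the vertex absorbs the whole cocone'', this is preserved by the equivalence, monos in the $\SU$-closed class $\vv K$ are injective homomorphisms, and for a \emph{directed} cocone the union of images is already a subalgebra.

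The genuine gap is exactly where you flag it, but your proposed repair does not close it. You argue that $F\alg F_{\vv Q}(1)$ is a finitely presentable regular projective regular generator of $\vv K$ and that ``this is exactly the data needed to compute the underlying set of $F\alg C$ as the directed colimit of the underlying sets of the $\alg B_t$''. The problem is that $\hom_{\vv K}(F\alg F_{\vv Q}(1),-)$ is \emph{not} the $\tau$-underlying-set functor $U_\tau$ on $\vv K$; the latter is represented by $\alg F_{\vv K}(1)$, and there is no reason for $F\alg F_{\vv Q}(1)\cong \alg F_{\vv K}(1)$ since $F$ is not concrete. So what your argument actually yields is that $\hom_{\vv K}(F\alg F_{\vv Q}(1),-)$ commutes with the $\vv K$-colimit, which is automatic from finite presentability and says nothing about $U_\tau(F\alg C)$ versus $U_\tau(\alg B)$. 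The same objection applies to your alternative route via the intrinsic characterisation of quasivariety categories: that characterisation only tells you $\vv K$ is \emph{equivalent} to a quasivariety, which is the hypothesis, not that the inclusion $\vv K\hookrightarrow\op{Alg}_\tau$ preserves directed colimits. What you actually need is that $\alg F_{\vv K}(1)$ (equivalently each $\alg F_{\vv K}(n)$) is finitely presentable \emph{in $\vv K$}; establishing this is where the real content of the Bankston--Fox argument lies, and it requires an idea you have not supplied. A cleaner intermediate target, which you nearly reach, is to show first that $\vv K$ is closed under directed \emph{unions}: there your comparison map is injective because directed colimits of monomorphisms in $\vv Q$ have monic cocone legs, a categorical fact that does transfer along $F$. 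But you would then still owe an argument that closure under $\II\SU\PP$ together with closure under directed unions forces closure under $\PP_u$, and that step is not addressed in your proposal either.
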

Suppose now that $\vv Q$ and $\vv R$ are quasivarieties and suppose that $F:\vv Q \longrightarrow \vv R$ is a functor between the two algebraic categories witnessing the categorical equivalence.
Now, $F$ preserves all the so-called {\em categorical properties}, i.e., those notions that can be expressed as properties of morphisms. In particular, embeddings are mapped to embeddings (since in algebraic categories they are exactly the categorical monomorphisms), surjective homomorphisms are mapped to surjective homomorphisms (since they correspond to {\em regular} epimorphisms in the categories). Moreover, we observe that direct products are preserved as well, since they can be expressed via families of surjective homomorphisms (see e.g. \cite{BurrisSanka}).
Therefore, if $\vv Q'$ is a subquasivariety of $\vv Q$, then the restriction of $F$ to $\vv Q'$ witnesses a categorical equivalence between $\vv Q'$ and
$$
\vv R'=\{\alg B \in \vv R: \alg B = F(\alg A)\ \text{for some $\alg A \in \vv Q'$}\}.
$$
It follows from Theorem \ref{thm:banks} that $\vv R'$ is a subquasivariety of $\vv R$,  and that $\vv R'$ is a variety whenever $\vv Q'$ is such. Given a quasivariety $\vv Q$, we denote by $\Lambda_q(\vv Q)$ the lattice of subquasivarieties of $\vv Q$.
Hence the correspondence sending
$\vv Q' \longmapsto \vv R'$ is a lattice isomorphism between $\Lambda_q(\vv Q)$ in $\Lambda_q(\vv R)$ that preserves all the categorical properties.
Moreover, we observe that, since ultraproducts in an algebraic category admit a categorical definition which turns out to be equivalent to the algebraic one (see for instance \cite{Eklof1977}), the functor $F$ also map universal subclasses to universal subclasses; more precisely, $\vv U \sse \vv Q$ is a universal class if and only if $F(\vv U) \sse \vv R$ is a universal class.

Let us show an example of how we can use these correspondences, that is also a preview of what we will see in the coming sections; if $\vv Q$ is a quasivariety, a subquasivariety $\vv Q'$ is {\em equational} in $\vv Q$ if $\vv Q'= \HH(\vv Q') \cap \vv Q$. A quasivariety is {\em primitive} if every subquasivariety of $\vv Q$ is equational in $\vv Q$. It is clear from the discussion above that this concept is preserved by categorical equivalence and that the lattice isomorphism described above sends primitive subquasivarieties in primitive subquasivarieties.

\subsection{Projectivity, weak projectivity and exactness}
We now introduce the algebraic notions that will be the key tools for our investigation: projective, weakly projective, exact, and finitely presented algebras.
\begin{definition}
	Given a class $\vv K$ of algebras, an algebra $\alg A \in \vv K$ is {\em projective} in $\vv K$ if for all $\alg B,\alg C \in \vv K$, any homomorphism $h:\alg A \longmapsto \alg C$, and any surjective homomorphism $g: \alg B\longmapsto \alg C$, there is a homomorphism $f: \alg A \longmapsto \alg B$ such that $h=gf$.
	\begin{center}
\begin{tikzpicture}[scale=.9]
\node[left] at (0.5,2) {\footnotesize  $\alg A$};
\node[right] at (3,2) {\footnotesize  $\alg C$};
\node[right] at (3,0) {{\footnotesize  $\alg B$} };
\draw[->] (0.5,2)-- (3,2);
\draw[->] (0.5,1.8) -- (3,0.2);
\draw[->>](3.3,.2) -- (3.3,1.8);
\node[above] at (1.7,2) {\footnotesize $h$};
\node[below] at (1.5,1) {\footnotesize $f$};
\node[right] at (3.3,1) { \footnotesize  $g$};
\end{tikzpicture}
\end{center}
\end{definition}
Determining the projective algebras in a class is usually a challenging problem, especially in a general setting. If however $\vv K$ contains all the free algebras on $\vv K$ (in particular, if $\vv K$ is a quasivariety), projectivity admits a simpler formulation. We call an algebra $\alg B$ a {\em retract} of an algebra $\alg A$ if there is a homomorphism $g: \alg A \longmapsto \alg B$ and a homomorphism $f:\alg B \longmapsto \alg A$ with $gf= \op{id}_\alg B$ (and thus, necessarily, $f$ is injective and $g$ is surjective).
The following theorem was proved first by Whitman for lattices \cite{Whitman1941} but it is well-known to hold for any class of algebras.

\begin{theorem} Let $\vv Q$ be a quasivariety.  Then the following are equivalent:
\begin{enumerate}
\item $\alg A$ is projective in $\vv Q$;
\item $\alg A$ is a retract of  a free algebra in $\vv Q$.
\item $\alg A$ is a retract of a projective algebra in $\vv Q$.
\end{enumerate}
In particular every free algebra in $\vv Q$ is projective in $\vv Q$.
\end{theorem}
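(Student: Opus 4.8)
The plan is to prove the chain of equivalences $(1)\Rightarrow(2)\Rightarrow(3)\Rightarrow(1)$, together with the final assertion, which is an immediate consequence once $(1)\Leftrightarrow(2)$ is in place.

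First I would prove $(1)\Rightarrow(2)$. Suppose $\alg A$ is projective in $\vv Q$. Pick a set of generators of $\alg A$, say $X = A$ itself to be safe, and let $\alg F = \alg F_\vv Q(X)$ be the free algebra in $\vv Q$ on that set. The identity map on $X$ extends to a surjective homomorphism $g\colon \alg F \longrightarrow \alg A$ (freeness gives a homomorphism, and it is onto because $X$ generates $\alg A$). Now apply the definition of projectivity with $\alg C = \alg A$, the homomorphism $h = \op{id}_{\alg A}$, and the surjection $g\colon \alg F \longrightarrow \alg A$: there is a homomorphism $f\colon \alg A \longrightarrow \alg F$ with $g f = \op{id}_{\alg A}$. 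This exhibits $\alg A$ as a retract of the free algebra $\alg F \in \vv Q$.

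Next, $(2)\Rightarrow(3)$ is trivial, since every free algebra in $\vv Q$ is projective in $\vv Q$ — but that last fact is itself part of what we are proving, so to keep the argument self-contained I would instead note that $(2)\Rightarrow(3)$ follows once we know free algebras are projective, and organize the write-up so that the projectivity of free algebras is established directly first (it is a routine consequence of the universal property of $\alg F_\vv Q(X)$: given $h\colon \alg F_\vv Q(X)\to\alg C$ and a surjection $g\colon\alg B\to\alg C$, choose for each $x\in X$ a preimage $b_x\in g^{-1}(h(x))$, which exists by surjectivity, and let $f$ be the unique homomorphism sending $x\mapsto b_x$; then $gf$ and $h$ agree on the generating set $X$, hence coincide). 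With that in hand, $(2)\Rightarrow(3)$ is immediate.

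Finally, for $(3)\Rightarrow(1)$, suppose $\alg A$ is a retract of a projective algebra $\alg P \in \vv Q$, via $f\colon \alg A \longrightarrow \alg P$ and $g\colon \alg P \longrightarrow \alg A$ with $g f = \op{id}_{\alg A}$. Given $\alg B, \alg C \in \vv Q$, a homomorphism $h\colon \alg A \longrightarrow \alg C$, and a surjection $e\colon \alg B \longrightarrow \alg C$, consider the composite $h g\colon \alg P \longrightarrow \alg C$; by projectivity of $\alg P$ there is $k\colon \alg P \longrightarrow \alg B$ with $e k = h g$. Put $\phi = k f\colon \alg A \longrightarrow \alg B$. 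Then $e \phi = e k f = h g f = h \op{id}_{\alg A} = h$, so $\alg A$ is projective in $\vv Q$. Composing $(1)\Rightarrow(2)$ with the already-established projectivity of free algebras and the (trivial) observation that a free algebra is a retract of itself closes the loop; and the "in particular" clause is exactly the projectivity-of-free-algebras fact used above. I do not anticipate a genuine obstacle here: the only point requiring mild care is making sure the surjection $g\colon \alg F_\vv Q(X) \longrightarrow \alg A$ in $(1)\Rightarrow(2)$ is onto, which is why one takes $X$ to be a generating set of $\alg A$.
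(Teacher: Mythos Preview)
Your argument is correct and is exactly the standard proof of this classical fact. The paper does not actually give a proof of this theorem: it is stated as well-known (attributed originally to Whitman for lattices), so there is nothing to compare against beyond noting that your write-up is the expected one.
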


\begin{definition}
	Given a quasivariety $\vv Q$ we say that an algebra is {\em finitely presented in $\vv Q$} if there exists a finite set $X$ and a finite set $H$ of pairs of terms over $X$ such that $\alg A \cong \alg F_\vv Q(X)/\th_{\vv Q}(H)$. 
\end{definition}

The proof of the following theorem is
standard (but see \cite{Ghilardi1997}).

\begin{theorem} For a finitely presented algebra $\alg A\in\vv Q$ the following are equivalent:
\begin{enumerate}
\item  $\alg A$ is projective in $\vv Q$;
\item  $\alg A$ is projective in the class of all finitely presented algebras in $\vv Q$;
\item  $\alg A$ is a retract of a finitely generated free algebra in $\vv Q$.
\end{enumerate}
\end{theorem}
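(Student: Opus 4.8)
The plan is to run the cycle $(1)\Rightarrow(2)\Rightarrow(3)\Rightarrow(1)$, leaning throughout on the preceding theorem that characterizes projective algebras in a quasivariety (projective $\iff$ retract of a free algebra $\iff$ retract of a projective algebra).

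The implication $(1)\Rightarrow(2)$ requires no work: projectivity of $\alg A$ in $\vv Q$ demands the lifting property against \emph{all} $\alg B,\alg C\in\vv Q$ and all surjections $g\colon\alg B\longmapsto\alg C$, so in particular it holds when $\alg B,\alg C$ are finitely presented in $\vv Q$; since $\alg A$ itself is finitely presented by hypothesis, this is exactly projectivity of $\alg A$ in the class of finitely presented algebras in $\vv Q$. For $(3)\Rightarrow(1)$, note that a finitely generated free algebra in $\vv Q$ is a free algebra in $\vv Q$, hence projective in $\vv Q$ by the previous theorem; and that same theorem (equivalence of its items (1) and (3)) says that any retract of a projective algebra in $\vv Q$ is projective in $\vv Q$, so $\alg A$ is projective in $\vv Q$.

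The substance is in $(2)\Rightarrow(3)$. Since $\alg A$ is finitely presented, fix a finite set $X$ and a finite set $H$ of pairs of terms over $X$ with $\alg A\cong\alg F_\vv Q(X)/\th_\vv Q(H)$. The key observation is that $\alg F_\vv Q(X)$ is itself finitely presented in $\vv Q$ (take the empty set of defining relations, so that $\th_\vv Q(\emptyset)$ is the identity congruence on $\alg F_\vv Q(X)\in\vv Q$), and it is finitely generated. Thus the canonical projection $g\colon\alg F_\vv Q(X)\longmapsto\alg A$ is a surjective homomorphism between members of the class of finitely presented algebras in $\vv Q$. Applying the hypothesis $(2)$ to the identity $h=\op{id}_\alg A\colon\alg A\longmapsto\alg A$ together with $g$, we obtain a homomorphism $f\colon\alg A\longmapsto\alg F_\vv Q(X)$ with $gf=\op{id}_\alg A$; that is, $\alg A$ is a retract of the finitely generated free algebra $\alg F_\vv Q(X)$, which is $(3)$.

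There is no real obstacle here: the only point that needs care is ensuring the lifting problem invoked in $(2)\Rightarrow(3)$ stays \emph{inside} the class of finitely presented algebras, which is precisely why one must first record that finitely generated free algebras are finitely presented; beyond that, the argument is a direct appeal to the preceding theorem on projectivity and retracts.
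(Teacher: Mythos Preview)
Your proof is correct and follows the standard cycle $(1)\Rightarrow(2)\Rightarrow(3)\Rightarrow(1)$; the paper does not actually supply a proof (it merely declares the result standard and cites Ghilardi), so your argument is precisely the kind of verification the paper defers to the reader. The only substantive step is $(2)\Rightarrow(3)$, and you handle it correctly by noting that $\alg F_\vv Q(X)$ is itself finitely presented (via the empty set of relations), so the canonical surjection $\alg F_\vv Q(X)\longmapsto\alg A$ stays inside the class where the lifting hypothesis applies.
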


As a consequence we stress that if $\vv Q$ is a quasivariety and $\vv V = \VV(\vv Q)$ then all the algebras that are projective in $\vv Q$ are also projective in $\vv V$ (and vice versa). Moreover, all the finitely generated
projective algebras in $\vv Q$ lie inside $\QQ(\alg F_\vv Q(\o))$.
\begin{definition}
An algebra $\alg A$ is {\em weakly projective in an algebra $\alg B$} if $\alg A \in \HH(\alg B)$ implies $\alg A \in \SU(\alg B)$; an algebra is \emph{weakly projective in a class $\vv K$} if it is weakly projective in any algebra $\alg B \in \vv K$.
\end{definition}
\begin{definition}
If $\vv Q$ is a quasivariety of algebras and  $\alg A \in \vv Q$, let $G_\alg A$ be the set of generators of $\alg A$;  $\alg A$ is {\em exact} in $\vv Q$ if it is weakly projective in some $\alg F_\vv Q(X)$ with $|X| \ge |G_\alg A|$.
\end{definition}
 Clearly any projective algebra in $\vv Q$  is  weakly projective in $\vv Q$  and any weakly projective algebra in $\vv Q$  is exact in $\vv Q$.

Observe also the following consequence of the definition.
\begin{lemma} Let $\vv Q$ be a quasivariety and  let $\alg A$ be a finitely generated algebra in $\vv Q$; then the following are equivalent:
\begin{enumerate}
\item $\alg A$ is exact in $\vv Q$;
\item $\alg A \in \SU(\alg F_\vv Q(\omega))$.
\end{enumerate}
\end{lemma}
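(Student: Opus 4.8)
The plan is to prove the equivalence directly from the definitions, using the fact (emphasised just before the statement) that a free algebra $\alg F_\vv Q(X)$ always belongs to $\II\SU\PP(\vv Q)$, and in particular lies in $\vv Q$. Throughout, write $\alg A$ with finite generating set $G_\alg A$, so $n := |G_\alg A| < \o$.

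For the direction $(1)\Rightarrow(2)$: suppose $\alg A$ is exact in $\vv Q$. By definition this means $\alg A$ is weakly projective in some $\alg F_\vv Q(X)$ with $|X| \ge |G_\alg A| = n$; since $n$ is finite we may as well take $|X| = n$ (if $|X|$ is larger, restrict to $n$ of the generators — or simply note that $\alg F_\vv Q(n)$ embeds in $\alg F_\vv Q(X)$ for $|X|\ge n$, and the larger free algebra embeds in $\alg F_\vv Q(\omega)$, so weak projectivity in $\alg F_\vv Q(X)$ transfers to $\alg F_\vv Q(\omega)$ via the composite embedding once we know $\alg A\in\HH(\alg F_\vv Q(\omega))$). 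The key observation is that an $n$-generated algebra in $\vv Q$ is always a homomorphic image of $\alg F_\vv Q(n)$ (map the $n$ free generators onto a generating set of $\alg A$ and extend; this is the universal property of the free algebra, and the image is all of $\alg A$ because the chosen elements generate). Hence $\alg A \in \HH(\alg F_\vv Q(n))$. Weak projectivity of $\alg A$ in $\alg F_\vv Q(n)$ then gives $\alg A \in \SU(\alg F_\vv Q(n))$, and composing with the embedding $\alg F_\vv Q(n) \hookrightarrow \alg F_\vv Q(\omega)$ yields $\alg A \in \SU(\alg F_\vv Q(\omega))$.

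For the direction $(2)\Rightarrow(1)$: suppose $\alg A \in \SU(\alg F_\vv Q(\omega))$, and we must show $\alg A$ is exact, i.e. weakly projective in some $\alg F_\vv Q(X)$ with $|X|\ge n$. The natural candidate is $X$ countably infinite. So let $\alg B := \alg F_\vv Q(\omega)$ and assume $\alg A \in \HH(\alg B)$; we need $\alg A \in \SU(\alg B)$. But this is immediate: by hypothesis $\alg A$ already embeds into $\alg B = \alg F_\vv Q(\omega)$, regardless of any homomorphic-image assumption. Thus $\alg A$ is (trivially) weakly projective in $\alg F_\vv Q(\omega)$, and since $|\omega| \ge n$, $\alg A$ is exact in $\vv Q$.

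I do not expect a serious obstacle here; the content is essentially bookkeeping about free algebras and finiteness of the generating set. The one point that needs a little care is the reduction from "weakly projective in $\alg F_\vv Q(X)$ for \emph{some} $X$ with $|X|\ge n$" to "embeds in $\alg F_\vv Q(\omega)$", where one should make explicit that $\alg F_\vv Q(\kappa)$ embeds into $\alg F_\vv Q(\lambda)$ for $\kappa\le\lambda$ (a standard fact about free algebras, following from the universal property applied to the inclusion of generators together with a retraction, or directly) and that an $n$-generated member of $\vv Q$ is a quotient of $\alg F_\vv Q(n)$; and to handle the case of infinite $X$ in the definition, one uses that $\alg F_\vv Q(X)$ for any infinite $X$ is isomorphic to $\alg F_\vv Q(\omega)$ when only finitely many generators are ever used, or more cleanly that $\alg A$ being $n$-generated means $\alg A\in\HH(\alg F_\vv Q(n))\subseteq\HH(\alg F_\vv Q(X))$, so weak projectivity in $\alg F_\vv Q(X)$ still applies. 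Everything else follows directly from the definitions recalled in the excerpt.
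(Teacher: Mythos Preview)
Your argument is correct and is exactly the routine unpacking the paper intends (the paper states the lemma as an immediate ``consequence of the definition'' and gives no proof). One small wording issue: in your parenthetical for $(1)\Rightarrow(2)$ you write that ``the larger free algebra embeds in $\alg F_\vv Q(\omega)$'', which is false when $|X|>\omega$; however, your closing paragraph handles this correctly by using that $\alg A$, being finitely generated, lands inside a subalgebra of $\alg F_\vv Q(X)$ generated by finitely many free generators, hence inside a copy of some $\alg F_\vv Q(m)\hookrightarrow\alg F_\vv Q(\omega)$.
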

Therefore for finitely generated algebras our definition of exactness coincides with the one in \cite{CabrerMetcalfe2015}.
We close this subsection with a couple of results connecting projectivity and weak projectivity.
\begin{proposition}\label{lemma: wp implies p} Let $\alg A$ be a finite subdirectly irreducible algebra; if $\alg A$ is weakly projective in $\QQ(\alg A)$, then it is projective in $\QQ(\alg A)$.
\end{proposition}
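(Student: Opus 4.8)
The plan is to reduce the statement to the splitting of surjections onto $\alg A$ from finite algebras, prove that splitting by induction on the size of the domain, and then invoke the characterization of projective algebras as retracts of free algebras.

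\emph{Reduction.} Since $\alg A$ is finite, $\VV(\alg A)$ is locally finite (the $n$-generated free algebra embeds in $\alg A^{|A|^{n}}$), hence so is $\QQ(\alg A)\sse\VV(\alg A)$; in particular every finitely generated algebra in $\QQ(\alg A)$ is finite. It therefore suffices to prove: $(\star)$ every surjection $q\colon\alg B\twoheadrightarrow\alg A$ with $\alg B\in\QQ(\alg A)$ finite has a right inverse. Indeed, applying $(\star)$ to the canonical surjection from $\alg F_{\QQ(\alg A)}(n)$ (which is finite, $n$ being a number of generators of $\alg A$) exhibits $\alg A$ as a retract of a free algebra of $\QQ(\alg A)$, so $\alg A$ is projective by the Whitman-type theorem recalled above.

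\emph{Proof of $(\star)$ by induction on $|\alg B|$.} If $|\alg B|=|\alg A|$ then $q$ is an isomorphism. Assume $|\alg B|>|\alg A|$. As $\alg A\in\HH(\alg B)$ and $\alg B\in\QQ(\alg A)$, weak projectivity gives an embedding $\iota\colon\alg A\hookrightarrow\alg B$; put $e:=q\iota\in\op{End}(\alg A)$. If $e$ is onto it is an automorphism ($\alg A$ is finite), and $\iota e^{-1}$ splits $q$. Otherwise $e$ is not injective, so $\ker e>\Delta$, whence $\ker e\supseteq\mu$, the monolith of the subdirectly irreducible $\alg A$. Choose $c\neq d$ with $\theta_{\alg A}(c,d)=\mu$; then $\iota(c)\neq\iota(d)$ while $q(\iota(c))=q(\iota(d))$, so the relative congruence $\rho:=\theta_{\QQ(\alg A)}(\iota(c),\iota(d))$ of $\alg B$ satisfies $\Delta<\rho\le\ker q$. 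Now suppose first that some relative congruence $\gamma$ satisfies $\Delta<\gamma<\ker q$. Then $q$ factors through $\alg B/\gamma$, which is smaller than $\alg B$, so by induction the induced surjection $\alg B/\gamma\twoheadrightarrow\alg A$ has a section $\bar s$; since $|\alg B/\gamma|>|\alg A|$ we get $\bar s(\alg A)\subsetneq\alg B/\gamma$, hence $\alg B_{1}:=\pi_\gamma^{-1}(\bar s(\alg A))$ is a proper subalgebra of $\alg B$ still surjecting onto $\alg A$; by induction again that surjection splits, and composing the two sections splits $q$. Thus we may assume there is no such $\gamma$: $\ker q$ is an atom of $\op{Con}_{\QQ(\alg A)}(\alg B)$, and then $\rho=\ker q$, so $\ker q$ is moreover principal.

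\emph{The crux.} It remains to rule out the configuration just isolated — $\alg B$ finite in $\QQ(\alg A)$ with $|\alg B|>|\alg A|$, $\ker q$ a principal atom of $\op{Con}_{\QQ(\alg A)}(\alg B)$ and $\alg B/\ker q\cong\alg A$, and every embedding $\alg A\hookrightarrow\alg B$ composing with $q$ to a non-surjection — and this is the step I expect to carry the real difficulty. The tools I would bring to bear are: the Czelakowski–Dziobiak theorem together with finiteness of $\alg A$, which forces every relatively subdirectly irreducible member of $\QQ(\alg A)$ to embed into $\alg A$; and subdirect irreducibility of $\alg A$. Writing $\alg B$ as a subdirect product of its relatively subdirectly irreducible quotients and running the embedding $\iota$ through it, subdirect irreducibility of $\alg A$ produces a relative congruence $\eta\neq\ker q$ with $\alg B/\eta\cong\alg A$; the atom hypothesis forces $\ker q\wedge\eta=\Delta$, so $\alg B\le_{sd}\alg A\times\alg A$ via $(q,q')$ with $\ker q'=\eta$. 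Chasing $\iota$ through this representation and using subdirect irreducibility once more shows $q'\iota$ is injective, hence an automorphism; after replacing $\iota$ by $\iota(q'\iota)^{-1}$ one may take $\iota(\alg A)=\{(e(a),a):a\in\alg A\}$. The finishing move is to locate inside $\alg B$ the graph of an endomorphism of $\alg A$ in the form $\{(a,h(a))\}$ — such a graph is exactly a section of $q$ — and I would do this by a final appeal to weak projectivity of $\alg A$ applied to suitable subalgebras of $\alg B$ (e.g. preimages under $q$ and $q'$ of subalgebras of $\alg A$), forcing $e$ to be surjective after all and contradicting $|\alg B|>|\alg A|$.
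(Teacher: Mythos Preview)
Your reduction and the inductive scaffolding are sound through Case~1, but the ``crux'' case is not proved --- you explicitly flag it as the hard step and then offer only a sketch (``apply weak projectivity to suitable subalgebras,'' ``forcing $e$ to be surjective'') that does not go through as stated. Once you are at $\alg B\le_{sd}\alg A\times\alg A$ with $q'\iota=\op{id}_\alg A$ and $e=q\iota$ non-surjective, the only leverage weak projectivity gives is the existence of \emph{some} embedding $\alg A\hookrightarrow\alg B$; rerunning your own dichotomy shows that any such embedding composes with at least one of $q,q'$ to an automorphism (otherwise both would collapse the monolith, contradicting $\ker q\wedge\ker q'=\Delta$), but nothing forces that automorphism to land on the $q$ side. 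Your proposed ``preimage'' subalgebras do not help: for instance $q^{-1}(e(\alg A))$ still surjects onto $\alg A$ only via $q'$, so induction there again produces sections of $q'$, not of $q$. (Also, ``$e$ surjective'' would contradict your case hypothesis, not $|\alg B|>|\alg A|$.) The induction does not close.

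The paper sidesteps this entirely by reversing the order of operations: fix the embedding first, then \emph{build} a compatible surjection. Embed $\alg A\le\alg F:=\alg F_{\QQ(\alg A)}(n)$ by weak projectivity, and use Zorn to pick a relative congruence $\theta$ of $\alg F$ maximal subject to $\theta\cap A^{2}=0_{\alg A}$. Subdirect irreducibility of $\alg A$ forces $\theta$ to be meet-irreducible in $\op{Con}_{\QQ(\alg A)}(\alg F)$, so $\alg F/\theta$ is relatively subdirectly irreducible and hence, by Czelakowski--Dziobiak together with finiteness of $\alg A$, lies in $\II\SU(\alg A)$. Since $a\mapsto a/\theta$ also embeds $\alg A$ into $\alg F/\theta$, finiteness yields $\alg F/\theta\cong\alg A$, and the quotient map restricts to the identity on $\alg A$. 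This single maximal-congruence step replaces your whole induction and is the idea you are missing.
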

\begin{proof} Let $\vv Q= \QQ(\alg A)$; since $\alg A$ is finite, $\vv Q$ is locally finite. Let $\alg F$ be a finitely generated (hence finite) free algebra in $\vv Q$ such that $\alg A \in \HH(\alg F)$; since $\alg A$ is weakly projective, $\alg A$ is embeddable in $\alg F$ and without loss of generality we may assume that $\alg A \le \alg F$.
Consider the set
$$
 V =\{\a \in \op{Con}_\vv Q(\alg F):  \a \cap A^2 = 0_\alg A\},
$$
where we denote by $0_{\alg A}$ the minimal congruence of $\alg A$.
It is easy to see that $V$ is an inductive poset so we may apply Zorn's Lemma to find a maximal congruence $\th \in V$.  Clearly $a \longmapsto a/\th$ is an embedding of $\alg A$ into $\alg F/\th$. We claim that $\alg F/\th$ is relative subdirectly irreducible and to prove so, since everything is finite, it is enough to show that $\th$ is meet irreducible in $\op{Con}_\vv Q(\alg F)$;
so let $\a,\b \in \op{Con}_\vv Q(\alg A)$ such that $\a \meet \beta = \th$.  Then
$$
0_ \alg A = \th \cap A^2 = (\a \meet \b) \cap A^2 = (\a \cap A^2) \meet (\b \cap A^2);
$$
But $\alg A$ is subdirectly irreducible, so $0_A$ is meet irreducible in $\op{Con}(\alg A)$; hence either $\a \cap A^2 = 0_\alg A$ or $\b \cap A^2 = 0_\alg A$, so either $\a \in V$ or $\b \in V$. Since $\th$ is maximal in $V$,
either $\a = \th$ or $\b = \th$, which proves that $\alg F/\th$ is relative subdirectly irreducible. Therefore, by Theorem \ref{quasivariety}(2), $\alg F/\th \in \II\SU(\alg A)$; since $\alg F/\th$ and $\alg A$ are both finite and
each one is embeddable in the other, they are in fact isomorphic. Thus $\alg A \le \alg F$, and there is a homomorphism from $\alg F$ onto $\alg A$ that maps each $a \in A$ to itself. This shows that $\alg A$ is a retract of $\alg F$, and therefore $\alg A$ is projective in $\QQ(\alg A)$.
\end{proof}

For varieties we have to add the hypothesis of congruence distributivity, since the use of Theorem \ref{birkhoff}(2) is paramount; for the very similar proof see \cite[Theorem 9]{JipsenNation2022}.

\begin{proposition}\label{lemma: wp implies p in varieties} Let $\alg A$ be a finite subdirectly irreducible algebra such that $\VV(\alg A)$ is congruence distributive; if $\alg A$ is weakly projective in $\VV(\alg A)$, then it is projective in $\VV(\alg A)$.
\end{proposition}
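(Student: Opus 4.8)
The plan is to reuse the argument of Proposition~\ref{lemma: wp implies p} almost verbatim, substituting J\'onsson's Lemma (Theorem~\ref{birkhoff}(2)) for the Czelakowski--Dziobiak theorem (Theorem~\ref{quasivariety}(2)); this substitution is precisely where congruence distributivity is used. Write $\vv V=\VV(\alg A)$. Since $\alg A$ is finite, $\vv V$ is locally finite, so all of its finitely generated free algebras are finite; in particular, since $\alg A$ is finite and hence finitely generated, it is a homomorphic image of a finite free algebra $\alg F=\alg F_\vv V(n)$. By weak projectivity of $\alg A$ in $\vv V$ we get $\alg A\in\SU(\alg F)$, and we may assume $\alg A\le\alg F$.

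Next I would run the Zorn's Lemma step of Proposition~\ref{lemma: wp implies p} unchanged, now with ordinary congruences in place of relative ones: the set $V=\{\a\in\op{Con}(\alg F):\a\cap A^2=0_{\alg A}\}$ is closed under unions of chains, so it has a maximal element $\th$, and $a\longmapsto a/\th$ embeds $\alg A$ into $\alg F/\th$. The same computation as there---using that $0_{\alg A}$ is meet irreducible in $\op{Con}(\alg A)$ because $\alg A$ is subdirectly irreducible---shows that $\th$ is meet irreducible in $\op{Con}(\alg F)$; since everything in sight is finite, it follows that $\alg F/\th$ is subdirectly irreducible (and nontrivial, as it contains a copy of $\alg A$).

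The one genuinely different step is the identification of $\alg F/\th$. Since $\vv V=\VV(\alg A)$ is congruence distributive, J\'onsson's Lemma gives $\vv V_{si}\sse\HH\SU\PP_u(\alg A)$, and since $\alg A$ is finite we have $\PP_u(\alg A)\sse\II(\alg A)$, whence $\alg F/\th\in\HH\SU(\alg A)$. Unlike in the quasivariety case, this does not immediately embed $\alg F/\th$ into $\alg A$, but it does force $|\alg F/\th|\le|\alg A|$; combining this with the embedding $\alg A\le\alg F/\th$ and finiteness, the embedding $a\longmapsto a/\th$ of $\alg A$ into $\alg F/\th$ must be an isomorphism. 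Composing its inverse with the quotient map $\alg F\longmapsto\alg F/\th$ then exhibits $\alg A$ as a retract of the free algebra $\alg F$, so $\alg A$ is projective in $\vv V$ by the characterization of projectivity via retracts of free algebras. The only real obstacle is thus the small amount of cardinality bookkeeping needed to replace the ``mutual embeddability'' step of the quasivariety proof, since J\'onsson's Lemma delivers only $\HH\SU\PP_u(\alg A)$ rather than $\II\SU\PP_u(\alg A)$; everything else goes through exactly as in Proposition~\ref{lemma: wp implies p}.
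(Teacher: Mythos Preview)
Your proposal is correct and follows exactly the approach the paper indicates: adapt the proof of Proposition~\ref{lemma: wp implies p} by replacing Theorem~\ref{quasivariety}(2) with J\'onsson's Lemma (Theorem~\ref{birkhoff}(2)), which is where congruence distributivity enters. The only wrinkle you identify---that J\'onsson gives $\alg F/\th\in\HH\SU(\alg A)$ rather than $\II\SU(\alg A)$, so one must argue via $|\alg F/\th|\le|\alg A|$ rather than mutual embeddability---is handled correctly, and in fact the original proof also concludes isomorphism from a cardinality count, so the difference is cosmetic.
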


We observe that in algebraic categories projectivity is a property  preserved by categorical
equivalence and the same holds for weak projectivity and exactness. Finally by \cite{GabrielUllmer1971} being finitely presented and being finitely generated are also categorical properties preserved by equivalences.

\subsection{Algebraic unification}
The main objects of our study, i.e., the notions of universal and structural completeness, are closely related to unification problems.
The classical syntactic unification problem given two term $s,t$ finds a {\em unifier} for them; that is, a uniform replacement of the
variables occurring in $s$ and $t$ by other terms that makes $s$ and $t$ identical. When the
latter syntactical identity is replaced by equality modulo a given equational theory
$E$, one speaks of {\em $E$-unification}. 
S. Ghilardi \cite{Ghilardi1997} proved that there is a completely algebraic way of studying ($E$-)unification problems in varieties of logic, which makes use of
finitely presented and projective algebras and thus is invariant under categorical equivalence.

Let us discuss Ghilardi's idea in some detail showing how it can be applied to quasivarieties.  If $\vv Q$ is a quasivariety and $\Sigma$ is a finite set of equations in the variables $X=\{\vuc xn\}$ by a {\em substitution} $\sigma$ we mean an assignment from
$X$ to $\alg F_\vv Q(\o)$, extending to a homomorphism from $\alg F_\vv Q(X)$ to $\alg F_\vv Q(\o)$. 
\begin{definition}
A {\em unification problem} for a quasivariety $\vv Q$ is a finite set of identities $\Sigma$ in the language of $\vv Q$;
	$\Sigma$ is {\em unifiable} in $\vv Q$ if there is a substitution $\sigma$ such that $\vv Q \vDash \sigma(\Sigma)$, i.e.
$$
\vv Q\vDash p(\sigma(x_1),\dots,\sigma (x_n)) \app q(\sigma(x_1),\dots,\sigma (x_n))
$$
for all $p \app q \in \Sigma$. The substitution $\sigma$ is called a {\em unifier} for $\Sigma$.
\end{definition}
Observe that $\Sigma$ is {\em unifiable} in $\vv Q$ if and only if it is unifiable in $\VV(\vv Q)$. Let us now present the algebraic approach, where a unification problem can be represented by a finitely presented algebra in $\vv Q$.
\begin{definition}
	If $\alg A$ is in $\vv Q$, a {\em unifier} for $\alg A$  is  a homomorphism $u: \alg A \longrightarrow \alg P$ where $\alg P$ is a projective algebra in $\vv Q$; we say that an algebra is {\em unifiable in $\vv Q$} if at least one such homomorphism exists. A quasivariety $\vv Q$ is {\em unifiable} if every finitely presented algebra in $\vv Q$ is unifiable.
\end{definition}

\begin{notation}
	 When we write $\alg F_{\vv Q}(X)/\th_{\vv Q}(\Sigma)$, $\th_{\vv Q}(\Sigma)$ is the relative congruence generated in $\alg F_\vv Q(X)$ by the set $\{(p,q): p \app q \in \Sigma\}$.
\end{notation}

The following summarizes the needed results of \cite{Ghilardi1997} applied to quasivarieties.
\begin{theorem}\label{Ghilardi} Let $\vv Q$ be a quasivariety, and let $\Sigma$ be a finite set of equations in the language of $\vv Q$ with variables in a (finite) set $X$; then:
\begin{enumerate}
\item if $\Sigma$ is unifiable via $\sigma: \alg F_\vv Q(X) \to \alg F_\vv Q(Y)$ then $u_\sigma: \alg F_{\vv Q}(X)/\th_{\vv Q}(\Sigma) \to \alg F_\vv Q(Y)$ defined by
$$
u_\sigma(t/\theta_{\vv Q}(\Sigma)) = \sigma(t)
$$
is a unifier for $\alg F_{\vv Q}(X)/\th_{\vv Q}(\Sigma)$;
\item conversely let  $\alg A = \alg F_\vv Q(X)/\theta_{\vv Q}(\Sigma)$. If there is a unifier  $u:\alg A \to \alg P$, where $\alg P$ is projective and a retract of $\alg F_\vv Q(Y)$ witnessed by an embedding $i: \alg P \to \alg F_\vv Q(Y)$,
the substitution  $$\sigma_u: x \longmapsto i(u (x/\theta_{\vv Q}(\Sigma)))$$ is a unifier for $\Sigma$ in $\vv Q$.
 \end{enumerate}
\end{theorem}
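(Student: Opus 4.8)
The plan is to prove both directions by direct diagram-chasing; the only substantive point is the interplay between the relative congruence $\th_{\vv Q}(\Sigma)$ and kernels of homomorphisms whose codomain lies in $\vv Q$. For~(1), first note that $\sigma$ being a unifier means $\vv Q\vDash\sigma(\Sigma)$, which, since $\alg F_{\vv Q}(Y)$ is relatively free, amounts to $\sigma(p)=\sigma(q)$ in $\alg F_{\vv Q}(Y)$ for every $p\app q\in\Sigma$; hence $\{(p,q):p\app q\in\Sigma\}\sse\ker\sigma$. Now $\ker\sigma$ is a congruence of $\alg F_{\vv Q}(X)$ whose quotient embeds into $\alg F_{\vv Q}(Y)\in\vv Q$, so it is a \emph{relative} congruence; by minimality of $\th_{\vv Q}(\Sigma)$ among relative congruences containing $\{(p,q):p\app q\in\Sigma\}$ we get $\th_{\vv Q}(\Sigma)\sse\ker\sigma$. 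This is exactly what makes the assignment $t/\th_{\vv Q}(\Sigma)\mapsto\sigma(t)$ a well-defined homomorphism $\alg A:=\alg F_{\vv Q}(X)/\th_{\vv Q}(\Sigma)\to\alg F_{\vv Q}(Y)$; and since $\alg F_{\vv Q}(Y)$ is free it is projective in $\vv Q$, so $u_\sigma$ is a unifier for $\alg A$.

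For~(2), write $\pi:\alg F_{\vv Q}(X)\to\alg A$ for the canonical projection and let $i:\alg P\to\alg F_{\vv Q}(Y)$ be the given embedding. The composite $i\circ u\circ\pi$ is a homomorphism $\alg F_{\vv Q}(X)\to\alg F_{\vv Q}(Y)$ which sends each generator $x$ to $i(u(x/\th_{\vv Q}(\Sigma)))$, hence it is precisely the homomorphism $\sigma_u$ determined by the displayed assignment. For any $p\app q\in\Sigma$ we have $(p,q)\in\th_{\vv Q}(\Sigma)$, so $\pi(p)=\pi(q)$ in $\alg A$, and therefore $\sigma_u(p)=i(u(\pi(p)))=i(u(\pi(q)))=\sigma_u(q)$ in $\alg F_{\vv Q}(Y)$; consequently $\vv Q\vDash\sigma_u(\Sigma)$, i.e.\ $\sigma_u$ is a unifier for $\Sigma$ in $\vv Q$. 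Note that only the embedding $i$ is used here, not the retraction $\alg F_{\vv Q}(Y)\to\alg P$; and if one insists that a substitution land in $\alg F_{\vv Q}(\o)$ one simply postcomposes with the inclusion $\alg F_{\vv Q}(Y)\hookrightarrow\alg F_{\vv Q}(\o)$.

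I do not anticipate a genuine obstacle: this is Ghilardi's varietal correspondence transported to quasivarieties with only cosmetic changes. The one place that deserves attention is the claim in~(1) that $\ker\sigma$ is a \emph{relative} congruence, which is what licenses invoking the minimality of $\th_{\vv Q}(\Sigma)$; once that observation is in place, both halves are routine bookkeeping, and in particular nothing further about projective algebras is needed beyond the facts already recalled.
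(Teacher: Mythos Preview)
Your proof is correct and follows essentially the same approach as the paper: for (1) you factor $\sigma$ through the quotient via the Second Homomorphism Theorem, and for (2) you compose $i\circ u\circ\pi$ and check that $\Sigma$ lies in the kernel. If anything, your treatment is slightly more careful than the paper's, since you explicitly verify that $\ker\sigma$ is a \emph{relative} congruence before invoking the minimality of $\th_{\vv Q}(\Sigma)$, a point the paper glosses over.
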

\begin{proof}
	For the first claim, consider $\sigma: \alg F_\vv Q(X) \to \alg F_\vv Q(Y)$ and the natural epimorphism $\pi_\Sigma: \alg F_\vv Q(X) \to \alg F_\vv Q(X)/\theta_{\vv Q}(\Sigma)$. Since $\theta_{\vv Q}(\Sigma)$ is the least congruence of $\alg F_\vv Q(X)$ containing the set of pairs $S = \{(p,q): p \app q \in \Sigma\}$, and given that $S \sse \ker(\sigma)$, by the Second Homomorphism Theorem we can close the following diagram with exactly the homomorphism $u_\sigma$: 
	\begin{center}
\begin{tikzpicture}[scale=.9]
\node at (2,3) { $\alg F_{\vv Q}(X)$};
\node[left] at (6.7,3) {$\alg F_{\vv Q}(Y)$};
\node at (2,1) { $\alg F_{\vv Q}(X)/\theta_{\vv Q}(\Sigma)$};
\node[above] at (3.8,3) {\footnotesize $\sigma$};
\node at (4,1.9) {\footnotesize $u_\sigma$};
\node[below] at (1.6,2.45) {\footnotesize $\pi_{\Sigma}$};
\draw[->] (2.7,3) -- (5.2,3);
\draw[->] (2.2,1.5) -- (5.5,2.7);
\draw[->>] (2,2.7) -- (2,1.5);
\end{tikzpicture}
\end{center}
	
	The second claim is easily seen, since $\sigma_u$ is defined by a composition of homomorphism and as above the set of pairs $S = \{(p,q): p \app q \in \Sigma\}$ is contained in its kernel, which yields that $\sigma_u$ is a unifier for $\Sigma$ in $\vv Q$.
\end{proof}
\begin{corollary}\label{cor:unifiablesigma}
	A finite set of identities $\Sigma$ is unifiable in $\vv Q$ if and only if the finitely presented algebra $\alg F_{\vv Q}(X)/\th_{\vv Q}(\Sigma)$ is unifiable in $\vv Q$.
\end{corollary}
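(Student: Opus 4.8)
The plan is to read this off directly from Theorem \ref{Ghilardi}: the corollary is essentially just the two halves of that theorem glued together, with one extra ingredient supplied for the converse, namely the characterization of projective algebras as retracts of free algebras.

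For the left-to-right direction I would argue as follows. If $\Sigma$ is unifiable in $\vv Q$, then by definition there is a substitution $\sigma$, which we may view as a homomorphism $\sigma:\alg F_{\vv Q}(X)\to\alg F_{\vv Q}(Y)$ for a finite $Y$ (finite because $X$ and $\Sigma$ are finite, so only finitely many variables are involved), with $\vv Q\vDash\sigma(\Sigma)$. Theorem \ref{Ghilardi}(1) then furnishes the homomorphism $u_\sigma:\alg F_{\vv Q}(X)/\th_{\vv Q}(\Sigma)\to\alg F_{\vv Q}(Y)$; since every free algebra in $\vv Q$ is projective in $\vv Q$, this $u_\sigma$ is a unifier for $\alg F_{\vv Q}(X)/\th_{\vv Q}(\Sigma)$, and hence the latter is unifiable in $\vv Q$.

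For the converse I would start from a unifier $u:\alg A\to\alg P$ of $\alg A=\alg F_{\vv Q}(X)/\th_{\vv Q}(\Sigma)$ with $\alg P$ projective in $\vv Q$. By Whitman's characterization of projective algebras in a quasivariety, $\alg P$ is a retract of a free algebra $\alg F_{\vv Q}(Y)$, witnessed by an embedding $i:\alg P\to\alg F_{\vv Q}(Y)$. This is exactly the input required by Theorem \ref{Ghilardi}(2), which then tells us that $\sigma_u:x\mapsto i(u(x/\th_{\vv Q}(\Sigma)))$ is a unifier for $\Sigma$ in $\vv Q$ (composing with a relabelling of $Y$ inside $\o$ if necessary, so that $\sigma_u$ has the codomain $\alg F_{\vv Q}(\o)$ demanded by the definition of a substitution). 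Hence $\Sigma$ is unifiable in $\vv Q$.

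I do not expect any genuine obstacle: both implications are immediate once Theorem \ref{Ghilardi} is in place. The only point needing a little care is the bookkeeping with the size of the generating sets — one should note that the finiteness of $X$ and $\Sigma$ allows all the free algebras appearing as codomains to be taken on at most countably many generators, so that the translation in both directions stays within the class of substitutions as defined and nothing is lost.
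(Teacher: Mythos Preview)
Your proposal is correct and matches the paper's intended argument: the corollary is stated without proof precisely because it is the immediate conjunction of the two directions of Theorem \ref{Ghilardi}, and your write-up simply makes that explicit (together with the observation that projectives are retracts of free algebras, needed to feed an arbitrary projective $\alg P$ into part (2)).
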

The following observation shows how to characterize unifiability in quasivarieties.

\begin{definition}
	For a quasivariety $\vv Q$, we let $\alg F_\vv Q$ be the {\em smallest free algebra}, i.e. $\alg F_\vv Q(\emptyset)$ (if there are constant operations) or else $\alg F_\vv Q(x)$.
\end{definition}
 We have the following :

\begin{lemma}\label{free} Let $\vv Q$ be a quasivariety and let $\alg A \in \vv Q$.  Then the following are equivalent:
\begin{enumerate}
\item $\alg A$ is unifiable in $\vv Q$;
\item there is a  homomorphism from $\alg A$ to $\alg F_\vv Q$.
\end{enumerate}
\end{lemma}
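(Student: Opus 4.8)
The plan is to prove the two implications separately, using Theorem~\ref{Ghilardi} and the basic characterization of projective algebras as retracts of free algebras.

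\emph{From (2) to (1).} Suppose there is a homomorphism $g : \alg A \longrightarrow \alg F_\vv Q$. The smallest free algebra $\alg F_\vv Q$ is itself free, hence projective in $\vv Q$ by the Whitman-type theorem recalled above. Therefore $g$ is itself a unifier for $\alg A$ (a homomorphism from $\alg A$ into a projective algebra), and $\alg A$ is unifiable in $\vv Q$ by definition.

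\emph{From (1) to (2).} Assume $\alg A$ is unifiable, so there is a homomorphism $u : \alg A \longrightarrow \alg P$ with $\alg P$ projective in $\vv Q$. By the characterization of projectivity, $\alg P$ is a retract of some free algebra $\alg F_\vv Q(Y)$; composing $u$ with the embedding $\alg P \hookrightarrow \alg F_\vv Q(Y)$ we may assume without loss of generality that $\alg P = \alg F_\vv Q(Y)$, i.e.\ we have a homomorphism $u : \alg A \longrightarrow \alg F_\vv Q(Y)$ for some set $Y$. Now compose with any homomorphism $\alg F_\vv Q(Y) \longrightarrow \alg F_\vv Q$ obtained by sending every free generator in $Y$ to a single fixed element of $\alg F_\vv Q$ (the generator $x$, or if the language has constants, any chosen element): such a homomorphism exists precisely because $\alg F_\vv Q(Y)$ is free on $Y$, so any assignment of the generators extends. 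The resulting composite is a homomorphism $\alg A \longrightarrow \alg F_\vv Q$, which is (2).

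\emph{Main obstacle.} The only point requiring a little care is the very last step: one needs $\alg F_\vv Q$ to be \emph{nonempty} and, in the constant-free case, to actually receive a homomorphism from every free algebra. If there are no constant operations, $\alg F_\vv Q = \alg F_\vv Q(x)$ is a one-generated free algebra and the map $\alg F_\vv Q(Y) \to \alg F_\vv Q(x)$ sending each $y \in Y$ to $x$ is well defined by freeness; if there are constants, $\alg F_\vv Q = \alg F_\vv Q(\emptyset)$ is the subalgebra of constants, which is nonempty and is a retract (indeed a subalgebra and quotient target) of every free algebra, so again the composite exists. In both cases the argument goes through, and no nontriviality or finiteness assumption on $\alg A$ is needed. (One should also note that in the degenerate situation where $\vv Q$ contains only the trivial algebra everything is vacuous.)
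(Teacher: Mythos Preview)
Your proof is correct and follows essentially the same route as the paper's: (2)$\Rightarrow$(1) because $\alg F_\vv Q$ is free hence projective, and (1)$\Rightarrow$(2) by composing the unifier $\alg A \to \alg P$ with the retraction embedding $\alg P \hookrightarrow \alg F_\vv Q(Y)$ and then with the canonical surjection $\alg F_\vv Q(Y) \to \alg F_\vv Q$. The paper compresses your last step into the single remark that ``$\alg F_\vv Q$ is a homomorphic image of every free algebra in $\vv Q$'', whereas you spell out why this holds in both the constant-free and constant-bearing cases; your extra care is welcome but not strictly needed. (The mention of Theorem~\ref{Ghilardi} in your plan is superfluous, as you do not actually invoke it.)
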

\begin{proof}Note that (2) trivially implies (1), since $\alg F_\vv Q$ is projective. Vice versa, if $\alg A$ is unifiable, there is a homomorphism from $\alg A$ to some projective algebra $\alg P$. Since $\alg P$ is a retract of some free algebra in $\vv Q$, and $\alg F_{\vv Q}$ is a homomorphic image of every free algebra in $\vv Q$, the claim follows.
\end{proof}

The above lemma implies for instance that  if $\alg F_\vv Q$ is trivial, then $\vv Q$ is unifiable since every algebra admits a homomorphism onto a trivial algebra. Hence, examples of unifiable algebras include lattices, groups, lattice-ordered abelian groups, residuated lattices. On the other hand, both bounded lattices and bounded residuated lattices (explored in Subsection \ref{sec:lattices} and \ref{sec:FL} respectively) are unifiable if and only if they admit a homomorphism onto the algebra (over the appropriate signature) with two elements $0$ and $1$.

We observe in passing that if $\alg A \cong \alg F_\vv Q(X)/\th_\vv Q$ is finitely presented  unifiable algebra in $\vv Q$, witnessed by a unifier $u: \alg A \longrightarrow \alg P$, then $u$ can be split into a homomorphism onto its image $u(\alg A)$, and an embedding from $u(\alg A)$ to $\alg P$.
By the Third Homomorphism Theorem there is a $\th' \in \op{Con}(\alg F_\vv Q(X))$ corresponding to the kernel of the onto homomorphism $u: \alg A \longrightarrow u(\alg A)$, $\th' \ge \th$, such that $\alg F_\vv V(X)/\th'$ embeds in $\alg P$; note that $\th'\in \op{Con}_\vv Q(\alg F_\vv Q(X))$, since $\alg P \in \vv Q$.  The diagram in Figure \ref{exact} shows that indeed  $\alg F_\vv V(X)/\th'$ is exact.

\begin{figure}[htbp]
\begin{center}
\begin{tikzpicture}[scale=.9]
\node at (2.1,2) {\footnotesize  $\alg F_\vv V(X)/\th$};
\node at (4.5,2) {\footnotesize  $\alg F_\vv V(X)/\th'$};
\node at (6.4,2) {\footnotesize $\alg P$};
\node at (8.4,2) {\footnotesize  $\alg F_\vv V(\o)$};
\draw[->] (2.9,2)-- (3.7,2);
\draw[->] (2.9,2)-- (3.6,2);
\draw[>->] (5.3,2)-- (6.2,2);
\draw[>->] (6.6,2)-- (7.8,2);
\end{tikzpicture}
\caption{}\label{exact}
\end{center}
\end{figure}

Let us now introduce the usual notion of order among unifiers.
Given two unifiers $u_1,u_2$ for $\alg A$ we say that $u_1$ is {\em less general then} $u_2$ (and we write $u_1 \preceq u_2$), if there is a homomorphism $h$ that makes the following diagram commute.

\begin{center}
\begin{tikzpicture}[scale=.9]
\node[left] at (0,2) {\footnotesize  $\alg A$};
\node[right] at (3,2) {\footnotesize  $\alg P_2$};
\node[right] at (3,0) {{\footnotesize  $\alg P_1$}};
\draw[->] (0,2)-- (3,2);
\draw[->] (0,1.8) -- (3,0.2);
\draw[->](3.3,1.8) -- (3.3,.2);
\node[above] at (1.5,2) {\footnotesize $u_2$};
\node[below] at (1.3,1) {\footnotesize $u_1$};
\node[right] at (3.3,1) { \footnotesize  $h$};
\end{tikzpicture}
\end{center}

Clearly $\preceq$ is a preordering and so the equivalence classes of the associated equivalence relation (i.e. the unifiers that are {\em equally general}) form a poset $U_\alg A$; using the maximal sets of that poset
it is possible to define a hierarchy of unification types  (see \cite{Ghilardi1997}). In particular, the unification type is \emph{unitary} if there is one maximal element, that is called {\em the most general unifier} or {\em mgu}.

\begin{definition}
	We say that a quasivariety $\vv Q$ has {\em  projective unifiers} if every finitely presented unifiable algebra in $\vv Q$ is projective, and that it has {\em exact unifiers} if every finitely presented unifiable algebra in $\vv Q$
is exact.
\end{definition}
 If $\vv Q$ has projective unifiers, then (from the algebraic perspective) the identity map is a unifier, and it is also the most general unifier.
Next we have a lemma whose proof is straightforward (modulo Lemma \ref{free}).

\begin{lemma} \label{projuni} Let $\vv Q$ be a quasivariety; then the following are equivalent:
\begin{enumerate}
\item $\vv  Q$ has projective (exact) unifiers;
\item  for any finitely presented $\alg A\in\vv Q$, $\alg A$ has $\alg F_\vv Q$ as a homomorphic image if
and only if $\alg A$ is projective (exact).
\end{enumerate}
\end{lemma}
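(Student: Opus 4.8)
The plan is to read the statement off from Lemma~\ref{free} together with the (essentially trivial) fact that projective and exact algebras are always unifiable. There is no real obstacle here: the argument is a pure unwinding of the definition of ``projective (exact) unifiers'', and the only ingredient that is not a literal restatement is the observation that projectivity and exactness each force unifiability. I would treat the ``projective'' and the ``exact'' versions in parallel, the reasoning being identical.

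First I would record two elementary facts. (a) By Lemma~\ref{free} (which characterizes unifiability of $\alg A$ by the existence of a homomorphism from $\alg A$ to $\alg F_\vv Q$), a finitely presented $\alg A\in\vv Q$ has $\alg F_\vv Q$ as a homomorphic image if and only if $\alg A$ is unifiable in $\vv Q$. (b) Every projective algebra in $\vv Q$ is unifiable, since for projective $\alg A$ the map $\op{id}_{\alg A}\colon\alg A\to\alg A$ is itself a unifier; and every finitely generated exact algebra is unifiable, since it embeds into the free --- hence projective --- algebra $\alg F_\vv Q(\omega)$, and such an embedding is a unifier. Combining (a) and (b): every finitely presented projective (resp. exact) algebra has $\alg F_\vv Q$ as a homomorphic image.

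Finally I would assemble the equivalence. For $(1)\Rightarrow(2)$, suppose $\vv Q$ has projective (exact) unifiers and let $\alg A$ be finitely presented; if $\alg A$ has $\alg F_\vv Q$ as a homomorphic image then $\alg A$ is unifiable by (a), hence projective (exact) by hypothesis, while the converse implication is precisely the last sentence of the previous paragraph. For $(2)\Rightarrow(1)$, let $\alg A$ be a finitely presented unifiable algebra; by (a) it has $\alg F_\vv Q$ as a homomorphic image, so by $(2)$ it is projective (exact). Since $\alg A$ was arbitrary among finitely presented unifiable algebras, this says exactly that $\vv Q$ has projective (exact) unifiers, which completes the argument; the only step that is not a direct translation is fact (b), which is what makes the ``$\Leftarrow$'' halves of the biconditionals in $(2)$ go through and is the reason the proof is ``straightforward modulo Lemma~\ref{free}'' rather than completely vacuous.
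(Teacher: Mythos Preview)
Your proof is correct and is exactly what the paper has in mind: the paper gives no proof beyond remarking that the lemma is ``straightforward (modulo Lemma~\ref{free})'', and your argument---use Lemma~\ref{free} to identify ``$\alg A$ has $\alg F_\vv Q$ as a homomorphic image'' with ``$\alg A$ is unifiable'', then observe that finitely presented projective (resp.\ exact) algebras are automatically unifiable---is precisely that verification.
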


If $\vv Q$ is locally finite, then we have a necessary and sufficient condition.

\begin{lemma}\label{tame} Let $\vv Q$ be a locally finite quasivariety of finite type, then the following are equivalent:
\begin{enumerate}
\item $\vv Q$ has projective unifiers;
\item  every finite unifiable algebra in $\vv Q$ is projective in the class of finite algebras in $\vv Q$.
\end{enumerate}
\end{lemma}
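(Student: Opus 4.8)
The plan is to reduce the statement to the earlier theorem characterising projective finitely presented algebras (the one saying that a finitely presented $\alg A \in \vv Q$ is projective in $\vv Q$ iff it is projective in the class of all finitely presented algebras in $\vv Q$), after first checking that, under the present hypotheses, the classes of \emph{finite}, \emph{finitely generated}, and \emph{finitely presented} members of $\vv Q$ all coincide.

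First I would establish this coincidence. Local finiteness makes each free algebra $\alg F_\vv Q(X)$ on a finite set $X$ finite, being finitely generated; hence every finitely generated algebra in $\vv Q$, being a quotient of such a free algebra, is finite, and conversely a finite algebra is finitely generated. Moreover, if $\alg A \in \vv Q$ is finite, choose a presentation $\alg A \cong \alg F_\vv Q(X)/\th$ with $X$ finite and $\th$ a relative congruence (take $X$ a finite generating set and $\th$ the kernel of the canonical surjection); since $\alg F_\vv Q(X)$ is finite, the set $\th \sse F_\vv Q(X)^2$ is finite, so $\th$ is generated as a relative congruence by a finite set of term-pairs, and $\alg A$ is finitely presented. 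Thus the class of finite algebras in $\vv Q$ equals the class of finitely presented algebras in $\vv Q$.

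Next I would recall Lemma~\ref{free}: unifiability of $\alg A$ in $\vv Q$ amounts to the existence of a homomorphism $\alg A \to \alg F_\vv Q$, a condition depending only on $\alg A$ and $\vv Q$, so that ``finitely presented unifiable algebra in $\vv Q$'' and ``finite unifiable algebra in $\vv Q$'' name the same collection. Finally I would invoke the theorem on finitely presented algebras recalled above: for a finitely presented (equivalently finite) $\alg A \in \vv Q$, being projective in $\vv Q$ is equivalent to being projective in the class of all finitely presented algebras in $\vv Q$, i.e.\ in the class of finite algebras in $\vv Q$. Putting the three observations together, statement (1)---every finitely presented unifiable algebra is projective in $\vv Q$---translates verbatim into statement (2)---every finite unifiable algebra is projective in the class of finite algebras in $\vv Q$---and the translation is symmetric, yielding both implications.

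I do not expect a genuine obstacle here: the argument is essentially bookkeeping built on already-proved facts. The one point deserving care is that projectivity relative to a proper subclass is a priori weaker than projectivity relative to the whole quasivariety; it is exactly the earlier theorem on finitely presented algebras that bridges this gap, and one should verify its hypotheses are met (they are, since finite $=$ finitely presented in this setting). One should also keep ``unifiable'' interpreted relative to $\vv Q$ throughout, as in the definitions, so that no change of ambient class sneaks in on that side.
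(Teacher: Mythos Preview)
Your argument is correct. The route differs slightly from the paper's: you reduce everything to the earlier theorem stating that a finitely presented algebra is projective in $\vv Q$ iff it is projective in the class of finitely presented algebras in $\vv Q$, after first verifying that under local finiteness (and finite type) the classes of finite, finitely generated, and finitely presented members of $\vv Q$ coincide. The paper instead argues $(2)\Rightarrow(1)$ directly: given a surjection $f:\alg B\to\alg A$ with $\alg A$ finite unifiable, it picks preimages of a generating set to obtain a finite subalgebra $\alg B'\le\alg B$ onto which $f$ still restricts surjectively, then applies (2) to split $f\restriction_{\alg B'}$, yielding a section $\alg A\to\alg B'\hookrightarrow\alg B$. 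Your approach is cleaner bookkeeping---it simply cashes in the earlier theorem---while the paper's is self-contained and makes the mechanism (passing to a finite subalgebra) explicit; in substance the two are doing the same thing, since the earlier theorem is proved by essentially the same reduction.
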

\begin{proof} (1) implies (2) is obvious. Assume (2), let $\alg A$ be unifiable and  finite and let $\alg B \in \vv Q$ such that
$f: \alg B \longrightarrow \alg A$ is a onto homomorphism. Let $\vuc an$ be the generators of $\alg A$ and let $\vuc bn \in B$ with
$f(b_i) = a_i$ for $i = 1 \ldots n$; if $\alg B'$ is the subalgebra generated by $\vuc bn$ then $f$ restricted to $\alg B'$ is onto. Clearly $\alg B'$ is finite. Hence by hypothesis there exists a $g: \alg A \longrightarrow \alg B$ such that $fg$ is the identity on $\alg A$.   This shows that $\alg A$ is projective in $\alg B$ and hence in $\vv Q$.  Thus (1) holds.
\end{proof}

Having exact unifiers is weaker than having projective unifiers:
\begin{example}\label{ex: distlattice}
	The variety $\vv D$ of distributive lattices is unifiable since it has no constants and it is idempotent; hence its least free algebra is trivial. But $\vv D$ does note have projective unifiers: a distributive lattice is projective if and only if the meet of join irreducible elements is again join irreducible \cite{Balbes1967}, so there are finite non projective distributive lattices.  However every finitely presented (i.e. finite) distributive lattice is exact \cite{CabrerMetcalfe2015a}.
\end{example}

  \begin{example}
  	A different  example is  the variety of $\mathsf{ST}$ of Stone algebras; a Stone algebra is a pseudocomplemented bounded distributive lattice in the signature $(\land, \lor, *, 0, 1)$ such that $x^* \lor x^{**} \approx 1$ holds. A Stone algebra is unifiable if and only if is has a homomorphism into the two element Boolean algebra if and only if it is nontrivial.
While there are nontrivial Stone algebras that are not projective, any nontrivial Stone algebra is exact (\cite[Lemma 17]{CabrerMetcalfe2015a}). Hence $\mathsf{ST}$ has exact unifiers.
  \end{example}

Moreover, there are examples of varieties having a most general unifier that do not have projective unifiers.

\begin{example}
	From the results in \cite{Ghilardi1999}, the variety $\mathsf{SH}$ of Stonean Heyting algebras (that is, Heyting algebras such that $\neg x \lor \neg\neg x \approx 1$ holds) is such that every unifiable algebra $\alg A \in \mathsf{SH}$ has a most general unifier. However, $\mathsf{SH}$  does not have projective unifiers. The algebra $\alg F_\mathsf{SH}(x,y,z)/\th$, where $\th$ is the congruence generated by the pair $( \neg x \imp (y \join z),1)$,  is unifiable but not projective. We observe that Ghilardi's argument relies heavily on some properties of Heyting algebras and uses Kripke models, making it difficult to generalize.
\end{example}

Trivial examples show that having projective or exact unifiers is not inherited in general by subvarieties (see for instance \cite[Example 7.2]{DzikStronkowski2016}). The following lemma (that we extract from \cite[Lemma 5.4]{DzikStronkowski2016}) gives a sufficient condition for having projective
unifiers. We write a detailed proof for the reader's convenience.
 \begin{lemma}[\cite{DzikStronkowski2016}]\label{lemma:DZlemma}
 Let $\vv Q$ be a quasivariety and let $\vv Q'$ be a subquasivariety of $\vv Q$ such that if $\alg B =\alg F_{\vv Q'}(X)/\th_{\vv Q'}(\Sigma)$ is finitely presented and unifiable in $\vv Q'$, then $\alg A = \alg F_{\vv Q}(X)/\th_\vv Q(\Sigma)$ is unifiable in $\vv Q$. If $\vv Q$ has projective unifiers then $\vv Q'$ has projective unifiers.
 \end{lemma}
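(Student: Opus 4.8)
The aim is to show that every finitely presented algebra of $\vv Q'$ which is unifiable in $\vv Q'$ is already projective in $\vv Q'$; this is precisely the assertion that $\vv Q'$ has projective unifiers. So I would fix such an algebra and present it as $\alg B=\alg F_{\vv Q'}(X)/\th_{\vv Q'}(\Sigma)$ with $X$ finite and $\Sigma$ a finite set of equations in the variables $X$. By the standing hypothesis, the companion $\alg A=\alg F_{\vv Q}(X)/\th_{\vv Q}(\Sigma)$ is unifiable in $\vv Q$, and since $\vv Q$ has projective unifiers, $\alg A$ is projective in $\vv Q$. Applying the definition of projectivity to the canonical surjection $\pi_\Sigma\colon\alg F_{\vv Q}(X)\to\alg A$ together with $\op{id}_{\alg A}$, we obtain an embedding $i\colon\alg A\to\alg F_{\vv Q}(X)$ with $\pi_\Sigma\circ i=\op{id}_{\alg A}$; that is, $\alg A$ is a retract of the free algebra $\alg F_{\vv Q}(X)$.

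The key move is to read off from this retraction an idempotent unifier of $\Sigma$. Set $\sigma:=i\circ\pi_\Sigma\colon\alg F_{\vv Q}(X)\to\alg F_{\vv Q}(X)$. Since $\pi_\Sigma$, hence $\sigma$, identifies $p$ with $q$ for every $p\app q\in\Sigma$, the substitution $\sigma$ is a unifier of $\Sigma$ in $\vv Q$; and $\pi_\Sigma\circ\sigma=\pi_\Sigma\circ i\circ\pi_\Sigma=\pi_\Sigma$, so $\sigma$ fixes every element modulo $\th_{\vv Q}(\Sigma)$. Now let $\pi_X\colon\alg F_{\vv Q}(X)\to\alg F_{\vv Q'}(X)$ be the canonical surjection. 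Substitutions preserve equational consequence, so $\ker\pi_X\sse\ker(\pi_X\circ\sigma)$ and $\sigma$ descends to a substitution $\sigma'\colon\alg F_{\vv Q'}(X)\to\alg F_{\vv Q'}(X)$ with $\sigma'\circ\pi_X=\pi_X\circ\sigma$; as $\sigma$ unifies $\Sigma$ in $\vv Q$, a fortiori in $\vv Q'$, it follows that $\sigma'$ unifies $\Sigma$ in $\vv Q'$.

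By Theorem \ref{Ghilardi}(1), $\sigma'$ yields a homomorphism $u_{\sigma'}\colon\alg B\to\alg F_{\vv Q'}(X)$ with $u_{\sigma'}(t/\th_{\vv Q'}(\Sigma))=\sigma'(t)$. I would then verify that $u_{\sigma'}$ is a section of the canonical surjection $\pi'_\Sigma\colon\alg F_{\vv Q'}(X)\to\alg B$, which finishes the proof, since it exhibits $\alg B$ as a retract of the free algebra $\alg F_{\vv Q'}(X)$ and hence as projective in $\vv Q'$. Write $\rho:=\pi'_\Sigma\circ\pi_X\colon\alg F_{\vv Q}(X)\to\alg B$ for the canonical surjection; it factors through $\pi_\Sigma$ because $\ker\rho$ is a $\vv Q$-congruence containing every pair arising from $\Sigma$, whence $\rho\circ\sigma=\rho$ follows from $\pi_\Sigma\circ\sigma=\pi_\Sigma$. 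Since $\{\rho(x):x\in X\}$ generates $\alg B$, it suffices to compute, for each $x\in X$,
$$
\pi'_\Sigma\bigl(u_{\sigma'}(\rho(x))\bigr)=\pi'_\Sigma\bigl(\sigma'(\pi_X(x))\bigr)=\pi'_\Sigma\bigl(\pi_X(\sigma(x))\bigr)=\rho(\sigma(x))=\rho(x),
$$
so that $\pi'_\Sigma\circ u_{\sigma'}=\op{id}_{\alg B}$, as wanted.

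I expect the delicate point to be exactly this last verification: after the unifier $\sigma$ of $\Sigma$ over $\vv Q$ is pushed down to the unifier $\sigma'$ over $\vv Q'$, one still needs it to split the presentation $\pi'_\Sigma$ of $\alg B$, and what makes this work is the idempotency property $\pi_\Sigma\circ\sigma=\pi_\Sigma$ inherited from the fact that $\alg A$ is a retract of $\alg F_{\vv Q}(X)$. The passage from $\sigma$ to $\sigma'$ along $\pi_X$, and the fact that it keeps unifying $\Sigma$, are routine consequences of the substitution-invariance of equational consequence together with $\vv Q'\sse\vv Q$.
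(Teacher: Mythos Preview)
Your proof is correct, and it takes a genuinely different route from the paper's. The paper argues abstractly: it observes that $\alg B$ is a quotient of $\alg A$ via a canonical surjection $p$, and that every homomorphism from $\alg A$ into an algebra of $\vv Q'$ factors through $p$; then it verifies the lifting property in the \emph{definition} of projectivity for $\alg B$ directly, by lifting $h\circ p\colon\alg A\to\alg D$ along $g\colon\alg C\to\alg D$ using projectivity of $\alg A$, and factoring the resulting map through $p$. Your approach is instead constructive: you extract from the retraction $\alg A\hookrightarrow\alg F_{\vv Q}(X)$ an idempotent substitution $\sigma$ unifying $\Sigma$, push it down along $\pi_X$ to a substitution $\sigma'$ on $\alg F_{\vv Q'}(X)$, and show this yields an explicit section of $\pi'_\Sigma$, exhibiting $\alg B$ as a retract of $\alg F_{\vv Q'}(X)$. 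The paper's argument is shorter and more categorical; yours has the advantage of actually producing the retraction (and the projective unifier) concretely, which is closer in spirit to unification-theoretic arguments. The common kernel of both proofs is the factorization of maps from $\alg A$ into $\vv Q'$-algebras through $\alg B$---in your version this appears as the step $\rho\circ\sigma=\rho$, obtained by factoring $\rho$ through $\pi_\Sigma$.
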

 \begin{proof}  It is an easy exercise in general algebra to show that if $\Theta = \bigcap\{\th \in \op{Con} (\alg F_\vv Q(X)): \alg F_\vv Q(X)/\th \in \vv Q'\}$ then
$$
\alg F_{\vv Q'}(X)/\th_{\vv Q'}(\Sigma) \cong  \alg F_\vv Q(X)/(\th_\vv Q(\Sigma) \join \Theta).
$$
It follows that $\alg B$ is a homomorphic image of $\alg A$ via the natural surjection
$$
p:a/\th_\vv Q(\Sigma) \longmapsto a/(\th_\vv Q(\Sigma) \join \Theta)
$$
composed with the isomorphism.
Moreover if $f: \alg A \longrightarrow \alg C$ is a homomorphism
and $\alg C \in \vv Q'$, then $\op{ker}(p) \le \op{ker}(f)$ and by the Second Homomorphism Theorem there is a $f': \alg B \longrightarrow \alg C$ with $f'p = f$.

Now let $\alg B = \alg F_{\vv Q'}/\th_{\vv Q'}(\Sigma)$  be finitely presented and unifiable and let $\alg A = \alg F_{\vv Q}(X)/\th_\vv Q(\Sigma)$; then $\alg A$ is finitely presented and unifiable as well, so, since $\vv Q$ has projective unifiers, $\alg A$ is projective in $\vv Q$. We now show that $\alg B$ is projective. Suppose there are algebras $\alg C, \alg D \in \vv Q' \sse \vv Q$ and homomorphisms $h: \alg B \to \alg D, g: \alg C \to \alg D$ with $g$ surjective. Then, there is a homomorphism $h  p: \alg A \to \alg D$, and since $\alg A$ is projective by the definition of projectivity there is a homomorphism $f: \alg A \to \alg C$ such that $gf = hp$. Factoring $f$ as above, there is $f'$ such that $f'p = f$. Therefore since $g f'p = gf  = hp$ and $p$ is surjective, we get that $gf' = h$ which means that $\alg B$ is projective in $\vv Q'$.
\end{proof}

We will see later in Section \ref{subsec: active universal} (Example \ref{ex: De Morgan}) that Lemma \ref{lemma:DZlemma} does not hold with ``projective unifiers'' replaced by ``exact unifiers''.
We can build on the previous lemma and obtain the following.
\begin{lemma}\label{lemma:dzik2} Suppose that $\vv Q$ is a quasivariety such that $\alg F_\vv Q = \alg F_{\vv Q'}$  for all $\vv Q'\sse \vv Q$.  If $\vv Q$ has projective unifiers, then every subquasivariety $\vv Q'$ has projective unifiers.
\end{lemma}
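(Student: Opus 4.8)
The plan is to reduce Lemma \ref{lemma:dzik2} directly to Lemma \ref{lemma:DZlemma}: it suffices to verify that the hypothesis of Lemma \ref{lemma:DZlemma} holds automatically under the assumption that $\alg F_\vv Q = \alg F_{\vv Q'}$ for every subquasivariety $\vv Q' \sse \vv Q$. Concretely, fix such a $\vv Q'$ and suppose $\alg B = \alg F_{\vv Q'}(X)/\th_{\vv Q'}(\Sigma)$ is finitely presented and unifiable in $\vv Q'$. We must produce a unifier in $\vv Q$ for $\alg A = \alg F_\vv Q(X)/\th_\vv Q(\Sigma)$. By Lemma \ref{free} (applied inside $\vv Q'$), unifiability of $\alg B$ in $\vv Q'$ means there is a homomorphism $g\colon \alg B \longrightarrow \alg F_{\vv Q'}$. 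Using the identification $\alg F_\vv Q = \alg F_{\vv Q'}$ and the fact (noted just before Lemma \ref{free}) that the smallest free algebra $\alg F_\vv Q$ is a homomorphic image of every free algebra in $\vv Q$ and in particular lies in $\vv Q$ and is projective in $\vv Q$, we can regard $g$ as a homomorphism into a projective algebra of $\vv Q$.

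The one thing that needs care is that $g$ is a priori only a homomorphism out of $\alg B \in \vv Q'$, whereas we need a unifier out of $\alg A \in \vv Q$, and $\alg A$ is in general strictly bigger than $\alg B$. The key is the surjection $p\colon \alg A \longrightarrow \alg B$ exhibited in the proof of Lemma \ref{lemma:DZlemma}: there it is shown, by the standard computation $\alg F_{\vv Q'}(X)/\th_{\vv Q'}(\Sigma) \cong \alg F_\vv Q(X)/(\th_\vv Q(\Sigma) \join \Theta)$ with $\Theta = \bigcap\{\th \in \op{Con}(\alg F_\vv Q(X)) : \alg F_\vv Q(X)/\th \in \vv Q'\}$, that $\alg B$ is the image of $\alg A$ under the natural map $p$. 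Composing, $g \circ p \colon \alg A \longrightarrow \alg F_\vv Q$ is a homomorphism from the finitely presented algebra $\alg A$ into a projective algebra of $\vv Q$, i.e. a unifier for $\alg A$ in $\vv Q$. Hence $\alg A$ is unifiable in $\vv Q$, which is exactly the hypothesis required to invoke Lemma \ref{lemma:DZlemma}.

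With that hypothesis verified, Lemma \ref{lemma:DZlemma} applies verbatim: since $\vv Q$ has projective unifiers, so does $\vv Q'$. As $\vv Q'$ was an arbitrary subquasivariety of $\vv Q$, every subquasivariety of $\vv Q$ has projective unifiers, completing the proof.

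I do not expect a genuine obstacle here — the statement is essentially a packaging of Lemma \ref{lemma:DZlemma} together with Lemma \ref{free}. The only mild subtlety, and the point I would be most careful to state cleanly, is the bookkeeping around the surjection $p\colon \alg A \longrightarrow \alg B$: one should either cite the isomorphism $\alg F_{\vv Q'}(X)/\th_{\vv Q'}(\Sigma) \cong \alg F_\vv Q(X)/(\th_\vv Q(\Sigma)\join\Theta)$ from the proof of Lemma \ref{lemma:DZlemma} or reprove it in a line, so that composing a unifier $g$ for $\alg B$ with $p$ legitimately yields a unifier $g\circ p$ for $\alg A$. Everything else is an immediate appeal to the two cited lemmas.
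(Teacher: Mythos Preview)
Your proposal is correct and follows essentially the same approach as the paper's proof: both verify the hypothesis of Lemma \ref{lemma:DZlemma} by composing a unifier $g\colon \alg B \to \alg F_{\vv Q'} = \alg F_\vv Q$ (obtained via Lemma \ref{free}) with the surjection $p\colon \alg A \to \alg B$, thereby showing $\alg A$ is unifiable in $\vv Q$. Your write-up is simply more explicit about the construction of $p$, but the argument is identical.
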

\begin{proof}  Let $\vv Q'$ be a subquasivariety of $\vv Q$, let  $\alg B = \alg F_{\vv Q'}(X)/\th(\Sigma)$ be finitely presented and unifiable in $\vv Q'$ and let $\alg A =\alg F_\vv Q(X)/\th(\Sigma)$.
Then $\alg B$ is a homomorphic image of $\alg A$ and, since $\alg B$ is unifiable there is a homomorphism from $\alg B$ to $\alg F_{\vv Q'} = \alg F_\vv Q$. Hence $\alg A$ is unifiable as well; hence the hypothesis
of Lemma \ref{lemma:DZlemma} are satisfied, and so $\vv Q'$ has projective unifiers.
\end{proof}

We close this subsection with a corollary appearing also in \cite{DzikStronkowski2016} that is useful to some examples we will explore in what follows. We reproduce the easy proof for the reader's convenience.

\begin{corollary}\label{cor: VQ pu Q pu} Let $\vv Q$ be a quasivariety and let  $\VV(\vv Q)= \vv V$;  if $\vv V$ has exact (projective) unifiers, then so does $\vv Q$.
\end{corollary}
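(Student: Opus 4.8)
The plan is to reduce the statement for $\vv Q$ to the corresponding statement for $\vv V = \VV(\vv Q)$ via the ``representing'' finitely presented algebras, exploiting the fact that unifiability of a finite set of equations is the same in $\vv Q$ and in $\vv V$ (this was observed right after the definition of unification problem). First I would fix a finitely presented unifiable algebra in $\vv Q$; by Corollary~\ref{cor:unifiablesigma} we may write it as $\alg A = \alg F_{\vv Q}(X)/\th_{\vv Q}(\Sigma)$ for a finite $X$ and a finite set of equations $\Sigma$, and the point is that $\alg A$ being unifiable in $\vv Q$ says exactly that $\Sigma$ is unifiable in $\vv Q$, equivalently in $\vv V$. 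Now set $\alg A' = \alg F_{\vv V}(X)/\th_{\vv V}(\Sigma)$, the finitely presented algebra in $\vv V$ presented by the \emph{same} data. Then $\alg A'$ is finitely presented and unifiable in $\vv V$, so the hypothesis that $\vv V$ has exact (resp.\ projective) unifiers gives that $\alg A'$ is exact (resp.\ projective) in $\vv V$.

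The heart of the argument is then to transfer exactness/projectivity of $\alg A'$ in $\vv V$ back to $\alg A$ in $\vv Q$. For this I would use the standard fact (the same one invoked in the proof of Lemma~\ref{lemma:DZlemma}, with the roles of the quasivariety and its generated variety in place of $\vv Q$ and $\vv Q'$): if $\Theta = \bigcap\{\th \in \op{Con}(\alg F_{\vv V}(X)) : \alg F_{\vv V}(X)/\th \in \vv Q\}$ then $\alg A \cong \alg F_{\vv V}(X)/(\th_{\vv V}(\Sigma)\join\Theta)$, so that $\alg A$ is a homomorphic image of $\alg A'$, and moreover any homomorphism from $\alg A'$ into an algebra of $\vv Q$ factors through this surjection $\alg A' \twoheadrightarrow \alg A$. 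For the projective case: $\alg A'$ projective in $\vv V$ means (by the retract characterization, Theorem referenced in the Projectivity subsection, together with the remark that projectivity in $\vv Q$ and in $\VV(\vv Q)$ coincide) that $\alg A'$ is a retract of a free algebra; recall $\alg F_{\vv Q}(Y)=\alg F_{\vv V}(Y)$ for all $Y$, so one runs the very same diagram chase as in Lemma~\ref{lemma:DZlemma} to conclude $\alg A$ is projective in $\vv Q$. For the exact case: $\alg A'$ exact in $\vv V$ means $\alg A'$ is weakly projective in some $\alg F_{\vv V}(Z)=\alg F_{\vv Q}(Z)$ with $|Z|\ge|X|$; since $\alg A$ is a homomorphic image of $\alg A'$ — and since the free algebra in question lies in $\vv Q$ — composing the quotient $\alg F_{\vv Q}(Z)\twoheadrightarrow \alg A'$ (which exists as $\alg A'\in\HH(\alg F_{\vv V}(Z))$ by weak projectivity applied after noting $\alg A'$ is generated by $|X|\le|Z|$ elements) with $\alg A'\twoheadrightarrow\alg A$ exhibits $\alg A$ as a quotient of $\alg F_{\vv Q}(Z)$; together with $\alg A\hookrightarrow$ (an image-embedding argument as in Figure~\ref{exact}) one gets $\alg A\in\SU(\alg F_{\vv Q}(Z))$, i.e.\ $\alg A$ is exact in $\vv Q$.

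The step I expect to be the main obstacle is the exact case: unlike projectivity, exactness is not characterized purely by retractions, and weak projectivity ``$\alg A'\in\HH(\alg F)\Rightarrow\alg A'\in\SU(\alg F)$'' only gives an embedding into a \emph{specific} free algebra, so I must be careful that when I replace $\alg A'$ by its homomorphic image $\alg A$ the embedding survives. The clean way around this is to note that $\alg A$ and $\alg A'$ have the same generating set $X$ (the images of the free generators), so ``$\alg A$ is exact'' only requires weak projectivity in $\alg F_{\vv Q}(X)$ or a larger free algebra; and then to use the factorization $\alg A'\twoheadrightarrow\alg A$ through $\Theta$ together with the embedding $\alg A'\hookrightarrow\alg F_{\vv V}(Z)$, observing that the kernel of $\alg F_{\vv V}(Z)\twoheadrightarrow\alg A'$ is a $\vv Q$-congruence precisely because $\alg A'\in\vv Q$ (it is unifiable, hence maps to $\alg F_{\vv Q}$, but more simply $\alg A'$ projective/exact in $\vv V$ already sits inside $\SU(\alg F_{\vv V}(Z))\subseteq\vv Q$). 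I would phrase the whole thing so that the projective case is literally a citation of the mechanism inside Lemma~\ref{lemma:DZlemma} applied with $\vv Q\rightsquigarrow\vv V$, $\vv Q'\rightsquigarrow\vv Q$ — whose hypothesis ``$\alg B$ f.p.\ unifiable in $\vv Q'$ $\Rightarrow$ $\alg A$ unifiable in $\vv Q$'' is automatic here since unifiability of $\Sigma$ in $\vv Q$ and in $\vv V$ agree — and then supply the parallel exactness argument by hand.
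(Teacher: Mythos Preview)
Your approach is essentially the paper's, but you are overcomplicating the exact case and not quite landing the punchline. You correctly set up $\alg A' = \alg F_{\vv V}(X)/\th_{\vv V}(\Sigma)$ and $\alg A = \alg F_{\vv Q}(X)/\th_{\vv Q}(\Sigma)$, observe that $p:\alg A'\twoheadrightarrow\alg A$ is the canonical surjection, and that any homomorphism from $\alg A'$ into an algebra of $\vv Q$ factors through $p$. You also eventually notice the crucial point: if $\alg A'$ is exact in $\vv V$ then the witnessing embedding $u:\alg A'\hookrightarrow\alg F_{\vv V}(Z)=\alg F_{\vv Q}(Z)$ has codomain in $\vv Q$. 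What you do not say explicitly is the one-line conclusion: since $u$ factors as $u=gp$ with $u$ injective, $p$ is injective, hence $p$ is an \emph{isomorphism} and $\alg A\cong\alg A'$. That single observation finishes both the exact and the projective case simultaneously, with no need for a separate diagram chase in the projective case or an ``image-embedding argument'' in the exact case.

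The paper's proof is exactly this: show $\alg A\cong\alg A'$ via injectivity of $p$, and then exactness (resp.\ projectivity) of $\alg A'$ in $\vv V$ transfers trivially since the free algebras agree. Your detour through ``$\alg A$ is a quotient of $\alg F_{\vv Q}(Z)$, together with an embedding\ldots'' in the second paragraph is not a valid step as written (a surjection and an embedding in opposite directions do not compose to an embedding), and the Figure~\ref{exact} reference does not rescue it. Drop that, and instead state plainly that the factorization of $u$ through $p$ forces $p$ to be an isomorphism.
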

\begin{proof} First recall that $\vv Q$ and $\vv V$ have the same free algebras. Let $\alg A = \alg F_\vv Q(X)/\th_\vv V(\Sigma)$ and $\alg B = \alg F_\vv Q(X)/\th_\vv Q(\Sigma)$; if $\alg B$ is unifiable then, as $\alg B$ is a homomorphic image of $\alg A$ via the epimorphism $p$ described in the proof of Lemma \ref{lemma:DZlemma}, $\alg A$ is unifiable as well hence it is exact. Therefore there is an embedding $u: \alg A \longrightarrow \alg F_\vv Q(\o)$;
then by (the proof of) Lemma \ref{lemma:DZlemma} there is a $g:\alg B \longrightarrow \alg F_\vv Q(\o)$ with $gp = u$. Since $u$ is injective, so is $p$ and hence $\alg A$ and $\alg B$ are isomorphic. This proves the thesis.
\end{proof}

\subsection{Structural and universal completeness}\label{subsec:structuraluniversal}
We now introduce the main notions of interest of this work, that is, structural and universal completeness.

Let $\cc L$ be a logic with consequence relation $\vdash$.
We call {\em clause} of $\cc L$ an ordered pair $(\Sigma,\Gamma)$ where $\Sigma,\Gamma$ are finite sets of formulas. We usually write a clause as $\Sigma \Rightarrow \Gamma$. A {\em rule} is a clause $\Sigma \Rightarrow \Gamma$ where $\Gamma = \{\g\}$.
A rule is {\em admissible} in a logic if, when added to its calculus, it does not produce new theorems. More precisely:
\begin{definition}
	A clause $\Sigma \Rightarrow \Delta$ is {\em admissible} in a logic $\cc L$ if every substitution making all the formulas in $\Sigma$ a theorem, also makes at least one of the formulas in $\Delta$ a theorem.
\end{definition}
Moreover, we say that a clause $\Sigma \Rightarrow \Delta$ is {\em derivable} in a logic $\cc L$ if $\Sigma \vdash \delta$ for some $\delta \in \Delta$.
An admissible clause is not necessarily derivable; a popular example is Harrop's rule for intuitionistic logic
 $$
 \{ \neg p \imp (q \join r)\} \ \Rightarrow \  \{(\neg p \imp q) \join (\neg p \imp r)\}
 $$
 which is admissible but not derivable.
\begin{definition}
Let $\cc L$ be a logic.
	A clause $\Sigma \Rightarrow \Gamma$ is {\em passive} in $\vv Q$ if there is no substitution making the premises $\Sigma$ a theorem of $\cc L$; a clause is {\em active} otherwise. Finally, a clause $\Sigma \Rightarrow \Delta$
is  {\em negative} if $\Delta = \emptyset$.
\end{definition}
 We observe that every admissible negative clause is necessarily passive.
\begin{definition}
	A logic is said to be
 \begin{enumerate}
 \ib {\em universally complete} if every admissible clause is derivable;
 \ib  {\em structurally complete}  if every admissible rule is derivable;
 \ib {\em actively universally complete}  if every active admissible clause is derivable;
 \ib {\em actively structurally complete}  if every active admissible rule is derivable\footnote{Logics with this property have been more often called {\em almost structurally complete} but here we follow A. Citkin's advice (see \cite[footnote 2, page 8]{DzikStronkowski2016}).}
 \ib {\em passively universally complete} if every passive admissible clause is derivable;
 \ib  {\em passively structurally complete}  if every passive admissible rule is derivable;
  \ib {\em non negatively universally complete}  if every non negative admissible clause is derivable.
 \end{enumerate}
\end{definition}
Modulo algebraizability, one obtains the corresponding notions for a quasivariety. In particular, we can express admissibility and derivability of clauses in $\cc L_\vv Q$ using the (quasi)equational logic of $\vv Q$; this is because the Blok-Pigozzi Galois connection transforms (sets of) formulas in $\cc L_\vv Q$ into (sets of) equations in $\vv Q$ in a uniform way. The obtained notions make sense for quasivarieties that do not necessarily correspond to a logic.
\begin{definition}
	Let $\vv Q$ be a quasivariety. A universal sentence $\Sigma \Rightarrow \Delta$ is {\em admissible} in $\vv Q$ if every substitution unifying all the identities in $\Sigma$ also unifies at least one of the identities in $\Delta$. A universal sentence is {\em passive} if there is no substitution unifying its premises, {\em active} otherwise.
$\vv Q$ is {\em (active/passive) universally/structurally complete } if every (active/passive) admissible universal sentence/quasiequation is valid in $\vv Q$. 
\end{definition}

If $P$ is one of those properties, then we say that a logic (or a quasivariety) is {\em hereditarily $P$} if the logic (or the quasivariety) and all its extensions have the property $P$.
Some of these properties are well-known to be distinct: for instance  classical logic is non-negative universally  complete but not universally complete, while intuitionistic logic is not structurally complete (thanks to Harrop's example) but
 it is passively structurally complete (as reported by Wronski in 2005, see \cite{CintulaMetcalfe2009}). The following is a consequence of algebraizability.
\begin{theorem}\label{admissiblederivable} Let $\vv Q$ be a quasivariety of logic, $\Sigma,\Delta$  sets of equations in the language of $\vv Q$ and $\Sigma',\Delta'$ the corresponding sets of formulas in $\cc L_\vv Q$. Then:
\begin{enumerate}
\item $\Sigma' \Rightarrow \Delta'$ is admissible in $\cc L_\vv Q$ if and only if $\Sigma \Rightarrow \Delta$ is admissible in $\vv Q$;
\item $\Sigma' \Rightarrow \Delta'$ is derivable in $\cc L_\vv Q$ if and only if $\vv Q \vDash \Sigma \Rightarrow \Delta$.
\end{enumerate}
\end{theorem}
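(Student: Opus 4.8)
The plan is to obtain both biconditionals by unwinding the two defining properties of the algebraizability of $\cc L_\vv Q$ over $\vv Q$: that $\Gamma \vdash_{\cc L_\vv Q} \f$ iff $\tau[\Gamma] \vDash_\vv Q \tau(\f)$ (where $\tau[\Gamma]=\bigcup\{\tau(\g):\g\in\Gamma\}$ and $\vDash_\vv Q$ is equational consequence relative to $\vv Q$), and that $x \app y \Dashv\vDash_\vv Q \tau(\Delta(x,y))$, together with the substitution invariance of $\tau$ and of $\Delta(\cdot,\cdot)$, which gives $\tau(\sigma\f) = \sigma(\tau\f)$ for every substitution $\sigma$. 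The first step is to record the basic dictionary entry: since $\emptyset \vdash_{\cc L_\vv Q} \sigma\f$ iff $\emptyset \vDash_\vv Q \tau(\sigma\f)$ and $\tau(\sigma\f)=\sigma(\tau\f)$, a substitution $\sigma$ makes $\f$ a theorem of $\cc L_\vv Q$ exactly when $\sigma$ unifies the equation set $\tau(\f)$ in $\vv Q$; dually, applying $x\app y \Dashv\vDash_\vv Q \tau(\Delta(x,y))$ with $x,y$ instantiated by the two sides of an equation $e$, the substitution $\sigma$ unifies $e$ in $\vv Q$ exactly when $\sigma$ makes every formula of $\Delta(e)$ a theorem. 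Since $\Sigma'$ (resp.\ $\Delta'$) is obtained from $\Sigma$ (resp.\ $\Delta$) by replacing each equation $e$ by its translation $\Delta(e)$, and $\tau[\Delta(e)] \Dashv\vDash_\vv Q \{e\}$, the finite sets $\Sigma$ and $\Sigma'$ carry the same information up to $\vv Q$-interderivability (and likewise $\Delta$, $\Delta'$); all equivalences below are to be read at this level, not as literal equalities.

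For (1), the dictionary entry gives at once: a substitution $\sigma$ makes every formula of $\Sigma'$ a theorem of $\cc L_\vv Q$ if and only if $\sigma$ unifies every equation of $\Sigma$ in $\vv Q$, and $\sigma$ makes some formula of $\Delta'$ a theorem if and only if $\sigma$ unifies some equation of $\Delta$. Now both ``$\Sigma'\Rightarrow\Delta'$ is admissible in $\cc L_\vv Q$'' and ``$\Sigma\Rightarrow\Delta$ is admissible in $\vv Q$'' are, verbatim, the assertion ``for every substitution $\sigma$, if the premises are satisfied then one of the conclusions is satisfied'', with premises and conclusions matched as just described; hence the two coincide. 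No closure property of $\vv Q$ is used in this part.

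For (2), by definition $\Sigma'\Rightarrow\Delta'$ is derivable in $\cc L_\vv Q$ iff $\Sigma'\vdash_{\cc L_\vv Q}\delta'$ for some $\delta'\in\Delta'$; translating the consequence relation and using the faithfulness of the equation--formula correspondence recorded above, this is equivalent to ``$\Sigma\vDash_\vv Q e$ for some single equation $e\in\Delta$''. It then remains to identify this with the validity of the clause, $\vv Q\vDash\Sigma\Rightarrow\Delta$. One implication is immediate. For the converse, write $\Delta=\{e_1,\dots,e_k\}$ and suppose for contradiction that for each $j$ there are $\alg A_j\in\vv Q$ and an assignment $h_j$ into $\alg A_j$ with $h_j\models\Sigma$ but $h_j\not\models e_j$. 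Since $\vv Q$ is closed under (finite) direct products, the coordinatewise assignment $h$ into $\prod_{j}\alg A_j\in\vv Q$ satisfies $h\models\Sigma$ while $h\not\models e_j$ for every $j$, contradicting $\vv Q\vDash\Sigma\Rightarrow\Delta$; hence some $e_j$ is a $\vv Q$-consequence of $\Sigma$, and (2) follows. This product argument is exactly where the hypothesis that $\vv Q$ is a quasivariety is used, and it is what reconciles the single-conclusion shape of $\vdash$ with the multiple-conclusion shape of a clause.

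Conceptually the statement is genuinely ``a consequence of algebraizability'', so the only real care lies in the bookkeeping: the translations $e\mapsto\Delta(e)$ and $\f\mapsto\tau(\f)$ compose to the identity only up to $\Dashv\vDash_\vv Q$ and $\dashv\vdash_{\cc L_\vv Q}$, so each equivalence must be formulated for finite sets modulo interderivability; and one must observe that precomposing a $\vv Q$-assignment with a substitution is again a $\vv Q$-assignment, which is what licenses passing between ``$\vv Q\vDash\sigma(\tau\f)$'' and ``$\vdash_{\cc L_\vv Q}\sigma\f$''. In the standard presentation with a single defining equation and a single equivalence formula these translations are essentially bijections and every step is transparent, and this is the setting in which I would carry out the argument.
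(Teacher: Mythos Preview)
The paper does not give an explicit proof of this theorem; it is presented simply as ``a consequence of algebraizability.'' Your argument correctly supplies the details the paper elides: the commutation of substitutions with the translation maps for part~(1), and---more substantively---the finite-product argument for part~(2), which is precisely what reconciles the single-conclusion shape of $\vdash$ with the multi-conclusion validity $\vv Q\vDash\Sigma\Rightarrow\Delta$. Your caveat about working modulo interderivability (and defaulting to a single equivalence formula and a single defining equation for transparency) is appropriate; when $|\Delta(x,y)|>1$ the disjunctive side of the translation needs the same care you already flag, but nothing beyond it.
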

Moreover, by Corollary \ref{cor:unifiablesigma} we get the following.
\begin{proposition}\label{activepassive} Let $\vv Q$ be a quasivariety of logic, $\Sigma,\Delta$ sets of equations in the language of $\vv Q$ and $\Sigma',\Delta'$ the corresponding sets of formulas in $\cc L_{\vv Q}$. Then:
\begin{enumerate}
\item $\Sigma'\Rightarrow \Delta'$ is active in $\cc L_{\vv Q}$ if and only if $\alg F_\vv Q(X)/\th_{\vv Q}(\Sigma)$ is unifiable in $\vv Q$;
\item $\Sigma'\Rightarrow \Delta'$ is passive in $\cc L_{\vv Q}$ if and only if $\alg F_\vv Q(X)/\th_{\vv Q}(\Sigma)$ is not unifiable in $\vv Q$.
\end{enumerate}
\end{proposition}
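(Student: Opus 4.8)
The plan is to prove item (1); item (2) is then immediate, since by definition a clause is \emph{active} exactly when it is not \emph{passive}, so (2) is the contrapositive of (1).

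First I would unwind the definition of activeness on the logical side: $\Sigma' \Rightarrow \Delta'$ is active in $\cc L_\vv Q$ iff there is a substitution $\sigma$ (of formulas for variables) such that $\vdash_{\cc L_\vv Q} \sigma(\f)$ for every $\f \in \Sigma'$. The next step is to transport this to $\vv Q$ through the Blok--Pigozzi connection. Instantiating the defining equivalence $\Gamma \vdash_{\cc L_\vv Q} \f$ iff $\tau(\Gamma) \vDash_{\vv Q} \tau(\f)$ at $\Gamma = \emptyset$ shows that $\psi$ is a theorem of $\cc L_\vv Q$ iff $\vv Q \vDash \tau(\psi)$; using in addition that the equivalence formulas provided by algebraizability satisfy $\tau(\Delta(x,y)) \dashv\vDash_{\vv Q} (x \app y)$, and that by hypothesis $\Sigma'$ is the set of formulas corresponding to $\Sigma$ (each equation $p \app q$ of $\Sigma$ being matched by the equivalence formulas built from $p,q$), one gets that $\sigma$ turns every formula of $\Sigma'$ into a theorem iff $\vv Q \vDash \sigma(p) \app \sigma(q)$ for all $p \app q \in \Sigma$. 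Reading $\sigma$ as an algebraic substitution $\alg F_\vv Q(X) \to \alg F_\vv Q(\o)$ — here one invokes the standard identification of equivalence classes of logical substitutions with algebraic substitutions into $\alg F_\vv Q(\o)$ — this says precisely that $\sigma$ is a unifier of $\Sigma$ in $\vv Q$. Hence $\Sigma' \Rightarrow \Delta'$ is active in $\cc L_\vv Q$ iff $\Sigma$ is unifiable in $\vv Q$.

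To finish, I would apply Corollary \ref{cor:unifiablesigma}: $\Sigma$ is unifiable in $\vv Q$ iff the finitely presented algebra $\alg F_\vv Q(X)/\th_{\vv Q}(\Sigma)$ is unifiable in $\vv Q$. Combining the two equivalences gives (1), and negating both sides of (1) gives (2). The only delicate point is bookkeeping rather than substance: one must make sure that the existence of a logical substitution making the premises theorems corresponds \emph{exactly} to the existence of an algebraic unifier of $\Sigma$ (and not merely to something holding up to interderivability), and that the variable set $X$ of $\Sigma$ is the one appearing in the finitely presented algebra named in the statement; everything else is a routine application of algebraizability together with the already-established Corollary \ref{cor:unifiablesigma}.
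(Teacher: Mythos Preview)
Your proposal is correct and follows essentially the same route as the paper: the paper's proof is the one line ``by Corollary \ref{cor:unifiablesigma} we get the following,'' which tacitly uses the Blok--Pigozzi translation to identify ``$\Sigma'$ has a substitution making the premises theorems'' with ``$\Sigma$ is unifiable in $\vv Q$,'' and then invokes Corollary \ref{cor:unifiablesigma} exactly as you do. Your write-up simply makes the algebraizability step explicit.
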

The next lemma (also derivable from \cite[Theorem 2]{CabrerMetcalfe2015a}) characterizes admissibility of universal sentences.
\begin{lemma}\label{admissible} Let $\vv Q$ be any quasivariety, let $\Sigma \Rightarrow \Delta$ be a clause in the language of $\vv Q$ and let $\vv U_{\Sigma\Rightarrow\Delta} = \{\alg A \in \vv Q: \alg A \vDash \Sigma\Rightarrow\Delta\}$. Then the following are equivalent:
 \begin{enumerate}
  \item $\Sigma \Rightarrow \Delta$ is admissible in $\vv Q$;
 \item $\alg F_\vv Q(\o) \vDash \Sigma \Rightarrow \Delta$;
 \item $\HH(\vv Q) = \HH(\vv U_{\Sigma \Rightarrow \Delta})$.
 \end{enumerate}
 \end{lemma}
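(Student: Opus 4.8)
The plan is to prove the three equivalences in a cycle $(1)\Rightarrow(2)\Rightarrow(3)\Rightarrow(1)$, working throughout with the fact that $\alg F_\vv Q(\o)$ is the free algebra on countably many generators and that every substitution $\sigma$ is, by definition, an assignment of variables into $\alg F_\vv Q(\o)$, hence extends to an endomorphism of $\alg F_\vv Q(\o)$ (or more precisely a homomorphism $\alg F_\vv Q(X)\to\alg F_\vv Q(\o)$). The key bookkeeping point is that a substitution $\sigma$ unifies an equation $p\app q$ exactly when $\alg F_\vv Q(\o)\vDash \sigma(p)\app \sigma(q)$, i.e. when $h_\sigma(p)=h_\sigma(q)$ for the corresponding assignment $h_\sigma$ into $\alg F_\vv Q(\o)$; so admissibility of $\Sigma\Rightarrow\Delta$ says precisely that every assignment into $\alg F_\vv Q(\o)$ that satisfies all of $\Sigma$ satisfies some identity in $\Delta$. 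That observation is almost verbatim the statement ``$\alg F_\vv Q(\o)\vDash\Sigma\Rightarrow\Delta$'', which gives $(1)\Leftrightarrow(2)$ essentially for free; one only has to be slightly careful that the variables appearing in $\Sigma\cup\Delta$ form a subset of the generating set, which holds since there are countably many generators and $\Sigma,\Delta$ are finite.

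For $(2)\Rightarrow(3)$: assume $\alg F_\vv Q(\o)\vDash\Sigma\Rightarrow\Delta$. Since $\alg F_\vv Q(\o)\in\vv Q$ we trivially have $\alg F_\vv Q(\o)\in\vv U_{\Sigma\Rightarrow\Delta}\sse\vv Q$, hence $\HH(\vv U_{\Sigma\Rightarrow\Delta})\sse\HH(\vv Q)$. For the reverse inclusion it suffices to show $\vv Q\sse\HH(\vv U_{\Sigma\Rightarrow\Delta})$, and here is where we use that every algebra in $\vv Q$ (indeed in $\VV(\vv Q)$) is a homomorphic image of a free algebra $\alg F_\vv Q(\kappa)$ for $\kappa$ large enough; and each $\alg F_\vv Q(\kappa)$ with $\kappa$ infinite lies in $\vv U_{\Sigma\Rightarrow\Delta}$ because satisfaction of the universal sentence $\Sigma\Rightarrow\Delta$ in a free algebra depends only on finitely many generators and is therefore inherited from $\alg F_\vv Q(\o)$ (any assignment of the finitely many variables factors through a finitely generated free subalgebra, which embeds into $\alg F_\vv Q(\o)$, and $\alg F_\vv Q(\o)$ embeds into $\alg F_\vv Q(\kappa)$). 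Thus every algebra of $\vv Q$ is a homomorphic image of a member of $\vv U_{\Sigma\Rightarrow\Delta}$, giving $\HH(\vv Q)=\HH(\vv U_{\Sigma\Rightarrow\Delta})$. One must take a moment to justify that a universal sentence valid in $\alg F_\vv Q(\o)$ is valid in every free algebra of $\vv Q$; this is the one genuinely technical point, and it rests on the ``finite support'' of assignments into free algebras together with the embeddings among free algebras of different ranks (noting $\alg F_\vv Q(X)\cong\alg F_{\VV(\vv Q)}(X)$, so we may reason inside the variety).

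For $(3)\Rightarrow(1)$: suppose $\HH(\vv Q)=\HH(\vv U_{\Sigma\Rightarrow\Delta})$ and let $\sigma$ be a substitution unifying $\Sigma$, i.e. $\alg F_\vv Q(\o)\vDash\sigma(p)\app\sigma(q)$ for all $p\app q\in\Sigma$. We must produce $s\app t\in\Delta$ with $\alg F_\vv Q(\o)\vDash\sigma(s)\app\sigma(t)$. Consider $\alg A=\alg F_\vv Q(X)/\th_\vv Q(\sigma(\Sigma))$ — more simply, let $\alg B$ be the subalgebra of $\alg F_\vv Q(\o)$ generated by the image of $\sigma$; then $\alg B\in\vv Q$, so $\alg B\in\HH(\vv U_{\Sigma\Rightarrow\Delta})$, say $\alg B=k(\alg C)$ with $\alg C\in\vv U_{\Sigma\Rightarrow\Delta}$. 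Pull the unifier back: since the generators of $\alg A=\alg F_\vv Q(X)/\th_\vv Q(\sigma(\Sigma))$ can be lifted along the surjection $k$, we obtain a homomorphism $\alg A\to\alg C$; because $\alg C\vDash\Sigma\Rightarrow\Delta$ and $\alg A$ already identifies $\Sigma$, the image in $\alg C$ satisfies some $s\app t\in\Delta$, and pushing forward through $k$ we get that $\sigma(s)$ and $\sigma(t)$ are identified in $\alg B\le\alg F_\vv Q(\o)$, i.e. $\sigma$ unifies $s\app t$. Hence $\Sigma\Rightarrow\Delta$ is admissible. The main obstacle in the whole argument is the lifting step here — making sure that a homomorphism out of the ``syntactic algebra'' of the unified premises can be redirected into $\alg C$ while still detecting the conclusion — and the closely related free-algebra support argument in $(2)\Rightarrow(3)$; both are standard but need to be stated carefully to avoid circularity. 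Once those are in hand, everything else is routine chasing of definitions through the Blok–Pigozzi dictionary and Lemma~\ref{lemma:ISP}.
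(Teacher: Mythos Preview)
Your $(1)\Leftrightarrow(2)$ is fine and matches the paper. Your $(2)\Rightarrow(3)$ is correct and actually more elementary than the paper's: the paper routes through $\HH\SU\PP_u(\alg F_\vv Q(\o))$ using that every algebra embeds in an ultraproduct of its finitely generated subalgebras, whereas you simply observe that every algebra in $\vv Q$ is a quotient of some $\alg F_\vv Q(\kappa)$ and that the finite-support argument pushes validity of $\Sigma\Rightarrow\Delta$ from $\alg F_\vv Q(\o)$ to all free algebras. Both work; yours avoids ultraproducts entirely.

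The gap is in $(3)\Rightarrow(1)$. You take $\alg B\le\alg F_\vv Q(\o)$ generated by the $\sigma(x_i)$, write $\alg B=k(\alg C)$ with $\alg C\in\vv U_{\Sigma\Rightarrow\Delta}$, and then claim that lifting the generators along $k$ yields a homomorphism $\alg A\to\alg C$ (with $\alg A$ the algebra presented by $\Sigma$, or $\sigma(\Sigma)$ --- the definition is already unclear). But lifting the elements $b_i=\sigma(x_i)$ to preimages $c_i\in\alg C$ gives you only $k(p(\vec c))=k(q(\vec c))$ for $p\app q\in\Sigma$; it does \emph{not} give $p(\vec c)=q(\vec c)$ in $\alg C$, so the assignment $x_i\mapsto c_i$ need not satisfy $\Sigma$, and $\alg C\vDash\Sigma\Rightarrow\Delta$ tells you nothing. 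The ``lifting step'' you flag as the main obstacle is in fact unjustified as written: $\alg B$ is not free, so there is no reason a section exists.

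The paper's fix is to bypass $\alg B$ entirely and apply the same idea to $\alg F_\vv Q(\o)$ itself: from $(3)$ one has $\alg F_\vv Q(\o)\in\HH(\vv U_{\Sigma\Rightarrow\Delta})$, and since $\alg F_\vv Q(\o)$ is free (hence projective) it is a \emph{retract} of whatever maps onto it, so $\alg F_\vv Q(\o)\in\II\SU(\vv U_{\Sigma\Rightarrow\Delta})\sse\vv U_{\Sigma\Rightarrow\Delta}$, giving $(2)$ directly. Your argument becomes correct the moment you replace $\alg B$ by $\alg F_\vv Q(\o)$ and invoke its projectivity; that is the missing idea.
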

\begin{proof}
The equivalence between (1) and (2) follows directly from the definition of admissibility.
Assume now $\alg F_\vv Q(\o) \vDash \Sigma \Rightarrow \Delta$, then $\alg F_\vv Q(\o) \in \vv U_{\Sigma \Rightarrow \Delta}$. Clearly
$\HH\SU\PP_u(\alg F_\vv Q(\o)) \sse \HH(\vv U_{\Sigma \Rightarrow \Delta}) \sse \HH(\vv Q)$.  Now every algebra is embeddable in an ultraproduct of its finitely generated subalgebras and every finitely generated algebra is a homomorphic image of $\alg F_\vv Q(\o)$. Therefore if $\alg A \in \vv Q$, then
$\alg A \in \SU\PP_u\HH(\alg F_\vv Q(\o)) \sse \HH\SU\PP_u(\alg F_\vv Q(\o))$. So $\HH\SU\PP_u(\alg F_\vv Q (\o)) = \HH(\vv Q)$ and thus  (3) holds.

Conversely assume (3).
Since $\alg F_{\vv Q}(\o) \in \HH(\vv Q) = \HH(\vv U_{\Sigma \Rightarrow \Delta})$, there is $\alg A \in \vv U_{\Sigma \Rightarrow \Delta}$ such that $\alg F_{\vv Q}(\o) \in \HH(\alg A)$. Since $\alg F_{\vv Q}(\o)$ is projective in $\vv Q$, it follows that $\alg F_{\vv Q}(\o) \in \SU(\alg A) \sse \SU(\vv U_{\Sigma \Rightarrow \Delta}) \sse \vv U_{\Sigma \Rightarrow \Delta}$. Therefore, $\alg F_{\vv Q}(\o) \models \Sigma \Rightarrow \Delta$ and (2) holds.
\end{proof}

To conclude the preliminaries, we present the following lemma which will be particularly useful in our proofs.
\begin{lemma}\label{lemma:FThetaSigma}
	Let $\vv Q$ be a quasivariety, and $\Sigma, \Delta$ be finite sets of equations over variables in a finite set $X$. The following are equivalent:
	\begin{enumerate}
		\item $\vv Q \models \Sigma \Rightarrow \Delta$;
		\item $\alg F_{\vv Q}(X)/\theta_{\vv Q}(\Sigma) \models \Sigma \Rightarrow \Delta$;
		\item there is $p \approx q \in \Delta$ such that $p / \theta_{\vv Q}(\Sigma) = q / \theta_{\vv Q}(\Sigma)$ in $\alg F_{\vv Q}(X)/\theta_{\vv Q}(\Sigma)$.
	\end{enumerate}
\end{lemma}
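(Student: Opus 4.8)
The plan is to prove the cycle of implications $(1)\Rightarrow(2)\Rightarrow(3)\Rightarrow(1)$, leaning on two facts recalled in the preliminaries: that $\theta_{\vv Q}(\Sigma)$ is the \emph{least} relative congruence of $\alg F_{\vv Q}(X)$ containing $\{(p,q): p\app q\in\Sigma\}$, and that a congruence $\theta$ of an algebra $\alg B\in\vv Q$ is a relative congruence exactly when $\alg B/\theta\in\vv Q$. The implication $(1)\Rightarrow(2)$ is immediate: since $\theta_{\vv Q}(\Sigma)$ is a relative congruence, $\alg F_{\vv Q}(X)/\theta_{\vv Q}(\Sigma)\in\vv Q$, so any universal sentence valid throughout $\vv Q$ holds there.

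For $(2)\Rightarrow(3)$ I would use the canonical assignment. Let $h$ be the assignment of $X$ into $\alg F_{\vv Q}(X)/\theta_{\vv Q}(\Sigma)$ sending $x\longmapsto x/\theta_{\vv Q}(\Sigma)$, with its unique extension to a homomorphism from the term algebra (the free insertion composed with the quotient map). For every $p\app q\in\Sigma$ we have $(p,q)\in\theta_{\vv Q}(\Sigma)$ by definition, hence $h(p)=h(q)$; so $h$ makes all premises of the clause hold. Applying $(2)$ with this particular assignment forces some $p\app q\in\Delta$ to satisfy $h(p)=h(q)$, which is precisely the statement that $p/\theta_{\vv Q}(\Sigma)=q/\theta_{\vv Q}(\Sigma)$, i.e. $(3)$.

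The only implication carrying content is $(3)\Rightarrow(1)$. Fix $p\app q\in\Delta$ with $(p,q)\in\theta_{\vv Q}(\Sigma)$. Let $\alg A\in\vv Q$ be arbitrary and let $g$ be an assignment into $\alg A$ satisfying every equation of $\Sigma$; regard $g$ as (the restriction to the term algebra over $X$ of) a homomorphism $\bar g\colon\alg F_{\vv Q}(X)\to\alg A$. Since $g$ satisfies each $s\app t\in\Sigma$, all the pairs $(s,t)$ with $s\app t\in\Sigma$ lie in $\ker\bar g$; moreover $\alg F_{\vv Q}(X)/\ker\bar g$ embeds into $\alg A\in\vv Q$, so $\ker\bar g$ is a relative congruence of $\alg F_{\vv Q}(X)$. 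By minimality of $\theta_{\vv Q}(\Sigma)$ we get $\theta_{\vv Q}(\Sigma)\sse\ker\bar g$, whence $(p,q)\in\ker\bar g$, i.e. $g(p)=g(q)$. Thus $g$ satisfies the equation $p\app q\in\Delta$, so $\alg A\models\Sigma\Rightarrow\Delta$; as $\alg A$ was arbitrary, $(1)$ holds.

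I do not anticipate a genuine obstacle here; the argument is essentially a packaging of the universal property of $\alg F_{\vv Q}(X)$ together with the defining property of $\theta_{\vv Q}(\Sigma)$. The only point needing a little care is the routine bookkeeping between assignments (defined on all variables) and homomorphisms out of $\alg F_{\vv Q}(X)$ — since $\Sigma$ and $\Delta$ involve only the finitely many variables in $X$, restricting to $\alg F_{\vv Q}(X)$ is harmless. It is worth noting that $(3)$ makes the lemma effective when $\vv Q$ is, e.g., locally finite, and that the equivalence $(1)\Leftrightarrow(3)$ is the ``validity'' counterpart of Corollary~\ref{cor:unifiablesigma}, with $\alg F_{\vv Q}(X)/\theta_{\vv Q}(\Sigma)$ playing the role of the universal witness for the clause $\Sigma\Rightarrow\Delta$.
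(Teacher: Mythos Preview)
Your proof is correct and follows essentially the same route as the paper: the cycle $(1)\Rightarrow(2)\Rightarrow(3)\Rightarrow(1)$, with the substantive step $(3)\Rightarrow(1)$ handled by observing that the kernel of any assignment satisfying $\Sigma$ is a relative congruence containing $\theta_{\vv Q}(\Sigma)$. The paper phrases this last step via the Second Homomorphism Theorem (factoring $h$ through $\pi_\Sigma$) rather than by direct appeal to minimality of $\theta_{\vv Q}(\Sigma)$, but the content is identical; your treatment of $(2)\Rightarrow(3)$ is more explicit than the paper's, which simply declares it clear.
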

\begin{proof}
	It is clear that (1) implies (2) and (2) implies (3). We now show that (3) implies (1). 
	
	Let $\alg A \in \vv Q$. If there is no assignment of the variables in $X$ to $\alg A$ that models $\Sigma$, then $\alg A \models \Sigma \Rightarrow \Delta$. Otherwise, suppose there is an assignment $h$ such that $\alg A, h \models \Sigma$. Then, since $\theta_{\vv Q}(\Sigma)$ is the smallest relative congruence of $\alg F_{\vv Q}(X)$ containing the set of pairs $S = \{(p,q): p \app q \in \Sigma\}$, by the Second Homomorphism Theorem we can close the following diagram:
	\begin{center}
\begin{tikzpicture}[scale=.9]
\node at (2,3) { $\alg F_{\vv Q}(X)$};
\node[left] at (6,3) {$\alg A$};
\node at (2,1) { $\alg F_{\vv Q}(X)/\theta_{\vv Q}(\Sigma)$};
\node[above] at (3.8,3) {\footnotesize $h$};
\node at (4,1.9) {\footnotesize $f$};
\node[below] at (1.6,2.45) {\footnotesize $\pi_{\Sigma}$};
\draw[->] (2.7,3) -- (5.2,3);
\draw[->] (2.2,1.5) -- (5.5,2.7);
\draw[->>] (2,2.7) -- (2,1.5);
\end{tikzpicture}
\end{center}

That is, there is a homomorphism $f: \alg F_{\vv Q}(X)/\theta_{\vv Q}(\Sigma) \to \alg A$ such that $h = f \pi_{\Sigma}$, where $\pi_\Sigma$ is the natural epimorphism from $\alg F_{\vv Q}(X)$ to $\alg F_{\vv Q}(X)/\theta_{\vv Q}(\Sigma)$. 
Now by (3) there is at least an identity $p \approx q \in \Delta$ such that 
$(p,q) \in \ker(\pi_\Sigma)$. Since $h = f\pi_\Sigma$, $(p,q) \in \ker(h)$, which means that $\alg A, h \models p \approx q$ and therefore $\alg A \models \Sigma \Rightarrow \Delta$. Since $\alg A$ is an arbitrary algebra of $\vv Q$ this shows that $\vv Q \models \Sigma \Rightarrow \Delta$.  
\end{proof}

\section{Universal completeness}\label{univquasi}
In this section we study from the algebraic perspective the notion of universal completeness and its variations: active, nonnegative, passive universal completeness, together with their hereditary versions. That is, we shall see which algebraic properties correspond to the notions coming from the logical perspective (detailed in the preliminaries Subsection \ref{subsec:structuraluniversal}). For each notion, we will present a characterization theorem and some examples. While the characterizations of active and passive universal completeness (to the best of our knowledge) are fully original, we build on existing ones for the other notions, presenting some new results and a coherent presentation in our framework.

\subsection{Universal quasivarieties}\label{univquasi1}
We start with universal completeness.
The following expands \cite[Proposition 6]{CabrerMetcalfe2015a}.
\begin{theorem}\label{prop:universal}
 For any quasivariety $\vv Q$ the following are equivalent:
\begin{enumerate}
\item\label{prop:universal2} $\vv Q$ is universally complete;
\item\label{prop:universal1} for every universal class $\vv U \sse \vv Q$,  $\HH(\vv U) = \HH(\vv Q)$ implies $\vv U = \vv Q$.
\item\label{prop:universal3} $\vv Q = \II\SU\PP_u(\alg F_\vv Q(\o))$;
\item\label{prop:universal4} every finitely presented algebra in $\vv Q$ is in $ \II\SU\PP_u(\alg F_\vv Q(\o))$.
\end{enumerate}
\end{theorem}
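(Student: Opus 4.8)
The plan is to take $(1)\Leftrightarrow(3)$ as the conceptual core --- it is the ``bridge'' translating universal completeness into a closure condition on $\vv Q$ --- and then close the circle by also establishing $(3)\Rightarrow(4)\Rightarrow(1)$ (the part going beyond the Cabrer--Metcalfe statement) and $(3)\Leftrightarrow(2)$. Throughout I will freely use that a quasivariety is a universal class and that universal sentences are preserved under $\II,\SU,\PP_u$.

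For $(1)\Leftrightarrow(3)$: by definition $\vv Q$ is universally complete iff every admissible universal sentence holds in $\vv Q$, and by Lemma~\ref{admissible} a universal sentence $\Sigma\Rightarrow\Delta$ is admissible in $\vv Q$ exactly when $\alg F_\vv Q(\o)\vDash\Sigma\Rightarrow\Delta$. Since $\alg F_\vv Q(\o)\in\vv Q$, the implication ``valid in $\vv Q$ $\Rightarrow$ valid in $\alg F_\vv Q(\o)$'' is automatic, so universal completeness says precisely that $\vv Q$ and $\alg F_\vv Q(\o)$ satisfy the same universal sentences. By Lemma~\ref{lemma:ISP}(1) every quasivariety is a universal class, hence $\vv Q=\II\SU\PP_u(\vv Q)$ is exactly the class of models of its own universal theory, while $\II\SU\PP_u(\alg F_\vv Q(\o))$ is the class of models of the universal theory of $\alg F_\vv Q(\o)$. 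Therefore the two universal theories coincide iff $\vv Q=\II\SU\PP_u(\alg F_\vv Q(\o))$, which is $(3)$.

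Next, $(3)\Rightarrow(4)$ is immediate, as finitely presented algebras lie in $\vv Q$. For $(4)\Rightarrow(1)$ I argue contrapositively: suppose $\Sigma\Rightarrow\Delta$, with variables in a finite set $X$, is admissible (so $\alg F_\vv Q(\o)\vDash\Sigma\Rightarrow\Delta$) but fails in $\vv Q$. By Lemma~\ref{lemma:FThetaSigma} the failure is already witnessed by $\alg A:=\alg F_\vv Q(X)/\theta_{\vv Q}(\Sigma)$, which is finitely presented; by $(4)$ we get $\alg A\in\II\SU\PP_u(\alg F_\vv Q(\o))$; but universal sentences pass from a class to $\II\SU\PP_u$ of it, so $\alg A\vDash\Sigma\Rightarrow\Delta$, a contradiction. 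I expect the main subtlety here to be exactly the gap between \emph{finitely generated} and \emph{finitely presented}: an arbitrary failure of $\Sigma\Rightarrow\Delta$ in $\vv Q$ only gives, a priori, a finitely generated witnessing subalgebra, which need not be finitely presented, so hypothesis $(4)$ would not obviously apply; Lemma~\ref{lemma:FThetaSigma} is precisely what lets me replace it by the canonical finitely presented quotient. (One could equivalently run the same argument to obtain $(4)\Rightarrow(3)$ directly.)

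Finally $(3)\Leftrightarrow(2)$. For $(3)\Rightarrow(2)$, let $\vv U\sse\vv Q$ be a universal class with $\HH(\vv U)=\HH(\vv Q)$; then $\alg F_\vv Q(\o)\in\HH(\vv Q)=\HH(\vv U)$, so $\alg F_\vv Q(\o)\in\HH(\alg A)$ for some $\alg A\in\vv U$, and since $\alg F_\vv Q(\o)$ is projective in $\vv Q$ the surjection $\alg A\twoheadrightarrow\alg F_\vv Q(\o)$ splits, giving $\alg F_\vv Q(\o)\in\II\SU(\alg A)\sse\vv U$. Hence $\vv Q=\II\SU\PP_u(\alg F_\vv Q(\o))\sse\II\SU\PP_u(\vv U)=\vv U\sse\vv Q$, i.e.\ $\vv U=\vv Q$. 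For $(2)\Rightarrow(3)$, put $\vv U:=\II\SU\PP_u(\alg F_\vv Q(\o))$, a universal class contained in $\vv Q$; since every algebra embeds into an ultraproduct of its finitely generated subalgebras and every finitely generated algebra of $\vv Q$ is a homomorphic image of $\alg F_\vv Q(\o)$ (as in the proof of Lemma~\ref{admissible}), every $\alg A\in\vv Q$ lies in $\SU\PP_u\HH(\alg F_\vv Q(\o))\sse\HH\SU\PP_u(\alg F_\vv Q(\o))=\HH(\vv U)$; thus $\HH(\vv U)=\HH(\vv Q)$, and $(2)$ forces $\vv U=\vv Q$, which is $(3)$.
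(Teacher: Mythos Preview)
Your proof is correct and uses essentially the same ingredients as the paper's: Lemma~\ref{admissible} for the equivalence of admissibility with validity in $\alg F_\vv Q(\o)$, Lemma~\ref{lemma:FThetaSigma} to reduce $(4)\Rightarrow(1)$ to the canonical finitely presented quotient, and the projectivity of $\alg F_\vv Q(\o)$ together with the embedding of any algebra into an ultraproduct of its finitely generated subalgebras. The only organizational difference is that you make $(3)$ the hub---proving $(1)\Leftrightarrow(3)$ and $(2)\Leftrightarrow(3)$ separately---whereas the paper first establishes $(1)\Leftrightarrow(2)$ directly (comparing universal theories of $\vv U$ and $\vv Q$) and then closes the loop $(1)\Rightarrow(3)\Rightarrow(4)\Rightarrow(1)$; your route $(3)\Rightarrow(2)$ via $\alg F_\vv Q(\o)\in\vv U$ is arguably a touch more direct than the paper's $(1)\Rightarrow(2)$, but the mathematical content is the same.
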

\begin{proof}  
(\ref{prop:universal1}) implies (\ref{prop:universal2}) via Lemma \ref{admissiblederivable}.
We show that (\ref{prop:universal2}) implies (\ref{prop:universal1}). Let $\vv U \sse \vv Q$ be a universal class such that $\HH(\vv U) = \HH (\vv Q)$  and suppose that $\vv U \vDash \Sigma \Rightarrow \Delta$;  then
 $$
 \HH(\vv Q) = \HH(\vv U) \sse \HH(\vv U_{\Sigma \Rightarrow \Delta}) \sse \HH(\vv Q).
 $$
 So $\HH(\vv U_{\Sigma \Rightarrow \Delta}) = \HH(\vv Q)$ and by Lemma \ref{admissiblederivable}  $\Sigma \Rightarrow \Delta$ is admissible in $\vv Q$. By (\ref{prop:universal2}), $\vv Q \vDash \Sigma \Rightarrow \Delta$; therefore $\vv U$ and $\vv Q$  are two universal classes in which exactly the same clauses are valid, thus they are equal. Hence (\ref{prop:universal1}) holds, and thus (\ref{prop:universal1}) and (\ref{prop:universal2}) are equivalent.
 
(\ref{prop:universal2}) implies (\ref{prop:universal3}) follows by Lemma \ref{admissiblederivable}.
Moreover, (\ref{prop:universal3}) clearly implies (\ref{prop:universal4}). 
We now show that (\ref{prop:universal4}) implies (\ref{prop:universal2}), which completes the proof.  
Consider a universal sentence $ \Sigma \Rightarrow \Delta$ that is admissible in $\vv Q$, or equivalently (by Lemma \ref{admissiblederivable}), such that $\alg F_\vv Q(\o) \models \Sigma \Rightarrow \Delta$. The finitely presented algebra 
$\alg F_\vv Q (X)/\theta_{\vv Q}(\Sigma) \in  \II\SU\PP_u(\alg F_\vv Q(\o))$ by (4), and thus $\alg F_\vv Q (X)/\theta_{\vv Q}(\Sigma) \models \Sigma \Rightarrow \Delta$. By Lemma \ref{lemma:FThetaSigma}, $\vv Q \models \Sigma \Rightarrow \Delta$ and thus $\vv Q$ is universally complete.
\end{proof}

By algebraizability, and since the property of being universal (for the discussion in Subsection \ref{subsec:universal}) is preserved by categorical equivalence, we get at once:

\begin{corollary}\label{universalq} For a quasivariety $\vv Q$ the following are equivalent:
\begin{enumerate}
\item $\vv Q$ is universally complete;
\item $\cc L_\vv Q$ is universally complete.
\end{enumerate}
\end{corollary}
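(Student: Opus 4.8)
The plan is to deduce the corollary from the algebraic characterization in Theorem~\ref{prop:universal} together with the Blok--Pigozzi correspondence of Theorem~\ref{admissiblederivable}, reducing first to the case in which $\vv Q$ is literally a quasivariety of logic. In that case $\cc L_\vv Q$ has $\vv Q$ itself as its equivalent algebraic semantics, and the equivalence is immediate: by Theorem~\ref{admissiblederivable}(1) a clause $\Sigma' \Rightarrow \Delta'$ of $\cc L_\vv Q$ is admissible precisely when the corresponding universal sentence $\Sigma \Rightarrow \Delta$ is admissible in $\vv Q$, and by Theorem~\ref{admissiblederivable}(2) it is derivable in $\cc L_\vv Q$ precisely when $\vv Q \vDash \Sigma \Rightarrow \Delta$. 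Since the Blok--Pigozzi translation is a bijection (up to interderivability, respectively semantic equivalence) between clauses of $\cc L_\vv Q$ and universal sentences over the language of $\vv Q$, the assertion ``every admissible clause of $\cc L_\vv Q$ is derivable'' is equivalent to ``every admissible universal sentence of $\vv Q$ holds in $\vv Q$'', i.e.\ to $\vv Q$ being universally complete.

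For an arbitrary quasivariety $\vv Q$, by definition $\cc L_\vv Q = \cc L_{\vv Q'}$ where $\vv Q'$ is a quasivariety of logic categorically equivalent to $\vv Q$, so by the previous paragraph it suffices to show that $\vv Q$ is universally complete if and only if $\vv Q'$ is. Here I would invoke Theorem~\ref{prop:universal}: universal completeness of a quasivariety is equivalent to the condition, stated in clause~(2) of that theorem, that $\vv Q$ be the only universal subclass $\vv U$ of $\vv Q$ with $\HH(\vv U) = \HH(\vv Q)$. Let $F\colon \vv Q \to \vv Q'$ witness the categorical equivalence. As recalled in Subsection~\ref{subsec:universal}, $F$ preserves monomorphisms (hence embeddings), direct products and ultraproducts, maps surjective homomorphisms to surjective homomorphisms, and induces a bijection between the universal subclasses of $\vv Q$ and those of $\vv Q'$. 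Since the condition of clause~(2) is formulated only in terms of the operator $\HH$, class inclusion, and the notion of universal subclass, it transports along $F$ from $\vv Q$ to $\vv Q'$ and back; hence $\vv Q$ is universally complete iff $\vv Q'$ is, and combining this with the first paragraph gives the corollary. (Alternatively one could argue through clause~(4) of Theorem~\ref{prop:universal}, using that being finitely presented is a categorical property, as recorded in Subsection~\ref{subsec:universal}.)

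The point that requires a little care is the transfer of the algebraic characterization along $F$: whereas $\HH,\SU,\PP,\PP_u$ and ``universal subclass'' are unproblematically categorical, the class $\HH(\vv Q)$ appearing in clause~(2) need not lie inside $\vv Q$ (it equals $\VV(\vv Q)$, which may be strictly larger), so to be fully rigorous one extends $F$ to a categorical equivalence between the generated varieties $\VV(\vv Q)$ and $\VV(\vv Q')$ and computes $\HH$ there. This is routine and I would not dwell on it; the essential content of the proof is the first paragraph, namely that under algebraizability the two notions of universal completeness are literal reformulations of one another, everything else being a transfer of known categorical invariance.
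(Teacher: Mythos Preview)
Your proof is correct and follows essentially the same approach as the paper's one-line argument: algebraizability (Theorem~\ref{admissiblederivable}) handles the case of a genuine quasivariety of logic, and the categorical invariance of the characterizing conditions in Theorem~\ref{prop:universal} (as discussed in Subsection~\ref{subsec:universal}) bridges to the general case. You have simply spelled out in detail what the paper compresses into a single sentence, including the minor subtlety about $\HH(\vv Q)$ living outside $\vv Q$, which you rightly note can be sidestepped by using clause~(4) instead.
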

The following theorem and lemma show a sufficient and a necessary condition respectively for a quasivariety to be universally complete.

\begin{theorem}\label{thm: unifiable}  If every finitely presented algebra in $\vv Q$ is exact  then $\vv Q$ is universally complete.
\end{theorem}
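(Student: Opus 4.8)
The plan is to read off the statement from the characterization of universal completeness already established in Theorem~\ref{prop:universal}. Precisely, by the equivalence of items~(\ref{prop:universal2}) and~(\ref{prop:universal4}) there, $\vv Q$ is universally complete as soon as every finitely presented algebra in $\vv Q$ belongs to $\II\SU\PP_u(\alg F_\vv Q(\o))$. So the whole task reduces to showing that, under the hypothesis that every finitely presented algebra of $\vv Q$ is exact, each such algebra lies in $\II\SU\PP_u(\alg F_\vv Q(\o))$.

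To this end I would fix an arbitrary finitely presented $\alg A \in \vv Q$, say $\alg A \cong \alg F_\vv Q(X)/\th_\vv Q(\Sigma)$ with $X$ finite. Then $\alg A$ is in particular finitely generated, so by hypothesis it is exact in $\vv Q$; by the lemma stating that for a finitely generated algebra exactness in $\vv Q$ is equivalent to membership in $\SU(\alg F_\vv Q(\o))$, we obtain $\alg A \in \SU(\alg F_\vv Q(\o))$. Since trivially $\SU(\alg F_\vv Q(\o)) \sse \II\SU\PP_u(\alg F_\vv Q(\o))$, this gives $\alg A \in \II\SU\PP_u(\alg F_\vv Q(\o))$, and hence, by Theorem~\ref{prop:universal}, $\vv Q$ is universally complete. (If one prefers to unwind the definition of exactness directly rather than cite that lemma: present $\alg A$ on a minimal generating set, so that $\alg A \in \HH(\alg F_\vv Q(Y))$ for every finite $Y$ with $|Y| \ge |G_{\alg A}|$; exactness provides such a $Y$ in which $\alg A$ is weakly projective, so $\alg A \in \SU(\alg F_\vv Q(Y))$; and $\alg F_\vv Q(Y)$, being a retract of $\alg F_\vv Q(\o)$ for finite $Y$, embeds into $\alg F_\vv Q(\o)$, whence $\alg A \in \SU(\alg F_\vv Q(\o))$.)

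I do not expect any genuine obstacle: this is a direct corollary of Theorem~\ref{prop:universal} — whose substantive direction (\ref{prop:universal4}) $\Rightarrow$ (\ref{prop:universal2}) is the one resting on Lemma~\ref{lemma:FThetaSigma} — together with the already-recorded observation that exactness of a finitely generated algebra is exactly membership in $\SU$ of the countably generated free algebra. The only point that deserves a word of care is the bookkeeping of generators needed to guarantee that a finitely presented algebra is a homomorphic image of the free algebra witnessing its exactness, and this is precisely what the finitely generated case of the exactness lemma takes care of.
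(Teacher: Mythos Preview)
Your proposal is correct and follows essentially the same approach as the paper: both argue that an exact finitely presented algebra lies in $\II\SU(\alg F_\vv Q(\o)) \sse \II\SU\PP_u(\alg F_\vv Q(\o))$ and then invoke Theorem~\ref{prop:universal}(\ref{prop:universal4}). The paper's proof is just the one-line version of what you wrote.
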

\begin{proof}  If every finitely presented algebra in $\vv Q$ is exact, it is in $\II\SU(\alg F_\vv Q(\o))$, and thus also in $\II\SU\PP_u(\alg F_\vv Q(\o))$. The claim then follows from Theorem \ref{prop:universal}.
\end{proof}

\begin{theorem}\label{lemma: FQ trivial} If $\vv Q$ is universally complete, then $\vv Q$ is unifiable.
\end{theorem}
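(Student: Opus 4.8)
The plan is to prove the contrapositive: if $\vv Q$ is not unifiable, then $\vv Q$ is not universally complete. The mechanism is that a non-unifiable finite unification problem produces an \emph{admissible negative clause}, while no quasivariety can validate a negative clause, since every quasivariety contains the one-element algebra $\mathbf 1$ (as the empty product) and $\mathbf 1$ satisfies every equation.

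Concretely, suppose $\vv Q$ is not unifiable. By definition there is a finitely presented algebra $\alg A \in \vv Q$ that is not unifiable in $\vv Q$; write $\alg A \cong \alg F_\vv Q(X)/\th_\vv Q(\Sigma)$ with $X$ and $\Sigma$ finite. By Corollary \ref{cor:unifiablesigma} the finite set of equations $\Sigma$ is not unifiable in $\vv Q$, i.e.\ there is no substitution $\sigma$ with $\vv Q \vDash \sigma(\Sigma)$. Consequently the negative clause $\Sigma \Rightarrow \emptyset$ is admissible in $\vv Q$: admissibility asks that every substitution unifying $\Sigma$ also unify some identity of the conclusion, and here there is no substitution unifying $\Sigma$ at all, so the condition holds vacuously. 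On the other hand $\Sigma \Rightarrow \emptyset$ is not valid in $\vv Q$: the trivial algebra $\mathbf 1$ belongs to $\vv Q$, and under its unique assignment every equation of $\Sigma$ holds while the empty disjunction is false, so $\mathbf 1 \not\vDash \Sigma \Rightarrow \emptyset$. Thus $\vv Q$ possesses an admissible clause that it does not satisfy, and hence it is not universally complete.

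I do not anticipate a genuine obstacle: the argument is essentially a definition chase, the two mildly delicate points being that every quasivariety contains $\mathbf 1$ (so that it witnesses the failure of validity) and the vacuous reading of ``admissible'' for a negative clause with non-unifiable premises, both of which are immediate from the definitions in Subsection \ref{subsec:structuraluniversal}. An alternative, essentially equivalent, route would be to invoke Theorem \ref{prop:universal}: universal completeness gives $\vv Q = \II\SU\PP_u(\alg F_\vv Q(\o))$, and one checks that this forces $\alg F_\vv Q$ to be trivial, whence every algebra of $\vv Q$ is unifiable by Lemma \ref{free}; but that variant appears to need some control over the signature, so I would prefer the direct negative-clause argument above.
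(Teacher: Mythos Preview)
Your proposal is correct and takes essentially the same approach as the paper's proof: both argue by contrapositive that a non-unifiable finite set $\Sigma$ yields the negative clause $\Sigma \Rightarrow \emptyset$, which is vacuously admissible but fails in the trivial algebra. Your version is simply more explicit about unpacking the definition of ``$\vv Q$ is unifiable'' via the finitely presented algebra and Corollary~\ref{cor:unifiablesigma}, whereas the paper compresses this into a single line.
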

\begin{proof}
Suppose by counterpositive that there is a finite set of identities $\Sigma$ that is not unifiable in $\vv Q$. Then $\Sigma \Rightarrow \emptyset$ is (passively) admissible but not derivable; indeed it does not hold in the trivial algebra. This implies that $\vv Q$ is not universally complete, and the claim is proved.
\end{proof}
Since projectivity implies exactness, we observe the following immediate consequence of Theorem \ref{thm: unifiable}.
\begin{corollary}\label{cor: unifiable}  If every finitely presented algebra in $\vv Q$ is projective then $\vv Q$ is universally complete.
\end{corollary}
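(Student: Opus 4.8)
The plan is to deduce this directly from Theorem \ref{thm: unifiable} by invoking the elementary implication, recorded just after the definition of exactness, that every projective algebra in a quasivariety is weakly projective and hence exact. So the argument is essentially a one-line specialization: assuming the hypothesis that every finitely presented algebra in $\vv Q$ is projective, I would first note that each such algebra is then in particular exact in $\vv Q$, and then apply Theorem \ref{thm: unifiable} to conclude that $\vv Q$ is universally complete.

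Concretely, I would write: let $\alg A$ be a finitely presented algebra in $\vv Q$. By hypothesis $\alg A$ is projective in $\vv Q$, and since projectivity implies weak projectivity and weak projectivity implies exactness (as observed after the definition of exact algebras), $\alg A$ is exact in $\vv Q$. Thus every finitely presented algebra in $\vv Q$ is exact, and Theorem \ref{thm: unifiable} gives that $\vv Q$ is universally complete.

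There is no real obstacle here: this is an immediate corollary, and the only thing to be careful about is to cite the correct chain of implications (projective $\Rightarrow$ weakly projective $\Rightarrow$ exact) rather than re-proving anything. One could alternatively bypass Theorem \ref{thm: unifiable} and argue directly via Theorem \ref{prop:universal}(\ref{prop:universal4}), using that a finitely generated projective algebra is a retract of a finitely generated free algebra and hence lies in $\II\SU(\alg F_\vv Q(\o)) \sse \II\SU\PP_u(\alg F_\vv Q(\o))$, but routing through Theorem \ref{thm: unifiable} is cleaner and matches the remark preceding the statement.
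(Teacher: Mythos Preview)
Your proposal is correct and matches the paper's approach exactly: the paper simply notes, in the sentence preceding the corollary, that since projectivity implies exactness this is an immediate consequence of Theorem~\ref{thm: unifiable}. No separate proof is given.
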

For locally finite varieties there is a stronger result, observed in \cite{CabrerMetcalfe2015a}.
\begin{lemma}\label{lemma: techlemma} \cite{CabrerMetcalfe2015a} Let $\vv Q$ be a locally finite quasivariety; then
 $\alg A \in \II\SU\PP_u(\alg F_\vv Q(\o))$ if and only if every finite subalgebra $\alg B$ of $\alg A$ is in $\II\SU(\alg F_\vv Q(\o))$.
\end{lemma}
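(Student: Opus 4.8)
The plan is to prove both directions using the standard model-theoretic machinery for locally finite quasivarieties, together with the fact that $\PP_u$ commutes (up to $\II\SU$) with the passage to finitely generated subalgebras. First I would observe that in a locally finite quasivariety $\vv Q$ of finite type, $\alg F_\vv Q(\o)$ is a direct limit (in fact a union of a chain) of its finite subalgebras $\alg F_\vv Q(n)$, $n<\o$; so every finitely generated subalgebra of any member of $\II\SU\PP_u(\alg F_\vv Q(\o))$ can be located inside an ultrapower of such a finite free algebra. This will be the source of the nontrivial inclusion.

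For the ``only if'' direction, suppose $\alg A \in \II\SU\PP_u(\alg F_\vv Q(\o))$ and let $\alg B \le \alg A$ be finite, say generated by $b_1,\dots,b_k$. Then $\alg B$ embeds into some ultrapower $\prod_{i\in I}\alg F_\vv Q(\o)/\cc U$. Pulling back the generators, each $b_j$ is represented by a family $(f_{j,i})_{i\in I}$ with $f_{j,i}\in \alg F_\vv Q(\o)$; for each $i$ the elements $f_{1,i},\dots,f_{k,i}$ generate a finite subalgebra of $\alg F_\vv Q(\o)$, which (since $\alg F_\vv Q(\o)=\bigcup_n \alg F_\vv Q(n)$ and finitely many elements lie in some $\alg F_\vv Q(n)$) embeds in some finitely generated free algebra, hence in $\II\SU(\alg F_\vv Q(\o))$. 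Because $\vv Q$ is locally finite of finite type there are, up to isomorphism, only finitely many $k$-generated algebras in $\vv Q$; so on a $\cc U$-large set of indices these subalgebras are all isomorphic to a single finite algebra $\alg C\in\II\SU(\alg F_\vv Q(\o))$, and a \L o\'s-style argument shows $\alg B\in\II\SU(\alg C)\sse\II\SU(\alg F_\vv Q(\o))$. Here the key point to check carefully is that the finitely many isomorphism types let us replace the ultraproduct of the subalgebras by a constant ultrapower, which is isomorphic to $\alg C$ itself; this is the main obstacle, and it is exactly where local finiteness and finite type are used.

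For the ``if'' direction, assume every finite subalgebra $\alg B$ of $\alg A$ lies in $\II\SU(\alg F_\vv Q(\o))$. Every algebra embeds into an ultraproduct of its finitely generated — here, by local finiteness, finite — subalgebras: $\alg A\in\II\SU\PP_u(\{\alg B : \alg B\le \alg A,\ \alg B\text{ finite}\})$. Since each such $\alg B$ is in $\II\SU(\alg F_\vv Q(\o))$, we get $\alg A\in\II\SU\PP_u\II\SU(\alg F_\vv Q(\o))\sse\II\SU\PP_u(\alg F_\vv Q(\o))$, using the standard operator identity $\PP_u\II\SU\sse\II\SU\PP_u$. This direction is essentially formal once the embedding of $\alg A$ into an ultraproduct of its finite subalgebras is invoked; no real obstacle arises here.

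Overall, the only genuinely delicate step is the ``only if'' direction, and specifically the reduction, via finitely many isomorphism types, of an ultraproduct of finite subalgebras of $\alg F_\vv Q(\o)$ to a single finite algebra in $\II\SU(\alg F_\vv Q(\o))$; everything else is bookkeeping with class operators and the representation $\alg F_\vv Q(\o)=\bigcup_n\alg F_\vv Q(n)$.
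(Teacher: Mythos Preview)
The paper does not supply its own proof of this lemma; it is quoted from \cite{CabrerMetcalfe2015a} without argument. Your proposal is correct and is essentially the standard proof: the ``if'' direction is the routine operator computation you give, and the ``only if'' direction is the pigeonhole on isomorphism types of $k$-generated subalgebras followed by the observation that an ultrapower of a finite algebra is isomorphic to the algebra itself.

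One small point: you repeatedly invoke \emph{finite type}, but the lemma as stated does not assume it, and your argument does not need it. Local finiteness alone guarantees that $\alg F_\vv Q(k)$ is finite, hence has only finitely many congruences, hence there are only finitely many $k$-generated algebras in $\vv Q$ up to isomorphism; and for any finite algebra $\alg C$ (in any signature) the diagonal embedding $\alg C \to \alg C^{J}/\cc U$ is onto, since each coordinate family takes only finitely many values and one value-class lies in $\cc U$. So you can and should drop the finite-type hypothesis from your write-up; otherwise you are proving a weaker statement than the one claimed.
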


\begin{theorem}[\cite{CabrerMetcalfe2015a}]\label{thm: lf unifiable} Let $\vv Q$ be a locally finite  variety of finite type. Then $\vv Q$ is universally complete if and only if $\vv Q$ is unifiable and  has exact unifiers.
\end{theorem}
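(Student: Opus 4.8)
The plan is to reduce everything to the characterization of universal completeness given in Theorem \ref{prop:universal}, using crucially that in a locally finite quasivariety of finite type the finitely presented algebras are exactly the finite ones. Under this identification, saying that $\vv Q$ is unifiable and has exact unifiers amounts to saying that every finite algebra of $\vv Q$ that is unifiable is exact, i.e.\ lies in $\II\SU(\alg F_\vv Q(\o))$; and since $\vv Q$ being unifiable means precisely that every finite algebra is unifiable, this is in turn the same as: every finite algebra of $\vv Q$ is exact. So the theorem will follow once I connect ``every finite algebra is exact'' with condition (4) of Theorem \ref{prop:universal}, namely that every finitely presented algebra lies in $\II\SU\PP_u(\alg F_\vv Q(\o))$.

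For the direction ``unifiable $+$ exact unifiers $\Rightarrow$ universally complete'': if every finite ($=$ finitely presented) algebra of $\vv Q$ is exact, then every finitely presented algebra lies in $\II\SU(\alg F_\vv Q(\o)) \sse \II\SU\PP_u(\alg F_\vv Q(\o))$, so condition (4) of Theorem \ref{prop:universal} holds and $\vv Q$ is universally complete. In fact this direction is just a restatement of Theorem \ref{thm: unifiable}, since ``unifiable and has exact unifiers'' is exactly ``every finitely presented algebra is exact''.

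For the converse, assume $\vv Q$ is universally complete. Unifiability is immediate from Theorem \ref{lemma: FQ trivial}. For exact unifiers, Theorem \ref{prop:universal} gives $\vv Q = \II\SU\PP_u(\alg F_\vv Q(\o))$, so every finitely presented — hence finite — algebra $\alg A$ of $\vv Q$ lies in $\II\SU\PP_u(\alg F_\vv Q(\o))$. Here I invoke Lemma \ref{lemma: techlemma}: a finite algebra lies in $\II\SU\PP_u(\alg F_\vv Q(\o))$ iff each of its finite subalgebras lies in $\II\SU(\alg F_\vv Q(\o))$; applying this with $\alg A$ itself taken as the finite subalgebra yields $\alg A \in \II\SU(\alg F_\vv Q(\o))$, i.e.\ $\alg A$ is exact. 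In particular every finitely presented unifiable algebra is exact, so $\vv Q$ has exact unifiers, completing the proof.

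The only genuinely load-bearing step — and the one place where the hypotheses ``locally finite'' and ``of finite type'' are actually used — is the appeal to Lemma \ref{lemma: techlemma} to strip the ultraproduct operator $\PP_u$ off $\alg F_\vv Q(\o)$ when dealing with finite algebras; without local finiteness, $\II\SU\PP_u(\alg F_\vv Q(\o))$ may properly contain $\II\SU(\alg F_\vv Q(\o))$, and universal completeness would not force exactness of the finite algebras. Everything else is routine bookkeeping combining Theorems \ref{prop:universal}, \ref{thm: unifiable}, \ref{lemma: FQ trivial} with the identification of finitely presented and finite algebras.
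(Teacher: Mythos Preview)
Your proof is correct and follows essentially the same approach as the paper: the forward direction uses Theorem \ref{lemma: FQ trivial} for unifiability and Lemma \ref{lemma: techlemma} to pass from $\II\SU\PP_u(\alg F_\vv Q(\o))$ to $\II\SU(\alg F_\vv Q(\o))$ for finite algebras, while the converse is Theorem \ref{thm: unifiable}. Your explicit identification of finitely presented with finite algebras, and your remark isolating Lemma \ref{lemma: techlemma} as the place where local finiteness and finite type are used, are helpful clarifications that the paper leaves implicit.
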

\begin{proof} Suppose that $\vv Q$ is universally complete; then, by Theorem \ref{lemma: FQ trivial},  $\vv Q$ is unifiable.
Since it is universally complete, every finite algebra in $\vv Q$ is in $\II\SU\PP_u(\alg F_\vv Q(\o))$, hence in $\II\SU(\alg F_\vv Q(\o))$ (by  Lemma \ref{lemma: techlemma}). Thus every finite unifiable algebra in $\vv Q$ is exact and $\vv Q$ has exact unifiers.
The converse claim follows from Theorem \ref{thm: unifiable}.
\end{proof}
\begin{remark}
	We observe that Theorem \ref{lemma: FQ trivial} limits greatly the examples of universally complete quasivarieties. In particular, in quasivarieties with finite type the trivial algebra is finitely presented, and thus if $\vv Q$ is universally complete, it must be unifiable. This means that a quasivariety with more than one constant in its finite type cannot be universally complete if there are nontrivial models where the constants are distinct; similarly if there is only one constant, then it must generate the trivial algebra in nontrivial models, or equivalently, in $\alg F_\vv Q$. If there are no constants, then $\alg F_\vv Q = \alg F_\vv Q(x)$ and, in order to be able to embed the trivial algebra, there has to be an idempotent term.
\end{remark}

Let us now discuss some different examples of universally complete (quasi)varieties.
\begin{example}
Let us consider {\em lattice-ordered abelian groups} (or abelian $\ell$-groups for short). These are algebras $\alg G = (G, \land, \lor, \cdot, ^{-1}, 1)$ where $(G, \cdot, ^{-1}, 1)$ is an abelian group, $(G, \land, \lor)$ is a lattice, and the group operation distributes over the lattice operations.
	Every finitely presented abelian $\ell$-groups is projective \cite{Beynon1977}; thus, the variety of abelian $\ell$-groups is universally complete by Corollary \ref{cor: unifiable}.
	
	The same holds for the variety of \emph{negative cones} of abelian $\ell$-groups. Given an $\ell$-group $\alg G$, the set of elements $G^-= \{x \in G: x \leq 1\}$ can be seen as a {\em residuated lattice} (see Section \ref{sec:FL}) $\alg G^-=(G^-, \cdot, \to, \land, \lor, 1)$ where $(\cdot, \land, \lor, 1)$ are inherited by the group and $x \to y = x^{-1} \cdot y \land 1$. The algebraic category of negative cones of abelian $\ell$-groups is equivalent to the one of abelian $\ell$-groups \cite{GalatosTsinakis2005}, thus every finitely presented algebra is projective  and the variety of negative cones of $\ell$-groups $\vv L \vv G^-$ is universally complete.
Observe that in all these cases the unique constant $1$ is absorbing w.r.t. any basic operation, and it generates the trivial algebra.
\end{example}

\begin{example}
	{\em Hoops} are a particular variety of residuated monoids related to logic which were defined in an unpublished manuscript by B\"uchi and Owens, inspired by the work of Bosbach on partially ordered monoids (see \cite{BlokFerr2000} for details on the theory of hoops).  Hoops have a constant which is absorbing w.r.t. any basic operation; hence the least free algebra is trivial in any variety of hoops and any variety of hoops is unifiable. In  \cite{AglianoUgolini2022} it was shown that every finite hoop is projective in the class of finite hoops which via Lemma \ref{tame} entails that every locally finite variety of hoops has projective unifiers. Since any locally finite quasivariety is contained in a locally finite variety, every locally finite quasivariety of hoops is universally complete. The same holds in the variety of $\imp$-subreducts of hoops, usually denoted by $\mathsf{HBCK}$; again locally finite varieties of $\mathsf{HBCK}$-algebras have projective unifiers \cite{AglianoUgolini2022} and hence they are universally complete.
For a non-locally finite example, we say that a hoop is {\em cancellative} if the underlying monoid is cancellative; cancellative hoops form a variety $\vv C$ that is categorically equivalent to the one of abelian $\ell$-groups \cite{BlokFerr2000}. Hence $\vv C$ is a non locally finite variety of hoops which is universally complete.
\end{example}
The classes of algebras in the above examples all have projective unifiers. However:
\begin{example}
In lattices there are no constants but any variety of lattices is idempotent; hence the least free algebra is trivial and every lattice is unifiable.
Every finite distributive lattice is exact \cite{CabrerMetcalfe2015a} and distributive lattices are locally finite, so distributive lattices are universally complete by Theorem \ref{thm: lf unifiable}. Moreover, as we have already observed in Example \ref{ex: distlattice}, distributive lattices do not have projective unifiers.
\end{example}
We now consider the hereditary version of universal completeness.
\begin{definition}
	A quasivariety $\vv Q$ is {\bf primitive universal} if all its subquasivarieties are universally complete.
\end{definition}

All the above  examples of universally complete varieties are primitive universal and this is not entirely coincidental.
Distributive lattices are trivially primitive universal, since they do not have any  trivial subquasivariety.  For all the other examples,  we have a general result.

\begin{theorem} Let $\vv Q$ be a quasivariety with projective unifiers and such that $\alg F_\vv Q$ is trivial; then $\vv Q$ is primitive universal.
\end{theorem}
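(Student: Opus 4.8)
The plan is to combine the sufficient condition for universal completeness (Theorem~\ref{thm: unifiable}, or its projectivity variant Corollary~\ref{cor: unifiable}) with the inheritance result for projective unifiers when the smallest free algebra is trivial (Lemma~\ref{lemma:dzik2}). So the argument has two moving parts: first I would show that every subquasivariety $\vv Q'$ of $\vv Q$ again has projective unifiers, and second I would show that every such $\vv Q'$ is universally complete.

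For the first part, I would invoke Lemma~\ref{lemma:dzik2}, which says precisely that if $\vv Q$ has projective unifiers and $\alg F_\vv Q = \alg F_{\vv Q'}$ for all subquasivarieties $\vv Q' \sse \vv Q$, then every subquasivariety of $\vv Q$ has projective unifiers. The hypothesis $\alg F_{\vv Q'} = \alg F_\vv Q$ needs a short justification: by assumption $\alg F_\vv Q$ is trivial, and since $\vv Q'$ is a nonempty class containing a trivial algebra, its smallest free algebra $\alg F_{\vv Q'}$ (which is $\alg F_{\vv Q'}(\emptyset)$ if there are constants, else $\alg F_{\vv Q'}(x)$) is a homomorphic image of $\alg F_\vv Q$ restricted appropriately — more directly, the trivial algebra lies in $\vv Q'$, and $\alg F_{\vv Q'}$ is generated on $0$ or $1$ generators and maps onto every algebra in $\vv Q'$ on that many generators; as it must map onto the trivial algebra and is itself a quotient of $\alg F_\vv Q$ which is already trivial, it is trivial too. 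Hence $\alg F_{\vv Q'}$ is trivial, so equals $\alg F_\vv Q$, and Lemma~\ref{lemma:dzik2} applies.

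For the second part, fix any subquasivariety $\vv Q'$. It has projective unifiers and a trivial smallest free algebra $\alg F_{\vv Q'}$. By Lemma~\ref{free}, since $\alg F_{\vv Q'}$ is trivial, \emph{every} algebra in $\vv Q'$ is unifiable (the unique map onto the trivial algebra witnesses it); in particular every finitely presented algebra in $\vv Q'$ is unifiable, hence projective because $\vv Q'$ has projective unifiers. Then by Corollary~\ref{cor: unifiable} (every finitely presented algebra projective implies universally complete), $\vv Q'$ is universally complete. Since $\vv Q'$ was an arbitrary subquasivariety, $\vv Q$ is primitive universal.

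I do not expect a serious obstacle here; the only point needing genuine care is the verification that $\alg F_{\vv Q'}$ is trivial for every subquasivariety $\vv Q'$ — i.e.\ that triviality of the smallest free algebra is itself inherited downward — which is where the idempotent-term / absorbing-constant structure implicit in ``$\alg F_\vv Q$ trivial'' does its work. Once that is in hand, the two cited lemmas (Lemma~\ref{lemma:dzik2} and Lemma~\ref{free}) plus Corollary~\ref{cor: unifiable} chain together immediately. One could alternatively bypass Lemma~\ref{lemma:dzik2} and argue directly: for $\vv Q'\sse\vv Q$ with $\alg F_{\vv Q'}$ trivial, a finitely presented unifiable $\alg B = \alg F_{\vv Q'}(X)/\th_{\vv Q'}(\Sigma)$ is a quotient of $\alg A = \alg F_\vv Q(X)/\th_\vv Q(\Sigma)$, which is unifiable in $\vv Q$ (map to trivial $\alg F_\vv Q$) hence projective in $\vv Q$; then the factoring argument from the proof of Lemma~\ref{lemma:DZlemma} upgrades projectivity of $\alg A$ to projectivity of $\alg B$ in $\vv Q'$ — but citing Lemma~\ref{lemma:dzik2} is cleaner.
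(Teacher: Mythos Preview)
Your proposal is correct and essentially matches the paper's proof: both first note that $\alg F_{\vv Q'}$ is trivial for every subquasivariety, then use the inheritance of projective unifiers to conclude that every finitely presented algebra in $\vv Q'$ is projective, and finish with Corollary~\ref{cor: unifiable}. The only cosmetic difference is that you invoke Lemma~\ref{lemma:dzik2} while the paper cites Lemma~\ref{lemma:DZlemma} directly---a route you yourself describe as the alternative.
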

\begin{proof} Observe that for any subquasivariety $\vv Q'\sse \vv Q$, $\alg F_{\vv Q'}$ is trivial
as well. Hence every algebra in $\vv Q$ is unifiable in any subvariety to which it belongs. Let $\alg B' = \alg F_{\vv Q'}(X)/\th_{\vv Q'}(\Sigma)$  be finitely presented in $\vv Q'$; then $\alg A = \alg F_\vv Q(X)/\th_\vv Q(\Sigma)$ is finitely presented in $\vv Q$ and thus it is projective in $\vv Q$. But then Lemma \ref{lemma:DZlemma} applies and $\alg B$ is projective; thus $\vv Q'$ has projective unifiers and thus it is universally complete by Corollary \ref{cor: unifiable}.
\end{proof}

Is the same conclusion true if we replace ``projective unifiers'' with ``unifiable, locally finite with exact unifiers''?  We do not know, but we know that  we cannot use an improved version of Lemma \ref{lemma:DZlemma} since it cannot be improved to account for exact unifiers (see Example \ref{ex: De Morgan}).

\subsection{Non-negative and active universal quasivarieties}\label{subsec: active universal}
The situation in which universal completeness fails due {\em only} to the trivial algebras has been first investigated in \cite{CabrerMetcalfe2015a};
the following expands \cite[Proposition 8]{CabrerMetcalfe2015a}.
\begin{theorem}\label{thm: non negative universal} For a quasivariety $\vv Q$ the following are equivalent:
\begin{enumerate}
\item\label{thm: non negative universal3} $\vv Q$ is non-negative universally complete;
\item\label{thm: non negative universal2} every admissible universal sentence is valid in $\vv Q^+$;
\item\label{thm: non negative universal1} every nontrivial algebra is in $\II\SU\PP_u(\alg F_\vv Q(\o))$. 
\item\label{thm: non negative universal4} every nontrivial finitely presented algebra is in $\II\SU\PP_u(\alg F_\vv Q(\o))$. 
\end{enumerate}
\end{theorem}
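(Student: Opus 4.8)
The plan is to mirror the structure of the proof of Theorem \ref{prop:universal}, but now restricting attention to \emph{nontrivial} algebras throughout, since a non-negative clause $\Sigma \Rightarrow \Delta$ with $\Delta \neq \emptyset$ is automatically valid in the trivial algebra. Here I write $\vv Q^+$ for the class of nontrivial members of $\vv Q$. First I would establish the equivalence of (\ref{thm: non negative universal3}) and (\ref{thm: non negative universal2}): a non-negative admissible clause holds in $\vv Q$ if and only if it holds in every nontrivial member of $\vv Q$, because the trivial algebra satisfies every clause with nonempty conclusion; so ``non-negative universally complete'' is literally the statement that admissibility implies validity in $\vv Q^+$. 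Conversely, an arbitrary admissible universal sentence $\Sigma \Rightarrow \Delta$ either has $\Delta \neq \emptyset$, in which case (\ref{thm: non negative universal2}) gives validity in $\vv Q^+$ and hence in all of $\vv Q$ (the trivial algebra being handled for free), or $\Delta = \emptyset$, in which case it is a negative clause and validity in $\vv Q^+$ is all that is being asserted anyway.

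Next I would prove (\ref{thm: non negative universal3}) $\Rightarrow$ (\ref{thm: non negative universal1}). Following the argument for Lemma \ref{admissible} and Theorem \ref{prop:universal}, suppose $\vv Q$ is non-negative universally complete and let $\alg A \in \vv Q$ be nontrivial; I want $\alg A \in \II\SU\PP_u(\alg F_\vv Q(\o))$. Pick any pair $a \neq b$ in $\alg A$ and consider, for each finitely generated subalgebra, the associated finite diagram; the key point is that any non-negative clause $\Sigma \Rightarrow \Delta$ valid in $\alg F_\vv Q(\o)$ is admissible, hence valid in $\vv Q^+$, hence valid in $\alg A$. This says exactly that the non-negative universal theory of $\alg F_\vv Q(\o)$ is contained in that of $\alg A$; combined with the fact that $\alg F_\vv Q(\o)$ is projective (so that $\II\SU\PP_u(\alg F_\vv Q(\o))$ is a universal class containing $\alg F_\vv Q(\o)$, and every finitely generated algebra is a homomorphic image of $\alg F_\vv Q(\o)$), a standard compactness/ultraproduct argument — as in the proof of Lemma \ref{admissible}, using that every algebra embeds in an ultraproduct of its finitely generated subalgebras — gives $\alg A \in \II\SU\PP_u(\alg F_\vv Q(\o))$. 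The one place requiring care is that the diagram of a nontrivial finitely generated subalgebra, together with a witness $a \neq b$, must be expressible so that its satisfiability in $\alg F_\vv Q(\o)$ is controlled by non-negative clauses only; since we always have the ``extra'' inequation $a \not\approx b$ available to shove into the conclusion side $\Delta$, negativity is never forced.

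Then (\ref{thm: non negative universal1}) $\Rightarrow$ (\ref{thm: non negative universal4}) is immediate, since finitely presented nontrivial algebras are in particular nontrivial algebras. Finally, for (\ref{thm: non negative universal4}) $\Rightarrow$ (\ref{thm: non negative universal3}), I would copy the last paragraph of the proof of Theorem \ref{prop:universal}: let $\Sigma \Rightarrow \Delta$ be a non-negative admissible clause, so by Lemma \ref{admissiblederivable} we have $\alg F_\vv Q(\o) \models \Sigma \Rightarrow \Delta$. If the finitely presented algebra $\alg F_\vv Q(X)/\theta_\vv Q(\Sigma)$ is trivial, then $\vv Q \models \Sigma \Rightarrow \Delta$ holds trivially via Lemma \ref{lemma:FThetaSigma}(3) (every identity in $\Delta$, and in particular at least one since $\Delta \neq \emptyset$, collapses in the trivial quotient). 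Otherwise it is nontrivial and finitely presented, hence by (\ref{thm: non negative universal4}) it lies in $\II\SU\PP_u(\alg F_\vv Q(\o))$, so it satisfies $\Sigma \Rightarrow \Delta$, and then Lemma \ref{lemma:FThetaSigma} yields $\vv Q \models \Sigma \Rightarrow \Delta$. I expect the main obstacle to be the careful bookkeeping in the (\ref{thm: non negative universal3}) $\Rightarrow$ (\ref{thm: non negative universal1}) step: one must make sure that the nontriviality hypothesis on $\alg A$ is genuinely used (by appending an inequation to $\Delta$) and that the resulting clauses remain non-negative, so that the hypothesis of non-negative universal completeness — rather than full universal completeness — actually applies.
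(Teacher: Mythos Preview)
Your argument for (\ref{thm: non negative universal4}) $\Rightarrow$ (\ref{thm: non negative universal3}) is correct and coincides verbatim with the paper's proof. The paper itself defers the equivalence of (\ref{thm: non negative universal3}), (\ref{thm: non negative universal2}), (\ref{thm: non negative universal1}) to \cite{CabrerMetcalfe2015a}, so your attempt at a self-contained treatment goes beyond what the paper writes out.

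There is, however, a genuine gap in your handling of (\ref{thm: non negative universal3}) $\Rightarrow$ (\ref{thm: non negative universal2}). When $\Delta = \emptyset$ you write that ``validity in $\vv Q^+$ is all that is being asserted anyway,'' but this is not an argument: you still have to show that an admissible negative clause $\Sigma \Rightarrow \emptyset$ holds in every nontrivial algebra, and (\ref{thm: non negative universal3}) says nothing directly about negative clauses. The fix is exactly the trick you deploy later in your (\ref{thm: non negative universal3}) $\Rightarrow$ (\ref{thm: non negative universal1}) sketch: replace the empty conclusion by $\{x \approx y\}$ with $x,y$ fresh. The resulting clause is non-negative and still admissible, so (\ref{thm: non negative universal3}) makes it valid in $\vv Q$; since $x,y$ range freely, no nontrivial algebra can satisfy $\Sigma$, which is precisely validity of $\Sigma \Rightarrow \emptyset$ in $\vv Q^+$. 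Once (\ref{thm: non negative universal2}) is secured this way, note that (\ref{thm: non negative universal2}) $\Rightarrow$ (\ref{thm: non negative universal1}) is immediate from the model-theoretic identification of $\II\SU\PP_u(\alg F_\vv Q(\o))$ with the class of models of the universal theory of $\alg F_\vv Q(\o)$: this makes your diagram-and-compactness detour unnecessary and clarifies why nontriviality of $\alg A$ is exactly what lets you convert every negative clause into a non-negative one.
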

\begin{proof}
	The equivalence of the first three points is in \cite[Proposition 8]{CabrerMetcalfe2015a}, and (\ref{thm: non negative universal1}) clearly implies (\ref{thm: non negative universal4}). Assume now that (\ref{thm: non negative universal4}) holds, we show (\ref{thm: non negative universal3}). Let $\Sigma \Rightarrow \Delta$ be a non-negative admissible universal sentence with variables in a finite set $X$, we show that $\alg F_\vv Q(X)/\theta_\vv Q(\Sigma) \models \Sigma \Rightarrow \Delta$. If $\alg F_\vv Q(X)/\theta_\vv Q(\Sigma)$ is trivial, then it models $\Sigma \Rightarrow \Delta$ (given that $\Delta$ is not $\emptyset$). Suppose now that $\alg F_\vv Q(X)/\theta_\vv Q(\Sigma)$ is nontrivial, then it is in $\II\SU\PP_u(\alg F_\vv Q(\o))$ by hypothesis and then it models $\Sigma \Rightarrow \Delta$ since the latter is admissible, and thus $\alg F_\vv Q(\o) \models \Sigma \Rightarrow \Delta$ by Lemma \ref{admissiblederivable}. By Lemma \ref{lemma:FThetaSigma}, $\vv Q$ models $\Sigma \Rightarrow \Delta$ and (\ref{thm: non negative universal3}) holds.
\end{proof}
Moreover:
\begin{theorem} For a  quasivariety  $\vv Q$   the following are equivalent:
\begin{enumerate}
\item $\vv Q$ is non-negative universally complete;
\item $\cc L_\vv Q$ is non-negative universally complete.
\end{enumerate}
\end{theorem}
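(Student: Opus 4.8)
The plan is to derive the equivalence in the same way Corollary~\ref{universalq} was obtained from Theorem~\ref{prop:universal}, but now using its non-negative analogue Theorem~\ref{thm: non negative universal} and adding one observation about the Blok--Pigozzi translation. Write $\vv R:=\vv Q_{\cc L_\vv Q}$ for the equivalent algebraic semantics of the logic $\cc L_\vv Q$; by the definition of $\cc L_\vv Q$ given before Theorem~\ref{thm:banks}, $\vv R$ is a quasivariety of logic that is categorically equivalent to $\vv Q$, and by the Galois connection $\cc L_\vv R=\cc L_\vv Q$. It then suffices to prove two reductions: \emph{(i)} non-negative universal completeness is invariant under categorical equivalence of quasivarieties, so $\vv Q$ is non-negative universally complete iff $\vv R$ is; and \emph{(ii)} for the quasivariety of logic $\vv R$, $\vv R$ is non-negative universally complete iff $\cc L_\vv R$ is non-negative universally complete. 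Composing \emph{(i)} and \emph{(ii)} gives the claim.

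For \emph{(ii)}, I would use Theorem~\ref{admissiblederivable}: the Blok--Pigozzi translations send a clause $\Sigma'\Rightarrow\Delta'$ of $\cc L_\vv R$ to the universal sentence $\tau(\Sigma')\Rightarrow\tau(\Delta')$ of $\vv R$ and, conversely, a universal sentence $\Sigma\Rightarrow\Delta$ of $\vv R$ to the clause whose antecedent and succedent are the formula-translations of $\Sigma$ and $\Delta$; along this correspondence admissibility is preserved and derivability is exchanged with validity (modulo the usual round-trip equivalences of an algebraizable logic). The only point to add is that the correspondence also preserves non-negativity: each formula is mapped by $\tau$ to a set of equations $\{\d_1\app\e_1,\dots,\d_n\app\e_n\}$ with $n\ge 1$, and each equation is mapped to a set of formulas $\{\f_1,\dots,\f_m\}$ with $m\ge 1$, so a finite set is empty precisely when its translation is. Hence non-negative admissible clauses of $\cc L_\vv R$ match non-negative admissible universal sentences of $\vv R$, and the former are derivable exactly when the latter are valid, which is \emph{(ii)}.

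For \emph{(i)} I would invoke the characterisation of Theorem~\ref{thm: non negative universal}(\ref{thm: non negative universal4}): a quasivariety is non-negative universally complete iff each of its nontrivial finitely presented algebras lies in $\II\SU\PP_u$ of its countably generated free algebra. Let $F:\vv Q\to\vv R$ be an equivalence functor. Being finitely presented is a categorical property by \cite{GabrielUllmer1971}, being nontrivial is categorical (the trivial algebra is the terminal object), and the operators $\II$, $\SU$, $\PP_u$ are transported by $F$, as recorded in Subsection~\ref{subsec:universal}; so the substantive point --- and the main obstacle of the whole argument --- is to show that $F$ carries $\II\SU\PP_u(\alg F_\vv Q(\o))$ onto $\II\SU\PP_u(\alg F_\vv R(\o))$. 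One cannot simply assert $F(\alg F_\vv Q(\o))\cong\alg F_\vv R(\o)$, since a categorical equivalence need not preserve the number of free generators (Morita equivalences of module categories already witness this); but $F$ does preserve projectivity, finite generation, and the property of being a regular generator of the category, and since a finite coproduct of copies of $\alg F_\vv Q(\o)$ is again isomorphic to $\alg F_\vv Q(\o)$, a short diagram chase shows that $F(\alg F_\vv Q(\o))$ and $\alg F_\vv R(\o)$ are retracts --- hence subalgebras --- of one another, so that $\SU$, and therefore $\II\SU\PP_u$, of the two agree. With this in hand, Theorem~\ref{thm: non negative universal}(\ref{thm: non negative universal4}) holds for $\vv Q$ iff it holds for $\vv R$, giving \emph{(i)} and completing the plan.
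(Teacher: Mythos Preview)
Your argument is correct and follows the same strategy as the paper's (much terser) proof, which also appeals to Theorem~\ref{thm: non negative universal} and the categorical nature of nontrivial finitely presented algebras; you are simply more explicit about why the universal class $\II\SU\PP_u(\alg F_\vv Q(\o))$ transfers under the equivalence, a point the paper absorbs into ``the usual arguments''. One small correction in part \emph{(i)}: to obtain $\alg F_\vv R(\o)$ as a retract of $F(\alg F_\vv Q(\o))$ you need that a \emph{countable} coproduct of copies of $\alg F_\vv Q(\o)$ is again $\alg F_\vv Q(\o)$ (finite coproducts alone do not reach the countably generated $\alg F_\vv R(\o)$); alternatively you can bypass the retract computation entirely by noting, via the proof of Lemma~\ref{admissible}, that $\II\SU\PP_u(\alg F_\vv Q(\o))$ is the least universal subclass $\vv U\subseteq\vv Q$ with $\HH(\vv U)=\HH(\vv Q)$, a description that is manifestly invariant under categorical equivalence.
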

\begin{proof}
	In a categorical equivalence between quasivarieties trivial algebras are mapped to trivial algebras, since the latter can be characterized as the algebras that are a homomorphic image of every algebra in a quasivariety. Thus nontrivial finitely presented algebras are mapped to nontrivial finitely presented algebras, and the result follows from the usual arguments.
\end{proof}
We can also obtain an analogue of Theorem \ref{thm: unifiable}.

\begin{theorem}\label{thm:unifiable2} If every nontrivial finitely presented algebra in $\vv Q$ is exact (or projective), then $\vv Q$ is non-negative universally complete.
\end{theorem}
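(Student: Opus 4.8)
The plan is to mirror the proof of Theorem \ref{thm: unifiable}, using Theorem \ref{thm: non negative universal}(\ref{thm: non negative universal4}) as the target characterization instead of Theorem \ref{prop:universal}(\ref{prop:universal4}). So first I would recall that by Theorem \ref{thm: non negative universal} it suffices to show that every \emph{nontrivial} finitely presented algebra of $\vv Q$ lies in $\II\SU\PP_u(\alg F_\vv Q(\o))$. Let $\alg A$ be such an algebra; by hypothesis $\alg A$ is exact in $\vv Q$ (the projective case is subsumed since projectivity implies exactness, exactly as in Corollary \ref{cor: unifiable}). By the definition of exactness, $\alg A$ is weakly projective in some $\alg F_\vv Q(X)$ with $|X|\ge|G_\alg A|$, and since $\alg A$ is finitely presented it is finitely generated, so by the lemma characterizing exactness for finitely generated algebras we in fact get $\alg A\in\SU(\alg F_\vv Q(\omega))$.

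From there the argument is immediate: $\SU(\alg F_\vv Q(\o))\sse\II\SU(\alg F_\vv Q(\o))\sse\II\SU\PP_u(\alg F_\vv Q(\o))$, the last inclusion because any algebra embeds in its trivial (principal) ultrapower. Hence every nontrivial finitely presented algebra of $\vv Q$ is in $\II\SU\PP_u(\alg F_\vv Q(\o))$, and condition (\ref{thm: non negative universal4}) of Theorem \ref{thm: non negative universal} holds, so $\vv Q$ is non-negative universally complete.

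I do not expect any real obstacle here; the statement is essentially the ``non-negative'' shadow of Theorem \ref{thm: unifiable}, and the only point that deserves a word is why the hypothesis being stated for \emph{nontrivial} finitely presented algebras is exactly what matches the nontriviality clause in Theorem \ref{thm: non negative universal}(\ref{thm: non negative universal4}) — the trivial finitely presented algebra (when it exists, e.g. in finite type) need not be exact, and indeed it typically is not, which is precisely the reason universal completeness fails but non-negative universal completeness can still hold. The proof can therefore be written in two or three lines. A clean write-up:

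\begin{proof}
If every nontrivial finitely presented algebra of $\vv Q$ is exact (in particular if it is projective, since projectivity implies exactness), then such an algebra, being finitely generated, lies in $\II\SU(\alg F_\vv Q(\o))\sse\II\SU\PP_u(\alg F_\vv Q(\o))$. Thus every nontrivial finitely presented algebra of $\vv Q$ belongs to $\II\SU\PP_u(\alg F_\vv Q(\o))$, and the claim follows from Theorem \ref{thm: non negative universal}.
\end{proof}
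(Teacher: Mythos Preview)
Your proposal is correct and follows essentially the same approach as the paper: both argue that an exact (hence, in particular, projective) nontrivial finitely presented algebra lies in $\II\SU(\alg F_\vv Q(\o))\sse\II\SU\PP_u(\alg F_\vv Q(\o))$, and then invoke condition (\ref{thm: non negative universal4}) of Theorem \ref{thm: non negative universal}. Your write-up is slightly more explicit about why exactness yields $\alg A\in\II\SU(\alg F_\vv Q(\o))$, but the argument is the same.
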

\begin{proof}
If every nontrivial finitely presented algebra is exact (or projective), then it is in $\II\SU(\alg F_\vv Q(\o))$, and therefore in $\II\SU\PP_u(\alg F_\vv Q(\o))$. The claim then follows from Theorem \ref{thm: non negative universal}.
\end{proof}

Analogously to the case of universal completeness, we get a stronger result for locally finite quasivarieties.
\begin{theorem} \label{thm: nonnegative unifiable}  Let $\vv Q$ be a locally finite quasivariety. Then $\vv Q$ is non-negative universally complete if and only if every nontrivial finitely presented algebra is exact.
\end{theorem}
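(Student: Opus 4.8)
The plan is to prove the two implications separately. The ``if'' direction is immediate, being exactly the content of Theorem \ref{thm:unifiable2} (which does not even require local finiteness), so all the real work lies in the ``only if'' direction.

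For that direction, assume $\vv Q$ is locally finite and non-negative universally complete, and let $\alg A$ be a nontrivial finitely presented algebra in $\vv Q$. First I would observe that $\alg A$ is \emph{finite}: being finitely presented it is in particular finitely generated, and every finitely generated algebra in a locally finite quasivariety is finite. Next, by Theorem \ref{thm: non negative universal}, non-negative universal completeness of $\vv Q$ means that every nontrivial algebra of $\vv Q$ belongs to $\II\SU\PP_u(\alg F_\vv Q(\o))$; in particular $\alg A \in \II\SU\PP_u(\alg F_\vv Q(\o))$. Now comes the step where local finiteness is used essentially: by Lemma \ref{lemma: techlemma}, for a locally finite $\vv Q$ an algebra lies in $\II\SU\PP_u(\alg F_\vv Q(\o))$ if and only if each of its finite subalgebras lies in $\II\SU(\alg F_\vv Q(\o))$. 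Applying this to $\alg A$, which is a finite subalgebra of itself, we obtain $\alg A \in \II\SU(\alg F_\vv Q(\o))$, i.e.\ $\alg A$ embeds into $\alg F_\vv Q(\o)$. Finally, since $\alg A$ is finitely generated and embeds into $\alg F_\vv Q(\o)$, the lemma characterizing exactness of finitely generated algebras (for finitely generated algebras, exactness in $\vv Q$ amounts to being a subalgebra of $\alg F_\vv Q(\o)$) yields that $\alg A$ is exact, completing the proof.

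I do not expect a genuine obstacle here, since the argument is just a chain of results already established in the excerpt. The one point that deserves a moment's care is the opening observation that ``finitely presented'' forces finiteness under local finiteness, via finite generation: this is precisely what lets us apply Lemma \ref{lemma: techlemma} to $\alg A$ itself rather than only to its finite subalgebras, which would otherwise leave us with the awkward task of reconstructing $\alg A$ from those subalgebras.
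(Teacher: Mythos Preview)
Your proof is correct and takes essentially the same approach as the paper: one direction is Theorem~\ref{thm:unifiable2}, and the other combines Theorem~\ref{thm: non negative universal} with Lemma~\ref{lemma: techlemma} to pass from $\II\SU\PP_u(\alg F_\vv Q(\o))$ to $\II\SU(\alg F_\vv Q(\o))$ for finite algebras. The only cosmetic difference is that the paper phrases the ``only if'' direction by contrapositive, while you argue directly.
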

\begin{proof}
Suppose that $\vv Q$ is locally finite and there is a finite nontrivial algebra $\alg A \in \vv Q$ that is not exact. Then $\alg A \notin \II\SU(\alg F_\vv Q(\o))$ and thus, by Lemma \ref{lemma: techlemma}, $\alg A \notin \II\SU\PP_u(\alg F_\vv Q(\o))$.
Therefore $\vv Q$ cannot be non-negative universally complete by Theorem \ref{thm: non negative universal}.
The other direction follows from Theorem \ref{thm:unifiable2}.
\end{proof}

\begin{example}\label{ex: boolean algebras}
	Boolean algebras are an example of a non-negative universally complete variety that is not universally complete. It is easily seen that every nontrivial finite Boolean algebra is exact (indeed, projective), which shows that Boolean algebras are non-negative universally complete by Theorem \ref{thm: nonnegative unifiable}. However, there are negative admissible clauses: e.g., the ones with premises given by the presentation of the trivial algebra, which is finitely presented but not unifiable. Thus Boolean algebras are not universally complete.
\end{example}
\begin{example}
	 Stone algebras are a different example; in \cite{CabrerMetcalfe2015a} the authors proved, using the duality between Stone algebras and particular Priestley spaces,  that every finite nontrivial Stone algebra is exact;
hence Stone algebras  are  non-negative universally complete.
\end{example}
We now move on to describe active universal completeness from the algebraic perspective.

\begin{theorem}\label{thm: activeu main}
	Let $\vv Q$ be a quasivariety. The following are equivalent:
	\begin{enumerate}
	\item\label{thm: activeu main2} $\vv Q$ is  active universally complete;
	\item\label{thm: activeu main5} every unifiable algebra in $\vv Q$ is in $\II\SU\PP_u(\alg F_{\vv Q}(\o))$;
		\item\label{thm: activeu main1} every finitely presented and unifiable algebra in $\vv Q$ is in $\II\SU\PP_u(\alg F_{\vv Q}(\o))$;
		\item\label{thm: activeu main3} every universal sentence admissible in $\vv Q$ is satisfied by all finitely presented unifiable algebras in $\vv Q$;
		\item\label{thm: activeu main4} for every $\alg A \in \vv Q$, $\alg A \times \alg F_{\vv Q}\in \II \SU \PP_u(\alg F_{\vv Q}(\omega))$.
	\end{enumerate}
\end{theorem}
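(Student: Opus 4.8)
The plan is to follow the pattern of the proof of Theorem~\ref{prop:universal}, but with ``admissible'' weakened to ``active admissible'' and ``finitely presented'' strengthened to ``finitely presented and unifiable'', and then to add the two product--based characterizations. The underlying dictionary is this: by Lemma~\ref{lemma:ISP}(1) the class $\II\SU\PP_u(\alg F_\vv Q(\o))$ is precisely the class of algebras validating every universal sentence valid in $\alg F_\vv Q(\o)$; since, up to logical equivalence, every universal sentence is a finite conjunction of clauses, and by Lemma~\ref{admissible} a clause holds in $\alg F_\vv Q(\o)$ exactly when it is admissible in $\vv Q$, an algebra $\alg B$ lies in $\II\SU\PP_u(\alg F_\vv Q(\o))$ if and only if it validates every clause admissible in $\vv Q$. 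With this in hand, $(\ref{thm: activeu main5})\Rightarrow(\ref{thm: activeu main1})$ is trivial (finitely presented unifiable algebras form a subclass of all unifiable ones), and $(\ref{thm: activeu main1})\Leftrightarrow(\ref{thm: activeu main3})$ is immediate from the dictionary; so it suffices to establish $(\ref{thm: activeu main1})\Rightarrow(\ref{thm: activeu main2})\Rightarrow(\ref{thm: activeu main5})$ and $(\ref{thm: activeu main5})\Leftrightarrow(\ref{thm: activeu main4})$.

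For $(\ref{thm: activeu main1})\Rightarrow(\ref{thm: activeu main2})$ I would fix an active admissible clause $\Sigma\Rightarrow\Delta$, with $\Sigma$ in the variables of a finite set $X$. Since it is active, $\Sigma$ is unifiable, so by Corollary~\ref{cor:unifiablesigma} the finitely presented algebra $\alg F_\vv Q(X)/\th_\vv Q(\Sigma)$ is unifiable, hence lies in $\II\SU\PP_u(\alg F_\vv Q(\o))$ by $(\ref{thm: activeu main1})$. Since it is admissible, $\alg F_\vv Q(\o)\models\Sigma\Rightarrow\Delta$ by Lemma~\ref{admissible}; as $\II$, $\SU$ and $\PP_u$ preserve validity of universal sentences, $\alg F_\vv Q(X)/\th_\vv Q(\Sigma)\models\Sigma\Rightarrow\Delta$, and then Lemma~\ref{lemma:FThetaSigma} yields $\vv Q\models\Sigma\Rightarrow\Delta$. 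As $\Sigma\Rightarrow\Delta$ was an arbitrary active admissible clause, $(\ref{thm: activeu main2})$ holds.

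The crux is $(\ref{thm: activeu main2})\Rightarrow(\ref{thm: activeu main5})$. Let $\alg A\in\vv Q$ be unifiable, with unifier $u\colon\alg A\to\alg P$, $\alg P$ projective in $\vv Q$; by the dictionary it is enough to show that $\alg A$ validates every admissible clause $\Sigma\Rightarrow\Delta$. So suppose $h$ is an assignment of the (finitely many) variables of $\Sigma$ to $\alg A$ with $\alg A,h\models\Sigma$. Exactly as in the proof of Lemma~\ref{lemma:FThetaSigma}, $\th_\vv Q(\Sigma)\sse\ker h$, so $h$ factors through the natural surjection $\pi_\Sigma$ as $h=\bar h\,\pi_\Sigma$ for a homomorphism $\bar h\colon\alg F_\vv Q(X)/\th_\vv Q(\Sigma)\to\alg A$. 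Then $u\bar h$ maps $\alg F_\vv Q(X)/\th_\vv Q(\Sigma)$ into the projective algebra $\alg P$, so this finitely presented algebra is unifiable, whence $\Sigma$ is unifiable by Corollary~\ref{cor:unifiablesigma}; that is, $\Sigma\Rightarrow\Delta$ is active. Being active and admissible, $(\ref{thm: activeu main2})$ gives $\vv Q\models\Sigma\Rightarrow\Delta$, so $\alg A,h\models p\app q$ for some $p\app q\in\Delta$; since $h$ was arbitrary, $\alg A\models\Sigma\Rightarrow\Delta$, and therefore $\alg A\in\II\SU\PP_u(\alg F_\vv Q(\o))$. I expect this to be the step demanding the most care: the key point is that any clause which some assignment into a unifiable algebra turns active must already be active in $\vv Q$, so that active universal completeness becomes applicable; the rest is routine diagram chasing.

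It remains to prove $(\ref{thm: activeu main5})\Leftrightarrow(\ref{thm: activeu main4})$. For $(\ref{thm: activeu main5})\Rightarrow(\ref{thm: activeu main4})$, observe that for any $\alg A\in\vv Q$ the second projection $\alg A\times\alg F_\vv Q\to\alg F_\vv Q$ is a homomorphism into the projective algebra $\alg F_\vv Q$, so $\alg A\times\alg F_\vv Q$ is unifiable by Lemma~\ref{free}, and $(\ref{thm: activeu main5})$ places it in $\II\SU\PP_u(\alg F_\vv Q(\o))$. For $(\ref{thm: activeu main4})\Rightarrow(\ref{thm: activeu main5})$, let $\alg A\in\vv Q$ be unifiable; by Lemma~\ref{free} there is a homomorphism $g\colon\alg A\to\alg F_\vv Q$, so $\langle\op{id}_\alg A,g\rangle\colon\alg A\to\alg A\times\alg F_\vv Q$ is an embedding, and since $\II\SU\PP_u(\alg F_\vv Q(\o))$ is closed under $\SU$ and contains $\alg A\times\alg F_\vv Q$ by $(\ref{thm: activeu main4})$, it contains $\alg A$. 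Together with the implications noted above, this yields all the equivalences.
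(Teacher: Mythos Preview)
Your proof is correct and follows essentially the same approach as the paper's. The only organizational difference worth noting is that you derive $(\ref{thm: activeu main4})$ from $(\ref{thm: activeu main5})$ via the observation that $\alg A\times\alg F_\vv Q$ is itself unifiable (through the second projection), whereas the paper derives $(\ref{thm: activeu main4})$ directly from $(\ref{thm: activeu main2})$ by a case split on whether the clause is valid in $\vv Q$; your route is slightly more direct, and your handling of $(\ref{thm: activeu main2})\Rightarrow(\ref{thm: activeu main5})$ merges the paper's case analysis into a single argument by showing that any assignment into a unifiable algebra satisfying $\Sigma$ forces $\Sigma$ to be unifiable.
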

\begin{proof}
	We start by showing that (\ref{thm: activeu main2}) implies (\ref{thm: activeu main5}).
	 Assume (\ref{thm: activeu main2}), and let $\Sigma \Rightarrow \Delta$ be such that $\alg F_\vv Q(\omega) \models \Sigma \Rightarrow \Delta$; equivalently, by Lemma \ref{admissiblederivable}, $\Sigma \Rightarrow \Delta$ is an admissible universal sentence in $\vv Q$. If $\Sigma$ is unifiable, by hypothesis $\Sigma \Rightarrow \Delta$ is valid in $\vv Q$. Suppose now that $\Sigma$ has variables in a finite set $X$ and it is not unifiable, that is, via Corollary \ref{cor:unifiablesigma} there is no homomorphism from $\alg F_{\vv Q}(X)/\theta_{\vv Q}(\Sigma)$ to $\alg F_{\vv Q}$. Let $\alg A$ be a unifiable algebra in $\vv Q$; we argue that there is no assignment of the variables in $\Sigma$ that validates $\Sigma$ in $\alg A$. Indeed otherwise the following diagram would commute and $\alg F_{\vv Q}(X)/\theta_{\vv Q}(\Sigma)$ would be unifiable, yielding a contradiction.
\begin{center}
\begin{tikzpicture}[scale=.9]
\node at (2,3) { $\alg F_{\vv Q}(X)$};
\node[left] at (6,3) {$\alg A$};
\node[left] at (8,3) {$\alg F_{\vv Q}$};
\node at (2,1) { $\alg F_{\vv Q}(X)/\theta_{\vv Q}(\Sigma)$};
\draw[->>] (2.7,3) -- (5.2,3);
\draw[->>] (6.1,3) -- (7.2,3);
\draw[->>] (2,2.7) -- (2,1.5);
\draw[->>] (2.2,1.5) -- (5.5,2.7);
\end{tikzpicture}
\end{center}

Therefore, $\Sigma \Rightarrow \Delta$ is vacuously satisfied in $\alg A$, which is any unifiable algebra in $\vv Q$, thus (\ref{thm: activeu main5}) holds. Now, clearly (\ref{thm: activeu main5}) implies (\ref{thm: activeu main1}), and (\ref{thm: activeu main1}) and (\ref{thm: activeu main3}) are equivalent by the definitions.

Let us show that (\ref{thm: activeu main3}) implies (\ref{thm: activeu main2}). Let $\Sigma \Rightarrow \Delta$ be an active admissible universal sentence in $\vv Q$ with variables in a finite set $X$; we want to show that it is also valid in $\vv Q$. Since by hypothesis $\Sigma \Rightarrow \Delta$ is active admissible, $\Sigma$ is unifiable, and therefore so is $\alg F_{\vv Q}(X)/\theta_{\vv Q}(\Sigma)$ by Corollary \ref{cor:unifiablesigma}. Then by (\ref{thm: activeu main3}), $\alg F_{\vv Q}(X)/\theta_{\vv Q}(\Sigma) \models \Sigma \Rightarrow \Delta$, which implies that $\vv Q \models \Sigma \Rightarrow \Delta$ by Lemma \ref{lemma:FThetaSigma}. Therefore the first four points are equivalent.

Finally, we show that (\ref{thm: activeu main2}) implies (\ref{thm: activeu main4}) and (\ref{thm: activeu main4}) implies (\ref{thm: activeu main5}), which completes the proof. We start with (\ref{thm: activeu main2}) $\Rightarrow$ (\ref{thm: activeu main4}).
Let $\alg A \in \vv Q$, and consider a clause $\Sigma \Rightarrow \Delta$ valid in $\alg F_{\vv Q}(\omega)$. We show that $\alg A \times \alg F_{\vv Q}(\omega) \models \Sigma \Rightarrow \Delta$. Now, if $\vv Q \models \Sigma \Rightarrow \Delta$, in particular $\alg A \times \alg F_{\vv Q}(\omega) \models \Sigma \Rightarrow \Delta$. Suppose that $\vv Q \not\vDash \Sigma \Rightarrow \Delta$. Since $\vv Q$ is active universally complete, $\Sigma \Rightarrow \Delta$ must be a passive rule, thus $\Sigma$ is not unifiable. Equivalently, there is no assignment $h$ of the variables in $\Sigma$ such that $\alg F_{\vv Q},h\models \Sigma$. Thus, there is also no assignment $h'$ of the variables in $\Sigma$ such that $\alg A \times \alg F_{\vv Q},h'\models \Sigma$, thus $\alg A \times \alg F_{\vv Q}(\omega) \models \Sigma \Rightarrow \Delta$.

It is left to prove (\ref{thm: activeu main4}) $\Rightarrow$ (\ref{thm: activeu main5}). 
Let $\alg A$ be a unifiable algebra in $\vv Q$, then there is a homomorphism $h: \alg A \to \alg F_\vv Q$ (Lemma \ref{free}). Consider the map $h': \alg A \to \alg A \times \alg F_{\vv Q}$ be defined as $h'(a) = (a, h(a))$. Clearly, $h'$ is an embedding of $\alg A$ into $\alg A \times \alg F_{\vv Q} \in \II \SU \PP_u(\alg F_{\vv Q}(\omega))$ (by (\ref{thm: activeu main4})). Thus also $\alg A \in \II \SU \PP_u(\alg F_{\vv Q}(\omega))$, which completes the proof.
\end{proof}
We observe that the previous characterization extends to universal sentences some of the results in \cite{DzikStronkowski2016} about active structural completeness.
We also get the usual result.
\begin{theorem} For a  quasivariety  $\vv Q$   the following are equivalent:
\begin{enumerate}
\item $\vv Q$ is active universally complete;
\item $\cc L_\vv Q$ is active universally complete.
\end{enumerate}
\end{theorem}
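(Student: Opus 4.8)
The plan is to leverage the general principle, used repeatedly in the preceding theorems, that all the relevant notions here (being active universally complete, being finitely presented, being unifiable, being in $\II\SU\PP_u$ of the smallest free algebra, being $\alg F_\vv Q$ itself) are preserved and reflected by categorical equivalences between quasivarieties, together with the Blok--Pigozzi connection from Subsection~\ref{subsec:universal}. Concretely, recall that $\cc L_\vv Q$ denotes a logic whose equivalent algebraic semantics $\vv R$ is categorically equivalent to $\vv Q$; by Theorem~\ref{admissiblederivable} and Proposition~\ref{activepassive}, $\vv R$ is active universally complete if and only if $\cc L_\vv Q$ is. So it suffices to show that active universal completeness is invariant under categorical equivalence of quasivarieties, i.e.\ if $F : \vv Q \to \vv R$ witnesses such an equivalence then $\vv Q$ is active universally complete iff $\vv R$ is.

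The key step is to match up the data on the two sides via $F$. First I would use the characterization Theorem~\ref{thm: activeu main}(\ref{thm: activeu main4}): $\vv Q$ is active universally complete iff $\alg A \times \alg F_\vv Q \in \II\SU\PP_u(\alg F_\vv Q(\o))$ for every $\alg A \in \vv Q$. Now, as recalled in Subsection~\ref{subsec:universal}, $F$ preserves embeddings, surjections, and direct products, and (since ultraproducts admit a categorical description) also ultraproducts; moreover $F$ preserves free algebras on a given finite set of generators, because projectivity and the property of being finitely generated are categorical (by \cite{GabrielUllmer1971}). In particular $F(\alg F_\vv Q(\o)) = \alg F_\vv R(\o)$ and $F(\alg F_\vv Q) = \alg F_\vv R$ (the smallest free algebra is free on $0$ or $1$ generators, a categorical specification). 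Hence $F$ sends the condition ``$\alg A \times \alg F_\vv Q \in \II\SU\PP_u(\alg F_\vv Q(\o))$'' for $\alg A \in \vv Q$ to the condition ``$\alg B \times \alg F_\vv R \in \II\SU\PP_u(\alg F_\vv R(\o))$'' for $\alg B = F(\alg A) \in \vv R$, and since $F$ is essentially surjective this runs over all $\alg B \in \vv R$. Applying Theorem~\ref{thm: activeu main} on both sides gives the equivalence.

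Assembling: given a quasivariety $\vv Q$, pick the quasivariety of logic $\vv R$ categorically equivalent to it with $\cc L_\vv R = \cc L_\vv Q$. By the categorical invariance just argued, $\vv Q$ is active universally complete iff $\vv R$ is. By Proposition~\ref{activepassive} the active admissible universal sentences of $\vv R$ correspond exactly to the active admissible clauses of $\cc L_\vv Q$, and by Theorem~\ref{admissiblederivable} derivability in $\cc L_\vv Q$ corresponds to validity in $\vv R$; hence $\vv R$ is active universally complete iff $\cc L_\vv Q$ is active universally complete. Chaining the two equivalences yields the claim.

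The main obstacle is the bookkeeping around which objects $F$ must preserve: one needs that $F$ carries $\alg F_\vv Q$ to $\alg F_\vv R$ (not merely some free algebra), that it commutes with $\PP_u$, and that it commutes with the binary product $\alg A \times \alg F_\vv Q$ appearing in condition (\ref{thm: activeu main4}). All of these are already recorded in the preliminaries --- products and ultraproducts are categorical, and the smallest free algebra has a categorical specification as the free algebra on the empty set or on one generator --- so the argument is really an application of the machinery rather than a new difficulty; it is exactly the same pattern used in the proofs of the analogous ``$\cc L_\vv Q$'' theorems for universal, non-negative universal, and active universal completeness earlier in this section.
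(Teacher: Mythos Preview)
Your proposal is correct and takes essentially the same approach as the paper, which argues in a single sentence that embeddings, ultraproducts, finite presentability, and unifiability are all categorical and hence preserved by the equivalence set up in Subsection~\ref{subsec:universal}. The only difference is that the paper implicitly invokes characterization~(\ref{thm: activeu main1}) of Theorem~\ref{thm: activeu main} rather than~(\ref{thm: activeu main4}); this sidesteps the need to argue that $F$ carries $\alg F_\vv Q$ and $\alg F_\vv Q(\omega)$ to their counterparts in $\vv R$ (your justification via ``finitely generated is categorical'' does not literally apply to $\alg F_\vv Q(\omega)$, though what the argument actually needs---that the universal class $\II\SU\PP_u(\alg F_\vv Q(\omega))$ is preserved---does hold).
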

\begin{proof}
	The result follows from the fact that embeddings, ultraproducts, being finitely presented and unifiable, are all categorical notions and thus preserved by categorical equivalence.
\end{proof}

Moreover, we have the following lemma whose proof is the same as the one of Theorems \ref{thm: unifiable} and \ref{thm: lf unifiable}.

\begin{theorem}
\label{lemma: ex unif auc}
If $\vv Q$ has exact (or projective) unifiers, then $\vv Q$ is active universally complete.  If $\vv Q$ is also locally finite then it is active universally complete if and only if it has exact unifiers.
\end{theorem}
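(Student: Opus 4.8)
The plan is to deduce both assertions from the characterization of active universal completeness given in Theorem~\ref{thm: activeu main}, together with the fact (recorded in the lemma identifying exactness with membership in $\SU(\alg F_\vv Q(\o))$ for finitely generated algebras) that a finitely generated algebra of $\vv Q$ is exact precisely when it lies in $\II\SU(\alg F_\vv Q(\o))$. So the whole argument is a short bookkeeping exercise chaining these earlier results.

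For the first implication, suppose $\vv Q$ has exact unifiers, and let $\alg A$ be a finitely presented unifiable algebra in $\vv Q$. By definition of exact unifiers $\alg A$ is exact; being finitely presented it is in particular finitely generated, so by the exactness lemma $\alg A \in \II\SU(\alg F_\vv Q(\o)) \sse \II\SU\PP_u(\alg F_\vv Q(\o))$. Hence condition~(\ref{thm: activeu main1}) of Theorem~\ref{thm: activeu main} is satisfied and $\vv Q$ is active universally complete. The parenthetical ``projective'' case is immediate, since any projective algebra is exact, so having projective unifiers entails having exact unifiers.

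For the converse, assume moreover that $\vv Q$ is locally finite and active universally complete. Local finiteness forces every finitely presented algebra of $\vv Q$ to be finite, so it suffices to prove that every finite unifiable $\alg A \in \vv Q$ is exact. By active universal completeness and Theorem~\ref{thm: activeu main}(\ref{thm: activeu main1}) we have $\alg A \in \II\SU\PP_u(\alg F_\vv Q(\o))$; since $\vv Q$ is locally finite, Lemma~\ref{lemma: techlemma} then yields that every finite subalgebra of $\alg A$ — in particular $\alg A$ itself — lies in $\II\SU(\alg F_\vv Q(\o))$, so $\alg A$ is exact by the exactness lemma. Therefore $\vv Q$ has exact unifiers.

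I do not expect a genuine obstacle here: all the real content has already been isolated in Theorem~\ref{thm: activeu main} and Lemma~\ref{lemma: techlemma}, and the only point requiring a little care is the passage between the two equivalent descriptions of exactness for finitely generated algebras, which is exactly where local finiteness (making finitely presented algebras finite) is used in the second half.
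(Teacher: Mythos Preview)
Your proof is correct and follows exactly the approach the paper intends: the paper's own proof simply says the argument is the same as that of Theorems~\ref{thm: unifiable} and~\ref{thm: lf unifiable}, and you have carried out precisely that adaptation, replacing Theorem~\ref{prop:universal} by Theorem~\ref{thm: activeu main} and invoking Lemma~\ref{lemma: techlemma} for the locally finite converse. One tiny bookkeeping remark: in the converse direction you pass to an arbitrary \emph{finite} unifiable $\alg A$ and then invoke item~(\ref{thm: activeu main1}) of Theorem~\ref{thm: activeu main}, which is stated for finitely presented algebras; either keep $\alg A$ finitely presented throughout (it is then finite by local finiteness, and the rest goes through unchanged), or cite item~(\ref{thm: activeu main5}) instead, which applies to all unifiable algebras.
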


\begin{example}\label{ex:discriminator}
A {\em discriminator} on a set $A$ is a ternary operation $t$ on $A$ defined by
$$
t(a,b,c) =\left\{
            \begin{array}{ll}
              a, & \hbox{if $a\ne b$;} \\
              c, & \hbox{if $a=b$.}
            \end{array}
          \right.
$$
A variety $\vv V$ is a {\em discriminator variety} \cite{Pixley1971} if there is a ternary term that is the discriminator on all the subdirectly irreducible members of $\vv V$. Discriminator varieties have many strong properties: for instance they are congruence permutable and congruence distributive.

 In \cite[Theorem 3.1]{Burris1992} it has been essentially shown that discriminator varieties have  projective unifiers, and therefore they are all active universally complete by Theorem \ref{lemma: ex unif auc}.
\end{example}

\begin{example}\label{ex: BL-algebras au}
	We now see some examples within the algebraic semantics of many-valued logics; in \cite{AglianoUgolini2022} it has been shown that in any locally finite variety of bounded hoops or $\mathsf {BL}$-algebras (the equivalent algebraic semantics of H\'ajek Basic Logic \cite{Hajek98}), the finite unifiable algebras are exactly the finite projective algebras. It follows
that any of such varieties has projective unifiers and hence it is active universally complete. This holds also for any locally finite quasivariety of bounded hoops or $\mathsf {BL}$-algebras, or their reducts, i.e., bounded $\mathsf{HBCK}$-algebras.

In contrast with the case of (unbounded) hoops, not all of them are non-negative universally complete, as we will now discuss. Let us call {\em chain} a totally orderef algebra. Every finite BL-chain is an ordinal sum of finite Wajsberg hoops, the first of which is an MV-algebra \cite{AglianoMontagna2003}. No finite MV-chain different from the 2-element Boolean algebra $\alg 2$ is unifiable (they are all simple and the least free algebra is $\alg 2$), and thus not exact. It follows by basic facts about ordinal sums that if a locally finite quasivariety $\vv Q$ of BL-algebras contains a chain whose first component is different from $\alg 2$, $\vv Q$ is not non-negative universally complete. The same holds, mutatis mutandis, for bounded hoops and bounded $\mathsf{HBCK}$-algebras.
  In Section \ref{sec:FL} we shall see a different class of (discriminator) varieties coming from many-valued logics that are active universally complete.
\end{example}
\begin{definition}
	We call a quasivariety $\vv Q$  {\em active primitive universal} if every subquasivariety of $\vv Q$ is active universally complete.  
\end{definition}
It is evident from the characterization theorem of active universally complete quasivarieties that a variety $\vv Q$ is active primitive universal if and only if $\cc L_\vv Q$ is hereditarily active universally complete. We have the following fact:

\begin{theorem}\label{thm: activeprimitive} Suppose that $\vv Q$ is a quasivariety such that $\alg F_\vv Q = \alg F_{\vv Q'}$  for all $\vv Q'\sse \vv Q$. If $\vv Q$ has projective unifiers then it is active primitive universal.
\end{theorem}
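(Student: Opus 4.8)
The plan is to combine the hypothesis $\alg F_\vv Q = \alg F_{\vv Q'}$ for all $\vv Q' \sse \vv Q$ with Lemma \ref{lemma:dzik2} to deduce that every subquasivariety $\vv Q'$ of $\vv Q$ has projective unifiers, and then to invoke Theorem \ref{lemma: ex unif auc} (projective unifiers imply active universal completeness) applied to $\vv Q'$. The key point is that Lemma \ref{lemma:dzik2} is exactly the statement that, under the hypothesis $\alg F_\vv Q = \alg F_{\vv Q'}$ for all $\vv Q' \sse \vv Q$, having projective unifiers is inherited by all subquasivarieties. So the argument is essentially a two-line composition of results already in the excerpt.

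First I would let $\vv Q'$ be an arbitrary subquasivariety of $\vv Q$. I would observe that the hypothesis on $\vv Q$ — namely $\alg F_\vv Q = \alg F_{\vv Q''}$ for every $\vv Q'' \sse \vv Q$ — is exactly the hypothesis of Lemma \ref{lemma:dzik2}; note that the hypothesis is stated for all subquasivarieties of $\vv Q$, so in particular it applies as needed inside the proof of that lemma. Since $\vv Q$ has projective unifiers by assumption, Lemma \ref{lemma:dzik2} yields that $\vv Q'$ has projective unifiers.

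Next I would apply Theorem \ref{lemma: ex unif auc}: any quasivariety with projective unifiers is active universally complete. Hence $\vv Q'$ is active universally complete. Since $\vv Q'$ was an arbitrary subquasivariety of $\vv Q$, this shows that every subquasivariety of $\vv Q$ is active universally complete, i.e., $\vv Q$ is active primitive universal, as desired.

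There is no real obstacle here: the only thing to be careful about is making sure the hypothesis ``$\alg F_\vv Q = \alg F_{\vv Q'}$ for all $\vv Q' \sse \vv Q$'' is genuinely inherited when we pass from $\vv Q$ to a subquasivariety $\vv Q'$ — but this is immediate, since every subquasivariety of $\vv Q'$ is also a subquasivariety of $\vv Q$, so the smallest free algebra of $\vv Q'$ agrees with that of $\vv Q$ and hence with that of every $\vv Q'' \sse \vv Q'$. Thus Lemma \ref{lemma:dzik2} can be applied directly to $\vv Q$ to get projective unifiers for all of its subquasivarieties in one shot, and Theorem \ref{lemma: ex unif auc} finishes the argument.
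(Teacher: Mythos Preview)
Your proposal is correct and follows exactly the same approach as the paper's own proof, which simply states that the result follows from Theorem~\ref{lemma: ex unif auc} and Lemma~\ref{lemma:dzik2}. Your additional remark about the hypothesis being inherited by subquasivarieties is accurate but unnecessary, since Lemma~\ref{lemma:dzik2} already delivers projective unifiers for every subquasivariety of $\vv Q$ in one application.
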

\begin{proof}  The proof follows from Theorem \ref{lemma: ex unif auc} and Lemma \ref{lemma:dzik2}.
\end{proof}

All varieties in Example \ref{ex: BL-algebras au} satisfy the hypotheses of Theorem \ref{thm: activeprimitive} (as the reader can easily check). For discriminator varieties all the examples of lattice-based varieties in  Section \ref{sec:FL} of this paper (but see also \cite{Burris1992} or  \cite{Citkin2018a} for more examples) have the same property; hence they are all active primitive universal.
 
Now, a variety is {\em q-minimal} if it does not have any proper nontrivial subquasivariety; so a q-minimal variety is necessarily equationally complete.  We have this result by Bergman and McKenzie:

\begin{theorem}\label{thm: mckenziebergman}  \cite{BergmanMcKenzie1990} A locally finite equationally complete  variety is q-minimal  if and only if it has exactly one subdirectly irreducible algebra that is embeddable in any nontrivial member of the variety. Moreover, this is always the case if the variety is congruence modular.
\end{theorem}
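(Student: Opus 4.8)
The plan is to prove the two implications of the equivalence separately and then to deduce the ``moreover'' clause from the commutator calculus; I read the right-hand side as the assertion that, up to isomorphism, $\vv V$ has a unique subdirectly irreducible algebra $\alg S$ and that $\alg S$ embeds into every nontrivial member of $\vv V$. Suppose first that $\vv V$ is q-minimal. Being locally finite and nontrivial, $\vv V$ contains a finite subdirectly irreducible algebra $\alg S$ (pass from a nontrivial finitely generated, hence finite, algebra to a nontrivial subdirectly irreducible quotient), and then $\QQ(\alg S)$ is a nontrivial subquasivariety of $\vv V$, so $\QQ(\alg S)=\vv V$. I first claim $\alg S$ embeds into every nontrivial $\alg A\in\vv V$: choosing a nontrivial finite subalgebra $\alg B\le\alg A$, q-minimality gives $\QQ(\alg B)=\vv V$, so $\alg S\in\II\SU\PP(\alg B)$ and, $\alg S$ being finite, $\alg S$ embeds into $\alg B^J$ for some finite $J$; since $\alg S$ has a monolith, the kernels of the coordinate projections $\alg B^J\to\alg B$ restricted to the image of $\alg S$ meet to $0_{\alg S}$, so one of them is $0_{\alg S}$, and that projection embeds $\alg S$ into $\alg B$, hence into $\alg A$. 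Next, by Theorem~\ref{quasivariety}(2) applied to $\vv V=\QQ(\alg S)$ and the fact that ultraproducts of the finite algebra $\alg S$ are isomorphic to $\alg S$, we get $\vv V_{rsi}\sse\II\SU\PP_u(\alg S)=\II\SU(\alg S)$; as every subdirectly irreducible algebra is relatively subdirectly irreducible, each $\alg T\in\vv V_{si}$ embeds into $\alg S$, and combining this with the claim (which embeds $\alg S$ into $\alg T$) and finiteness yields $\alg T\cong\alg S$. Hence $\alg S$ is the unique subdirectly irreducible algebra of $\vv V$, and the right-hand side holds.

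For the converse, let $\alg S$ be the unique subdirectly irreducible algebra of $\vv V$, embeddable into every nontrivial member. By Birkhoff (Theorem~\ref{birkhoff}(1)) every algebra of $\vv V$ is a subdirect product of copies of $\alg S$, so $\vv V\sse\II\SU\PP(\alg S)\sse\QQ(\alg S)\sse\vv V$, i.e.\ $\vv V=\QQ(\alg S)$. If $\vv R$ is a nontrivial subquasivariety of $\vv V$ and $\alg A\in\vv R$ is nontrivial, then $\alg S\in\SU(\alg A)\sse\vv R$, so $\vv V=\QQ(\alg S)\sse\vv R$ and hence $\vv R=\vv V$. Thus $\vv V$ is q-minimal.

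For the ``moreover'', assume in addition that $\vv V$ is congruence modular and let $\alg S$ be a subdirectly irreducible member of $\vv V$ of least (finite) cardinality; a minimality-of-cardinality argument, using that $\vv V$ is equationally complete, shows $\alg S$ is simple and $\vv V=\VV(\alg S)$. The modular commutator splits the argument in two. If $\alg S$ is abelian, then $\vv V$ is affine, hence term-equivalent to a variety of modules over a finite ring which, by minimality of $\vv V$, may be taken to be a simple (hence Artinian) ring $R\cong M_k(D)$; then $\vv V$ is semisimple, its unique simple (= subdirectly irreducible) module $D^k$ embeds into every nonzero module, and the right-hand side holds. Otherwise $\vv V$ has no nontrivial abelian members and is therefore congruence distributive; then J\'onsson's Lemma (Theorem~\ref{birkhoff}(2)) gives $\vv V_{si}\sse\HH\SU\PP_u(\alg S)=\HH\SU(\alg S)$, and since $\alg S$ is simple and, again by minimality, has no proper nontrivial subalgebras, this forces $\vv V_{si}=\{\alg S\}$, after which one checks that $\alg S$ embeds into every nontrivial member of $\vv V$. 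In both cases the right-hand side holds, and the previous paragraph gives q-minimality.

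I expect the real work to be the ``moreover'' clause: organizing the abelian/non-abelian dichotomy through the modular commutator, in the abelian case reducing the affine variety to modules over a simple Artinian ring and reading off semisimplicity, and in the congruence distributive case upgrading ``$\alg S$ is a homomorphic image of every nontrivial member'' to ``$\alg S$ embeds into every nontrivial member'' for the strictly simple generator $\alg S$. The remaining steps are routine applications of the Birkhoff--Mal'cev--J\'onsson--Czelakowski--Dziobiak machinery recalled in Section~\ref{sec: start}.
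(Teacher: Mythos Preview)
The paper does not give its own proof of this statement: Theorem~\ref{thm: mckenziebergman} is simply quoted from \cite{BergmanMcKenzie1990} and used as a black box, so there is no in-paper argument to compare your proposal against.

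That said, your proof of the equivalence is correct and is essentially the standard one. For the forward direction, the monolith argument showing that a finite subdirectly irreducible algebra embedded in a finite power $\alg B^J$ must already embed in $\alg B$ is exactly right, and the appeal to Theorem~\ref{quasivariety}(2) together with $\PP_u(\alg S)=\II(\alg S)$ for finite $\alg S$ cleanly gives uniqueness. The converse is routine.

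For the ``moreover'' clause your outline is along the right lines (and you correctly flag where the work lies), but be aware of two genuine soft spots. First, in the non-abelian case you assert that $\vv V$ ``has no nontrivial abelian members and is therefore congruence distributive'': what follows from commutator theory is that a congruence modular variety with no nontrivial abelian \emph{congruences} is congruence distributive, and passing from ``$\alg S$ is non-abelian'' to that statement for all of $\vv V$ uses equational completeness and needs to be made explicit. Second, the step you yourself highlight---upgrading ``$\alg S$ is a quotient of every nontrivial member'' to ``$\alg S$ embeds into every nontrivial member'' for a strictly simple $\alg S$ in the congruence distributive case---is not a one-liner; in Bergman--McKenzie this is where the substantive argument sits (one route: show every nontrivial finite $\alg A\in\vv V$ has a one-generated subalgebra that maps onto $\alg S$, then use strict simplicity to force that subalgebra to be $\alg S$ itself, but this needs care). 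Your abelian/affine branch is broadly correct but also compresses nontrivial module theory. In short: the skeleton is right, but the ``moreover'' part is a genuine theorem of \cite{BergmanMcKenzie1990}, not a corollary of the machinery in Section~\ref{sec: start}.
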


It follows immediately that every active universally complete q-minimal variety is active primitive universal. 
\begin{example}
Discriminator varieties are active universally complete as seen in example \ref{ex:discriminator}.
	Now, given a finitely generated discriminator variety $\vv V$, it is generated by a finite algebra $\alg A$ having a discriminator term on it, also called a  {\em quasi-primal} algebra. 	By \cite{Werner1970} $\vv V$ is equationally complete and, since it is congruence modular, it is q-minimal; hence $\vv V$ is active primitive universal.
\end{example}

Finally, we observe that Lemma \ref{lemma:dzik2} cannot be improved to ``having exact unifiers'' and the counterexample is given by {\em De Morgan lattices}; we will see below that they form an active universally complete variety that is not active primitive universal.

\begin{example}\label{ex: De Morgan}
	A  De Morgan lattice is a distributive lattice with a unary operation $\neg$ which is involutive and satisfies the De Morgan Laws.  It is well-known that the variety $\mathsf{DM}$ of De Morgan lattices is locally finite and  has exactly two proper non trivial subvarieties, i.e. the variety $\mathsf{BLa}$ of Boolean lattices (axiomatized by $x \le y \join \neg y$) and the variety $\mathsf{KL}$ of Kleene lattices (axiomatized by $x \meet \neg x \le y \join \neg y$). It is easily seen that all these nontrivial varieties have the same one-generated free algebra whose universe is  $\{x, \neg x, x \join \neg x, x \meet \neg x\}$. It follows that all the subquasivarieties of De Morgan lattices have the same least free algebra and $\mathsf{DM}$ satisfies the hypotheses of Theorem \ref{thm: activeprimitive}.  Admissibility in  De Morgan lattices has been  investigated in \cite{MetcalfeRothlisberger2012} and \cite{CabrerMetcalfe2015a}. Now for
 a finite algebra $\alg A \in \mathsf{DM}$ the following are equivalent:
\begin{enumerate}
\item $\alg A$ is unifiable;
\item the universal sentence $\{x \app \neg x\} \Rightarrow \emptyset$ is valid in $\alg A$;
\item $\alg A \in \II\SU(\alg F_\vv{DM}(\o))$.
\end{enumerate}
The equivalence of (2) and (3) has been proved in \cite[Lemma 28]{CabrerMetcalfe2015a}, while (3) implies (1) trivially. If we assume that (2) does not hold for $\alg A$, then there is an $a \in A$ with $\neg a = a$; so if
$f:\alg A \longrightarrow \alg F_\vv{DM} (x)$ is a homomorphism and $f(a) = \f$, then $\f = \neg \f$. But there is no element in $\alg F_\vv{DM} (x)$ with that property, so $\alg A$ cannot be unifiable.
This concludes the proof of the equivalence of the three statements.

Therefore $\vv{DM}$ has exact unifiers and thus it is active universally complete by Theorem \ref{lemma: ex unif auc}. Now consider the subvariety of $\vv{DM}$ of Kleene lattices. In \cite{CabrerMetcalfe2015a} it is shown that the universal sentence
$$
\Phi := \{x \le \neg x, x \meet \neg y \le \neg x \join y\} \Rightarrow \neg y \le y
$$
is admissible in $\mathsf{KL}$. It is also active, as the reader can easily check that the substitution $x \longmapsto z \land \neg z$, $y \longmapsto \neg z$  unifies the premises of $\Phi$.
However it fails in the three element Kleene lattice  $\alg K_3$ in Figure \ref{Kleene}, with the assignment $x =a$, $y=\neg a$;
hence $\mathsf{KL}$ is not active universally complete. So $\vv{DM}$ is a variety that is active universally complete but not active primitive universal.

\begin{figure}[h]
\begin{center}
\begin{tikzpicture}[scale=.8]
\draw (0,0) -- (0,2);
\draw[fill] (0,0) circle [radius=0.05];
\draw[fill] (0,1) circle [radius=0.05];
\draw[fill] (0,2) circle [radius=0.05];
\node[right]  at (0,1) {$\neg b = b$};
\node[right]  at (0,0) {$\neg a$};
\node[right]  at (0,2) {$a$};
\end{tikzpicture}
\caption{The lattice $\alg K_3$ }\label{Kleene}
\end{center}
\end{figure}
\end{example}

Note that in $\mathsf{KL}$ there must be a finite unifiable algebra that is not exact (since $\mathsf{KL}$ cannot have exact unifiers).  Now a finite Kleene lattice $\alg A$ is exact if and only if both
$\{x \app \neg x\} \Rightarrow \emptyset$ and $\Phi$ are valid in $\alg A$ \cite[Lemma 38]{CabrerMetcalfe2015a}. Let $\alg A = \alg K_3 \times \mathbf 2$; the reader can easily check that $\alg A$ is unifiable in $\mathsf{KL}$
(since it satisfies $\{x \app \neg x\} \Rightarrow \emptyset$  and hence it is unifiable in $\mathsf{DM}$) but does not satisfy $\Phi$. This shows (as promised) that Lemma \ref{lemma:DZlemma} cannot be improved.

\subsection{Passive universal quasivarieties}
We will now see that passive universal completeness in a quasivariety corresponds to an algebraic notion we have already introduced: unifiability. Moreover, we shall see that it corresponds to the apparently weaker notion of negative universal completeness, that is,
every (passive) admissible negative universal sentence is derivable. We recall that a quasivariety $\vv Q$ is unifiable if every finitely presented algebra in $\vv Q$ is unifiable.
\begin{theorem}\label{thm: passive universal}
	For every quasivariety $\vv Q$ the following are equivalent:
	\begin{enumerate}
		\item $\vv Q$ is passive universally complete;
		\item $\vv Q$ is negative universally complete;
		\item $\vv Q$ is unifiable.
	\end{enumerate}
\end{theorem}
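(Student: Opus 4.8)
The plan is to prove the cycle of implications (1) $\Rightarrow$ (2) $\Rightarrow$ (3) $\Rightarrow$ (1), since (1) $\Rightarrow$ (2) is immediate from the definitions: a negative universal sentence $\Sigma \Rightarrow \emptyset$ is in particular passive (as already noted after the definition of passive clause, every admissible negative clause is necessarily passive), so if every passive admissible universal sentence is valid, then so is every negative one.

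For (2) $\Rightarrow$ (3), I would argue by contraposition. Suppose $\vv Q$ is not unifiable; then there is a finitely presented algebra $\alg A = \alg F_\vv Q(X)/\th_\vv Q(\Sigma)$ (for some finite $X$ and finite set of equations $\Sigma$) that is not unifiable in $\vv Q$. By Corollary \ref{cor:unifiablesigma}, the set $\Sigma$ itself is not unifiable in $\vv Q$. Consider the negative universal sentence $\Sigma \Rightarrow \emptyset$. Since no substitution unifies $\Sigma$, the sentence is vacuously admissible in $\vv Q$ (the admissibility condition is an implication with false antecedent for every substitution). On the other hand $\Sigma \Rightarrow \emptyset$ is not valid in $\vv Q$: it fails in $\alg F_\vv Q(X)/\th_\vv Q(\Sigma)$ itself, under the natural assignment $x \mapsto x/\th_\vv Q(\Sigma)$, which by construction satisfies all equations of $\Sigma$ while $\emptyset$ contains nothing to satisfy (so this assignment witnesses $\alg F_\vv Q(X)/\th_\vv Q(\Sigma) \not\vDash \Sigma \Rightarrow \emptyset$, unless that algebra is trivial — but if it were trivial then the constant map would be a unifier, contradicting non-unifiability). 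Hence $\vv Q$ is not negative universally complete.

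For (3) $\Rightarrow$ (1), assume $\vv Q$ is unifiable and let $\Sigma \Rightarrow \Delta$ be a passive admissible universal sentence in $\vv Q$, with variables in a finite set $X$. Passivity means $\Sigma$ is not unifiable, i.e. by Corollary \ref{cor:unifiablesigma} the finitely presented algebra $\alg B := \alg F_\vv Q(X)/\th_\vv Q(\Sigma)$ is not unifiable in $\vv Q$. But $\vv Q$ is unifiable, so every finitely presented algebra in $\vv Q$ is unifiable; the only way out is that $\alg B$ is not a genuine ``new'' obstruction — concretely, I claim $\alg B$ must be trivial. Indeed, $\alg B$ is finitely presented in $\vv Q$, hence unifiable by hypothesis, contradicting the previous sentence unless we reconcile the two: the resolution is that $\alg B$ being non-unifiable together with $\vv Q$ being unifiable is contradictory, so in fact no passive $\Sigma \Rightarrow \Delta$ with $\alg B$ nontrivial can exist at all, and if $\alg B$ is trivial then it validates $\Sigma \Rightarrow \Delta$ vacuously only when $\Delta \neq \emptyset$; when $\Delta = \emptyset$ a trivial $\alg B$ means the constant map unifies $\Sigma$, contradicting passivity. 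So: passivity of $\Sigma \Rightarrow \Delta$ forces $\alg B$ non-unifiable, which under hypothesis (3) is impossible, so $\vv Q$ has no passive admissible universal sentences that fail — every passive clause is vacuously valid because $\alg F_\vv Q(X)/\th_\vv Q(\Sigma) \vDash \Sigma \Rightarrow \Delta$ holds (its defining assignment cannot satisfy $\Sigma$, else $\alg B$ would be unifiable), and then by Lemma \ref{lemma:FThetaSigma} we conclude $\vv Q \vDash \Sigma \Rightarrow \Delta$.

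The main subtlety — really the only one — is making the last step clean: the point is that unifiability of $\vv Q$ means $\alg F_\vv Q(X)/\th_\vv Q(\Sigma)$ admits a homomorphism to $\alg F_\vv Q$ (Lemma \ref{free}), and composing with an assignment would then yield a unifier for $\Sigma$, so passivity is only possible when there is in fact \emph{no} assignment into \emph{any} algebra of $\vv Q$ — in particular none into $\alg F_\vv Q(X)/\th_\vv Q(\Sigma)$ — that satisfies $\Sigma$; wait, that last clause needs care since the natural assignment does satisfy $\Sigma$. The correct formulation is: if $\vv Q$ is unifiable then $\alg F_\vv Q(X)/\th_\vv Q(\Sigma)$ is unifiable, hence $\Sigma$ is unifiable by Corollary \ref{cor:unifiablesigma}, hence $\Sigma \Rightarrow \Delta$ is \emph{active}, not passive. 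Thus under (3) there simply are no passive admissible universal sentences (other than those already vacuous), so passive universal completeness holds trivially. This reframing is the anticipated obstacle and resolving it is where I would spend the care; the rest is bookkeeping with Corollary \ref{cor:unifiablesigma}, Lemma \ref{free}, and Lemma \ref{lemma:FThetaSigma}.
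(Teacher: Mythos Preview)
Your proof is correct and follows essentially the same cycle of implications as the paper. Your argument for (3) $\Rightarrow$ (1) is unnecessarily convoluted, however: once you observe (as you eventually do) that unifiability of $\vv Q$ means every finite $\Sigma$ is unifiable, there are simply no passive clauses at all and the conclusion is immediate---the paper dispatches this in a single sentence, and for (2) $\Rightarrow$ (3) it uses the trivial algebra (rather than $\alg F_\vv Q(X)/\th_\vv Q(\Sigma)$) as the witness of failure, which is slightly cleaner.
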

\begin{proof}
Assume (1) and let $\Sigma \Rightarrow \emptyset$ be a negative admissible universal sentence; then it is necessarily passive, since there is no substitution that unifies $\emptyset$. Thus, by (1), $\Sigma \Rightarrow \emptyset$ is valid in $\vv Q$.
	
Assume now (2), we prove that it implies (3) by contrapositive. Suppose that $\vv Q$ is not unifiable, that is, there exists a finite set of identities $\Sigma$ that is not unifiable. Then the negative universal sentence $\Sigma \Rightarrow \emptyset$ is (passively) admissible, but it is not derivable (in particular, it fails in the trivial algebra).
	
	Finally, if (3) holds, then (1) trivially holds, since if every set of identities is unifiable there is no passive admissible clause.
\end{proof}
In some cases, we can improve the previous result.
\begin{lemma}\label{lemma:Qunifiable}
Let $\vv Q$ be a quasivariety such that $\II(\alg F_{\vv Q}) = \II\PP_u(\alg F_{\vv Q})$, then the following are equivalent.
\begin{enumerate}
		\item $\vv Q$ is unifiable;
	\item every algebra in $\vv Q$ is unifiable.
\end{enumerate}
\end{lemma}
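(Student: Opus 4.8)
The plan is to prove the statement by showing that $(1)\Rightarrow(2)$, the converse being trivial. So assume $\vv Q$ is unifiable, i.e.\ every finitely presented algebra in $\vv Q$ is unifiable, and assume moreover the hypothesis $\II(\alg F_{\vv Q}) = \II\PP_u(\alg F_{\vv Q})$; I want to produce a homomorphism from an arbitrary $\alg A \in \vv Q$ to $\alg F_{\vv Q}$, which by Lemma \ref{free} is exactly what unifiability of $\alg A$ amounts to. The natural move is to express $\alg A$ as a directed colimit (union of a directed system) of its finitely generated subalgebras $\{\alg A_i\}_{i\in I}$. Each $\alg A_i$ is a homomorphic image of some finitely generated free algebra, hence is finitely \emph{presented} only if the quasivariety is, say, locally finite or otherwise well-behaved --- this is where care is needed; but in any case every finitely generated algebra is a homomorphic image of $\alg F_\vv Q(\o)$, and the truly finitely presented algebras are cofinal in a suitable sense. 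The cleanest route is: every finitely generated subalgebra $\alg A_i$ is a directed colimit of finitely presented algebras, each of which is unifiable by hypothesis; one then has to lift these unifiers coherently.

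**The key steps.** First I would reduce to the finitely generated case: since $\alg A = \bigcup_{i} \alg A_i$ with the $\alg A_i$ finitely generated, and since a homomorphism $\alg A \to \alg F_{\vv Q}$ is the same as a compatible family of homomorphisms $\alg A_i \to \alg F_{\vv Q}$, I need to handle the directed system. Second, for each finitely generated $\alg A_i$, write it as $\alg F_\vv Q(X_i)/\th$ for a congruence $\th$ which is a directed join of finitely generated (relative) congruences $\th = \bigvee_j \th^{(j)}_i$; then $\alg A_i$ is the directed colimit of the finitely presented algebras $\alg F_\vv Q(X_i)/\th^{(j)}_i$, each of which is unifiable by assumption, yielding homomorphisms $u^{(j)}_i : \alg F_\vv Q(X_i)/\th^{(j)}_i \to \alg F_{\vv Q}$. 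Third --- and this is the crux --- I must pass to the limit: the composites $\alg F_\vv Q(X_i) \twoheadrightarrow \alg F_\vv Q(X_i)/\th^{(j)}_i \xrightarrow{u^{(j)}_i} \alg F_{\vv Q}$ form a family of homomorphisms $\alg F_\vv Q(X_i)\to \alg F_{\vv Q}$, and I want one that factors through $\alg A_i = \alg F_\vv Q(X_i)/\th$. Here is exactly where the hypothesis $\II(\alg F_{\vv Q}) = \II\PP_u(\alg F_{\vv Q})$ enters: take the ultraproduct of the $u^{(j)}_i$'s over an ultrafilter on the index set $j$ refining the directed order; the ultraproduct of copies of $\alg F_{\vv Q}$ is, by hypothesis, (isomorphic to) $\alg F_{\vv Q}$ again, and the induced map kills $\th$ since each generator of $\th$ lies in $\th^{(j)}_i$ for all large $j$. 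This yields a homomorphism $\alg A_i \to \alg F_{\vv Q}$.

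**The main obstacle.** The delicate point is the final coherence/colimit argument: producing, for the whole algebra $\alg A$, a \emph{single} compatible family of homomorphisms $\alg A_i \to \alg F_{\vv Q}$ rather than merely one for each $\alg A_i$ in isolation. One clean way around this is to avoid gluing across $i$ altogether: represent the \emph{whole} algebra $\alg A$ directly as a directed colimit $\alg A = \varinjlim_k \alg B_k$ of finitely presented algebras $\alg B_k$ (this is possible for any algebra in a quasivariety, since relative congruences on a free algebra form an algebraic lattice), obtain unifiers $u_k : \alg B_k \to \alg F_{\vv Q}$ from the hypothesis, and then take an ultraproduct $\prod_k \alg F_{\vv Q}/\mathcal U \cong \alg F_{\vv Q}$ along an ultrafilter $\mathcal U$ on the (directed) index set containing all the up-sets. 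The compatibility conditions of the colimit guarantee that the map induced on $\alg A$ by the $u_k$'s is well defined, and landing in the ultrapower-which-is-$\alg F_{\vv Q}$ gives the desired homomorphism $\alg A \to \alg F_{\vv Q}$. So the real work is bookkeeping with the directed system of finitely presented quotients and verifying that the ultraproduct construction respects the colimit cocone; the algebraic input --- "finitely presented $\Rightarrow$ unifiable" plus "$\alg F_{\vv Q}$ absorbs ultrapowers" --- does the rest.
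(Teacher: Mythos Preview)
Your approach is correct but takes a considerably longer route than the paper. You argue directly: write $\alg A$ as a directed colimit $\varinjlim_k \alg B_k$ of finitely presented algebras, pick unifiers $u_k:\alg B_k\to\alg F_{\vv Q}$, and assemble them through an ultraproduct indexed by the directed system. One point needs sharpening: the $u_k$'s are \emph{not} a cocone (they need not commute with the transition maps), so the sentence ``the compatibility conditions of the colimit guarantee that the map induced on $\alg A$ by the $u_k$'s is well defined'' is misleading. What actually happens is that there is a canonical homomorphism $\alg A\to\prod_k\alg B_k/\mathcal U$ (send $a\in A$, represented by some $b\in\alg B_{k_0}$, to the $\mathcal U$-class of the tuple whose $k$-th entry is $t_{k_0k}(b)$ for $k\ge k_0$; different choices of representative agree on a set in $\mathcal U$ because the colimit identifies them at some stage), and then one composes with $\prod_k u_k/\mathcal U$ and the isomorphism $\prod_k\alg F_{\vv Q}/\mathcal U\cong\alg F_{\vv Q}$. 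Your earlier treatment of the finitely generated case already contains the correct kernel computation, so the idea is sound once this step is made explicit.

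The paper's proof, by contrast, is a three-line contrapositive. If $\alg A=\alg F_{\vv Q}(X)/\theta$ is not unifiable then $\alg F_{\vv Q}\models\Sigma_\theta\Rightarrow\emptyset$ where $\Sigma_\theta=\{t\approx u:(t,u)\in\theta\}$. The hypothesis $\II(\alg F_{\vv Q})=\II\PP_u(\alg F_{\vv Q})$ is invoked through the standard fact that the equational consequence relation relative to a class is finitary precisely when the class is closed under ultraproducts; hence some finite $\Sigma'\subseteq\Sigma_\theta$ already satisfies $\alg F_{\vv Q}\models\Sigma'\Rightarrow\emptyset$, and the associated finitely presented algebra is a non-unifiable witness. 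This packages the ultraproduct hypothesis as a compactness statement and avoids colimits altogether. Your construction is essentially the direct, ``unwound'' dual of that compactness argument; it buys an explicit homomorphism $\alg A\to\alg F_{\vv Q}$ at the cost of the colimit-to-ultraproduct bookkeeping.
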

\begin{proof}
	We prove the nontrivial direction by contraposition. Consider an arbitrary algebra $\alg A \in \vv Q$ and assume that it is not unifiable; without loss of generality we let $\alg A = \alg F_\vv Q(X)/\theta$ for some set $X$ and some relative congruence $\theta$. Since $\alg A$ is not unifiable, there is no assignment $h: \alg F_\vv Q(X) \to \alg F_\vv Q$ such that $\alg F_\vv Q, h \models \Sigma_\theta$, where $\Sigma_\theta = \{t \approx u : (t,u) \in \theta\}$. Equivalently, iff $\alg F_\vv Q \models \Sigma_\theta \Rightarrow \emptyset$. Now, the equational consequence relation relative to a class of algebras $\vv K$ is finitary if and only if $\vv K$ is closed under ultraproducts (see for instance \cite{Raftery2006}); thus by hypothesis the equational consequence relation relative to $\alg F_{\vv Q}$ is finitary, and we obtain that $\alg F_\vv Q \models \Sigma'_\theta \Rightarrow \emptyset$, for $\Sigma'_\theta$ some finite subset of $\Sigma_\theta$. That is, $\Sigma'$ is finite and not unifiable, thus $\vv Q$ is not unifiable and the proof is complete.
\end{proof}
Observe that if a quasivariety $\vv Q$ is such that $\alg F_\vv Q$ is finite, it satisfies the hypothesis of the previous lemma. 
\begin{corollary}
	Let $\vv Q$ be a quasivariety such that $\II(\alg F_{\vv Q}) = \II\PP_u(\alg F_{\vv Q})$, then the following are equivalent.
\begin{enumerate}
		\item $\vv Q$ is passive universally complete;
		\item $\vv Q$ is negative universally complete;
		\item $\vv Q$ is unifiable;
	\item every algebra in $\vv Q$ is unifiable.
\end{enumerate}
\end{corollary}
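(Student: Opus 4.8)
The plan is essentially to observe that this corollary is a direct splicing together of the two results that immediately precede it, and that the hypothesis is tailored precisely so that both apply. First I would invoke Theorem \ref{thm: passive universal}, which holds for \emph{every} quasivariety and already establishes the equivalence of (1), (2), and (3): passive universal completeness, negative universal completeness, and unifiability are the same condition, the argument being that a non-unifiable finite set of identities $\Sigma$ yields a passive (indeed negative) admissible universal sentence $\Sigma \Rightarrow \emptyset$ that fails in the trivial algebra, while if $\vv Q$ is unifiable there are no passive admissible clauses at all.

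Second, I would note that the standing hypothesis $\II(\alg F_{\vv Q}) = \II\PP_u(\alg F_{\vv Q})$ is exactly the hypothesis of Lemma \ref{lemma:Qunifiable}, so that lemma applies verbatim and gives the equivalence of (3) and (4): unifiability of $\vv Q$ (a statement about finitely presented algebras) is upgraded to unifiability of \emph{every} algebra in $\vv Q$. The mechanism there is that closure of $\alg F_{\vv Q}$ under ultraproducts makes the equational consequence relation relative to $\alg F_{\vv Q}$ finitary, so a failure of unifiability for an infinite presentation already manifests in a finite subset of it. Chaining (1) $\Leftrightarrow$ (2) $\Leftrightarrow$ (3) with (3) $\Leftrightarrow$ (4) closes the cycle.

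There is no genuine obstacle here: the corollary is bookkeeping, and the only thing to be careful about is citing the two prior results with the correct labels and checking that the hypotheses match. Concretely, the proof reads:

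\begin{proof}
	By Theorem \ref{thm: passive universal}, items (1), (2) and (3) are equivalent for any quasivariety. Since by assumption $\II(\alg F_{\vv Q}) = \II\PP_u(\alg F_{\vv Q})$, Lemma \ref{lemma:Qunifiable} applies and yields the equivalence of (3) and (4). Hence all four statements are equivalent.
\end{proof}
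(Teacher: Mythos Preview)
Your proposal is correct and matches the paper's approach: the corollary is stated without proof in the paper precisely because it is an immediate combination of Theorem \ref{thm: passive universal} and Lemma \ref{lemma:Qunifiable}, exactly as you outline.
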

Since unifiability is preserved by categorical equivalence, we get the following.
\begin{corollary} A quasivariety $\vv Q$ is passive universally complete if and only if $\cc L_\vv Q$ is passive universally complete.
\end{corollary}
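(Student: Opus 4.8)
The plan is to reduce everything to Theorem~\ref{thm: passive universal}, which says that a quasivariety is passive universally complete exactly when it is unifiable, and then to exploit the fact that unifiability is invariant under categorical equivalence.

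First I would unwind the definition of $\cc L_\vv Q$: it denotes a logic whose equivalent algebraic semantics is some quasivariety $\vv Q'$ that is categorically equivalent to $\vv Q$. Since $\vv Q'$ is a genuine quasivariety of logic, the Blok--Pigozzi correspondence (together with Theorem~\ref{admissiblederivable} and Proposition~\ref{activepassive}) transfers admissibility, derivability, and passivity of clauses between $\cc L_\vv Q$ and the quasiequational theory of $\vv Q'$; hence $\cc L_\vv Q$ is passive universally complete if and only if $\vv Q'$ is. Applying Theorem~\ref{thm: passive universal} on the $\vv Q'$ side and again on the $\vv Q$ side, the corollary reduces to the purely algebraic claim that $\vv Q$ is unifiable if and only if $\vv Q'$ is unifiable.

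To prove that claim I would fix a functor $F\colon\vv Q\longrightarrow\vv Q'$ witnessing the categorical equivalence and recall, from the discussion in Subsection~\ref{subsec:universal}, that $F$ sends embeddings to embeddings, surjections to surjections and direct products to direct products, that projectivity is preserved by categorical equivalence, and that being finitely presented is a categorical property (\cite{GabrielUllmer1971}). Given a finitely presented unifiable $\alg A\in\vv Q$, choose a unifier $u\colon\alg A\longrightarrow\alg P$ with $\alg P$ projective in $\vv Q$; then $F(u)\colon F(\alg A)\longrightarrow F(\alg P)$ is a homomorphism from the finitely presented algebra $F(\alg A)$ into the projective algebra $F(\alg P)$, so $F(\alg A)$ is unifiable in $\vv Q'$. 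Since $F$ is essentially surjective, every finitely presented algebra of $\vv Q'$ is isomorphic to some such $F(\alg A)$, whence $\vv Q$ unifiable implies $\vv Q'$ unifiable; the converse follows symmetrically using a quasi-inverse of $F$. Chaining the resulting biconditionals then yields the statement.

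I do not expect a genuine obstacle here: the only points that need care — that finitely presented algebras are carried to finitely presented algebras, and that the image of a projective algebra is again projective — are already recorded in the preliminaries, so once they are invoked the proof is a short composition of equivalences.
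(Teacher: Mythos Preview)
Your proposal is correct and follows exactly the paper's approach: the paper's entire justification is the one-liner ``Since unifiability is preserved by categorical equivalence, we get the following,'' and you have simply unpacked that claim by invoking Theorem~\ref{thm: passive universal} on both sides and verifying explicitly that the equivalence functor carries finitely presented algebras, projective algebras, and homomorphisms to the same. There is nothing to add.
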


\section{Structural completeness}\label{structprim}
In this section we investigate the algebraic counterparts of structural completeness and its variations. The main new results are about the characterization of passive structurally complete quasivarieties; moreover, we also show a characterization of primitive quasivarieties grounding on the results in \cite{Gorbunov1998}.
\subsection{Structural quasivarieties}
The bridge theorems for structural completeness have been first established by Bergman \cite{Bergman1991}. We present the proof for the sake of the reader, expanding with point (6).
\begin{theorem}[\cite{Bergman1991}]\label{structural} For a quasivariety $\vv Q$ the following are equivalent:
\begin{enumerate}
\item\label{structural5} $\vv Q$ is structurally complete;
\item\label{structural4} $\vv Q = \QQ(\alg F_\vv Q(\o))$;
\item\label{structural1} no proper subquasivariety of $\vv Q$ generates a proper subvariety of $\HH(\vv Q)$;
\item\label{structural2} for all $\vv Q'\sse\vv Q$ if $\HH(\vv Q') = \HH(\vv Q)$, then $\vv Q = \vv Q'$;
\item\label{structural3} for all $\alg A \in \vv Q$ if $\VV(\alg A) = \HH(\vv Q)$, then $\QQ(\alg A) = \vv Q$;
\item\label{structural6} every finitely presented algebra in $\vv Q$ is in $\QQ(\alg F_\vv Q(\o))$.
\end{enumerate}
\end{theorem}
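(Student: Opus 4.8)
The plan is to prove the cycle of implications (\ref{structural5}) $\Rightarrow$ (\ref{structural4}) $\Rightarrow$ (\ref{structural1}) $\Rightarrow$ (\ref{structural2}) $\Rightarrow$ (\ref{structural5}), then close the loop with (\ref{structural2}) $\Leftrightarrow$ (\ref{structural3}) and (\ref{structural4}) $\Rightarrow$ (\ref{structural6}) $\Rightarrow$ (\ref{structural4}). The backbone is the observation, already available to us, that $\alg F_\vv Q(\o)$ is projective in $\vv Q$ (hence every admissible rule is, by definition, a quasiequation valid in $\alg F_\vv Q(\o)$), together with Lemma \ref{admissible} specialized to the case $|\Delta| = 1$: a quasiequation $\Sigma \Rightarrow p \app q$ is admissible in $\vv Q$ exactly when it is valid in $\alg F_\vv Q(\o)$, which by Lemma \ref{lemma:FThetaSigma} is in turn equivalent to $\QQ(\alg F_\vv Q(\o)) \models \Sigma \Rightarrow p \app q$.

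First I would show (\ref{structural5}) $\Rightarrow$ (\ref{structural4}). Since $\alg F_\vv Q(\o) \in \vv Q$, certainly $\QQ(\alg F_\vv Q(\o)) \sse \vv Q$. For the reverse inclusion, take any quasiequation $\Sigma \Rightarrow p \app q$ valid in $\QQ(\alg F_\vv Q(\o))$; then it is valid in $\alg F_\vv Q(\o)$, hence admissible in $\vv Q$ by the equivalence (1)$\Leftrightarrow$(2) of Lemma \ref{admissible}, hence (by structural completeness) valid in $\vv Q$. As two quasivarieties satisfying the same quasiequations coincide, $\vv Q = \QQ(\alg F_\vv Q(\o))$. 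For (\ref{structural4}) $\Rightarrow$ (\ref{structural6}) there is nothing to do: a finitely presented algebra of $\vv Q$ lies in $\vv Q = \QQ(\alg F_\vv Q(\o))$. For (\ref{structural6}) $\Rightarrow$ (\ref{structural4}) I would use Lemma \ref{lemma:FThetaSigma}: if $\QQ(\alg F_\vv Q(\o)) \models \Sigma \Rightarrow p \app q$ with variables in a finite $X$, then the finitely presented algebra $\alg F_\vv Q(X)/\th_\vv Q(\Sigma)$, being in $\QQ(\alg F_\vv Q(\o))$ by (\ref{structural6}), satisfies that quasiequation, whence $\vv Q \models \Sigma \Rightarrow p \app q$ by that lemma; so $\vv Q$ and $\QQ(\alg F_\vv Q(\o))$ satisfy the same quasiequations.

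Next the implications among (\ref{structural4}), (\ref{structural1}), (\ref{structural2}), (\ref{structural3}). For (\ref{structural4}) $\Rightarrow$ (\ref{structural1}): if $\vv Q' \sse \vv Q$ is a proper subquasivariety with $\VV(\vv Q') = \HH(\vv Q)$, then $\alg F_{\vv Q'}(\o) = \alg F_{\VV(\vv Q')}(\o) = \alg F_{\HH(\vv Q)}(\o) = \alg F_\vv Q(\o)$ (using the remark in Subsection \ref{subsec:universal} that free algebras of a quasivariety and its generated variety coincide); hence $\vv Q' \supseteq \QQ(\alg F_{\vv Q'}(\o)) = \QQ(\alg F_\vv Q(\o)) = \vv Q$ by (\ref{structural4}), contradicting properness. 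The implication (\ref{structural1}) $\Rightarrow$ (\ref{structural2}) is just the contrapositive reading together with $\HH(\vv Q') = \VV(\vv Q')$ for a quasivariety $\vv Q'$ and $\HH(\vv Q) = \VV(\vv Q)$. For (\ref{structural2}) $\Rightarrow$ (\ref{structural5}): let $\Sigma \Rightarrow p \app q$ be admissible; set $\vv U = \{\alg A \in \vv Q : \alg A \models \Sigma \Rightarrow p \app q\}$, which is a subquasivariety of $\vv Q$ (intersection of $\vv Q$ with a quasivariety). By Lemma \ref{admissible}, admissibility gives $\HH(\vv U) = \HH(\vv Q)$; hence $\vv U = \vv Q$ by (\ref{structural2}), i.e. the quasiequation holds in all of $\vv Q$. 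Finally (\ref{structural2}) $\Leftrightarrow$ (\ref{structural3}): given $\vv Q' \sse \vv Q$ with $\HH(\vv Q') = \HH(\vv Q)$ one picks, via Birkhoff, for each subdirectly irreducible $\alg S \in \vv Q'_{si}$ an algebra and more cheaply argues directly — since $\HH(\vv Q') = \HH(\vv Q)$ is generated as a variety by $\vv Q'$, and conversely (\ref{structural3}) applied to a single generator gives the statement for principal subquasivarieties, from which the general case follows because $\vv Q' = \bigvee \{\QQ(\alg A) : \alg A \in \vv Q'\}$ and each joinand is $\vv Q$ once $\VV(\alg A) = \HH(\vv Q)$; I would present this last equivalence carefully since it is the fiddliest bookkeeping step.

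The main obstacle I anticipate is not any single deep idea but keeping the free-algebra identifications straight: the repeated use of $\alg F_\vv Q(\o) = \alg F_{\VV(\vv Q)}(\o)$ and of the fact that $\QQ(\alg A) = \vv Q$ as soon as $\alg A$ generates the right variety and satisfies the right quasiequations. The genuinely new content is point (\ref{structural6}), and its proof is a direct corollary of Lemma \ref{lemma:FThetaSigma} once one notices that admissibility of a quasiequation is entirely detected by finitely presented algebras of the form $\alg F_\vv Q(X)/\th_\vv Q(\Sigma)$ — so I would make sure to state that reduction explicitly before invoking it, as it is the crux of why ``finitely presented'' suffices in (\ref{structural6}).
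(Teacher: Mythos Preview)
Your argument is essentially correct and tracks the paper's proof closely. The paper also pivots on the identification $\alg F_{\vv Q'}(\o)=\alg F_{\HH(\vv Q')}(\o)=\alg F_{\HH(\vv Q)}(\o)=\alg F_\vv Q(\o)$ for (\ref{structural4}) $\Rightarrow$ (\ref{structural2}), and on Lemma \ref{admissible} for the link with (\ref{structural5}). One genuine difference: for (\ref{structural6}) $\Rightarrow$ (\ref{structural4}) the paper simply quotes that a quasivariety is generated by its finitely presented algebras (\cite[Proposition 2.1.18]{Gorbunov1998}), whereas you rederive this internally via Lemma \ref{lemma:FThetaSigma}. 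Your route is more self-contained and is exactly the mechanism the paper uses elsewhere (Theorems \ref{prop:universal} and \ref{thm: activeu main}); either is fine.

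There is, however, a real gap in your sketch of (\ref{structural3}) $\Rightarrow$ (\ref{structural2}). You write that $\vv Q' = \bigvee\{\QQ(\alg A): \alg A \in \vv Q'\}$ and that ``each joinand is $\vv Q$ once $\VV(\alg A) = \HH(\vv Q)$''. But a generic $\alg A \in \vv Q'$ need not satisfy $\VV(\alg A)=\HH(\vv Q)$, so (\ref{structural3}) says nothing about most joinands. The clean fix is to exhibit a single witness: take $\alg A=\alg F_{\vv Q'}(\o)\in\vv Q'$, which does satisfy $\VV(\alg F_{\vv Q'}(\o))=\HH(\vv Q')=\HH(\vv Q)$; then (\ref{structural3}) gives $\QQ(\alg F_{\vv Q'}(\o))=\vv Q$, and since $\alg F_{\vv Q'}(\o)\in\vv Q'$ we get $\vv Q\sse\vv Q'$. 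Equivalently, and this is what the paper does, simply route (\ref{structural3}) $\Rightarrow$ (\ref{structural4}) by applying (\ref{structural3}) to $\alg A=\alg F_\vv Q(\o)$, and then use your already-established (\ref{structural4}) $\Rightarrow$ (\ref{structural2}). Either way, the join-of-principal-subquasivarieties detour is unnecessary and, as you wrote it, incorrect.
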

\begin{proof}
First, (\ref{structural5}) is equivalent to (\ref{structural4}) via Lemma \ref{admissiblederivable}.
The implications (\ref{structural1}) $\Leftrightarrow$ (\ref{structural2}) $\Rightarrow$ (\ref{structural3}) $\Rightarrow$ (\ref{structural4}) are straightforward.
(\ref{structural4}) implies (\ref{structural2}) since if $\vv Q'\sse \vv Q$ and $\HH(\vv Q') = \HH(\vv Q)$, we get that $\alg F_{\vv Q'}(\omega) = \alg F_{\HH(\vv Q')}(\omega) = \alg F_{\HH(\vv Q)}(\omega) = \alg F_{\vv Q}(\omega)$; thus $\vv Q = \QQ(\alg F_\vv Q(\o)) = \QQ(\alg F_\vv Q'(\o)) \sse \vv Q'$ and then equality holds. Thus the first five points are equivalent; Finally, clearly (\ref{structural4}) implies (\ref{structural6}), and
(\ref{structural6}) implies (\ref{structural4}) since a quasivariety is generated by its finitely presented algebras (\cite[Proposition 2.1.18]{Gorbunov1998}).
\end{proof}

\begin{corollary} A variety $\vv V$ is structurally complete if and only if every proper subquasivariety of $\vv V$ generates a proper subvariety; therefore if $\alg A$ is such that $\VV(\alg A)$ is structurally complete, then $\VV(\alg A) = \QQ(\alg A)$.
\end{corollary}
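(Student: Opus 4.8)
The plan is to read off both assertions from Theorem \ref{structural}, specialised to the case where the ambient quasivariety is the variety $\vv V$ itself. The key observation that makes the translation work is that for any quasivariety $\vv Q'$ one has $\HH(\vv Q') = \HH\SU\PP(\vv Q') = \VV(\vv Q')$, since $\vv Q'$ is already closed under $\SU$ and $\PP$; and that $\HH(\vv V) = \vv V$ because $\vv V$ is a variety.

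For the first claim I would take $\vv Q = \vv V$ in Theorem \ref{structural}. Under the two identifications above, the condition ``$\HH(\vv Q') = \HH(\vv Q)$ implies $\vv Q' = \vv Q$, for every subquasivariety $\vv Q'$'' becomes: every subquasivariety of $\vv V$ that generates $\vv V$ as a variety equals $\vv V$; equivalently, taking the contrapositive, every proper subquasivariety of $\vv V$ generates a proper subvariety. Since by Theorem \ref{structural} this condition is equivalent to $\vv V$ being structurally complete, the first claim follows.

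For the ``therefore'' part, I would assume $\VV(\alg A)$ structurally complete and apply the condition ``for all $\alg B \in \vv Q$, $\VV(\alg B) = \HH(\vv Q)$ implies $\QQ(\alg B) = \vv Q$'' of Theorem \ref{structural} with $\vv Q = \VV(\alg A)$ and $\alg B = \alg A$: indeed $\alg A \in \VV(\alg A)$ and $\VV(\alg A) = \HH(\VV(\alg A))$ as $\VV(\alg A)$ is a variety, so the hypothesis holds and we get $\QQ(\alg A) = \VV(\alg A)$. Alternatively one can argue directly from the first claim: $\QQ(\alg A)$ is a subquasivariety of $\VV(\alg A)$ and, since $\alg A \in \QQ(\alg A) \sse \VV(\alg A)$, it satisfies $\VV(\QQ(\alg A)) = \VV(\alg A)$, hence it cannot be a proper subquasivariety, i.e. $\QQ(\alg A) = \VV(\alg A)$.

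The proof presents no real obstacle, since everything of substance is contained in Theorem \ref{structural}; the only point requiring a word of justification is the equality $\HH(\vv Q') = \VV(\vv Q')$ for quasivarieties $\vv Q'$, which is exactly what converts the ``homomorphic image'' phrasing of Bergman's theorem into the ``generated subvariety'' phrasing of the corollary.
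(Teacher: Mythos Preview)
Your proposal is correct and is exactly the intended derivation: the paper states the corollary without proof because it follows immediately from Theorem~\ref{structural} specialised to $\vv Q = \vv V$, using precisely the identifications $\HH(\vv Q') = \VV(\vv Q')$ for subquasivarieties and $\HH(\vv V) = \vv V$ that you spell out. Both your argument for the first claim (via condition~(\ref{structural2}) of Theorem~\ref{structural}) and for the ``therefore'' part (via condition~(\ref{structural3}), or alternatively directly from the first claim) are the natural ones.
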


Since the definition of structural completeness is invariant under categorical equivalence we get also:

\begin{corollary} \label{lemma: SCrelation} Let $\vv Q$ be a quasivariety; then $\vv Q$ is structurally complete if and only if $\mathcal{L}_\vv Q$ is structurally complete.
\end{corollary}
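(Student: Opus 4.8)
The plan is to deduce the statement from two things already available: the invariance of structural completeness under categorical equivalence, together with the Blok--Pigozzi dictionary. Recall that $\cc L_\vv Q$ denotes a logic whose equivalent algebraic semantics is a quasivariety of logic $\vv R$ categorically equivalent to $\vv Q$, via an equivalence functor $F\colon\vv Q\to\vv R$. Since $\vv R$ is a genuine quasivariety of logic, Theorem~\ref{admissiblederivable} already gives that a rule of $\cc L_\vv R=\cc L_\vv Q$ is admissible (resp.\ derivable) exactly when the corresponding quasiequation is admissible in $\vv R$ (resp.\ valid in $\vv R$); as the translation pairs rules with quasiequations, $\cc L_\vv Q$ is structurally complete if and only if $\vv R$ is. So it remains to show that $\vv Q$ is structurally complete if and only if $\vv R$ is.

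For this I would use the characterization of structural completeness given by Theorem~\ref{structural}, point~(\ref{structural6}): $\vv Q$ is structurally complete if and only if every finitely presented algebra of $\vv Q$ lies in $\QQ(\alg F_\vv Q(\o))$. The key observation is that $\QQ(\alg F_\vv Q(\o))$ equals the quasivariety $\QQ(\vv P_\vv Q)$ generated by the class $\vv P_\vv Q$ of finitely generated \emph{projective} algebras of $\vv Q$. Indeed, every finitely generated projective algebra is a retract, hence a subalgebra, of some $\alg F_\vv Q(n)\in\SU(\alg F_\vv Q(\o))$, so $\QQ(\vv P_\vv Q)\sse\QQ(\alg F_\vv Q(\o))$; conversely $\alg F_\vv Q(\o)$ embeds into an ultraproduct of its finitely generated subalgebras (as in the proof of Lemma~\ref{admissible}), each of which sits inside some $\alg F_\vv Q(n)$, and each $\alg F_\vv Q(n)$ is itself finitely generated and projective, so $\alg F_\vv Q(\o)\in\SU\PP_u(\vv P_\vv Q)\sse\QQ(\vv P_\vv Q)$. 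Hence structural completeness of $\vv Q$ is equivalent to: every finitely presented algebra of $\vv Q$ belongs to $\QQ(\vv P_\vv Q)$.

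Every ingredient of this last condition is now a categorical notion preserved by $F$: being finitely presented and being finitely generated are categorical by~\cite{GabrielUllmer1971}, projectivity is categorical, and $\II,\SU,\PP,\PP_u$ (hence $\QQ$) admit categorical descriptions, as recalled in Subsection~\ref{subsec:universal}. Thus $F$ restricts to a bijection (up to isomorphism) between the finitely presented algebras of $\vv Q$ and those of $\vv R$, it carries $\vv P_\vv Q$ onto $\vv P_\vv R$, and it maps $\QQ(\vv P_\vv Q)$ into $\QQ(\vv P_\vv R)$. Therefore ``every finitely presented algebra of $\vv Q$ lies in $\QQ(\vv P_\vv Q)$'' holds if and only if the same holds in $\vv R$, which gives $\vv Q$ structurally complete $\iff\vv R$ structurally complete $\iff\cc L_\vv Q$ structurally complete.

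An alternative, perhaps more transparent, route stays entirely on the level of subquasivariety lattices: using that $\HH$ and $\VV$ coincide on quasivarieties, Theorem~\ref{structural}, point~(\ref{structural2}), says that $\vv Q$ is structurally complete precisely when the only member of $\Lambda_q(\vv Q)$ contained in no proper subquasivariety of $\vv Q$ that is equational in $\vv Q$ is $\vv Q$ itself; since the lattice isomorphism $\Lambda_q(\vv Q)\cong\Lambda_q(\vv R)$ induced by $F$ matches equational-in-$\vv Q$ subquasivarieties with equational-in-$\vv R$ ones (Subsection~\ref{subsec:universal}), the condition transfers verbatim. In either version, the only step that is not pure bookkeeping with the categorical dictionary is the auxiliary identity $\QQ(\alg F_\vv Q(\o))=\QQ(\vv P_\vv Q)$ (respectively, the rewriting of Bergman's condition~(\ref{structural2}) as a statement purely about $\Lambda_q(\vv Q)$ and its equational elements); I expect that to be the main, though minor, obstacle.
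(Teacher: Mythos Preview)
Your proposal is correct and follows the same approach as the paper: the paper's ``proof'' is just the one-line assertion that structural completeness is invariant under categorical equivalence (immediately before the corollary), and you have supplied the details that the paper omits. Your first route via $\QQ(\alg F_\vv Q(\o))=\QQ(\vv P_\vv Q)$ and your second route via the lattice isomorphism $\Lambda_q(\vv Q)\cong\Lambda_q(\vv R)$ are both legitimate ways to cash out that invariance; the second is closer in spirit to what the paper has already set up in Subsection~\ref{subsec:universal}, though your rephrasing of condition~(\ref{structural2}) in terms of ``equational in $\vv Q$'' is slightly garbled and you would be better served by the cleaner observation that $\HH(\vv Q')=\HH(\vv Q)$ is equivalent to $\vv Q'$ not being contained in any proper subvariety of $\HH(\vv Q)$ intersected with $\vv Q$, which transfers directly under the lattice isomorphism since $F$ preserves $\HH$.
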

Let us extract some sufficient conditions for structural completeness.

\begin{lemma}\label{lemma: wpstructcomplete}  Let $\vv Q$ be a quasivariety; if
\begin{enumerate}
\item every $\alg A\in \vv K$ is exact in $\vv Q = \QQ(\vv K)$, or
\item every finitely generated algebra in $\vv Q$ is exact, or
\item every finitely presented algebra in $\vv Q$ is exact, or
\item every  finitely generated relative subdirectly irreducible in $\vv Q$ is exact,
\end{enumerate}
then $\vv Q$ is structurally complete. Moreover if every $\alg A\in \vv K$ is  exact in  $\VV(\vv K)$ and every subdirectly irreducible member of $\VV(\vv K)$ is in $\II\SU(\vv K)$, then $\VV(\vv K)$ is structurally complete.
\end{lemma}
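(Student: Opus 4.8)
The plan is to establish item (1) directly, and then to read off (2), (3), (4) and the last assertion as special cases of (1) by feeding it an appropriately chosen generating class $\vv K$.

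\emph{Item (1).} I would use the characterization in Theorem \ref{structural} that $\vv Q$ is structurally complete provided every subquasivariety $\vv Q'\sse\vv Q$ with $\HH(\vv Q')=\HH(\vv Q)$ already equals $\vv Q$. So fix such a $\vv Q'$. Because a quasivariety is closed under $\SU$ and $\PP$, one has $\HH(\vv R)=\VV(\vv R)$ for any quasivariety $\vv R$; hence $\VV(\vv Q')=\VV(\vv Q)$, and therefore $\alg F_{\vv Q'}(X)=\alg F_{\VV(\vv Q')}(X)=\alg F_{\VV(\vv Q)}(X)=\alg F_\vv Q(X)$ for every set $X$, so every free algebra of $\vv Q$ already lies in $\vv Q'$. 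Now take $\alg A\in\vv K$ with a generating set $G_\alg A$. Exactness provides a set $X$ with $|X|\ge|G_\alg A|$ in which $\alg A$ is weakly projective; since $|X|\ge|G_\alg A|$, $\alg A$ is a homomorphic image of $\alg F_\vv Q(X)$, so weak projectivity yields $\alg A\in\SU(\alg F_\vv Q(X))\sse\SU(\vv Q')=\vv Q'$. Thus $\vv K\sse\vv Q'$, whence $\vv Q=\QQ(\vv K)\sse\vv Q'$ and $\vv Q=\vv Q'$.

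\emph{Reductions.} For (2), apply (1) with $\vv K$ the class of finitely generated members of $\vv Q$, which generates $\vv Q$ as a quasivariety since every algebra embeds in an ultraproduct of its finitely generated subalgebras. For (3), apply (1) with $\vv K$ the class of finitely presented members of $\vv Q$, which generates $\vv Q$ as a quasivariety by \cite[Proposition 2.1.18]{Gorbunov1998}. For (4), apply (1) with $\vv K$ the class of finitely generated relative subdirectly irreducibles in $\vv Q$: by Mal'cev's theorem (Theorem \ref{quasivariety}(1)) every finitely generated $\alg B\in\vv Q$ is a subdirect product of relative subdirectly irreducibles, and these, being homomorphic images of $\alg B$, are finitely generated, so $\QQ(\vv K)=\vv Q$. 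For the last assertion, put $\vv V=\VV(\vv K)$; by Theorem \ref{birkhoff}(1) together with the hypothesis $\vv V_{si}\sse\II\SU(\vv K)$ one gets $\vv V\sse\II\SU\PP(\vv V_{si})\sse\II\SU\PP(\vv K)\sse\QQ(\vv K)\sse\vv V$, hence $\QQ(\vv K)=\vv V$, and since every member of $\vv K$ is exact in $\vv V=\QQ(\vv K)$, item (1) applies and gives that $\VV(\vv K)$ is structurally complete.

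The argument is almost entirely class-operator bookkeeping; the one step carrying real content — the crux of (1) — is the observation that a competing subquasivariety $\vv Q'$ generating the same variety as $\vv Q$ has exactly the same free algebras, which is precisely what lets exactness force the members of $\vv K$ into $\vv Q'$. I would also take care over the apparently trivial point that $|X|\ge|G_\alg A|$ (rather than equality) is enough to exhibit $\alg A$ as a homomorphic image of $\alg F_\vv Q(X)$: map a subset of $X$ of size $|G_\alg A|$ onto $G_\alg A$ and send the remaining elements of $X$ anywhere.
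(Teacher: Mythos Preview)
Your proof is correct. The approach differs slightly from the paper's: you invoke condition (\ref{structural2}) of Theorem~\ref{structural} (no proper subquasivariety generates the same variety), while the paper invokes condition (\ref{structural4}) (namely $\vv Q=\QQ(\alg F_\vv Q(\o))$) by observing that exactness of each $\alg A\in\vv K$ places $\vv K$ inside $\II\SU(\alg F_\vv Q(\o))$, whence $\vv Q=\QQ(\vv K)\sse\QQ(\alg F_\vv Q(\o))$ directly. Both routes are short and rest on the same characterization theorem; your version has the minor advantage of handling possibly non-finitely-generated members of $\vv K$ without needing to argue that $\alg F_\vv Q(X)$ lies in $\QQ(\alg F_\vv Q(\o))$ for arbitrary $X$. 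For the final assertion, the paper instead notes that each subdirectly irreducible in $\VV(\vv K)$, being in $\II\SU(\vv K)$, is itself exact, and then applies (1) with the class of subdirectly irreducibles as the generating set; your reduction via $\QQ(\vv K)=\VV(\vv K)$ is an equally valid alternative.
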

\begin{proof} If each algebra in $\vv K$ is exact in $\vv Q = \QQ(\vv K)$, then $\vv K \sse\II\SU(\alg F_\vv Q(\o))$; therefore $\vv Q =\QQ(\vv K) \sse \QQ(\alg F_\vv Q(\o))$ and thus equality holds. Hence $\vv Q$ is structurally complete by the characterization theorem. The other points follow. 

 For the last claim, every subdirectly irreducible member of $\VV(\vv K)$ lies in $\II\SU(\vv K)$ and thus is exact in $\VV(\vv K)$. Since any variety is generated as a quasivariety by its subdirectly irreducible members, $\VV(\vv K)$ is structurally complete.
\end{proof}

We observe that none of the previous conditions is necessary. For locally finite quasivarieties we have a necessary and sufficient condition for structural completeness because of the following:

\begin{lemma}[\cite{CabrerMetcalfe2015a}]\label{lemma: lf ISP} Let $\vv Q$ be a locally finite quasivariety and $\alg A$ a finite algebra in $\vv Q$. Then $\alg A \in \QQ(\alg F_\vv Q(\o))$ if and only if $\alg A \in \II\SU\PP(\alg F_\vv Q(\o))$.
\end{lemma}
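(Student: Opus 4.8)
The plan is to establish the two inclusions separately. The inclusion $\II\SU\PP(\alg F_\vv Q(\o)) \sse \QQ(\alg F_\vv Q(\o))$ is immediate, since $\QQ(\alg F_\vv Q(\o)) = \II\SU\PP\PP_u(\alg F_\vv Q(\o))$ is a quasivariety containing $\alg F_\vv Q(\o)$, hence closed under $\II$, $\SU$ and $\PP$. All the content is in the converse, and here the natural first move is to reformulate membership in $\II\SU\PP(\alg F_\vv Q(\o))$ as a point-separation property: a (finite) algebra $\alg A$ lies in $\II\SU\PP(\alg F_\vv Q(\o))$ precisely when the homomorphisms from $\alg A$ to $\alg F_\vv Q(\o)$ jointly separate the points of $\alg A$, the diagonal map into the corresponding power of $\alg F_\vv Q(\o)$ then being an embedding. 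So I would fix $a \neq b$ in $\alg A$ and aim to produce a single homomorphism $h\colon \alg A \to \alg F_\vv Q(\o)$ with $h(a) \neq h(b)$.

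Assuming $\alg A$ is finite and $\alg A \in \QQ(\alg F_\vv Q(\o)) = \II\SU\PP\PP_u(\alg F_\vv Q(\o))$, there is an embedding of $\alg A$ into a product $\prod_{i \in I}\alg U_i$ with each $\alg U_i$ an ultraproduct of copies of $\alg F_\vv Q(\o)$. Since an embedding separates $a$ from $b$, these elements differ in some coordinate, which yields a homomorphism $g\colon \alg A \to \alg U$ with $g(a) \neq g(b)$, where $\alg U$ is an ultraproduct of copies of $\alg F_\vv Q(\o)$. Now $g(\alg A)$ is a finite subalgebra of $\alg U$, and $\alg U \in \PP_u(\alg F_\vv Q(\o)) \sse \II\SU\PP_u(\alg F_\vv Q(\o))$; hence by Lemma \ref{lemma: techlemma} every finite subalgebra of $\alg U$, in particular $g(\alg A)$, lies in $\II\SU(\alg F_\vv Q(\o))$. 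Fixing an embedding $\iota\colon g(\alg A) \to \alg F_\vv Q(\o)$, the composite $\iota g\colon \alg A \to \alg F_\vv Q(\o)$ is a homomorphism with $\iota g(a) \neq \iota g(b)$. As $a \neq b$ were arbitrary, the homomorphisms $\alg A \to \alg F_\vv Q(\o)$ separate the points of $\alg A$, so $\alg A \in \II\SU\PP(\alg F_\vv Q(\o))$.

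The crux — and essentially the only nonformal step — is the appeal to Lemma \ref{lemma: techlemma}, which is exactly where local finiteness of $\vv Q$ is needed: it is what lets a finite subalgebra of an ultraproduct of copies of $\alg F_\vv Q(\o)$ be embedded back into $\alg F_\vv Q(\o)$ itself. The finiteness of $\alg A$ enters only to ensure that $g(\alg A)$ is finite, so that Lemma \ref{lemma: techlemma} applies; the remainder is routine manipulation of the operators $\II,\SU,\PP,\PP_u$. I do not expect any genuine obstacle beyond correctly threading the ultraproduct factor through, which Lemma \ref{lemma: techlemma} already takes care of.
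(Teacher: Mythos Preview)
The paper does not supply its own proof of this lemma; it is stated with a citation to \cite{CabrerMetcalfe2015a} and used as a black box. Your argument is correct and is essentially the natural proof: reduce membership in $\II\SU\PP(\alg F_\vv Q(\o))$ to point-separation by homomorphisms into $\alg F_\vv Q(\o)$, and use Lemma~\ref{lemma: techlemma} to push a finite subalgebra of an ultraproduct of copies of $\alg F_\vv Q(\o)$ back into $\II\SU(\alg F_\vv Q(\o))$.
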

The following theorem improves \cite[Corollary 11]{CabrerMetcalfe2015a}.
\begin{theorem}\label{thm: structexact} For a locally finite quasivariety $\vv Q$ of finite type the following are equivalent:
\begin{enumerate}
\item $\vv Q$ is structurally complete;
\item each finite algebra in $\vv Q$ is in $\II\SU\PP(\alg F_\vv Q(\o))$;
\item every finite  relative subdirectly irreducible in $\vv Q$ is exact.
\end{enumerate}
\end{theorem}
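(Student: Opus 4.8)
The plan is to prove the cycle $(1)\Leftrightarrow(2)$, $(2)\Rightarrow(3)$, $(3)\Rightarrow(1)$, leaning on the general characterization of structural completeness (Theorem \ref{structural}), the locally finite transfer result (Lemma \ref{lemma: lf ISP}), and the sufficient condition for structural completeness from Lemma \ref{lemma: wpstructcomplete}.

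First I would observe that, since $\vv Q$ is locally finite of finite type, the finitely presented algebras of $\vv Q$ are exactly the finite ones: if $\alg A\in\vv Q$ is finite then it is finitely generated, so $\alg A\cong\alg F_{\vv Q}(X)/\th_{\vv Q}(H)$ with $X$ finite, and as $\alg F_{\vv Q}(X)$ is then finite the congruence $\th_{\vv Q}(H)$ is generated by finitely many pairs; conversely finitely presented implies finitely generated, hence finite. Combining this with Theorem \ref{structural}(6), $\vv Q$ is structurally complete if and only if every finite algebra of $\vv Q$ lies in $\QQ(\alg F_{\vv Q}(\o))$; and by Lemma \ref{lemma: lf ISP} the latter condition is equivalent to every finite algebra of $\vv Q$ lying in $\II\SU\PP(\alg F_{\vv Q}(\o))$, which is $(2)$.

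For $(2)\Rightarrow(3)$ I would take a finite relative subdirectly irreducible $\alg A\in\vv Q$. By $(2)$ there is an embedding $e\colon\alg A\hookrightarrow\prod_{i\in I}\alg F_i$ with each $\alg F_i\cong\alg F_{\vv Q}(\o)$; composing with the projections $p_i$ and restricting the codomains to the images $\alg B_i=p_i(e(\alg A))\in\SU(\alg F_{\vv Q}(\o))\sse\vv Q$ turns $e$ into a subdirect embedding $\alg A\le_{sd}\prod_{i\in I}\alg B_i$ inside $\vv Q$, with $I\ne\emptyset$ since $\alg A$ is nontrivial. The kernels $\th_i$ of the induced surjections $\alg A\to\alg B_i$ lie in $\op{Con}_{\vv Q}(\alg A)$, because $\alg A/\th_i\cong\alg B_i\in\vv Q$, and $\bigcap_{i\in I}\th_i=0_{\alg A}$. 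As $\op{Con}_{\vv Q}(\alg A)$ has a least non-zero element, not all $\th_i$ can be non-zero (otherwise that element would lie below $\bigcap_{i\in I}\th_i=0_{\alg A}$), so $\th_j=0_{\alg A}$ for some $j$ and hence $\alg A\cong\alg B_j\in\SU(\alg F_{\vv Q}(\o))$. Since $\alg A$ is finitely generated, this says precisely that $\alg A$ is exact in $\vv Q$.

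Finally $(3)\Rightarrow(1)$ is immediate: in a locally finite quasivariety ``finitely generated'' and ``finite'' coincide, so $(3)$ is exactly hypothesis $(4)$ of Lemma \ref{lemma: wpstructcomplete}, which yields structural completeness. The only step requiring genuine care is $(2)\Rightarrow(3)$: one must recall that an embedding into a direct product is not yet a subdirect representation, and that relative subdirect irreducibility has to be invoked in its ``monolith'' form --- every $\vv Q$-subdirect decomposition of $\alg A$ has a factor isomorphic to $\alg A$ --- which is read off from $\op{Con}_{\vv Q}(\alg A)$ having a unique minimal non-zero element. The remaining implications are routine bookkeeping with the cited results.
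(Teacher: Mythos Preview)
Your proof is correct and follows essentially the same route as the paper's: the cycle $(1)\Rightarrow(2)\Rightarrow(3)\Rightarrow(1)$ via Theorem~\ref{structural}, Lemma~\ref{lemma: lf ISP}, and Lemma~\ref{lemma: wpstructcomplete}. The only difference is cosmetic: the paper states $(2)\Rightarrow(3)$ in one line (``if $\alg A$ is finite relative subdirectly irreducible, then it is in $\II\SU(\alg F_{\vv Q}(\o))$''), whereas you unpack the subdirect-representation-plus-monolith argument explicitly; and you additionally close the loop $(2)\Rightarrow(1)$ directly through Theorem~\ref{structural}(6) rather than passing through $(3)$.
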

\begin{proof} Assume (1); then each finite algebra in $\vv Q$ is in $\QQ(\alg F_\vv Q(\o))$ and thus, by Lemma \ref{lemma: lf ISP}, is in $\II\SU\PP(\alg F_\vv Q(\o))$ and (2) holds. If (2) holds and
 $\alg A$ is finite relative subdirectly irreducible, then it is in $\II\SU(\alg F_\vv Q(\o))$, i.e. it is exact.  Finally if (3) holds, then $\vv Q$ is structurally complete by Lemma \ref{lemma: wpstructcomplete}.
\end{proof}

\subsection{Primitive quasivarieties}\label{subsec: prim quasivarieties}
We now consider the hereditary notion of structural completeness.
\begin{definition}
	A class of algebras $\vv K$ in a quasivariety $\vv Q$ is {\em equational relative to} $\vv Q$ if $\vv K = \VV(\vv K)\cap \vv Q$.
In particular, a subquasivariety $\vv Q'$ of $\vv Q$ is {\em equational relative to} $\vv Q$ if $\vv Q' = \HH(\vv Q')\cap \vv Q$; a quasivariety $\vv Q$ is {\em primitive} if every subquasivariety of $\vv Q$ is equational relative to $\vv Q$. 
\end{definition}
Clearly  primitivity is downward hereditary and a variety $\vv V$ is primitive if and only if every subquasivariety of $\vv V$ is a variety. 
We can  show the following.
\begin{theorem}\label{primitiveQ} For a quasivariety $\vv Q$ the following are equivalent:
\begin{enumerate}
\item $\vv Q$ is primitive;
\item every subquasivariety of $\vv Q$ is structurally complete;
\item for all subdirectly irreducible $\alg A \in \HH(\vv Q)$ and for any $\alg B \in \vv Q$, if $\alg A \in \HH(\alg B)$, then
$\alg A \in \II\SU\PP_u(\alg B)$.
\end{enumerate}
\end{theorem}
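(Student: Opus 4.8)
The plan is to prove the cycle $(1) \Rightarrow (2) \Rightarrow (3) \Rightarrow (1)$, leaning on the characterization of structural completeness in Theorem \ref{structural} and on the Czelakowski--Dziobiak description of relative subdirectly irreducibles (Theorem \ref{quasivariety}(2)). The overall strategy is to reduce ``primitive'' to a statement about how subdirectly irreducible quotients of algebras in $\vv Q$ must already sit inside the subquasivariety they help generate, which is exactly what condition (3) encodes.

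For $(1) \Rightarrow (2)$: let $\vv Q' \sse \vv Q$ be a subquasivariety; by primitivity $\vv Q' = \HH(\vv Q') \cap \vv Q$. To see $\vv Q'$ is structurally complete I would use Theorem \ref{structural}, in the form of clause (\ref{structural2}): suppose $\vv Q'' \sse \vv Q'$ with $\HH(\vv Q'') = \HH(\vv Q')$. Then $\vv Q''$ is also a subquasivariety of $\vv Q$, so $\vv Q'' = \HH(\vv Q'') \cap \vv Q = \HH(\vv Q') \cap \vv Q = \vv Q'$, giving structural completeness of $\vv Q'$. (One should note that every subquasivariety of a primitive quasivariety is again primitive, so this is in fact ``hereditarily structurally complete,'' but the statement as given only asks for (2).)

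For $(2) \Rightarrow (3)$: take $\alg A \in \HH(\vv Q)$ subdirectly irreducible, $\alg B \in \vv Q$, with $\alg A \in \HH(\alg B)$. Consider $\vv Q' = \QQ(\alg B)$, a subquasivariety of $\vv Q$, which by (2) is structurally complete; hence $\vv Q' = \QQ(\alg F_{\vv Q'}(\o))$ by Theorem \ref{structural}(\ref{structural4}). Now $\alg A$ is subdirectly irreducible and $\alg A \in \HH(\alg B) \sse \HH(\vv Q') = \VV(\alg B)$. I want to conclude $\alg A$ is \emph{relative} subdirectly irreducible in $\vv Q'$ and apply Theorem \ref{quasivariety}(2) to $\vv Q' = \QQ(\alg B)$, which yields $\vv Q'_{rsi} \sse \II\SU\PP_u(\alg B)$, hence $\alg A \in \II\SU\PP_u(\alg B)$. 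The point needing care here is membership $\alg A \in \vv Q'$: a priori $\alg A$ lies only in $\HH(\alg B)$, not necessarily in $\SU\PP(\alg B)$. This is precisely where structural completeness of $\vv Q'$ does the work — since $\vv Q'$ is primitive/equationally closed in $\vv Q$, or more directly since under structural completeness $\HH(\vv Q') \cap \vv Q = \vv Q'$ (using that $\vv Q$ is primitive is not yet available in this implication, so instead I would argue: as $\vv Q$ satisfies (2), \emph{every} subquasivariety is structurally complete, and one shows by the argument of $(1)\Rightarrow(2)$ run in reverse that this forces each subquasivariety to be equational in $\vv Q$; this is the subtle step). Granting $\alg A \in \vv Q'$, it is subdirectly irreducible in $\vv Q'$, hence relatively subdirectly irreducible there, and the Czelakowski--Dziobiak theorem applies.

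For $(3) \Rightarrow (1)$: let $\vv Q' \sse \vv Q$ be a subquasivariety; I must show $\vv Q' = \HH(\vv Q') \cap \vv Q$, and only ``$\supseteq$'' needs proof. Take $\alg C \in \HH(\vv Q') \cap \vv Q$. By Theorem \ref{quasivariety}(1), $\alg C$ embeds subdirectly into a product of algebras $\alg A_i \in \vv Q_{rsi}$ (relative to $\vv Q$), each $\alg A_i \in \HH(\alg C) \sse \HH(\vv Q')$. It suffices to show each such $\alg A_i \in \vv Q'$, since $\vv Q'$ is closed under $\SU\PP$. Fix such an $\alg A = \alg A_i$: it is a homomorphic image of some $\alg B \in \vv Q'$ (as $\alg A \in \HH(\vv Q')$, say $\alg A \in \HH(\alg B)$ with $\alg B \in \vv Q'$), and $\alg A \in \HH(\vv Q) $. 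If $\alg A$ is subdirectly irreducible, then hypothesis (3) gives $\alg A \in \II\SU\PP_u(\alg B) \sse \vv Q'$ and we are done. The remaining obstacle is that a relative-to-$\vv Q$ subdirectly irreducible $\alg A_i$ need not be \emph{absolutely} subdirectly irreducible, so (3) as literally stated does not apply. I would handle this by first decomposing $\alg C$ (via Birkhoff, Theorem \ref{birkhoff}(1)) into absolutely subdirectly irreducible factors $\alg A_j \in \HH(\vv Q)$, each still a homomorphic image of a member of $\vv Q'$, apply (3) to each to land them in $\vv Q'$, and then reconstruct $\alg C \in \SU\PP(\vv Q') = \vv Q'$. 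This Birkhoff-versus-Mal'cev decomposition bookkeeping — ensuring that the absolutely subdirectly irreducible factors are still homomorphic images of a single algebra of $\vv Q'$ — is the main technical point of the argument, and it is what forces the statement to be phrased with $\HH(\vv Q)$ and ordinary (not relative) subdirect irreducibility.
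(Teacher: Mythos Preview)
Your plan matches the paper's structure: the paper proves $(1)\Leftrightarrow(2)$ exactly as you outline and handles $(3)\Rightarrow(1)$ by decomposing $\alg C\in\HH(\vv Q')\cap\vv Q$ into absolutely subdirectly irreducible factors via Birkhoff, then pulling each factor back into $\vv Q'$ using (3), just as in your last paragraph.

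However, the step you flag as subtle in $(2)\Rightarrow(3)$ is a genuine gap, and neither your proposed fix nor the paper's own argument closes it. You suggest first running $(2)\Rightarrow(1)$ and then using primitivity to get $\QQ(\alg B)=\HH(\QQ(\alg B))\cap\vv Q$, hence $\alg A\in\QQ(\alg B)$; but that intersection only yields $\alg A\in\QQ(\alg B)$ when already $\alg A\in\vv Q$, whereas (3) as stated assumes only $\alg A\in\HH(\vv Q)$. The paper's proof of $(1)\Rightarrow(3)$ makes exactly this slip: it opens ``let $\alg A,\alg B\in\vv Q$'', tacitly strengthening the hypothesis. In fact $(1)\Rightarrow(3)$ is false as written: take $\vv Q=\QQ(\alg M_{3,3}^+)$ from Section~\ref{sec: primlattices}, which is primitive by Lemma~\ref{whitman}, and set $\alg B=\alg M_{3,3}^+$, $\alg A=\alg M_{3,3}$. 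Then $\alg A$ is simple and lies in $\HH(\alg B)\sse\HH(\vv Q)$, yet the paper itself notes that $\alg M_{3,3}$ does not embed in $\alg M_{3,3}^+$; since $\alg M_{3,3}^+$ is finite this gives $\alg A\notin\II\SU\PP_u(\alg B)$. The equivalence is restored by requiring $\alg A\in\vv Q$ in (3) and reading ``subdirectly irreducible'' as relative to $\vv Q$, so that the $(3)\Rightarrow(1)$ direction can use Mal'cev's decomposition (Theorem~\ref{quasivariety}(1)), whose factors remain in $\vv Q$.
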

\begin{proof} We first show the equivalence between (1) and (2). Suppose that $\vv Q$ is primitive and let $\vv Q'\sse \vv Q$; if $\vv Q'' \sse \vv Q'$ and $\HH(\vv Q'') = \HH(\vv Q')$ then
$$
\vv Q' = \HH(\vv Q') \cap \vv Q = \HH(\vv Q'') \cap \vv Q = \vv Q''
$$
so $\vv Q'$ is structurally complete by Theorem \ref{structural}.

Conversely assume (2), let $\vv Q' \sse \vv Q$ and let $\vv Q'' = \HH(\vv Q') \cap \vv Q$ (it is clearly a quasivariety); then $\HH(\vv Q'') = \HH(\vv Q')$ and thus $\vv Q'' = \vv Q'$, again using the characterization of Theorem \ref{structural}. So
$\vv Q'$ is equational in $\vv Q$ and $\vv Q$ is primitive.

Assume (1) again, and let $\alg A,\alg B \in \vv Q$ with $\alg A$ subdirectly irreducible and $\alg A \in \HH(\alg B)$. Since $\vv Q$ is primitive we have
$$
\QQ(\alg B) = \HH(\QQ(\alg B)) \cap \vv Q
$$
and hence $\alg A \in \QQ(\alg B)$. Since $\alg A$ is subdirectly irreducible, $\alg A \in \II\SU\PP_u(\alg B)$ by Theorem \ref{quasivariety} and (3) holds.

Conversely, assume (3) and let $\vv Q'$ be a subquasivariety of $\vv Q$; if $\alg B \in \HH(\vv Q') \cap \vv Q$,
observe that $\alg B \in \HH(\vv Q)$ and hence $\alg B \le_{sd} \prod \alg A_i$ where the $\alg A_i$ are subdirectly irreducible in $\HH(\vv Q) \cap \HH(\vv Q')$. Then for all $i$ there is $\alg B_i \in \vv Q'$ such that $\alg A_i \in \HH(\alg B_i)$ and hence by hypothesis
$\alg A_i \in \SU\PP_u(\alg B_i)$ and so $\alg A_i \in \vv Q'$ for all $i$. Therefore $\alg B \in \vv Q'$, so $\HH(\vv Q') \cap \vv Q =\vv Q'$ and $\vv Q'$ is equational in $\vv Q$. Therefore $\vv Q$ is primitive and (1) holds.
\end{proof}

As commented in the preliminary section (Subsection \ref{subsec:universal}), primitivity is preserved under categorical equivalence, and therefore:
\begin{corollary}
A quasivariety is primitive if and only if $\cc L_\vv Q$ is hereditarily structurally complete.
\end{corollary}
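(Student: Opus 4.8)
The plan is to assemble the corollary from Theorem \ref{primitiveQ}, Corollary \ref{lemma: SCrelation}, and the remarks on categorical equivalence in Subsection \ref{subsec:universal}. First I would unwind the definitions: by convention $\cc L_\vv Q$ denotes a logic whose equivalent algebraic semantics $\vv R$ is categorically equivalent to $\vv Q$, so fix a functor $F\colon \vv Q \to \vv R$ witnessing the equivalence. As recalled in Subsection \ref{subsec:universal}, $F$ induces a lattice isomorphism $\Lambda_q(\vv Q) \cong \Lambda_q(\vv R)$ that preserves all categorical properties and, in particular, sends primitive subquasivarieties to primitive subquasivarieties; and by the Blok--Pigozzi connection the extensions of the algebraizable logic $\cc L_\vv Q$ are in order-preserving bijection with the subquasivarieties of $\vv R$ via $\vv R' \longmapsto \cc L_{\vv R'}$. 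Thus ``$\cc L_\vv Q$ is hereditarily structurally complete'' unpacks to ``$\cc L_{\vv R'}$ is structurally complete for every $\vv R' \in \Lambda_q(\vv R)$''.

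Next I would prove the forward direction. Suppose $\vv Q$ is primitive. Since primitivity is preserved by categorical equivalence (Subsection \ref{subsec:universal}), $\vv R$ is primitive, so by Theorem \ref{primitiveQ} every subquasivariety $\vv R' \sse \vv R$ is structurally complete. Applying Corollary \ref{lemma: SCrelation} to each such $\vv R'$ yields that $\cc L_{\vv R'}$ is structurally complete; as these logics are exactly the extensions of $\cc L_\vv Q$, the logic $\cc L_\vv Q$ is hereditarily structurally complete.

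For the converse, assume $\cc L_\vv Q$ is hereditarily structurally complete. Then $\cc L_{\vv R'}$ is structurally complete for every $\vv R' \in \Lambda_q(\vv R)$, so by Corollary \ref{lemma: SCrelation} each $\vv R'$ is structurally complete, and Theorem \ref{primitiveQ} gives that $\vv R$ is primitive. Transporting this back along $F$ (using once more that primitivity is a categorical invariant, together with the lattice isomorphism $\Lambda_q(\vv R) \cong \Lambda_q(\vv Q)$) we conclude that $\vv Q$ is primitive. The one point that needs a little care is the three-way bookkeeping between the subquasivarieties of $\vv Q$, the subquasivarieties of $\vv R$, and the extensions of $\cc L_\vv Q$; but each of these identifications is already in place from the preliminaries, so this is the only ``obstacle'' and it is a routine one.
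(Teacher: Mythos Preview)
Your proposal is correct and follows essentially the same approach as the paper: the paper's entire justification is the single remark preceding the corollary that primitivity is preserved under categorical equivalence (Subsection \ref{subsec:universal}), and you have simply unpacked this by explicitly invoking Theorem \ref{primitiveQ}, Corollary \ref{lemma: SCrelation}, the lattice isomorphism $\Lambda_q(\vv Q)\cong\Lambda_q(\vv R)$, and the Blok--Pigozzi correspondence between extensions and subquasivarieties.
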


We will see how Theorem \ref{primitiveQ} can be improved in the locally finite case. Let $\vv Q$ be a quasivariety and let $\alg A \in \vv Q$; we define
$$
[\vv Q:\alg A] =\{\alg B \in \vv Q: \alg A \notin \II\SU(\alg B)\}.
$$

The following lemma describes some properties of $[\vv Q:\alg A]$; the proofs are quite standard with the exception of point (3). As a matter of fact a proof of the forward implication of (3) appears
in \cite[Corollary 2.1.17]{Gorbunov1998}. However the proof is somewhat buried into generality and it is not easy to follow; so we felt that a suitable  translation would make it easier for the readers.

\begin{lemma} \label{universal} Let $\vv Q$ be a quasivariety; then
\begin{enumerate}
\item if $\alg A \in \vv Q$ is finite and $\vv Q$ has finite type, then $[\vv Q:\alg A]$ is a universal class;
\item if $\alg A$ is relative subdirectly irreducible and finitely presented, then  $[\vv Q:\alg A]$ is a quasivariety;
\item $\alg A$ is weakly projective in $\vv Q$ if and only if $[\vv Q:\alg A]$ is closed under $\HH$ if and only if $[\vv Q:\alg A]$ is equational relative to $\vv Q$;
 \item if  $\alg A$ is relative subdirectly irreducible, finitely presented and weakly projective in $\vv Q$, then  $[\vv Q:\alg A]$ is a variety.
 \end{enumerate}
 Moreover if $\vv Q$ is locally finite of finite type, the converse implications in (1),(2) and (4) hold.
\end{lemma}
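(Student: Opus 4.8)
\medskip
\noindent\emph{Proof plan.} The strategy is to decide, item by item, which of the operators $\II,\SU,\PP,\PP_u,\HH$ the class $[\vv Q:\alg A]$ is closed under, since universal classes, quasivarieties and varieties are precisely the classes closed under $\II\SU\PP_u$, under $\II\SU\PP\PP_u$ and under $\HSP$, respectively. Closure of $[\vv Q:\alg A]$ under $\II$ and $\SU$ holds unconditionally (a subalgebra of a subalgebra is a subalgebra, and $[\vv Q:\alg A]\sse\vv Q$), so only $\PP$, $\PP_u$ and $\HH$ require work. For (1) the point is $\PP_u$: as $\alg A$ is finite and $\vv Q$ has finite type, the assertion ``$\alg A\in\II\SU(\alg B)$'' is expressed by one existential first-order sentence in the language of $\vv Q$ --- fixing an enumeration $a_1,\dots,a_k$ of $\alg A$ it says that there are pairwise distinct $b_1,\dots,b_k$ satisfying the finitely many atomic equalities recording the operation table of $\alg A$ --- and by {\L}o{\'s}'s theorem such a sentence transfers from an ultraproduct to almost all of its factors. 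Hence, if every factor of an ultraproduct lies in $[\vv Q:\alg A]$, so does the ultraproduct; being contained in $\vv Q$, the class $[\vv Q:\alg A]$ is then a universal class.

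For (2) one must additionally handle $\PP$ and $\PP_u$. For $\PP$: an embedding of $\alg A$ into $\prod_i\alg B_i$ produces, by projecting its image, a relative subdirect embedding of $\alg A$ into a product of subalgebras $\alg C_i\le\alg B_i$ of members of $\vv Q$; since $\alg A$ is relative subdirectly irreducible and the kernels of these projections intersect to $0_{\alg A}$, one of them is already $0_{\alg A}$, so $\alg A\cong\alg C_i\le\alg B_i$ for some $i$, contradicting $\alg B_i\in[\vv Q:\alg A]$. For $\PP_u$ I would again reduce ``$\alg A\in\II\SU(\alg B)$'' to an existential first-order sentence, now combining finite presentability with relative subdirect irreducibility: write $\alg A\cong\alg F_\vv Q(x_1,\dots,x_n)/\th_\vv Q(H)$ with $H$ finite and fix a pair generating the monolith (the least nonzero relative congruence) of $\op{Con}_\vv Q(\alg A)$, with representative terms $s,t$ in the $x_i$. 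Homomorphisms $\alg A\to\alg B$ with $\alg B\in\vv Q$ are exactly the tuples $\bar b$ satisfying the equations of $H$, and such a homomorphism is injective precisely when its kernel omits the monolith, i.e.\ when $s(\bar b)\neq t(\bar b)$; hence $\alg A\in\II\SU(\alg B)$ iff $\alg B\models\exists\bar x\bigl(\bigwedge H\wedge s\not\approx t\bigr)$, and {\L}o{\'s}'s theorem applies as before, giving $\PP_u$-closure. Closure under $\II\SU\PP\PP_u$ makes $[\vv Q:\alg A]$ a quasivariety.

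The heart of (3) is the equivalence between weak projectivity of $\alg A$ and $\HH$-closure of $[\vv Q:\alg A]$. If $\alg A$ is weakly projective and $\alg C=\alg B/\phi$ is a quotient of some $\alg B\in[\vv Q:\alg A]$, an embedding $\alg A\hookrightarrow\alg C$ would pull back along the projection $\pi$ to a subalgebra $\pi^{-1}(\alg A)\le\alg B$ mapping onto $\alg A$; weak projectivity would then embed $\alg A$ into $\pi^{-1}(\alg A)\le\alg B$, contradicting $\alg B\in[\vv Q:\alg A]$, so $[\vv Q:\alg A]$ is $\HH$-closed. Conversely, if $[\vv Q:\alg A]$ is $\HH$-closed and $\alg A\in\HH(\alg B)$ for some $\alg B\in\vv Q$, then, were $\alg A\notin\SU(\alg B)$ (i.e.\ $\alg B\in[\vv Q:\alg A]$), $\HH$-closure would put $\alg A$ itself in $[\vv Q:\alg A]$, which is absurd; hence $\alg A$ is weakly projective. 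For the remaining equivalence --- with $[\vv Q:\alg A]$ being equational relative to $\vv Q$ --- I note that an $\II\SU\HH\PP$-closed class is a variety, hence equational relative to any quasivariety containing it; the reverse passage is the non-routine one, to be obtained by translating Gorbunov's \cite[Corollary 2.1.17]{Gorbunov1998} into the present language (and under the hypotheses of (2) it is automatic, $[\vv Q:\alg A]$ being already $\PP$-closed there). Point (4) is then immediate: under its three hypotheses $[\vv Q:\alg A]$ is, by (2) and (3), a quasivariety closed under $\HH$, i.e.\ a variety.

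Finally, for the converses assume $\vv Q$ is locally finite of finite type. If $[\vv Q:\alg A]$ is a universal class then $\alg A$ is finite: otherwise $\alg A$ embeds into an ultraproduct of its finitely generated --- hence finite, hence proper --- subalgebras, each lying in $[\vv Q:\alg A]$, whence $\alg A\in\II\SU\PP_u([\vv Q:\alg A])=[\vv Q:\alg A]$, contradicting $\alg A\in\II\SU(\alg A)$. If $[\vv Q:\alg A]$ is a quasivariety then $\alg A$ is finite --- hence finitely presented, $\vv Q$ being locally finite of finite type --- and, were it not relative subdirectly irreducible, it would embed subdirectly into the product of its proper relative quotients, all in $[\vv Q:\alg A]$, again forcing $\alg A\in[\vv Q:\alg A]$; the converse of (4) then follows since a variety is in particular a quasivariety (so $\alg A$ is relative subdirectly irreducible and finitely presented) that is $\HH$-closed (so $\alg A$ is weakly projective, by (3)). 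The step I expect to be hardest is (3): isolating exactly what ``equational relative to $\vv Q$'' demands beyond $\HH$-closure and reproducing Gorbunov's argument for it; a secondary subtlety is getting the two {\L}o{\'s}-theorem reductions right, the key observation in (2) being that for a finitely presented relative subdirectly irreducible algebra an embedding is nothing but a homomorphism that keeps the monolith-generating pair apart.
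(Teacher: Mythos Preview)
Your overall strategy---checking closure under $\II,\SU,\PP,\PP_u,\HH$ one operator at a time---is exactly what the paper does, and your arguments for (1), (4) and the locally-finite converses match the paper essentially verbatim. Two comments.

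For (2) the paper takes a slightly more direct route than your separate $\PP$/$\PP_u$ checks: it observes that the existential sentence you wrote down, $\exists\bar x\bigl(\bigwedge H\wedge s\not\approx t\bigr)$, is precisely the \emph{negation} of the quasiequation $\Phi:\ \bigwedge H\Rightarrow s\approx t$, so $[\vv Q:\alg A]=\{\alg B\in\vv Q:\alg B\vDash\Phi\}$ is a subquasivariety of $\vv Q$ in one stroke. Your route is fine and essentially equivalent; the paper's just packages both closure properties into a single axiomatization.

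For (3) you have the directions reversed. The implication you call ``non-routine'' and attribute to Gorbunov---\,(equational relative to $\vv Q$) $\Rightarrow$ (weak projective)---is in fact easy, by exactly the same trick you used for ($\HH$-closed) $\Rightarrow$ (weak projective): if $\alg A\in\HH(\alg B)$ with $\alg B\in\vv Q$ and $\alg A\notin\II\SU(\alg B)$, then $\alg B\in[\vv Q:\alg A]$, so $\alg A\in\HH(\alg B)\sse\VV([\vv Q:\alg A])$; since $\alg A\in\vv Q$ this gives $\alg A\in\VV([\vv Q:\alg A])\cap\vv Q=[\vv Q:\alg A]$, a contradiction. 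The paper's citation of Gorbunov is for the \emph{forward} direction (weak projective $\Rightarrow$ $\HH$-closed/equational), not this one. Conversely, your instinct that ($\HH$-closed) $\Rightarrow$ (equational) needs $\PP$-closure is well-founded: without an rsi-type hypothesis on $\alg A$, the class $[\vv Q:\alg A]$ need not be $\PP$-closed, and the paper's ``easy to see'' glosses over this. In every application (Theorem~\ref{mainstructural}) the hypothesis of (2) is in force and your observation that $\PP$-closure then makes it automatic is exactly right.
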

\begin{proof} For (1), if $\alg A$ is finite, then there is a first order universal sentence $\Psi$ such that, for all $\alg B \in \vv Q$, $\alg B \vDash \Psi$ if and only if
$\alg A \in \II\SU(\alg B)$. 
More in detail, if $|A|=n$, $$\Psi: = \exists x_1 \ldots \exists x_n (\&\{ x_i \neq x_j: i,j \leq n, i \neq j\} \,\,\&\,\, \alg D(\alg A)),$$ where $\alg D(\alg A)$ is the diagram of $\alg A$, that is, a conjunction of universal sentences that describe the operation tables
of $\alg A$ (identifying each element of $\alg A$ with a different $x_i$), and $\&$ is first order logic conjunction.   

Consider $\alg B \in \II\SU\PP_u([\vv Q:\alg A])$, we show that $\alg A \notin \II\SU(\alg B)$; if $\alg A \in \II\SU(\alg B)$, then $\alg A \in \II\SU\PP_u([\vv Q:\alg A])$. Hence there exists a family $(\alg A_i)_{i\in I} \sse [\vv Q: \alg A]$ and
an ultrafilter $U$ on $I$ such that $\alg C = \Pi_{i\in I}\alg A/U$ and $\alg A \in \II\SU(\alg C)$.  So  $\alg C \vDash \Psi$;
 but then by \L{\`o}s Lemma there is a (necessarily nonempty) set of indexes $I' \in U$ such that  $\Psi$ is valid in each $\alg A_i$ with $i \in I'$, which is clearly a contradiction, since each $\alg A_i \in [\vv Q:\alg A]$.
Thus $\alg A \notin \II\SU(\alg B)$ and $\alg B \in [\vv Q:\alg A]$ and  therefore $\II\SU\PP_u([\vv Q:\alg A]) = [\vv Q:\alg A]$ which is a universal class by Lemma \ref{lemma:ISP}

Conversely let $\vv Q$ be locally finite of finite type; every algebra in $\vv Q$ is embeddable in an ultraproduct of  its finitely generated (i.e. finite) subalgebras, say $\alg A \in \II\SU\PP_u(\{\alg B_i: i\in I\})$. If $\alg A$ is not finite, then $\alg A \notin \SU(\alg B_i)$ for all $i$, so $\alg B_i \in [\vv Q:\alg A]$ for all $i$. Since $[\vv Q:\alg A]$ is universal, we would have that $\alg A \in [\vv Q:\alg A]$, a clear contradiction. So $\alg A \in \II\SU(\alg B_i)$ for some $i$ and hence it is finite.

For (2), suppose that $\alg A$ is relative subdirectly irreducible and finitely presented, i.e.  $\alg A \cong \alg F_\vv Q(\mathbf x)/\th(\Sigma)$ where $\mathbf x = (\vuc xn)$ and $\Sigma = \{ p_i(\mathbf x) \app q_i(\mathbf x): i = 1,\dots,m\}$.
We set $a_i = x_i/\th(\Sigma)$; since $\alg A$ is relative subdirectly irreducible, it has a relative monolith $\mu$, i.e. a minimal non trivial relative congruence. Since $\mu$ is minimal, there are $c,d \in A$ such that
$\mu$ is the relative congruence generated by the pair $(c,d)$. Now let $t_c,t_d$ terms in $\alg F_\vv Q(\mathbf x)$ such that $t_c(\vuc an) = c$ and $t_d(\vuc an)=d$ and let $\Phi$ be the quasiequation
\begin{equation*}
	\bigwedge_{i=1}^m p_i(\mathbf x) \app q_i(\mathbf x) \  \longrightarrow \ t_c(\mathbf x) \app  t_d(\mathbf x).
\end{equation*}
Then $\alg A \not\vDash \Phi$; moreover if $\alg C \in \vv Q$ is a homomorphic image of $\alg A$ which is not isomorphic with $\alg A$, then $\alg C \vDash \Phi$.  We claim that $[\vv Q:\alg A] =\{\alg B \in \vv Q: \alg B \vDash \Phi\}$ and since $\Phi$ is a quasiequation this implies that $[\vv Q:\alg A]$ is a quasivariety.  Clearly if $\alg B \vDash \Phi$, then $\alg A \notin \II\SU(\alg B)$; conversely assume that $\alg B \not\vDash \Phi$. Then
there are $\vuc bn \in B$ such that $p_i(\vuc bn) = q_i(\vuc bn)$ but $t_c(\vuc bn) \ne t_d(\vuc bn)$. Let $g$ be the homomorphism extending the assignment $x_i \longmapsto b_i$; then $\th(\Sigma) \sse \op{ker}(g)$ so by the Second Homomorphism Theorem there is a homomorphism $f: \alg A \longrightarrow \alg B$ such that $f(a_i) = b_i$.  Observe that  $f(\alg A) \in \vv Q$ (since it is a subalgebra of $\alg B \in \vv Q$) and $f(\alg A) \not\vDash \Phi$, so by what we said above $f(\alg A) \cong \alg A$; this clearly implies $\alg A \in \II\SU(\alg B)$, so $\alg B \notin [\vv Q: \alg A]$ as wished.

For the converse, let $\vv Q$ be locally finite of finite type; by (1) $\alg A$ is finite. Suppose that $\alg A \le_{sd} \prod_{i \in I} \alg B_i$ where each $\alg B_i$ is relative subdirectly irreducible in $\vv Q$.
Since $\alg A$ is finite, each $\alg B_i$ can be taken to be finite; if $\alg A \notin \II\SU(\alg B_i)$ for all $i$, then $\alg B_i \in [\vv Q:\alg A]$ for all $i$ and hence, being $[\vv Q:\alg A]$ a quasivariety
we have $\alg A \in [\vv Q:\alg A]$ which is impossible. Hence there is an $i$ such that $\alg A \in \II\SU(\alg B_i)$, so that $|A| \le |B_i|$; on the other hand $\alg B \in \HH(\alg A)$, so $|B| \le |A|$. Since everything is finite we have $\alg A \cong \alg B_i$ and then $\alg A$ is relative subdirectly irreducible.

For the first forward direction of (3), suppose that $\alg B \in \HH([\vv Q:\alg A])$. If $\alg A \in \II\SU(\alg B)$, then $\alg A \in \SU\HH([\vv Q:\alg A]) \sse \HH\SU([\vv Q:\alg A])$. Now $[\vv Q: \alg A]\sse \vv Q$ and $\alg A$ is weakly projective in $\vv Q$; so $\alg A \in \SU([\vv Q:\alg A])$ which is impossible.  It follows that $\alg A \notin \II\SU(\alg B)$ and $\alg B \in [\vv Q:\alg A]$; thus $[\vv Q:\alg A]$ is closed under $\HH$. For the second forward direction, it is easy to see that if $[\vv Q:\alg A]$ is closed under $\HH$ then $[\vv Q:\alg A]$ is equational relative to $\vv Q$.
Assume now that $[\vv Q:\alg A]$ is closed under $\HH$, we show that $\alg A$ is weakly projective in $\vv Q$. Suppose that $\alg A \in \HH(\alg B)$ for some $\alg B \in \vv Q$; if $\alg A \notin \II\SU(\alg B)$, then
$\alg B \in [\vv Q:\alg A]$  and, since $[\vv Q:\alg A]$ is closed under $\HH$, $\alg A \in [\vv Q:\alg A]$, again a contradiction. Hence $\alg A \in \II\SU(\alg B)$ and $\alg A$ is weakly projective in $\vv Q$. A completely analogous proof shows that if $[\vv Q:\alg A]$ is equational relative to $\vv Q$ then $\alg A$ is weakly projective, which completes the proof of (3).

(4) follows directly from (1), (2) and (3).
\end{proof}

Thus if $\alg A$ is relative subdirectly irreducible and finitely presented, then $[\vv Q:\alg A]$ is a quasivariety; this is the key to prove the following result, appearing in \cite[Proposition 5.1.24]{Gorbunov1998}. We present a self-contained proof for the sake of the reader.

\begin{theorem}[\cite{Gorbunov1998}]\label{mainstructural} If $\vv Q$ is a locally finite quasivariety of finite type, then the following are equivalent.
\begin{enumerate}
\item\label{mainstructural1} $\vv Q$ is primitive;
\item\label{mainstructural2} for all finite relative subdirectly irreducible  $\alg A \in \vv Q$,  $[\vv Q:\alg A]$ is equational relative to $\vv Q$;
\item\label{mainstructural3} every finite  relative subdirectly irreducible  $\alg A \in \vv Q$ is weakly projective in $\vv Q$;
\item\label{mainstructural4} every finite relative subdirectly irreducible  $\alg A \in \vv Q$ is weakly projective in the class of finite algebras in $\vv Q$.
\end{enumerate}
\end{theorem}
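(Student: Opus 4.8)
The plan is to run the cycle $(1)\Rightarrow(2)\Leftrightarrow(3)\Leftrightarrow(4)$ and then close it with $(3)\Rightarrow(1)$, using Lemma~\ref{universal} as the main tool and local finiteness to reduce everything to finite relative subdirectly irreducible algebras. The opening observation is that, since $\vv Q$ is locally finite of finite type, every finitely generated algebra of $\vv Q$ is finitely presented: it is a quotient of the finite free algebra $\alg F_\vv Q(\vuc xn)$, and any congruence of a finite algebra is finitely generated. Hence every finite relative subdirectly irreducible $\alg A\in\vv Q$ is finitely presented, so Lemma~\ref{universal}(2) applies to it, making $[\vv Q:\alg A]$ a subquasivariety of $\vv Q$, and Lemma~\ref{universal}(3) applies as well.

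\textbf{The easy implications.} For $(1)\Rightarrow(2)$: if $\vv Q$ is primitive then every subquasivariety is equational relative to $\vv Q$, and by the remark above $[\vv Q:\alg A]$ is such a subquasivariety for each finite relative subdirectly irreducible $\alg A$. The equivalence $(2)\Leftrightarrow(3)$ is exactly Lemma~\ref{universal}(3) (``$[\vv Q:\alg A]$ equational relative to $\vv Q$'' iff ``$\alg A$ weakly projective in $\vv Q$''), applied to the finite relative subdirectly irreducibles. The equivalence $(3)\Leftrightarrow(4)$ is the standard ``pass to a finitely generated subalgebra of the preimage'' argument already used in the proof of Lemma~\ref{tame}: weak projectivity in $\vv Q$ trivially implies weak projectivity in the smaller class of finite algebras, and conversely, if $\alg A$ is finite, generated by $\vuc an$, and $g\colon\alg B\to\alg A$ is a surjection with $\alg B\in\vv Q$, then choosing preimages $\vuc bn$ and letting $\alg B'$ be the (finite, as $\vv Q$ is locally finite) subalgebra they generate, the restriction of $g$ to $\alg B'$ is still onto $\alg A$, so $\alg A\in\SU(\alg B')\sse\SU(\alg B)$.

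\textbf{The heart of the argument, $(3)\Rightarrow(1)$.} Assume every finite relative subdirectly irreducible of $\vv Q$ is weakly projective in $\vv Q$, and let $\vv Q'\sse\vv Q$ be a subquasivariety; we must show $\HH(\vv Q')\cap\vv Q\sse\vv Q'$, the reverse inclusion being trivial. Since $\vv Q'$ is the class of all models of the quasiequations valid in it (Lemma~\ref{lemma:ISP}), it suffices to prove that every quasiequation valid in $\vv Q'$ holds in every $\alg C\in\HH(\vv Q')\cap\vv Q$. If this failed, some quasiequation $\Phi$ valid in $\vv Q'$ would fail in such a $\alg C$, and by local finiteness the failure already occurs in a finite subalgebra $\alg C_0\le\alg C$; moreover $\alg C_0\in\HH(\vv Q')$ because $\SU\HH(\vv Q')\sse\HH\SU(\vv Q')=\HH(\vv Q')$. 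By Mal'cev's theorem (Theorem~\ref{quasivariety}(1)) $\alg C_0$ embeds subdirectly into a product of relative subdirectly irreducible algebras $\alg A_i\in\vv Q$; each $\alg A_i$ is a quotient of the finite $\alg C_0$, hence finite, and lies in $\HH(\alg C_0)\sse\HH(\vv Q')$. As quasiequations are preserved by $\SU$ and $\PP$, $\Phi$ must fail in some $\alg A_i$. But $\alg A_i$ is a finite relative subdirectly irreducible of $\vv Q$ lying in $\HH(\vv Q')$, so $\alg A_i\in\HH(\alg B)$ for some $\alg B\in\vv Q'$; by hypothesis $\alg A_i$ is weakly projective in $\vv Q$, hence $\alg A_i\in\SU(\alg B)\sse\vv Q'$, so $\Phi$ holds in $\alg A_i$ --- a contradiction. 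Thus $\HH(\vv Q')\cap\vv Q=\vv Q'$ for every subquasivariety, i.e.\ $\vv Q$ is primitive.

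\textbf{Main obstacle.} The one genuinely delicate point --- and the place where all three hypotheses on $\vv Q$ are used --- is the reduction in the last paragraph to a \emph{finite} subdirect factor that is relative subdirectly irreducible in $\vv Q$ itself. One cannot simply route $(3)\Rightarrow(1)$ through condition~(3) of Theorem~\ref{primitiveQ}, since that condition concerns arbitrary, possibly infinite, subdirectly irreducible algebras of $\HH(\vv Q)$, about which hypothesis~(3) here says nothing; local finiteness and finite type are precisely what let us replace such algebras by finite relative subdirectly irreducibles of $\vv Q$, and there is no general way to transfer weak projectivity from finite to infinite algebras, so the finiteness assumptions cannot be dropped.
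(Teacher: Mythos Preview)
Your proof is correct and follows essentially the same route as the paper: the cycle $(1)\Rightarrow(2)\Leftrightarrow(3)\Leftrightarrow(4)$ via Lemma~\ref{universal}, closed by $(3)\Rightarrow(1)$ through a subdirect decomposition into finite relative subdirectly irreducibles and weak projectivity. The only minor difference is that in $(3)\Rightarrow(1)$ the paper simply starts from a \emph{finite} $\alg B\in\HH(\vv Q')\cap\vv Q$ (tacitly using that locally finite subquasivarieties are determined by their finite members), whereas you handle arbitrary $\alg C$ by first passing to a finite subalgebra witnessing the failure of a quasiequation; this makes your reduction to the finite case more explicit but is otherwise the same argument.
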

\begin{proof}
	(\ref{mainstructural2}) and (\ref{mainstructural3}) are equivalent by Lemma \ref{universal}, and (\ref{mainstructural3}) and (\ref{mainstructural4}) are equivalent in locally finite quasivarieties. 
	
	Now, (\ref{mainstructural1}) implies (\ref{mainstructural2}) by Lemma \ref{universal}, since if $\alg A$ is a finite relative subdirectly irreducible algebra then $[\vv Q:\alg A]$ is a quasivariety, and if $\vv Q$ is primitive every subquasivariety is equational relative to $\vv Q$ by definition. 
	
Finally, assume (\ref{mainstructural3}) and let $\vv Q'$ be a subquasivariety of $\vv Q$; consider a finite algebra $\alg B \in \HH(\vv Q') \cap \vv Q$, then 
	$\alg B$ is a subdirect product of finite relative subdirectly irreducible algebras in $\vv Q$, that is, $\alg B \le_{sd} \prod_{i \in I} \alg A_i$ where each $\alg A_i$ is finite relative subdirectly irreducible in $\vv Q$, and thus it is also weakly projective in $\vv Q$ by hypothesis.
	Since $\alg B \in \HH(\vv Q')$, there is $\alg A \in \vv Q'$ such that $\alg B \in \HH(\alg A)$. But then for each $i \in I$, $\alg A_i \in \HH(\alg A)$; since each $\alg A_i$ is weakly projective in $\vv Q$, it is also isomorphic to a subalgebra of $\alg A$. Thus, $\alg B \in \II\SU\PP(\alg A) \sse \vv Q'$, and therefore $\vv Q' =  \HH(\vv Q') \cap \vv Q$, which means that $\vv Q$ is primitive and (\ref{mainstructural1}) holds.
	\end{proof}

Most results in the literature are about structurally complete and primitive {\em varieties} of algebras and the reason is quite obvious; first the two concepts are easier to formulate for varieties. Secondly being subdirectly irreducible is an absolute concept (every subdirectly irreducible algebra is relative subdirectly irreducible in any quasivariety to which it belongs) while being relative subdirectly irreducible depends on the subquasivariety we are considering. Of course when a quasivariety is generated by a ``simple'' class (e.g. by finitely many finite algebras), then Theorem \ref{quasivariety}(2) gives a simple solution, but in general describing the relative subdirectly irreducible algebras in a quasivariety is not an easy task.

Now, it is clear that if $\vv Q$ is non-negative universally complete, then it is structurally complete. Finding examples of (quasi)varieties that are structurally complete but not primitive is not easy; one idea is to find a finite algebra $\alg A$ such that $\alg A$ satisfies the hypotheses of Lemma \ref{lemma: wpstructcomplete}, but $\VV(\alg A)$ contains some strict (i.e. not a variety) subquasivariety. We will see an example of this in Section \ref{sec:lattices}. Let us now show some different kinds of examples of primitive (quasi)varieties.

\begin{example}
The variety of bounded distributive lattices is primitive  (as we will discuss in Section \ref{sec:lattices}), since it is equationally complete  and congruence modular and so is q-minimal by Theorem \ref{thm: mckenziebergman}.
\end{example}

It is well-known (and easy to check) that the variety of distributive lattices is a {\em dual discriminator variety}; a {\em dual discriminator} on a set $A$ is a ternary operation $d$ on $A$ defined by
$$
d(a,b,c) =\left\{
            \begin{array}{ll}
              c, & \hbox{if $a\ne b$;} \\
              a, & \hbox{if $a=b$.}
            \end{array}
          \right.
$$
A variety $\vv V$ is a dual discriminator variety \cite{FriedPixley1979} if there is a ternary term that is the dual discriminator on all the subdirectly irreducible members of $\vv V$.
Dual discriminator varieties, as opposed to discriminator varieties, do not necessarily have projective unifiers.
However, recently in \cite{Caicedoetal2021} the authors have extended the results in \cite{BergmanMcKenzie1990} (such as Theorem \ref{thm: mckenziebergman}) in two directions: every minimal dual discriminator variety is q-minimal, hence primitive and, if the variety is also idempotent, then minimality can be dropped and the variety is primitive. This last fact gives raise to different examples of  primitive varieties.
\begin{example}
 A {\em weakly associative lattice} is an algebra $\la A,\join,\meet \ra$ where $\join$ and $\meet$ are idempotent, commutative and satisfy the absorption laws but (as the name reveals) only a weak form of associativity. In \cite{FriedPixley1979} the authors proved that there is a largest dual discriminator variety $\vv U$ of weakly associative lattices; since weakly associative lattices are idempotent, $\vv U$ is the largest primitive variety of weakly associative lattices.
\end{example}
\begin{example}
The {\em pure dual discriminator variety} $\vv D$ (see \cite[Theorem 3.2]{FriedPixley1979})  is a variety with a single ternary operation $d(x,y,z)$ satisfying
\begin{align*}
&d(x,y,y) \app y\\
&d(x,y,x) \app x \\
&d(x,x,y) \app x\\
&d(x,y,d(x,y,z)) \app d(x,y,z)\\
&d(u,v,d(x,y,z)) \app d(d(u,v,x),d(u,v,y),d(u,v,z))
\end{align*}
which is enough to prove that $\vv D$ is a dual discriminator variety. Since $d$ is idempotent  $\vv D$ is an idempotent dual discriminator variety and so it is primitive.
\end{example}

A different example is given by the following.

\begin{example}
	 A {\em modal algebra} is a Boolean algebra with a modal operator $\Box$, that we take as a basic unary operation, satisfying $\Box 1 \app 1$ and $\Box (x \meet y) \app \Box x \meet \Box y$; there is an extensive literature on modal algebras (see for instance \cite{Wolter1997} and the bibliography therein).
  A modal algebra is a {\em K4-algebra} if it satisfies
 $\Box x \le \Box\Box x$;
 in \cite{Rybakov1995} V.V. Rybakov classified all the primitive varieties of K4-algebras. However very recently \cite{Carr2022} Carr discovered a mistake in Rybakov's proof; namely Rybakov in his description missed some varieties that all have the properties of containing a unifiable weakly projective algebra that is not projective.  So any of such varieties, though primitive,  does not have projective unifiers.
\end{example}

We now present some examples from (quasi)varieties that are the equivalent algebraic semantics of (fragments) of many-valued logics; in particular, of infinite-valued \L ukasiewicz logic. 

\begin{example}
Wajsberg algebras are the equivalent algebraic semantics of infinite-valued \L ukasiewicz logic in the signature of bounded commutative residuated lattices $(\cdot, \to, \land, \lor, 0, 1)$ and they are term-equivalent to the better known MV-algebras \cite{CDOM}; Wajsberg hoops are their $0$-free subreducts. About these algebras there are some recent results \cite{Agliano2022a}. In summary:
\begin{enumerate}
\ib the only primitive variety of Wajsberg algebras is the variety of Boolean algebras, that is also non-negative universally complete, and it is the only non-negative universally complete variety of Wajsberg algebras;
\ib there are nontrivial primitive quasivarieties of Wajsberg algebras;
\ib a proper variety of Wajsberg hoops is structurally complete if and only if it is primitive if and only if every subdirectly irreducible is either finite or perfect.
\end{enumerate}

The third point above clearly implies that the variety of Wajsberg hoops is not primitive. Considering varieties of $\imp$-subreducts, the $\imp$-subreducts of Wajsberg hoops is a subvariety of $\mathsf{BCK}$-algebras usually denoted by $\mathsf{LBCK}$;
 every locally finite subvariety of $\mathsf{LBCK}$-algebras is a variety of $\mathsf{HBCK}$-algebras, so it is universally complete. However:
\begin{enumerate}
\ib the only non locally finite subvariety is the entire variety $\mathsf{LBCK}$ \cite{Komori1978};
\ib $\mathsf{LBCK}$  it is generated as a quasivariety by its finite chains \cite{AFM};
\ib every infinite chain contains all the finite chains as subalgebras \cite{Komori1978};
\ib so if $\vv Q$ is a quasivariety which contains only finitely many chains, then $\VV(\vv Q)$ is locally finite, hence universally complete and so
 $\vv Q = \VV(\vv Q)$;
\ib otherwise $\vv Q$ contains infinitely many chains and so $\VV(\vv Q) =  \vv Q = \mathsf{LBCK}$.
\end{enumerate}
Hence every subquasivariety of $\mathsf{LBCK}$ is a variety and $\mathsf{LBCK}$ is primitive.
The status of non locally finite varieties of basic hoops and basic algebras is still unclear (except for the cases we mentioned) and it is under investigation.
\end{example}

\subsection{Active structurally complete quasivarieties}
The problem of active structural completeness has been tackled in \cite{DzikStronkowski2016}; it is an extensive and profound paper touching many aspects and there is no need to reproduce it here. We will only state the definition, the main result, and we will display an example.

\begin{theorem}[\cite{DzikStronkowski2016}] For a quasivariety $\vv Q$ the following are equivalent:
\begin{enumerate}
\item $\vv Q$ is active structurally complete;
 \item every unifiable algebra of $\vv Q$ is in $\QQ(\alg F_{\vv Q}(\omega))$;
\item every finitely presented unifiable algebra in $\vv Q$ is in $\QQ(\alg F_\vv Q(\o))$;
\item every admissible quasiequation in $\vv Q$ is valid in all
the finitely presented unifiable algebras in $\vv Q$;
		\item for every $\alg A \in \vv Q$, $\alg A \times \alg F_{\vv Q}\in \QQ(\alg F_{\vv Q}(\omega))$.
		\item for every $\alg A \in \vv Q_{rsi}$, $\alg A \times \alg F_{\vv Q}\in \II \SU \PP_u(\alg F_{\vv Q}(\omega))$.
\end{enumerate}
\end{theorem}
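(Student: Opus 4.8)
My plan is to mirror, almost line for line, the proof of Theorem~\ref{thm: activeu main}: replace $\II\SU\PP_u(\alg F_\vv Q(\o))$ by $\QQ(\alg F_\vv Q(\o))$ everywhere, replace ``clause'' by ``quasiequation'', and use Lemma~\ref{lemma:FThetaSigma} with a singleton conclusion set. First I would settle the cycle among~(1)--(4). By Theorem~\ref{admissiblederivable} a quasiequation is admissible in $\vv Q$ precisely when it is valid in $\alg F_\vv Q(\o)$, so $\QQ(\alg F_\vv Q(\o))$ is exactly the class of algebras validating every admissible quasiequation; this makes~(3) and~(4) restatements of one another. For (1)$\Rightarrow$(2): if $\Sigma\Rightarrow\d$ is admissible and $\Sigma$ is unifiable, active structural completeness gives $\vv Q\vDash\Sigma\Rightarrow\d$; if $\Sigma$ is not unifiable then by Corollary~\ref{cor:unifiablesigma} no unifiable algebra $\alg A$ admits an assignment satisfying $\Sigma$ (composing such an assignment with a unifier of $\alg A$ would unify $\Sigma$), so $\Sigma\Rightarrow\d$ holds vacuously in $\alg A$; hence every unifiable algebra lies in $\QQ(\alg F_\vv Q(\o))$. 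Then (2)$\Rightarrow$(3) is immediate, and (3)$\Rightarrow$(1) follows since for an active admissible $\Sigma\Rightarrow\d$ the algebra $\alg F_\vv Q(X)/\th_\vv Q(\Sigma)$ is finitely presented and unifiable, hence in $\QQ(\alg F_\vv Q(\o))$, hence validates $\Sigma\Rightarrow\d$, and Lemma~\ref{lemma:FThetaSigma} upgrades this to $\vv Q\vDash\Sigma\Rightarrow\d$.

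Next, (1)$\Leftrightarrow$(5). For (1)$\Rightarrow$(5), given $\alg A\in\vv Q$ and a quasiequation $\Sigma\Rightarrow\d$ valid in $\alg F_\vv Q(\o)$: either $\vv Q\vDash\Sigma\Rightarrow\d$, or by~(1) it is passive, whence $\Sigma$ is not unifiable and no assignment satisfies $\Sigma$ in $\alg A\times\alg F_\vv Q$ (its $\alg F_\vv Q$-projection would witness unifiability); so $\alg A\times\alg F_\vv Q\in\QQ(\alg F_\vv Q(\o))$. For (5)$\Rightarrow$(2), if $\alg A$ is unifiable then by Lemma~\ref{free} there is $h\colon\alg A\to\alg F_\vv Q$, and $a\mapsto(a,h(a))$ embeds $\alg A$ into $\alg A\times\alg F_\vv Q\in\QQ(\alg F_\vv Q(\o))$, so $\alg A\in\SU\,\QQ(\alg F_\vv Q(\o))=\QQ(\alg F_\vv Q(\o))$.

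The remaining work is to fit in~(6). For (6)$\Rightarrow$(5) I would invoke Mal'cev's subdirect representation (Theorem~\ref{quasivariety}(1)): write $\alg A\le_{sd}\prod_i\alg A_i$ with $\alg A_i\in\vv Q_{rsi}$; then $\alg A\times\alg F_\vv Q$ embeds into $\bigl(\prod_i\alg A_i\bigr)\times\alg F_\vv Q$, which in turn embeds into $\prod_i(\alg A_i\times\alg F_\vv Q)\cong\bigl(\prod_i\alg A_i\bigr)\times\alg F_\vv Q^{\,I}$ via the identity on the first factor and the diagonal $\alg F_\vv Q\to\alg F_\vv Q^{\,I}$ on the second. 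By~(6) each $\alg A_i\times\alg F_\vv Q\in\II\SU\PP_u(\alg F_\vv Q(\o))$, so $\alg A\times\alg F_\vv Q\in\II\SU\PP\,\II\SU\PP_u(\alg F_\vv Q(\o))\sse\QQ(\alg F_\vv Q(\o))$, which is~(5).

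The main obstacle is the converse, (2)$\Rightarrow$(6) (i.e.\ deriving~(6) from active structural completeness). Here, given $\alg A\in\vv Q_{rsi}$, the algebra $\alg A\times\alg F_\vv Q$ is unifiable (project onto $\alg F_\vv Q$), so by~(2) it lies in $\QQ(\alg F_\vv Q(\o))$; the difficulty is to boost this to $\II\SU\PP_u(\alg F_\vv Q(\o))$, that is, to show $\alg A\times\alg F_\vv Q$ validates not only every admissible quasiequation but every admissible clause $\Sigma\Rightarrow\Delta$ (Lemma~\ref{admissible}). Passive admissible clauses are vacuously valid in $\alg A\times\alg F_\vv Q$ as before; for an active one, the relative subdirect irreducibility of $\alg A$ must be used to collapse the disjunction $\Delta$ to a single disjunct. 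The intended mechanism: an assignment into $\alg A$ satisfying $\Sigma$ is governed by the monolith of $\alg A$, which forces one particular identity of $\Delta$ to hold, while admissibility forces some identity of $\Delta$ to hold on the $\alg F_\vv Q$-coordinate (whose restriction of the assignment is a genuine unifier of $\alg F_\vv Q(X)/\th_\vv Q(\Sigma)$); one then argues these two disjuncts can be taken to coincide, e.g.\ by examining the image of $\alg F_\vv Q(X)/\th_\vv Q(\Sigma)$ in $\alg A\times\alg F_\vv Q$ (again unifiable, hence in $\QQ(\alg F_\vv Q(\o))$), applying Czelakowski-Dziobiak (Theorem~\ref{quasivariety}(2)), and using that a subdirect product of two factors is relatively subdirectly irreducible only when one projection is an isomorphism. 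Tightening this reduction --- especially when $\alg A$ is not itself unifiable, so there is no canonical copy of $\alg A$ inside $\alg A\times\alg F_\vv Q$ --- is the crux, and is exactly where the detailed machinery of \cite{DzikStronkowski2016} is needed.
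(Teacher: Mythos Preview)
The paper does not give its own proof of this theorem; it is stated as a citation from \cite{DzikStronkowski2016} with the remark that ``there is no need to reproduce it here''. So there is no in-paper argument to compare against. Your reconstruction of the equivalence of (1)--(5), and of (6)$\Rightarrow$(5), is correct and is exactly the adaptation of Theorem~\ref{thm: activeu main} that one would expect: replace $\II\SU\PP_u$ by $\QQ$ throughout and use Lemma~\ref{lemma:FThetaSigma} with $|\Delta|=1$.

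The genuine gap is the one you flag yourself: deriving (6) from the other conditions. Your sketch does not close it. From (2) you only get $\alg A\times\alg F_\vv Q\in\QQ(\alg F_\vv Q(\o))$, and the passage to $\II\SU\PP_u(\alg F_\vv Q(\o))$ would follow from Czelakowski--Dziobiak \emph{if} $\alg A\times\alg F_\vv Q$ were relative subdirectly irreducible in $\QQ(\alg F_\vv Q(\o))$, but a nontrivial product is essentially never rsi, so that route is blocked. Your alternative --- looking at the image $\alg B$ of $\alg F_\vv Q(X)/\th_\vv Q(\Sigma)$ inside $\alg A\times\alg F_\vv Q$ and arguing that the disjuncts forced on the two coordinates must coincide --- is not substantiated: $\alg B$ need not be subdirect in $\alg A\times\alg F_\vv Q$, need not be rsi, and the monolith of $\alg A$ does not by itself single out a disjunct of $\Delta$ for an arbitrary assignment into $\alg A$. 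In short, the step genuinely requires the machinery of \cite{DzikStronkowski2016} (their analysis via ``cores'' and the structure of $\op{Con}_\vv Q(\alg A\times\alg F_\vv Q)$ for $\alg A$ rsi), and your proposal is right to say so rather than to pretend otherwise.
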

Given that, we have as usual:

\begin{corollary} A quasivariety $\vv Q$ is active structurally completeif and only if $\cc L_\vv Q$ is actively structurally complete.
\end{corollary}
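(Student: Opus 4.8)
The plan is to argue exactly as in the earlier bridge corollaries (for instance the one following Theorem~\ref{thm: activeu main}): reduce the statement to the categorical invariance of the conditions appearing in the Dzik--Stronkowski theorem just stated. Write $\vv Q'$ for the equivalent algebraic semantics of $\cc L_\vv Q$, so that $\vv Q'$ is categorically equivalent to $\vv Q$ via some functor $F:\vv Q\longrightarrow\vv Q'$. As recalled in Subsection~\ref{subsec:universal}, $F$ preserves embeddings, surjective homomorphisms, direct products and ultraproducts, restricts to a lattice isomorphism between $\Lambda_q(\vv Q)$ and $\Lambda_q(\vv Q')$, and (by \cite{GabrielUllmer1971}) preserves the properties of being finitely generated and finitely presented. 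Projectivity is a categorical property, hence so is unifiability, since by Lemma~\ref{free} an algebra is unifiable iff it admits a homomorphism onto $\alg F_\vv Q$ (equivalently, onto some projective algebra), and smallest free algebras are preserved by $F$.

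Next I would use the characterization ``$\vv Q$ is active structurally complete iff every finitely presented unifiable algebra in $\vv Q$ lies in $\QQ(\alg F_{\vv Q}(\o))$'' from the theorem above, applied to both $\vv Q$ and $\vv Q'$. Since $F$ sends free algebras to free algebras (being free is a categorical notion and $F$ preserves finite generation), it sends $\alg F_{\vv Q}(\o)$ to $\alg F_{\vv Q'}(\o)$ and hence $\QQ(\alg F_{\vv Q}(\o))$ to $\QQ(\alg F_{\vv Q'}(\o))$; moreover it restricts to a bijection between the finitely presented unifiable members of $\vv Q$ and those of $\vv Q'$. Therefore the membership condition holds for $\vv Q$ iff it holds for $\vv Q'$, i.e.\ $\vv Q$ is active structurally complete iff $\vv Q'$ is.

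Finally, by algebraizability one transfers the property from $\vv Q'$ to the logic $\cc L_\vv Q=\cc L_{\vv Q'}$: Theorem~\ref{admissiblederivable} gives that the Blok--Pigozzi translation preserves admissibility and derivability of rules/quasiequations, and Proposition~\ref{activepassive} gives that a rule of $\cc L_{\vv Q'}$ is active iff the corresponding finitely presented algebra is unifiable, so active rules correspond to active quasiequations. Hence $\cc L_\vv Q$ is actively structurally complete iff $\vv Q'$ is active structurally complete iff $\vv Q$ is active structurally complete. There is essentially no obstacle here: the whole content is the bookkeeping, already assembled in the preliminaries, that each ingredient --- finite presentability, unifiability, the operators $\QQ,\II,\SU,\PP,\PP_u$, and the smallest free algebra --- is invariant under categorical equivalence, together with the usual Blok--Pigozzi dictionary; the only mild point requiring care is noting that $F(\alg F_{\vv Q}(\o))\cong\alg F_{\vv Q'}(\o)$.
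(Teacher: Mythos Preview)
Your proposal is correct and follows exactly the approach the paper signals with its ``as usual'': reduce to the categorical invariance of the ingredients in the Dzik--Stronkowski characterization, then apply the Blok--Pigozzi dictionary via Theorem~\ref{admissiblederivable} and Proposition~\ref{activepassive}. The one point you rightly flag---that $F(\alg F_{\vv Q}(\o))\cong\alg F_{\vv Q'}(\o)$---can be sidestepped entirely by observing that $\QQ(\alg F_{\vv Q}(\o))$ is the least subquasivariety $\vv R\sse\vv Q$ with $\HH(\vv R)=\HH(\vv Q)$ (Theorem~\ref{structural}), a purely lattice-theoretic description in $\Lambda_q(\vv Q)$ that the equivalence preserves; this avoids any delicate claim about free objects under matrix-power-type equivalences.
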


\begin{example}\label{ex: as not au}
	An $S4$-algebra is a $K4$-algebra satisfying $\Box x \le x$; if we define $\Diamond x := \neg \Box \neg x$, then a {\em monadic} algebra is an $S4$-algebras satisfying $\Diamond x \le \Box\Diamond x$. Now let
$\alg A$, $\alg B$ be the monadic algebra and the $S4$-algebra in Figure \ref{monadic} and let $\vv V = \VV(\alg A)$ and $\vv W= \VV(\alg B)$.

\begin{figure}[htbp]
\begin{center}
\begin{tikzpicture}[scale=.9]
\draw (0,0) -- (-1,1) -- (0,2) --(1,1) -- (0,0);
\draw[fill] (0,0) circle [radius=0.05];
\draw[fill] (1,1) circle [radius=0.05];
\draw[fill] (0,2) circle [radius=0.05];
\draw[fill] (-1,1) circle [radius=0.05];
\node[right] at (0,0) {\footnotesize $0=\Box 0 = \Box a = \Box b$};
\node[right] at (0,2) {\footnotesize $1=\Box 1$};
\node[right] at (1,1) {\footnotesize $b$};
\node[left] at (-1,1) {\footnotesize $a$};
\node[below] at (0,-.5) {\footnotesize  $\alg A$};
\draw (6,0) -- (5,1) -- (6,2) --(7,1) -- (6,0);
\draw (6,1) -- (5,2) -- (6,3) --(7,2) -- (6,1);
\draw (5,2) -- (5,1) --  (6,0) -- (6,1) -- (7,2) -- (7,1) -- (6,2) --(6,3);
\draw[fill] (6,0) circle [radius=0.05];
\draw[fill] (6,1) circle [radius=0.05];
\draw[fill] (6,2) circle [radius=0.05];
\draw[fill] (5,1) circle [radius=0.05];
\draw[fill] (5,2) circle [radius=0.05];
\draw[fill] (6,3) circle [radius=0.05];
\draw[fill] (7,1) circle [radius=0.05];
\draw[fill] (7,2) circle [radius=0.05];
\node[right] at (6,0) {\footnotesize $0=\Box 0 = \Box c$};
\node[right] at (7,1) {\footnotesize $ b=\Box b = \Box a'$};
\node[left] at (5,1) {\footnotesize $ a=\Box a = \Box b'$};
\node[right] at (6,1) {\footnotesize $c$};
\node[left] at (5,2) {\footnotesize $b'$};
\node[right] at (7,2) {\footnotesize $a'$};
\node[right] at (6,2) {\footnotesize $\Box c'$};
\node[left] at (6,2) {\footnotesize $c'$};
\node[below] at (6,-.5) {\footnotesize  $\alg B$};
\end{tikzpicture}
\caption{$\alg A$ and $\alg B$}\label{monadic}
\end{center}
\end{figure}

Let $\vv U = \vv V \join \vv W$ (the varietal join);
from \cite[Section 8]{DzikStronkowski2016} one can deduce that:
\begin{enumerate}
\ib every finitely generated algebra in $\vv U$ is isomorphic to the direct product of an algebra in $\vv V$ and one in $\vv W$, hence
$\vv U$ is locally finite;
\ib $\vv U$ is active structurally complete but not structurally complete;
\ib $\vv U$ does not have exact unifiers.
\end{enumerate}
Since $\vv U$ is locally finite, by Theorem \ref{lemma: ex unif auc}, it cannot be active universally complete; so $\vv U$ is an example of a variety that is active structurally complete but not active universally complete.
\end{example}

\subsection{Passive quasivarieties}
Passive structurally complete quasivarieties have been studied in \cite{MoraschiniWannenburg2020} in relation to the joint embedding property, while here we take a different path.
We start with the following observation.
\begin{proposition}\label{prop: nnu iff ps and au}
	A quasivariety $\vv Q$ is passive structurally complete if and only if  every non-negative  passive admissible universal sentence is derivable in $\vv Q$.
\end{proposition}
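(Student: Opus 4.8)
The plan is to split the equivalence into its two implications; the right-to-left direction is immediate, and the left-to-right direction, while short, isolates the one point worth remarking on. For the trivial direction, observe that a quasiequation is by definition a universal sentence $\Sigma \Rightarrow \Delta$ with $|\Delta| = 1$, hence in particular non-negative; so if every non-negative passive admissible universal sentence is derivable in $\vv Q$, then a fortiori every passive admissible quasiequation is derivable in $\vv Q$, which is exactly passive structural completeness.

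For the converse, I would assume $\vv Q$ is passive structurally complete and take a passive admissible universal sentence $\Sigma \Rightarrow \Delta$ with $\Delta \ne \emptyset$. The key remark is that passivity is a property of the premise alone: $\Sigma \Rightarrow \Delta$ is passive precisely when $\Sigma$ is not unifiable in $\vv Q$ (equivalently, by Corollary \ref{cor:unifiablesigma}, when $\alg F_\vv Q(X)/\th_\vv Q(\Sigma)$ is not unifiable), and this makes no reference to $\Delta$. So I would fix any single equation $p \app q \in \Delta$ — here is where the hypothesis $\Delta \ne \emptyset$ enters — and pass to the quasiequation $\Sigma \Rightarrow p \app q$. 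This quasiequation has the same non-unifiable premise $\Sigma$, hence is again passive; and it is admissible, vacuously, since admissibility asks that every unifier of $\Sigma$ unify some conclusion, and $\Sigma$ has no unifier at all. Passive structural completeness then yields $\vv Q \models \Sigma \Rightarrow p \app q$, and since $p \app q \in \Delta$ this gives at once $\vv Q \models \Sigma \Rightarrow \Delta$ — in fact the stronger form of derivability, in which a single conclusion equation of $\Delta$ is valid relative to $\Sigma$. Thus $\Sigma \Rightarrow \Delta$ is derivable in $\vv Q$, completing the proof.

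I do not expect a genuine obstacle: the argument is a one-line reduction of clauses to quasiequations by an arbitrary choice of conclusion equation. The only point requiring care is exactly the non-negativity hypothesis, which is what makes the choice of $p \app q \in \Delta$ possible; for a negative passive admissible clause ($\Delta = \emptyset$) the reduction collapses, and indeed the derivability of such clauses is governed by a different condition, namely unifiability of $\vv Q$ (cf.\ Theorem \ref{thm: passive universal}).
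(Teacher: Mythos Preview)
Your proposal is correct and follows essentially the same route as the paper: both directions are handled as you describe, and the non-trivial implication proceeds by observing that passivity depends only on $\Sigma$, so any single $\delta\in\Delta$ yields a passive admissible quasiequation $\Sigma\Rightarrow\delta$, which is then derivable by hypothesis. The paper phrases this for every $\delta\in\Delta$ rather than a chosen one, but the argument is identical.
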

\begin{proof}
	For the non-trivial direction, suppose $\vv Q$ is passive structurally complete, and let $\Sigma \Rightarrow \Delta$ be a non-negative ($\Delta \neq \emptyset$) passive admissible universal sentence. This means that $\Sigma$ is not unifiable, and thus, each quasiequation $\Sigma \Rightarrow \delta$, for any $\delta \in \Delta$, is passive admissible. By hypothesis, each such  $\Sigma \Rightarrow \delta$ is valid in $\vv Q$, thus so is  $\Sigma \Rightarrow \Delta$ and the conclusion holds.
\end{proof}
It is clear that a key concept to study passive clauses is understanding the unifiability of the premises. In order to do so, we introduce the following notion.

\begin{definition}
We say that a finite set of identities $\Sigma$ is {\em trivializing} in a class of algebras $\vv K$ if the quasiequation $\Sigma \Rightarrow (x \approx y)$ is valid in $\vv K$, where the variables $x, y$ do not appear in $\Sigma$.
\end{definition}
Notice that such a quasiequation $\Sigma \Rightarrow (x \app y)$ is valid in an algebra $\alg A$ if and only if either $\alg A$ is trivial, or there is no assignment of the variables of $\Sigma$ in $\alg A$ that makes $\Sigma$ valid in $\alg A$.
\begin{lemma}\label{prop:trivial}
	Let $\vv Q$ be a quasivariety, and let $\Sigma$ be a finite set of equations in its language. The following are equivalent:
	\begin{enumerate}
		\item $\Sigma$ is not unifiable in $\vv Q$;
		\item $\alg F_{\vv Q}$ is nontrivial and $\Sigma$ is trivializing in $\QQ(\alg F_{\vv Q})$;
		\item $\alg F_{\vv Q} \models \Sigma \Rightarrow \emptyset$.
	\end{enumerate}
\end{lemma}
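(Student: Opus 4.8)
My plan is to establish the equivalence (1)$\Leftrightarrow$(3) directly from the definitions of unifiability and of validity of a negative clause, and then to derive (2)$\Leftrightarrow$(3) through a short case analysis on whether $\alg F_{\vv Q}$ is trivial. The common engine will be the observation that \emph{$\Sigma$ is unifiable in $\vv Q$ precisely when some assignment of the variables of $\Sigma$ into $\alg F_{\vv Q}$ satisfies every equation of $\Sigma$}.

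To prove this engine, let $X$ be the finite set of variables occurring in $\Sigma$. By Corollary~\ref{cor:unifiablesigma}, $\Sigma$ is unifiable in $\vv Q$ iff the finitely presented algebra $\alg F_{\vv Q}(X)/\th_{\vv Q}(\Sigma)$ is unifiable, and by Lemma~\ref{free} this holds iff there is a homomorphism $\alg F_{\vv Q}(X)/\th_{\vv Q}(\Sigma)\longrightarrow\alg F_{\vv Q}$. Composing with the canonical epimorphism $\alg F_{\vv Q}(X)\longrightarrow\alg F_{\vv Q}(X)/\th_{\vv Q}(\Sigma)$ identifies such homomorphisms with the homomorphisms $g\colon\alg F_{\vv Q}(X)\longrightarrow\alg F_{\vv Q}$ whose kernel contains $\th_{\vv Q}(\Sigma)$; since $\alg F_{\vv Q}\in\vv Q$ the kernel of $g$ is a relative congruence, and as $\th_{\vv Q}(\Sigma)$ is the least relative congruence containing $\{(p,q):p\approx q\in\Sigma\}$ this is equivalent to $g(p)=g(q)$ for all $p\approx q\in\Sigma$. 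Finally, homomorphisms $\alg F_{\vv Q}(X)\longrightarrow\alg F_{\vv Q}$ are exactly the assignments $h$ of $X$ into $\alg F_{\vv Q}$, and the displayed condition then reads $\alg F_{\vv Q},h\models\Sigma$. This proves the engine, and since $\alg F_{\vv Q}\models\Sigma\Rightarrow\emptyset$ means by definition that \emph{no} assignment of $X$ into $\alg F_{\vv Q}$ satisfies $\Sigma$, we obtain (1)$\Leftrightarrow$(3) at once.

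For (2)$\Leftrightarrow$(3) I would argue by cases. If $\alg F_{\vv Q}$ is trivial, then (2) fails by its explicit nontriviality clause, while (3) also fails because the unique assignment into the one-element algebra vacuously satisfies $\Sigma$; so both are false. If $\alg F_{\vv Q}$ is nontrivial, then (2) reduces to ``$\Sigma$ is trivializing in $\QQ(\alg F_{\vv Q})$'', i.e.\ to $\QQ(\alg F_{\vv Q})\models\Sigma\Rightarrow(x\approx y)$ with $x,y$ not occurring in $\Sigma$; since a quasiequation is preserved by $\II,\SU,\PP,\PP_u$ and $\alg F_{\vv Q}\in\QQ(\alg F_{\vv Q})$, this is equivalent to $\alg F_{\vv Q}\models\Sigma\Rightarrow(x\approx y)$. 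Now if no assignment of the variables of $\Sigma$ into $\alg F_{\vv Q}$ satisfies $\Sigma$, this quasiequation holds vacuously; conversely, if some assignment $h_0$ satisfies $\Sigma$, pick distinct $a,b\in\alg F_{\vv Q}$ (available by nontriviality) and extend $h_0$ by $x\mapsto a$, $y\mapsto b$ (legitimate since $x,y$ do not occur in $\Sigma$) to refute it. Hence in the nontrivial case (2) is equivalent to ``no assignment of the variables of $\Sigma$ into $\alg F_{\vv Q}$ satisfies $\Sigma$'', which by the engine is exactly (3).

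I do not anticipate a serious obstacle; the only point that needs care is that unifiability of $\Sigma$ must be read off the \emph{minimal} free algebra $\alg F_{\vv Q}$ rather than $\alg F_{\vv Q}(\omega)$ --- naively lifting an assignment into $\alg F_{\vv Q}$ along the epimorphism $\alg F_{\vv Q}(\omega)\longrightarrow\alg F_{\vv Q}$ need not preserve the equations of $\Sigma$ --- which is precisely why the argument routes through Lemma~\ref{free} and Corollary~\ref{cor:unifiablesigma} rather than manipulating substitutions $X\longrightarrow\alg F_{\vv Q}(\omega)$ directly.
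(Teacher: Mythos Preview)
Your proof is correct and follows essentially the same route as the paper's: both establish (1)$\Leftrightarrow$(3) via Corollary~\ref{cor:unifiablesigma} and Lemma~\ref{free}, identifying unifiers of $\Sigma$ with assignments into $\alg F_{\vv Q}$ satisfying $\Sigma$, and both reduce (2)$\Leftrightarrow$(3) to the observation that $\Sigma$ is trivializing in $\alg F_{\vv Q}$ iff no assignment into $\alg F_{\vv Q}$ satisfies $\Sigma$. Your case analysis on whether $\alg F_{\vv Q}$ is trivial makes explicit what the paper handles via the remark immediately preceding the lemma, but this is a difference in presentation rather than in substance.
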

\begin{proof}
It is easy to see that (2) and (3) are equivalent, modulo the fact that a set of identities is trivializing in $\QQ(\alg F_{\vv Q})$ if and only if it is trivializing in $\alg F_{\vv Q}$.

Let us now assume that the identities in $\Sigma$ are on a (finite) set of variables $X$.
	Then, given Lemma \ref{free}, $\Sigma$ is not unifiable in $\vv Q$ if and only if there is no homomorphism $h: \alg F_{\vv Q}(X)/\theta_{\vv Q}(\Sigma) \to \alg F_{\vv Q}$. We show that the latter holds if and only if there is no homomorphism $k: \alg F_{\vv Q}(X) \to \alg F_{\vv Q}$ such that $k(t) = k(u)$ for each $t \approx u \in \Sigma$. Indeed, for the non-trivial direction, suppose that there is a homomorphism $k: \alg F_{\vv Q}(X) \to \alg F_{\vv Q}$ with the above property. Then the following diagram commutes, i.e., there is a homomorphism $h: \alg F_{\vv Q}(X)/\theta_{\vv Q}(\Sigma) \to \alg F_\vv Q$:

\begin{center}
\begin{tikzpicture}[scale=.9]
\node at (2,3) { $\alg F_{\vv Q}(X)$};
\node[left] at (4.8,3) {$\alg F_{\vv Q}$};
\node at (2,1) { $\alg F_{\vv Q}(X)/\theta_{\vv Q}(\Sigma)$};
\draw[->] (2.7,3) -- (4,3);
\draw[->] (2.2,1.5) -- (4,2.75);
\draw[->] (2,2.7) -- (2,1.5);
\node[above] at (3.3,3) {\footnotesize $k$};
\node at (3.5,2.1) {\footnotesize $h$};
\node[below] at (1.8,2.25) {\footnotesize $\pi$};
\end{tikzpicture}
\end{center}

Notice that there is no homomorphism $k: \alg F_{\vv Q}(X) \to \alg F_{\vv Q}$ such that $k(t) = k(u)$ for each $t \approx u \in \Sigma$ if and only if there is no assignment of variables in $X$ validating $\Sigma$ in $\alg F_{\vv Q}$. The latter is equivalent to $\alg F_{\vv Q} \models \Sigma \Rightarrow \emptyset$.
\end{proof}
We are now ready to prove the characterization theorem.
\begin{theorem}\label{thm:PUC}
	Let $\vv Q$ be a quasivariety, then the following are equivalent.
	
	\begin{enumerate}
	\item\label{thm:PUC1} $\vv Q$ is passive structurally complete;
	\item\label{thm:PUC4} $\alg F_{\vv Q} \models \Sigma \Rightarrow \emptyset$ implies $\Sigma$ is trivializing in $\vv Q$;
	\item\label{thm:PUC5} either $\alg F_{\vv Q}$ is trivial, or $\Sigma$ is trivializing in $\QQ(\alg F_{\vv Q})$ implies $\Sigma$ is trivializing in $\vv Q$;
		\item\label{thm:PUC2} every nontrivial finitely presented algebra is unifiable.
	\end{enumerate}
\end{theorem}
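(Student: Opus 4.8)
The plan is to establish all four conditions equivalent by routing everything through $(2)$, using the two lemmas already in hand: Lemma \ref{prop:trivial}, which identifies non-unifiability of $\Sigma$ in $\vv Q$ with the clause $\alg F_{\vv Q}\models\Sigma\Rightarrow\emptyset$ (and with $\alg F_{\vv Q}$ being nontrivial together with $\Sigma$ trivializing in $\QQ(\alg F_{\vv Q})$), and Lemma \ref{lemma:FThetaSigma}, which reads validity of a clause off the finitely presented algebra of its premise. The first thing I would record is that a \emph{passive} quasiequation is automatically \emph{admissible}, since there is no unifier of its premises and the admissibility requirement is then vacuous; hence ``$\vv Q$ is passive structurally complete'' means precisely that $\vv Q\models\Sigma\Rightarrow\delta$ for every finite non-unifiable $\Sigma$ and every equation $\delta$.

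For $(1)\Leftrightarrow(2)$: assuming $(1)$, if $\alg F_{\vv Q}\models\Sigma\Rightarrow\emptyset$ then $\Sigma$ is non-unifiable (Lemma \ref{prop:trivial}), so the quasiequation $\Sigma\Rightarrow(x\approx y)$ with $x,y$ not in $\Sigma$ is passive, hence admissible, hence valid in $\vv Q$, which is exactly the assertion that $\Sigma$ is trivializing in $\vv Q$. Assuming $(2)$, a passive admissible quasiequation $\Sigma\Rightarrow\delta$ has $\Sigma$ non-unifiable, so $\alg F_{\vv Q}\models\Sigma\Rightarrow\emptyset$ (Lemma \ref{prop:trivial}), so $\Sigma$ is trivializing, i.e. $\vv Q\models\Sigma\Rightarrow(x\approx y)$ for fresh $x,y$; applying the substitution that sends $x,y$ to the two sides of $\delta$ and fixes all other variables — which leaves $\Sigma$ unchanged — and using substitution-invariance of validity of universal sentences gives $\vv Q\models\Sigma\Rightarrow\delta$. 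The equivalence $(2)\Leftrightarrow(3)$ is then only a reformulation: by Lemma \ref{prop:trivial} the hypothesis ``$\alg F_{\vv Q}\models\Sigma\Rightarrow\emptyset$'' in $(2)$ is ``$\alg F_{\vv Q}$ nontrivial and $\Sigma$ trivializing in $\QQ(\alg F_{\vv Q})$'', so if $\alg F_{\vv Q}$ is trivial both $(2)$ and $(3)$ hold vacuously, and if $\alg F_{\vv Q}$ is nontrivial both assert that trivializing in $\QQ(\alg F_{\vv Q})$ implies trivializing in $\vv Q$.

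For $(2)\Leftrightarrow(4)$ the bridge is the auxiliary observation that, for $\Sigma$ with variables in a finite set $X$ and two further variables $x,y\notin X$, the set $\Sigma$ is trivializing in $\vv Q$ if and only if the finitely presented algebra $\alg F_{\vv Q}(X\cup\{x,y\})/\theta_{\vv Q}(\Sigma)$ is trivial: by Lemma \ref{lemma:FThetaSigma}, $\vv Q\models\Sigma\Rightarrow(x\approx y)$ says $x$ and $y$ are identified there, and substituting arbitrary terms over $X\cup\{x,y\}$ for $x,y$ promotes this to the collapse of every pair of terms. Given this, $(4)\Rightarrow(2)$ is immediate: if $\alg F_{\vv Q}\models\Sigma\Rightarrow\emptyset$ then $\Sigma$ is non-unifiable, hence so is $\alg F_{\vv Q}(X\cup\{x,y\})/\theta_{\vv Q}(\Sigma)$ (Corollary \ref{cor:unifiablesigma}, since $x,y$ occur in no relation), so by $(4)$ it is trivial, i.e. $\Sigma$ is trivializing. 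Conversely, for $(2)\Rightarrow(4)$, a finitely presented non-unifiable $\alg A=\alg F_{\vv Q}(Y)/\theta_{\vv Q}(\Gamma)$ forces $\Gamma$ non-unifiable (Corollary \ref{cor:unifiablesigma}), hence $\alg F_{\vv Q}\models\Gamma\Rightarrow\emptyset$ (Lemma \ref{prop:trivial}), hence $\Gamma$ trivializing by $(2)$; substituting arbitrary $Y$-terms into $\Gamma\Rightarrow(x\approx y)$ and invoking Lemma \ref{lemma:FThetaSigma} shows every pair of $Y$-terms is identified in $\alg A$, so $\alg A$ is trivial.

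The point I expect to require the most care — and the reason $(4)$ genuinely says something — is exactly this handling of the ``spare'' free generators: one must not conflate ``$\Sigma$ trivializing in $\vv Q$'' with triviality of $\alg F_{\vv Q}(\mathrm{vars}(\Sigma))/\theta_{\vv Q}(\Sigma)$, since collapsing the variables that occur in $\Sigma$ need not collapse the whole algebra when $\vv Q$ has constants or non-trivial one-generated behaviour, so the two extra free generators in the presentation are precisely what makes the equivalence go through, and $(4)$ has content because it quantifies over \emph{all} finitely presented algebras, including those presented over a variable set strictly larger than the variables of their defining relations. I would therefore carry out each substitution-invariance step with explicit bookkeeping of which variables are held fixed, and verify the auxiliary ``two extra generators'' fact in full before assembling the implications.
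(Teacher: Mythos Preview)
Your proposal is correct and follows essentially the same approach as the paper. The paper proves $(1)\Leftrightarrow(2)\Leftrightarrow(3)$, then $(2)\Rightarrow(4)$ and $(4)\Rightarrow(1)$, while you route through $(2)\Leftrightarrow(4)$ directly; your ``auxiliary observation'' about triviality of $\alg F_{\vv Q}(X\cup\{x,y\})/\theta_{\vv Q}(\Sigma)$ makes explicit exactly the two-extra-generators trick the paper uses in its $(4)\Rightarrow(1)$ step, and your substitution argument in $(2)\Rightarrow(4)$ unpacks what the paper leaves as ``clearly implies.''
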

	\begin{proof}
		We first show that (\ref{thm:PUC1}) and (\ref{thm:PUC4}) are equivalent. 
	By definition, $\vv Q$ is passive structurally complete if and only if 
	each quasiidentity $\Sigma \Rightarrow \delta$ where $\Sigma$ is not unifiable in $\vv Q$ is valid in $\vv Q$. That is, $\Sigma$ not unifiable in $\vv Q$ implies $\vv Q \models \Sigma \Rightarrow \delta$, for all identities $\delta$.
		By Proposition \ref{prop:trivial}, the latter is equivalent to:
		$\alg F_{\vv Q} \models \Sigma \Rightarrow \emptyset$ implies $\vv Q \models \Sigma \Rightarrow \delta$, for all identities $\delta$.
		 From this it follows the particular case where $\delta = \{x \approx y\}$, with $x, y$ not appearing in $\Sigma$. In turn, if $\alg F_{\vv Q} \models \Sigma \Rightarrow \emptyset$ implies $\vv Q \models \Sigma \Rightarrow (x \approx y)$, then clearly $\vv Q \models \Sigma \Rightarrow \delta$ for any $\delta$, and thus (\ref{thm:PUC1}) $\Leftrightarrow$ (\ref{thm:PUC4}). 		 
		
		Now, (\ref{thm:PUC4}) and (\ref{thm:PUC5}) are equivalent by Lemma \ref{prop:trivial}, thus the first three points are equivalent. 
		Let us now assume (\ref{thm:PUC4}) and prove (\ref{thm:PUC2}). We consider a nontrivial finitely presented algebra in $\vv Q$, $\alg F_{\vv Q}(X)/\theta_{\vv Q}(\Sigma)$. If it is not unifiable, by Lemma \ref{prop:trivial} $\alg F_{\vv Q} \models \Sigma \Rightarrow \emptyset$. By (\ref{thm:PUC4}) this implies that $\Sigma$ is trivializing in $\vv Q$, that is, $\vv Q \models \Sigma \Rightarrow (x \approx y)$ (with $x,y$ new variables). This clearly implies that $\alg F_{\vv Q}(X)/\theta_{\vv Q}(\Sigma)$ is trivial, a contradiction. Thus $\alg F_{\vv Q}(X)/\theta_{\vv Q}(\Sigma)$ is unifiable and (\ref{thm:PUC2}) holds.
	
	Finally, we prove that (\ref{thm:PUC2}) implies (\ref{thm:PUC1}). Suppose $\Sigma \Rightarrow \delta$ is a passive quasiequation over variables in $X$, that is, $\Sigma$ is not unifiable in $\vv Q$. By Lemma \ref{prop:trivial} $\alg F_{\vv Q} \models \Sigma \Rightarrow \emptyset$. Let $x,y$ be variables not in $X$, and consider the finitely presented algebra $\alg F_{\vv Q}(X')/\theta_{\vv Q}(\Sigma)$, where $X' = X \cup \{x, y\}$ and suppose by way of contradiction that it is not trivial. By (\ref{thm:PUC2}) it is unifiable, that is, there is a homomorphism $h: \alg F_{\vv Q}(X')/\theta_{\vv Q}(\Sigma) \to \alg F_{\vv Q}$. Then, considering the natural epimorphism $\pi_{\Sigma}: \alg F_{\vv Q}(X') \to \alg F_{\vv Q}(X')/\theta_{\vv Q}(\Sigma)$, the composition $h \pi_\Sigma$ is an assignment  from $X'$ to $\alg F_\vv Q$ satisfying $\Sigma$; but $\alg F_{\vv Q} \models \Sigma \Rightarrow \emptyset$, a contradiction. Thus $\alg F_{\vv Q}(X')/\theta_{\vv Q}(\Sigma)$ is trivial, and therefore $x / \theta_{\vv Q}(\Sigma) = y / \theta_{\vv Q}(\Sigma)$. By Lemma \ref{lemma:FThetaSigma} $\vv Q \models \Sigma \Rightarrow (x \approx y)$, and thus $\vv Q \models \Sigma \Rightarrow \delta$ and (\ref{thm:PUC1}) holds.
	\end{proof}
Analogously to the case of passive universal completeness, if the smallest free algebra is isomorphic to all its ultraproducts we can improve the previous result.
\begin{lemma}
Let $\vv Q$ be a quasivariety such that $\II(\alg F_{\vv Q}) = \II\PP_u(\alg F_{\vv Q})$, then the following are equivalent.
\begin{enumerate}
		\item every nontrivial finitely presented algebra in $\vv Q$ is unifiable;
	\item every nontrivial algebra in $\vv Q$ is unifiable.
\end{enumerate}
\end{lemma}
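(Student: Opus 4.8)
The plan is to follow the template of Lemma~\ref{lemma:Qunifiable}, keeping careful track of nontriviality so that the finitely presented algebra we extract is itself nontrivial. The implication (2)$\Rightarrow$(1) is immediate, since a nontrivial finitely presented algebra is in particular a nontrivial algebra. Also, if $\alg F_{\vv Q}$ is trivial the statement is vacuous: the trivial algebra is then free, hence projective, and every algebra of $\vv Q$ maps onto it, so every algebra of $\vv Q$ is unifiable. So we may assume $\alg F_{\vv Q}$ is nontrivial and prove (1)$\Rightarrow$(2) by contraposition.

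First I would take a nontrivial $\alg A \in \vv Q$ that is not unifiable, present it as $\alg A \cong \alg F_{\vv Q}(X)/\theta$ for a (possibly infinite) set $X$ and a relative congruence $\theta$, and set $\Sigma_\theta = \{t \approx u : (t,u)\in\theta\}$. By Lemma~\ref{free}, non-unifiability of $\alg A$ means that no assignment of $X$ into $\alg F_{\vv Q}$ satisfies $\Sigma_\theta$, i.e.\ $\alg F_{\vv Q}\models \Sigma_\theta\Rightarrow\emptyset$ (Lemma~\ref{prop:trivial}). Exactly as in the proof of Lemma~\ref{lemma:Qunifiable}, the hypothesis $\II(\alg F_{\vv Q}) = \II\PP_u(\alg F_{\vv Q})$ makes the equational consequence relation relative to $\alg F_{\vv Q}$ finitary; applying this to the (inconsistent) set $\Sigma_\theta$ — say by rewriting $\alg F_{\vv Q}\models\Sigma_\theta\Rightarrow\emptyset$ as $\Sigma_\theta \models_{\alg F_{\vv Q}} z_1\approx z_2$ for two variables $z_1,z_2$ not occurring in $\Sigma_\theta$, which is legitimate because $\alg F_{\vv Q}$ is nontrivial — yields a \emph{finite} $\Sigma'\subseteq\Sigma_\theta$ with $\alg F_{\vv Q}\models \Sigma'\Rightarrow\emptyset$, i.e.\ $\Sigma'$ is not unifiable in $\vv Q$.

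The one delicate point is to promote $\Sigma'$ to a \emph{nontrivial} finitely presented non-unifiable algebra; the naive choice $\alg F_{\vv Q}(\mathrm{var}(\Sigma'))/\theta_{\vv Q}(\Sigma')$ may well be trivial. The fix is to observe that $\Sigma'$ is \emph{not trivializing} in $\vv Q$: since $\Sigma'\subseteq\theta$, the assignment into $\alg A$ sending each $x\in\mathrm{var}(\Sigma')$ to $x/\theta$ and $z_1,z_2$ to two distinct elements of $\alg A$ (which exist, as $\alg A$ is nontrivial) satisfies $\Sigma'$ but not $z_1\approx z_2$, so $\vv Q\not\models\Sigma'\Rightarrow(z_1\approx z_2)$. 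By Lemma~\ref{lemma:FThetaSigma}, this means $z_1/\theta_{\vv Q}(\Sigma')\neq z_2/\theta_{\vv Q}(\Sigma')$ in $\alg B' := \alg F_{\vv Q}(Y')/\theta_{\vv Q}(\Sigma')$ with $Y' = \mathrm{var}(\Sigma')\cup\{z_1,z_2\}$, so $\alg B'$ is nontrivial. It is finitely presented by construction, and it is not unifiable, since any homomorphism $\alg B'\to\alg F_{\vv Q}$ is an assignment of $Y'$ into $\alg F_{\vv Q}$ satisfying $\Sigma'$ and, as $z_1,z_2$ do not occur in $\Sigma'$, it would restrict to a unifier of $\Sigma'$. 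Thus $\alg B'$ witnesses the failure of (1), which completes the contrapositive.

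I expect the only real obstacle to be precisely this nontriviality bookkeeping: one has to resist taking the obvious finitely presented algebra associated to $\Sigma'$ and instead build in the two spare generators $z_1,z_2$ together with the observation that $\Sigma'\subseteq\theta$ keeps it nontrivial. Everything else — the reduction to the nontrivial case, the use of finitarity, and the translations between unifiability, homomorphisms to $\alg F_{\vv Q}$, and validity of $\Sigma\Rightarrow\emptyset$ — is a direct reuse of Lemmas~\ref{free}, \ref{prop:trivial} and~\ref{lemma:FThetaSigma} and of the argument already given for Lemma~\ref{lemma:Qunifiable}.
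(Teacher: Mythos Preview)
Your proof is correct and follows essentially the same route as the paper: contrapose, pass from $\Sigma_\theta$ to a finite non-unifiable $\Sigma'$ via finitarity of $\models_{\alg F_{\vv Q}}$, and then use nontriviality of $\alg A$ to see that $\Sigma'$ is not trivializing in $\vv Q$. The only difference is cosmetic: where you explicitly build the nontrivial finitely presented witness $\alg B' = \alg F_{\vv Q}(Y')/\theta_{\vv Q}(\Sigma')$, the paper instead invokes the equivalence in Theorem~\ref{thm:PUC} (between ``every nontrivial finitely presented algebra is unifiable'' and ``$\alg F_{\vv Q}\models\Sigma\Rightarrow\emptyset$ implies $\Sigma$ is trivializing in $\vv Q$'') to conclude directly.
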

\begin{proof}
The proof is analogous to the one of Lemma \ref{lemma:Qunifiable};
	we prove the nontrivial direction by contraposition. Consider an arbitrary algebra $\alg A = \alg F_\vv Q(X)/\theta \in \vv Q$ and assume that it is not unifiable. Then there is no assignment $h: \alg F_\vv Q(X) \to \alg F_\vv Q$ such that $\alg F_\vv Q, h \models \Sigma_\theta$, where $\Sigma_\theta = \{t \approx u : (t,u) \in \theta\}$. Equivalently, iff $\alg F_\vv Q \models \Sigma_\theta \Rightarrow \emptyset$. Now, the equational consequence relation relative to $\alg F_{\vv Q}$ is finitary (since all ultraproducts of $\alg F_{\vv Q}$ are isomorphic to $\alg F_{\vv Q}$); thus we obtain that $\alg F_\vv Q \models \Sigma'_\theta \Rightarrow \emptyset$, for $\Sigma'_\theta$ some finite subset of $\Sigma_\theta$. But $\alg F_{\vv Q}(X)/\theta \not\vDash \Sigma'_\theta \Rightarrow (x \approx y)$ (with $x,y \notin X$), since it is nontrivial, which contradicts (\ref{thm:PUC4}) of Theorem \ref{thm:PUC}; equivalently it contradicts (1) and thus the proof is complete.
\end{proof}

\begin{corollary}\label{cor:PUCPu}
Let $\vv Q$ be a quasivariety such that $\II(\alg F_{\vv Q}) = \II\PP_u(\alg F_{\vv Q})$, then the following are equivalent.
\begin{enumerate}
\item $\vv Q$ is passively structurally complete;
	\item $\alg F_{\vv Q} \models \Sigma \Rightarrow \emptyset$ implies $\Sigma$ is trivializing in $\vv Q$;
	\item either $\alg F_{\vv Q}$ is trivial, or $\Sigma$ is trivializing in $\QQ(\alg F_{\vv Q})$ implies $\Sigma$ is trivializing in $\vv Q$;
		\item every nontrivial finitely presented algebra is unifiable;
		\item every nontrivial algebra in $\vv Q$ is unifiable.
\end{enumerate}
\end{corollary}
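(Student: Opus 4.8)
The plan is to obtain this corollary as an immediate consequence of the two results immediately preceding it, namely Theorem \ref{thm:PUC} and the Lemma that characterizes unifiability of arbitrary algebras under the hypothesis $\II(\alg F_{\vv Q}) = \II\PP_u(\alg F_{\vv Q})$. Since Theorem \ref{thm:PUC} already establishes the equivalence of items (1), (2), (3) and (4) for an \emph{arbitrary} quasivariety, the only new content here is connecting (4) with (5), and this is exactly what the preceding Lemma does under the standing hypothesis. So the core of the argument is just a citation chain.

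Concretely, I would first recall that by Theorem \ref{thm:PUC} the statements ``$\vv Q$ is passively structurally complete'', ``$\alg F_{\vv Q} \models \Sigma \Rightarrow \emptyset$ implies $\Sigma$ is trivializing in $\vv Q$'', ``either $\alg F_{\vv Q}$ is trivial or ($\Sigma$ trivializing in $\QQ(\alg F_{\vv Q})$ implies $\Sigma$ trivializing in $\vv Q$)'', and ``every nontrivial finitely presented algebra in $\vv Q$ is unifiable'' are pairwise equivalent with no extra hypotheses. Then I would invoke the Lemma preceding the corollary: under the assumption $\II(\alg F_{\vv Q}) = \II\PP_u(\alg F_{\vv Q})$, ``every nontrivial finitely presented algebra in $\vv Q$ is unifiable'' is equivalent to ``every nontrivial algebra in $\vv Q$ is unifiable'', i.e. (4) $\Leftrightarrow$ (5). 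Combining the two gives the equivalence of all five items, completing the proof.

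I would also add the one-line remark (as observed in the analogous Corollary after Lemma \ref{lemma:Qunifiable}) that the hypothesis $\II(\alg F_{\vv Q}) = \II\PP_u(\alg F_{\vv Q})$ is automatically satisfied whenever $\alg F_{\vv Q}$ is finite, so the corollary in particular applies to all locally finite quasivarieties of finite type and, more generally, to any quasivariety whose least free algebra is finite. There is no real obstacle here: the whole statement is a packaging corollary, and the only thing to be careful about is to state explicitly that the hypothesis is used \emph{only} to pass between the finitely-presented and the general formulation, every other equivalence being unconditional.
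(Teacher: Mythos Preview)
Your proposal is correct and matches the paper's approach exactly: the corollary is stated without proof in the paper precisely because it is the immediate combination of Theorem \ref{thm:PUC} (giving the unconditional equivalence of (1)--(4)) with the preceding Lemma (giving (4) $\Leftrightarrow$ (5) under the ultraproduct hypothesis on $\alg F_{\vv Q}$). Your remark about the hypothesis being automatic when $\alg F_{\vv Q}$ is finite is also the content of the Remark the paper places right after the corollary.
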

\begin{remark}
	The previous corollary can be applied whenever $\alg F_\vv Q$ is finite, therefore to all locally finite quasivarieties, but also to more complex classes of algebras, e.g., all subquasivarieties of $\mathsf{FL}_{w}$ (see Subsection \ref{sec:FL}).
\end{remark}
We will see an interesting application of Theorem \ref{thm:PUC} (or Corollary \ref{cor:PUCPu}) in substructural logics in Subsection \ref{sec:FL}; let us now show some other consequences.
Given a quasivariety $\vv Q$ whose smallest free algebra $\alg F_\vv Q$ is nontrivial, let us consider the following set:
$$
\mathcal{P}_{\vv Q} = \{\Sigma \Rightarrow \delta: \vv Q(\alg F_{\vv Q})\vDash \Sigma \Rightarrow \{x \approx y\}, \delta \mbox{ any identity}\}.
$$	
$\mathcal{P}_{\vv Q}$ axiomatizes a subquasivariety of $\vv Q$, that we denote with $\vv P_{\vv Q}$. From Theorem \ref{thm:PUC} we get the following.

\begin{corollary}
Let $\vv Q$ be a quasivariety such that $\alg F_\vv Q$ is nontrivial.
Every passive structurally complete subquasivariety of $\vv Q$ is contained in
	$\vv P_{\vv Q}$, which is the largest subquasivariety of $\vv Q$ that is passive structurally complete.
\end{corollary}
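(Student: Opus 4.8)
The plan is to read off the statement from the characterisation of passive structural completeness in Theorem~\ref{thm:PUC} together with the description of non-unifiable sets of equations in Lemma~\ref{prop:trivial}. The first thing I would record, since $\alg F_\vv Q$ is nontrivial, is that a finite set $\Sigma$ is trivializing in $\QQ(\alg F_\vv Q)$ precisely when it is not unifiable in $\vv Q$, i.e. when $\alg F_\vv Q\models\Sigma\Rightarrow\emptyset$ (this is exactly the equivalence (1)$\Leftrightarrow$(2)$\Leftrightarrow$(3) of Lemma~\ref{prop:trivial}). So $\mathcal P_\vv Q$ is the set of quasiequations $\Sigma\Rightarrow\delta$ with $\Sigma$ not unifiable in $\vv Q$ and $\delta$ arbitrary, and $\vv P_\vv Q$ is the class of algebras in $\vv Q$ on which every such quasiequation holds.

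Next I would pin down the smallest free algebra of $\vv P_\vv Q$. If $\Sigma\Rightarrow\delta\in\mathcal P_\vv Q$ then $\alg F_\vv Q\models\Sigma\Rightarrow\emptyset$, hence $\alg F_\vv Q$ satisfies $\Sigma\Rightarrow\delta$ vacuously; therefore $\alg F_\vv Q\in\vv P_\vv Q$, and since a free algebra is inherited by every subquasivariety containing it, $\alg F_{\vv P_\vv Q}=\alg F_\vv Q$. That $\vv P_\vv Q$ is passive structurally complete then follows from the equivalence (\ref{thm:PUC1})$\Leftrightarrow$(\ref{thm:PUC4}) of Theorem~\ref{thm:PUC}: assume $\alg F_{\vv P_\vv Q}\models\Sigma\Rightarrow\emptyset$; since $\alg F_{\vv P_\vv Q}=\alg F_\vv Q$, Lemma~\ref{prop:trivial} tells us $\Sigma$ is not unifiable in $\vv Q$, so $\QQ(\alg F_\vv Q)\models\Sigma\Rightarrow(x\approx y)$ for fresh $x,y$, i.e. $\Sigma\Rightarrow(x\approx y)\in\mathcal P_\vv Q$; by definition of $\vv P_\vv Q$ this gives $\vv P_\vv Q\models\Sigma\Rightarrow(x\approx y)$, that is, $\Sigma$ is trivializing in $\vv P_\vv Q$.

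For the maximality clause, let $\vv Q'$ be a passive structurally complete subquasivariety of $\vv Q$ and fix $\Sigma\Rightarrow\delta\in\mathcal P_\vv Q$; I must show $\vv Q'\models\Sigma\Rightarrow\delta$. I would invoke Theorem~\ref{thm:PUC} for $\vv Q'$: it is enough to prove that $\Sigma$ is not unifiable in $\vv Q'$, for then $\Sigma$ is trivializing in $\vv Q'$ and a fortiori $\vv Q'\models\Sigma\Rightarrow\delta$. By Corollary~\ref{cor:unifiablesigma} and Lemma~\ref{free} this reduces to showing that no assignment into $\alg F_{\vv Q'}$ satisfies $\Sigma$, and here one wants to exploit that $\Sigma$ is trivializing not merely in $\alg F_\vv Q$ but in all of $\QQ(\alg F_\vv Q)$, together with the fact that $\alg F_{\vv Q'}$ is a homomorphic image of $\alg F_\vv Q$ and a $0$- or $1$-generated member of $\vv Q$.

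This last point — transporting non-unifiability of $\Sigma$ from $\vv Q$ down to $\vv Q'$ — is the step I expect to be the main obstacle. The difficulty is that the negative clause $\Sigma\Rightarrow\emptyset$ is \emph{not} in general preserved by homomorphic images, so one cannot simply pass from $\alg F_\vv Q\models\Sigma\Rightarrow\emptyset$ to $\alg F_{\vv Q'}\models\Sigma\Rightarrow\emptyset$; the argument must genuinely route through $\QQ(\alg F_\vv Q)$ and must treat, or exclude, the degenerate case in which $\alg F_{\vv Q'}$ collapses. The cleanest hypothesis under which the whole argument goes through is that $\alg F_{\vv Q'}=\alg F_\vv Q$ for every subquasivariety $\vv Q'$ of $\vv Q$ — in particular this holds in the substructural setting of Subsection~\ref{sec:FL}, where $\alg F_\vv Q$ is the two-element algebra and embeds into every nontrivial member of $\vv Q$. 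Granted this, the two halves combine to show that $\vv P_\vv Q$ is the largest passive structurally complete subquasivariety of $\vv Q$.
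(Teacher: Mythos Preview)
Your argument that $\vv P_\vv Q$ is passive structurally complete is correct and complete. The gap you identify in the maximality clause is genuine, and in fact the corollary as stated is false without an additional hypothesis. The paper offers no proof beyond ``From Theorem~\ref{thm:PUC} we get the following'', so there is no argument to compare yours against.

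Here is a concrete failure of the maximality clause. Take the language with two constants $a,b$ and no operations, and let $\vv Q$ be the variety of all such algebras; then $\alg F_\vv Q=\alg F_\vv Q(\emptyset)=\{a,b\}$ is nontrivial, the set $\Sigma=\{a\approx b\}$ is not unifiable in $\vv Q$, and hence $(a\approx b)\Rightarrow(x\approx y)\in\mathcal P_\vv Q$. Let $\vv Q'$ be the subvariety axiomatised by $a\approx b$. Then $\alg F_{\vv Q'}(\emptyset)$ is trivial, so by Theorem~\ref{thm:PUC}(3) $\vv Q'$ is (vacuously) passive structurally complete. But the two-element algebra with $a=b$ lies in $\vv Q'$, is nontrivial, and falsifies $(a\approx b)\Rightarrow(x\approx y)$; hence $\vv Q'\not\subseteq\vv P_\vv Q$. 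The obstruction is precisely the one you name: $\Sigma$ is unifiable in $\vv Q'$ even though it is not unifiable in $\vv Q$, because $\alg F_{\vv Q'}$ has collapsed.

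Your proposed repair --- assuming $\alg F_{\vv Q'}=\alg F_\vv Q$ for all subquasivarieties $\vv Q'$ of $\vv Q$ --- does make the argument go through exactly as you sketch, and this hypothesis holds in all the applications the paper actually makes (notably the $\mathsf{FL}_w$ setting of Subsection~\ref{sec:FL}, where $\alg F_\vv Q=\alg 2$ for every nontrivial subquasivariety).
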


Moreover, for locally finite quasivarieties the characterization theorem reads as follows.

\begin{corollary}
Let $\vv Q$ be a locally finite quasivariety, then the following are equivalent.
\begin{enumerate}
\item  $\vv Q$ is passive structurally complete;
\item every algebra in  $\vv Q$ is unifiable;
\item  every finite algebra in $\vv Q$ is unifiable;
\item  every finite subdirectly irreducible in $\mathsf Q$ is unifiable.
\end{enumerate}
\end{corollary}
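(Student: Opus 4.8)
The plan is to reduce the statement to Corollary \ref{cor:PUCPu}. Since $\vv Q$ is locally finite, its smallest free algebra $\alg F_\vv Q$ --- which by definition is $\alg F_\vv Q(\emptyset)$ or $\alg F_\vv Q(x)$, in either case a finitely generated algebra of $\vv Q$ --- is finite; hence every ultrapower of $\alg F_\vv Q$ is isomorphic to $\alg F_\vv Q$, so $\II(\alg F_\vv Q)=\II\PP_u(\alg F_\vv Q)$ and Corollary \ref{cor:PUCPu} applies. In particular, $\vv Q$ is passive structurally complete if and only if every nontrivial finitely presented algebra in $\vv Q$ is unifiable, if and only if every nontrivial algebra in $\vv Q$ is unifiable.

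Next I would observe that in a locally finite quasivariety an algebra is finitely presented precisely when it is finite: on the one hand a finitely presented algebra is finitely generated, hence finite; on the other hand, if $\alg A$ is finite and $X$ is a finite generating set, then $\alg F_\vv Q(X)$ is finite, so the kernel of a surjection $\alg F_\vv Q(X)\to\alg A$ is a finite relation which coincides with the relative congruence it generates, exhibiting $\alg A$ as finitely presented. Thus the two conditions supplied by Corollary \ref{cor:PUCPu} are exactly statements (3) and (2), read for nontrivial algebras, and so (1), (2) and (3) are equivalent.

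It remains to fold in (4). That (3) implies (4) is immediate, subdirectly irreducible algebras being nontrivial by definition. For the converse, let $\alg A\in\vv Q$ be finite and nontrivial; by the Mal'cev form of Birkhoff's theorem (Theorem \ref{quasivariety}(1)) there is a subdirect embedding $\alg A\le_{sd}\prod_{i\in I}\alg A_i$ with each $\alg A_i$ relatively subdirectly irreducible in $\vv Q$ and, as a surjective image of $\alg A$, finite. By (4) each $\alg A_i$ admits a homomorphism $h_i\colon\alg A_i\to\alg F_\vv Q$ (Lemma \ref{free}); composing $h_i$ with the surjective projection $\pi_i\colon\alg A\to\alg A_i$ produces a homomorphism $\alg A\to\alg F_\vv Q$, so $\alg A$ is unifiable and (3) follows. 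The whole argument rests on Corollary \ref{cor:PUCPu} together with the ``finite $=$ finitely presented'' remark; the only points requiring care are the standard bookkeeping ones --- excluding the trivial algebra in (2) and (3) (automatic once $\alg F_\vv Q$ is nontrivial, and moot when it is trivial), and reading ``subdirectly irreducible'' in (4) as \emph{relative} subdirect irreducibility, so that the subdirect factors above remain inside $\vv Q$.
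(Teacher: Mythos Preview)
Your proof is correct and follows the paper's intended route: reduce to Corollary~\ref{cor:PUCPu} via the finiteness of $\alg F_\vv Q$, identify ``finitely presented'' with ``finite'' by local finiteness, and close the loop (4)$\Rightarrow$(3) through a subdirect decomposition. You are also right to flag the two bookkeeping points in the statement --- that (2) and (3) must be read for \emph{nontrivial} algebras (the trivial algebra need not be unifiable when $\alg F_\vv Q$ is nontrivial, e.g.\ in Boolean algebras, so this restriction is genuinely needed rather than ``automatic''), and that (4) must be read as \emph{relative} subdirect irreducibility, since Mal'cev's theorem only delivers relatively subdirectly irreducible factors inside $\vv Q$.
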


A nontrivial algebra $\alg A$ is \emph{Koll\'ar} if it has no trivial subalgebras, and a quasivariety $\vv Q$ is a {\em Koll\'ar quasivariety} if all nontrivial algebras in $\vv Q$ are Koll\'ar.
By  \cite{Kollar1979} if $\alg A$ belongs to a Koll\'ar quasivariety, $1_\alg A$, the largest congruence of $\alg A$, is compact in $\op{Con}_\vv Q(\alg A)$; from there a straightforward application of Zorn's Lemma yields that if $\alg A$ is nontrivial there is at least one maximal congruence $\th \in \op{Con}_\vv Q (\alg A)$ below $1_\alg A$ (i.e.  $\alg A/\th$ is relative simple).

\begin{theorem}\label{semideg} If $\vv Q$ is a Koll\'ar quasivariety and $\alg F_\vv Q$ is the only finitely generated relative simple algebra in $\vv Q$, then
$\vv Q$ is passive structurally complete.
\end{theorem}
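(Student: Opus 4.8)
The plan is to invoke the characterization of passive structural completeness in Theorem~\ref{thm:PUC}: it is enough to check that every nontrivial finitely presented algebra in $\vv Q$ is unifiable, which is item~(\ref{thm:PUC2}) of that theorem. We shall in fact establish the formally stronger assertion that \emph{every} nontrivial finitely generated algebra in $\vv Q$ is unifiable, which is no harder.

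So fix a nontrivial finitely generated $\alg A \in \vv Q$. Since $\vv Q$ is a Koll\'ar quasivariety, by \cite{Kollar1979} the largest congruence $1_\alg A$ is compact in $\op{Con}_\vv Q(\alg A)$; hence, exactly as recalled in the paragraph preceding the statement, Zorn's Lemma provides a relative congruence $\th \in \op{Con}_\vv Q(\alg A)$ that is maximal among those lying strictly below $1_\alg A$. By the correspondence between the relative congruences of $\alg A/\th$ and those of $\alg A$ lying above $\th$, the quotient $\alg A/\th$ is nontrivial and relative simple; being a homomorphic image of the finitely generated algebra $\alg A$, it is also finitely generated. By hypothesis the only finitely generated relative simple algebra in $\vv Q$ is $\alg F_\vv Q$, whence $\alg A/\th \cong \alg F_\vv Q$. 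Composing the canonical surjection $\alg A \to \alg A/\th$ with this isomorphism we obtain a homomorphism from $\alg A$ to $\alg F_\vv Q$, so $\alg A$ is unifiable by Lemma~\ref{free}.

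In particular every nontrivial finitely presented algebra in $\vv Q$ is unifiable, and Theorem~\ref{thm:PUC} yields that $\vv Q$ is passive structurally complete. The argument is short and presents no real obstacle: the only delicate point is to verify that the quotient $\alg A/\th$ simultaneously meets the three conditions entering the hypothesis — nontriviality (since $\th < 1_\alg A$), relative simplicity (by maximality of $\th$), and finite generation (inherited from $\alg A$) — while the Koll\'ar assumption is used solely to guarantee that such a proper maximal relative congruence exists at all, i.e., that $\alg A$ admits at least one relative simple homomorphic image.
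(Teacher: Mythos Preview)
Your proof is correct and follows essentially the same route as the paper's: take a nontrivial finitely presented (or finitely generated) algebra, use the Koll\'ar hypothesis to produce a relative simple quotient, observe that this quotient is finitely generated and hence must be $\alg F_\vv Q$, conclude unifiability, and apply Theorem~\ref{thm:PUC}. The paper's argument is identical in substance, only more terse.
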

\begin{proof} Let $\alg A$ be a  nontrivial finitely presented algebra in $\vv Q$; since $\vv Q$ is a Koll\'ar quasivariety, $\alg A$ has a relative simple homomorphic image, that must be finitely generated. Hence it  must be equal to $\alg F_\vv Q$, so $\alg A$ is unifiable; by Theorem \ref{thm:PUC}  $\vv Q$ is passive structurally complete.
\end{proof}

\begin{corollary}\label{semideglf} For a locally finite Koll\'ar quasivariety $\vv Q$ such that $\alg F_{\vv Q}$ has no proper subalgebra the following are equivalent:
\begin{enumerate}
\item $\alg F_\vv Q$ is the only  finite  relative simple algebra in $\vv Q$;
\item $\vv Q$ is passive structurally complete.
\end{enumerate}
\end{corollary}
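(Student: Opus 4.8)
The plan is to obtain both implications from results already established, so the argument is short. For $(1)\Rightarrow(2)$: since $\vv Q$ is locally finite, every finitely generated algebra in $\vv Q$ is finite; in particular $\alg F_\vv Q$ is finite, and the hypothesis that $\alg F_\vv Q$ is the only \emph{finite} relative simple algebra in $\vv Q$ is precisely the statement that it is the only \emph{finitely generated} relative simple algebra in $\vv Q$. As $\vv Q$ is Koll\'ar, Theorem~\ref{semideg} then yields that $\vv Q$ is passive structurally complete.

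For $(2)\Rightarrow(1)$ I would argue as follows. First, we may assume $\vv Q$ is nontrivial (otherwise the statement is degenerate), and then $\alg F_\vv Q$ is nontrivial: if it were trivial, the homomorphic image of $\alg F_\vv Q$ sitting inside an arbitrary member of $\vv Q$ --- the subalgebra generated by the constants, or by a single element when there are no constants --- would be a trivial subalgebra, and the Koll\'ar hypothesis would force every member of $\vv Q$ to be trivial. Since $\vv Q$ is locally finite, $\alg F_\vv Q$ is finite, so (as already noted in the text) Corollary~\ref{cor:PUCPu} applies and passive structural completeness of $\vv Q$ is equivalent to every nontrivial algebra of $\vv Q$ being unifiable. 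Now let $\alg A$ be a finite relative simple algebra in $\vv Q$; being relative simple it is nontrivial, hence unifiable, so by Lemma~\ref{free} there is a homomorphism $h: \alg A \to \alg F_\vv Q$. Its image $h(\alg A)$ is a subalgebra of $\alg F_\vv Q$, and since $\alg F_\vv Q$ has no proper subalgebra, $h(\alg A)=\alg F_\vv Q$; in particular $h$ is onto and $h(\alg A)$ is nontrivial. Moreover $\ker h \in \op{Con}_\vv Q(\alg A)$ because $\alg A/\ker h \cong \alg F_\vv Q \in \vv Q$; as $\alg A$ is relative simple, $\ker h$ is either $0_{\alg A}$ or $1_{\alg A}$, and $1_{\alg A}$ is excluded since $h(\alg A)$ is nontrivial. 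Hence $\ker h = 0_{\alg A}$, so $h$ is an isomorphism and $\alg A \cong \alg F_\vv Q$, proving $(1)$.

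The whole thing is essentially bookkeeping. The only two points that need a moment of care are that local finiteness forces $\alg F_\vv Q$ to be finite --- so that all its ultrapowers collapse onto it and Corollary~\ref{cor:PUCPu} is genuinely available --- and that the Koll\'ar hypothesis prevents $\alg F_\vv Q$ from being trivial unless $\vv Q$ itself is. I do not foresee any real obstacle.
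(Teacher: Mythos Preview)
Your argument is correct and follows the same route as the paper's: Theorem~\ref{semideg} for $(1)\Rightarrow(2)$, and unifiability of nontrivial algebras for $(2)\Rightarrow(1)$, using that a surjection from a relative simple algebra onto the nontrivial $\alg F_\vv Q$ must be an isomorphism.

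There is one small loose end in your $(2)\Rightarrow(1)$: you establish that every finite relative simple algebra in $\vv Q$ is isomorphic to $\alg F_\vv Q$, but statement~(1) also asserts that $\alg F_\vv Q$ \emph{is} relative simple, and this you do not verify. The paper closes the gap by invoking the general fact (\cite[Theorem 3.1.8]{Gorbunov1998}) that a nontrivial quasivariety always contains a relative simple member; in your locally finite Koll\'ar setting it is even more direct, since the nontrivial finite algebra $\alg F_\vv Q$ has a finite relative simple quotient, which by your own argument must be isomorphic to $\alg F_\vv Q$ itself. Adding this one sentence makes the two proofs coincide.
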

\begin{proof}  If (1) holds, than (2) holds by Theorem \ref{semideg}.  Conversely assume (2); then every nontrivial finitely presented algebra in $\vv Q$ is unifiable.  Since $\vv Q$ is locally finite $\alg F_\vv Q$ is finite and nontrivial since $\vv Q$ is Koll\'ar; now  since $\alg F_{\vv Q}$ has no proper subalgebra no finite relative simple algebra different from $\alg F_\vv Q$ can be unifiable, but $\vv Q$ must contain at least a relative simple algebra \cite[Theorem 3.1.8]{Gorbunov1998}. Hence $\alg F_\vv Q$ must be relative simple and (1) holds.
\end{proof}

The next results will allow us to find interesting applications in varieties of bounded lattices, which we will explore in Section \ref{sec:lattices}.  We say that an algebra $\alg A$ in a variety $\vv V$ is {\em flat} if $\HH\SU(\alg A)$ does not contain any simple algebra different from
$\alg F_\vv V$.

\begin{theorem}  Let $\vv V$ be a Koll\'ar variety; if every finitely generated
algebra in $\vv V$ is flat then $\vv V$ is passive structurally complete. If $\vv V$ is locally finite,
then the converse holds as well.
\end{theorem}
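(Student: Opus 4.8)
The plan is to reduce both implications to results already established, principally Theorem~\ref{semideg} (with its locally finite refinement Corollary~\ref{semideglf}) and the locally finite characterization of passive structural completeness (the corollary to Theorem~\ref{thm:PUC}). Throughout we may assume $\vv V$ nontrivial, since otherwise there is nothing to prove; note that then $\alg F_\vv V$ is nontrivial, because in a Koll\'ar variety a trivial smallest free algebra would make $\vv V$ itself trivial. For the first implication I would first show that, if every finitely generated algebra in $\vv V$ is flat, then $\alg F_\vv V$ is the only finitely generated simple algebra in $\vv V$. Indeed $\alg F_\vv V$ is finitely generated and nontrivial, and in a Koll\'ar variety $1_\alg A$ is compact in $\op{Con}_\vv V(\alg A)$, so Zorn's Lemma produces a simple quotient $\alg S$ of $\alg F_\vv V$; since $\alg S\in\HH(\alg F_\vv V)\sse\HH\SU(\alg F_\vv V)$, flatness of $\alg F_\vv V$ forces $\alg S\cong\alg F_\vv V$, so $\alg F_\vv V$ is itself simple. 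And if $\alg A$ is any finitely generated simple algebra then $\alg A\in\SU(\alg A)\sse\HH\SU(\alg A)$, so flatness of $\alg A$ gives $\alg A\cong\alg F_\vv V$. As ``relative simple'' means ``simple'' in a variety, Theorem~\ref{semideg} then yields that $\vv V$ is passive structurally complete.

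For the converse assume $\vv V$ locally finite and passive structurally complete, so $\alg F_\vv V$ is finite and nontrivial. By the locally finite form of Theorem~\ref{thm:PUC}, every nontrivial finite algebra of $\vv V$ is unifiable. Let $\alg A$ be finitely generated, hence finite, and let $\alg S$ be a simple algebra in $\HH\SU(\alg A)$; then $\alg S$ is finite and nontrivial, hence unifiable, so by Lemma~\ref{free} there is a homomorphism $h\colon\alg S\to\alg F_\vv V$. Since $\alg S$ is simple, $\ker h$ is either the identity congruence or the full congruence; the latter would exhibit $h(\alg S)$ as a trivial subalgebra of the nontrivial algebra $\alg F_\vv V$, contradicting that $\vv V$ is Koll\'ar. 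Hence $h$ is injective, i.e.\ $\alg S\in\SU(\alg F_\vv V)$. The final step is to upgrade this embedding to an isomorphism: under the standing hypotheses $\alg F_\vv V$ has no proper subalgebra, so the simple subalgebra $\alg S$ must coincide with $\alg F_\vv V$; concretely I would invoke Corollary~\ref{semideglf}, whose hypotheses are met, to conclude that $\alg F_\vv V$ is the unique finite simple algebra in $\vv V$, and therefore that every finitely generated $\alg A$ is flat.

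The heart of the argument is precisely that last step of the converse, turning $\alg S\hookrightarrow\alg F_\vv V$ into $\alg S\cong\alg F_\vv V$. In the forward direction this comes for free: a proper subalgebra $\alg B$ of $\alg F_\vv V$ would be nontrivial, hence would have a simple quotient $\alg C\in\HH\SU(\alg F_\vv V)$; flatness of $\alg F_\vv V$ would force $\alg C\cong\alg F_\vv V$, and then $\alg F_\vv V$, being free and therefore projective, would be a retract of $\alg B$, contradicting $|\alg B|<|\alg F_\vv V|$. In the converse this structural fact must instead be supplied by passive structural completeness, which is exactly what Corollary~\ref{semideglf} provides; so I would organise the converse as a clean reduction to that corollary, the only genuine work being the unifiability of finite simple algebras and the ``no trivial subalgebra'' observation above. (When $\vv V$ has a constant operation --- the case relevant to the intended application to bounded lattices --- $\alg F_\vv V=\alg F_\vv V(\emptyset)$ has no proper subalgebra automatically, and this reduction is immediate.)
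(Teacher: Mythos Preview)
Your forward direction is essentially the paper's argument, fleshed out with more care (you explicitly verify that $\alg F_\vv V$ is itself simple, which the paper leaves implicit).

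For the converse, the paper is much more direct: it simply invokes Corollary~\ref{semideglf} to conclude that $\alg F_\vv V$ is the only finite simple algebra in $\vv V$, from which flatness of every finite algebra is immediate. Your route through unifiability and the embedding $\alg S\hookrightarrow\alg F_\vv V$ is redundant, since you end up appealing to Corollary~\ref{semideglf} anyway, and that corollary already delivers $\alg S\cong\alg F_\vv V$ outright without any intermediate embedding step.

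There is, however, a genuine wrinkle you correctly sense but mis-resolve. Corollary~\ref{semideglf} carries the hypothesis that $\alg F_\vv V$ has no proper subalgebra, and this is \emph{not} a standing hypothesis of the present theorem --- your assertion that it is, is simply wrong. The paper's own proof applies Corollary~\ref{semideglf} without verifying this hypothesis either. As you note parenthetically, when the signature has constants (in particular for bounded lattices, the only application made in the paper) it is automatic, since then $\alg F_\vv V=\alg F_\vv V(\emptyset)$ is the subalgebra generated by the constants; in a constant-free signature the point would need separate justification. So your instinct that this is ``the heart of the argument'' is sound, but your resolution --- that Corollary~\ref{semideglf} itself supplies the missing fact --- is circular: that corollary \emph{assumes} it.
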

\begin{proof} First, if $\alg F_\vv V$ is trivial then $\vv V$ is vacuously passive structurally complete. If $\alg F_\vv V$ is nontrivial and every finitely generated algebra is flat, then the only finitely
generated simple lattice in $\vv V$ must be $\alg F_{\vv V}$; since $\vv V$ is Koll\'ar, $\vv V$ is passive structurally complete by Theorem \ref{semideg}.

If $\vv V$ is locally finite and passive structurally complete, then $\alg F_\vv V$ is the only
finite simple algebra in $\vv V$ by Corollary \ref{semideglf}. It follows that no finite simple
algebra different from  $\alg F_\vv V$ can appear in $\HH\SU(\alg A)$ for any finite $\alg A\in\vv V$.
So every finite algebra in $\vv V$ must be flat.
\end{proof}

\begin{theorem}\label{fgflat} Let $\vv V$ be a congruence distributive Koll\'ar variety;  a finitely generated variety $\vv W \sse \vv V$  is passive structurally complete  if and only if each generating algebra is flat.
\end{theorem}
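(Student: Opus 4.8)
The plan is to deduce Theorem~\ref{fgflat} from the theorem immediately preceding it, applied not to $\vv V$ but to $\vv W$ itself, together with one application of J\'onsson's Lemma to move between flatness of the finitely many finite generating algebras and flatness of \emph{all} finite algebras of $\vv W$.

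The first step is to record the structural facts about $\vv W$ that make the preceding theorem available for it: since $\vv W$ is finitely generated it is locally finite; it is a Koll\'ar variety because being Koll\'ar is inherited by every subquasivariety of the Koll\'ar variety $\vv V$; and it is congruence distributive since congruence distributivity passes to subvarieties of $\vv V$. With these in hand, the previous theorem says precisely that $\vv W$ is passive structurally complete if and only if every finite algebra of $\vv W$ is flat (here ``every finitely generated algebra'' becomes ``every finite algebra'' because $\vv W$ is locally finite). So it remains to show that every finite algebra of $\vv W$ is flat exactly when each generating algebra is flat.

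One direction is trivial: the generators are themselves finite algebras of $\vv W$. For the converse, I would write $\vv W=\VV(\alg A_1,\dots,\alg A_n)$ with each $\alg A_i$ finite and flat, take an arbitrary finite $\alg B\in\vv W$, and let $\alg S$ be a simple algebra in $\HH\SU(\alg B)$; the goal is $\alg S\cong\alg F_{\vv W}$. Being simple, $\alg S$ is subdirectly irreducible and, since $\HH\SU(\alg B)\sse\vv W$, it lies in $\vv W$. Congruence distributivity of $\vv W$ lets me invoke J\'onsson's Lemma (Theorem~\ref{birkhoff}(2)): $\alg S\in\HH\SU\PP_u(\{\alg A_1,\dots,\alg A_n\})$. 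As an ultraproduct of a family drawn from finitely many finite algebras is isomorphic to one of them, $\PP_u(\{\alg A_1,\dots,\alg A_n\})\sse\II(\{\alg A_1,\dots,\alg A_n\})$, so $\alg S\in\HH\SU(\alg A_i)$ for some $i$; flatness of $\alg A_i$ then forces $\alg S\cong\alg F_{\vv W}$, and hence $\alg B$ is flat.

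The only point requiring care is the reference algebra in the definition of ``flat''. For algebras of $\vv W$ it is $\alg F_{\vv W}$, and one should check this agrees with $\alg F_{\vv V}$, as expected when saying ``each generating algebra is flat''. When $\vv W$ is nontrivial this is forced: a generator $\alg A_i$ is then nontrivial and, $\vv V$ being Koll\'ar, has a simple quotient, which by flatness of $\alg A_i$ must be $\alg F_{\vv V}$; that simple quotient lies in $\HH\SU(\alg A_i)\sse\vv W$, whence $\alg F_{\vv V}\in\vv W$ and therefore $\alg F_{\vv W}=\alg F_{\vv V}$. (If $\vv W$ is trivial both sides of the claimed equivalence hold vacuously.) I expect this coherence bookkeeping, rather than the main argument, to be the fiddly part; the substance of the proof is just the previous theorem combined with J\'onsson's Lemma.
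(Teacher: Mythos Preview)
Your proposal is correct and follows essentially the same approach as the paper: both arguments rest on J\'onsson's Lemma (applied to the finitely many finite generators, so that every subdirectly irreducible---in particular every simple---algebra of $\vv W$ lies in $\HH\SU(\alg A_i)$ for some generator $\alg A_i$) together with the characterization of passive structural completeness via the absence of ``bad'' simple algebras. The only organizational difference is that you factor explicitly through the preceding theorem (PSC for $\vv W$ $\Leftrightarrow$ every finite algebra of $\vv W$ is flat), whereas the paper argues directly that flatness of the generators forces $\alg F_{\vv W}$ to be the unique finite simple in $\vv W$ and then invokes the Koll\'ar/simple-algebra criterion; your extra care about $\alg F_{\vv V}=\alg F_{\vv W}$ is a point the paper leaves implicit.
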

\begin{proof} Suppose that $\vv W = \VV(K)$ where $K$ is a finite set of finite algebras; by
J\'onsson Lemma any simple algebra in $\vv V$ is in $\HH\SU(K)$. If $K$ consists entirely of flat algebras, then there cannot be any simple algebra in $\vv V$ different from $\alg F_\vv W$, so $\vv W$ is passive structurally complete. On the other hand if $\alg A \in K$ is not flat, then there is an algebra
$\alg B \in \HH\SU(K)$ which is simple and different from $\alg F_\vv V$. Clearly $\alg B \in \vv W$, which is not passive structurally complete.
\end{proof}

\section{Applications to algebra and logic}
In this last section we will see some relevant examples and applications of our results in the realm of algebra and (algebraic) logic that deserve a deeper exploration than the examples already presented in the previous sections. We will start with focusing on varieties of lattices and bounded lattices, and then move to their expansions that are the equivalent algebraic semantics of subtructural logics: residuated lattices. 

As a main result, in the last subsection we present the logical counterpart of the characterization of passive structural completeness in substructural logics with weakening, that is, such a logic is passively structurally complete if and only if every classical contradiction is explosive in it; building on this, from the algebraic perspective, we are able to axiomatize the largest variety of representable bounded commutative integral residuated lattices that is passively structurally complete (and such that all of its quasivarieties have this property). Notice that this characterization establishes negative results as well: if a logic (or a quasivariety) is not passively structurally complete, a fortiori it is not structurally complete either.

\subsection{(Bounded) lattices}\label{sec:lattices}
In this subsection we start with some results about primitive (quasi)varieties of lattices, and then move to bounded lattices, where in particular we obtain some new results about passive structurally complete varieties.

\subsubsection{Primitivity in lattices}\label{sec: primlattices}
Many examples of  quasivarieties that are  primitive can be found in  lattices satisfying {\em Whitman's condition} (W);  Whitman's condition is a universal sentence that holds in free lattices:
\begin{equation}
\{x \meet y \le u \join v\} \Rightarrow \{x \le u \join v, y \le u \join v, x \meet y \le u, x \meet y \le v\} \tag{W}.
\end{equation}
Now a finite lattice is finitely projective in the variety of all lattices if and only if it satisfies (W) \cite{DaveySands1977}, which implies:

\begin{lemma}\label{whitman} Let $\vv K$ be a finite set of finite lattices.  If every lattice in $\vv K$ satisfies (W) then $\QQ(\vv K)$ is primitive.
\end{lemma}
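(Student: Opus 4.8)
The strategy is to apply the structural-completeness machinery developed earlier, specifically Lemma~\ref{lemma: wpstructcomplete} together with the characterization of primitivity for locally finite quasivarieties in Theorem~\ref{mainstructural}. Since $\vv K$ is a finite set of finite lattices, $\QQ(\vv K)$ is locally finite of finite type, so Theorem~\ref{mainstructural} applies: $\QQ(\vv K)$ is primitive if and only if every finite relative subdirectly irreducible algebra in $\QQ(\vv K)$ is weakly projective in $\QQ(\vv K)$. The key input is the cited result of Davey and Sands: a finite lattice is finitely projective in the variety $\vv L$ of all lattices precisely when it satisfies (W). The plan is to show that the hypothesis ``every lattice in $\vv K$ satisfies (W)'' propagates to all the relevant algebras of $\QQ(\vv K)$.

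First I would recall that (W) is a universal sentence valid in free lattices, hence in $\alg F_{\vv L}(\o)$, and that it is preserved under subalgebras (being universal). Then, by Theorem~\ref{quasivariety}(2) (Czelakowski--Dziobiak), every relative subdirectly irreducible algebra of $\QQ(\vv K)$ lies in $\II\SU\PP_u(\vv K)$; since $\vv K$ consists of finite algebras and we are interested in finite relative subdirectly irreducibles, each such algebra is in fact in $\II\SU(\vv K)$. Because (W) is a universal sentence holding in every member of $\vv K$, it holds in every subalgebra of a member of $\vv K$, hence in every finite relative subdirectly irreducible $\alg A$ of $\QQ(\vv K)$.

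Next I would invoke the Davey--Sands theorem: since such an $\alg A$ is a finite lattice satisfying (W), it is finitely projective in $\vv L$, the variety of all lattices. It follows that $\alg A$ is weakly projective in $\vv L$ (finite projectivity in a locally finite setting gives, via a retraction argument, that whenever $\alg A \in \HH(\alg B)$ for finite $\alg B$ one gets $\alg A \in \SU(\alg B)$), and a fortiori $\alg A$ is weakly projective in the subclass $\QQ(\vv K) \sse \vv L$: if $\alg A \in \HH(\alg B)$ with $\alg B \in \QQ(\vv K)$, then $\alg B \in \vv L$, so $\alg A \in \SU(\alg B)$. Thus every finite relative subdirectly irreducible algebra in $\QQ(\vv K)$ is weakly projective in $\QQ(\vv K)$, and Theorem~\ref{mainstructural} (the implication (3)$\Rightarrow$(1)) yields that $\QQ(\vv K)$ is primitive.

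The main obstacle is the bookkeeping around projectivity versus weak projectivity and making sure the Davey--Sands statement about \emph{finite projectivity in $\vv L$} is correctly transferred to \emph{weak projectivity in the subquasivariety $\QQ(\vv K)$}. Concretely, one must be careful that ``finitely projective in $\vv L$'' means projectivity with respect to finitely generated (hence, in the locally finite fragment, finite) algebras, and then observe that weak projectivity is exactly what is needed for Theorem~\ref{mainstructural}, and that weak projectivity in a larger class restricts downward to weak projectivity in any subclass containing the relevant algebras. None of this is deep, but it is the step where a careless argument could go wrong; everything else is a direct chaining of the quoted theorems.
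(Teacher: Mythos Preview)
Your proposal is correct and follows essentially the same route as the paper's proof: local finiteness of $\QQ(\vv K)$, Czelakowski--Dziobiak to locate the relative subdirectly irreducibles in $\II\SU(\vv K)$, preservation of the universal sentence (W) under subalgebras, the Davey--Sands characterization, and then Theorem~\ref{mainstructural}. The only cosmetic difference is that the paper invokes condition~(4) of Theorem~\ref{mainstructural} (weak projectivity among finite algebras, which is immediate from finite projectivity in $\vv L$), whereas you pass through condition~(3); since (3) and (4) are equivalent in the locally finite setting, this is the same argument.
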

\begin{proof} $\QQ(\vv K)$ is locally finite and by Theorem \ref{quasivariety}(2) every relative subdirectly irreducible lies  in $\II\SU(\vv K)$; as (W) is a universal sentence it is preserved under subalgebras, thus they all satisfy (W)
and hence they are all finitely projective in the variety of lattices and then also in $\QQ(\vv K)$. By Theorem \ref{mainstructural}(4), $\QQ(\vv K)$ is primitive.
\end{proof}

Luckily finite lattices satisfying  (W) abound, so there is no shortage of primitive quasivarieties of lattices. For varieties of lattices the situation is slightly different; in particular, because of Lemma \ref{lemma: wpstructcomplete} it is not enough that all lattices in $\vv K$ are weakly projective in $\VV(\vv K)$ to guarantee that $\VV(\vv K)$ is structurally complete.

First we introduce some lattices: $\alg M_n$ for $3\le n \le \o$ are  the modular lattices consisting of a top, a bottom, and $n$ atoms while
the lattices  $\alg M_{3,3}$ and $\alg M_{3,3}^+$ are displayed in Figure \ref{m3lattices}.

\begin{figure}[htbp]
\begin{center}
\begin{tikzpicture}[scale=.7]
\draw (0,0) -- (0,2) -- (1,3) --(1,1) -- (2,2) --(1,3) -- (0,2) -- (-1,1)-- (0,0) -- (1,1) -- (0,2);
\draw[fill] (0,0) circle [radius=0.05];
\draw[fill] (0,2) circle [radius=0.05];
\draw[fill] (1,1) circle [radius=0.05];
\draw[fill] (-1,1) circle [radius=0.05];
\draw[fill] (1,3) circle [radius=0.05];
\draw[fill] (2,2) circle [radius=0.05];
\draw[fill] (0,1) circle [radius=0.05];
\draw[fill] (1,2) circle [radius=0.05];
\node[below] at (1,0) {\footnotesize  $\alg M_{3,3}$};
\draw (6,0)  --(8.5,2.5) -- (7.5,3.5) -- (6,2);
\draw (6,2) -- (5,1)-- (6,0) -- (7,1) -- (6,2) -- (6,0);
\draw (6.5,2.5) -- (7.5,1.5) -- (7.5,3.5);
\draw[fill] (6,0) circle [radius=0.05];
\draw[fill] (5,1) circle [radius=0.05];
\draw[fill] (6,2) circle [radius=0.05];
\draw[fill] (7,1) circle [radius=0.05];
\draw[fill] (6,1) circle [radius=0.05];
\draw[fill] (6.5,2.5) circle [radius=0.05];
\draw[fill] (7.5,1.5) circle [radius=0.05];
\draw[fill] (8.5,2.5) circle [radius=0.05];
\draw[fill] (7.5,3.5) circle [radius=0.05];
\draw[fill] (7.5,2.5) circle [radius=0.05];
\node[below] at (7,0) {\footnotesize  $\alg M_{3,3}^+$};
\end{tikzpicture}
\caption{$\alg M_{3,3}$ and $\alg M_{3,3}^+$}\label{m3lattices}
\end{center}
\end{figure}
Observe that all the above lattices, with the exception of $\alg M_{3,3}$, satisfy (W). Now Gorbunov (\cite{Gorbunov1998}, Theorem 5.1.29) showed that $\alg M^+_{3,3}$ is {\em splitting} in the lattice of subquasivarieties of modular lattices. More in detail for any quasivariety $\vv Q$ of modular lattices, either $\alg M^+_{3,3} \in \vv Q$ or else $\vv Q= \QQ(\alg M_n)$ for some $n \le \o$.  Observe that, for $n <\o$, $\vv Q(\alg M_n)$ is primitive by Lemma \ref{whitman} and $\VV(\alg M_n) = \QQ(\alg M_n)$ by Lemma \ref{lemma: Q(A) variety}; then the only thing left to show is that  $\VV(\alg M_\o)$ is a primitive variety and Gorbunov did exactly that.
On the other hand no variety $\vv V$ of lattices containing $\alg M_{3,3}^+$ can be primitive; in fact $\alg M_{3,3}$ is a simple homomorphic image of $\alg M^+_{3,3}$ that  cannot be embedded in  $\alg M^+_{3,3}$.
By Lemma \ref{lemma: Q(A) variety}, $\QQ (\alg M_{3,3}^+) \subsetneq \VV(\alg M_{3,3}^+)$,  so $\vv V$ contains a strict (i.e. not a variety) subquasivariety and cannot be primitive.
Thus Gorbunov's result can be formulated as: {\em a variety of modular lattices is primitive if and only if it does not contain $\alg M^+_{3,3}$}.
Note that it cannot be improved to quasivarieties: since $\alg M_{3,3}^+$ satisfies (W), $\QQ(\alg M^+_{3,3})$ is primitive by Lemma \ref{whitman}. However we observe:

\begin{lemma}\label{lemmaM33} If $\vv Q$ is a quasivariety of modular lattices and $\alg M_{3,3} \in \vv Q$, then $\vv Q$ is not primitive.
\end{lemma}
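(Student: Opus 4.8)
The plan is to show that $\vv Q$ fails condition~(3) of Theorem \ref{primitiveQ}, that is, to produce a subdirectly irreducible algebra $\alg A\in\HH(\vv Q)$ and an algebra $\alg B\in\vv Q$ with $\alg A\in\HH(\alg B)$ but $\alg A\notin\II\SU\PP_u(\alg B)$. The candidates are essentially forced: take $\alg A=\alg M_{3,3}$, which is simple, hence subdirectly irreducible, and lies in $\vv Q\sse\HH(\vv Q)$, and take $\alg B=\alg M_{3,3}^+$. Then $\alg M_{3,3}\in\HH(\alg M_{3,3}^+)$ since $\alg M_{3,3}$ is a homomorphic image of $\alg M_{3,3}^+$; and $\alg M_{3,3}\notin\SU(\alg M_{3,3}^+)$ since $\alg M_{3,3}^+$ satisfies Whitman's condition (W) whereas $\alg M_{3,3}$ does not, and (W), being a universal sentence, is inherited by sublattices. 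As $\alg M_{3,3}^+$ is finite, $\II\SU\PP_u(\alg M_{3,3}^+)=\II\SU(\alg M_{3,3}^+)$, so $\alg M_{3,3}\notin\II\SU\PP_u(\alg M_{3,3}^+)$ as well. Hence the whole argument reduces to the single point that $\alg M_{3,3}^+$ actually belongs to $\vv Q$.

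Establishing $\alg M_{3,3}^+\in\vv Q$ is the main obstacle, since $\vv Q$ need not be a variety and a priori carries no information about $\alg M_{3,3}^+$. Here I would invoke Gorbunov's splitting theorem for modular lattices (\cite{Gorbunov1998}, Theorem 5.1.29) recalled above: $\alg M_{3,3}^+$ is splitting in the lattice of quasivarieties of modular lattices, so either $\alg M_{3,3}^+\in\vv Q$ or $\vv Q=\QQ(\alg M_n)$ for some $n\le\o$. It thus suffices to exclude the second alternative under the hypothesis $\alg M_{3,3}\in\vv Q$. For this, observe that every $\alg M_n$ has length $2$; since lattices are congruence distributive, J\'onsson's Lemma (Theorem \ref{birkhoff}(2)) places every subdirectly irreducible member of $\VV(\alg M_n)$ inside $\HH\SU\PP_u(\alg M_n)$. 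Having length $\le 2$ --- i.e.\ containing no chain of four elements --- is preserved by ultraproducts (being first-order), by subalgebras (trivially), and by homomorphic images, since a chain $a_0<\dots<a_k$ in a quotient lifts to a chain of the same length by taking $c_0$ to be an arbitrary preimage of $a_0$ and $c_{i+1}=c_i\vee(\text{a preimage of }a_{i+1})$. Consequently every subdirectly irreducible algebra in $\VV(\alg M_n)$ has length $\le 2$, while $\alg M_{3,3}$ is subdirectly irreducible of length $3$ (Figure \ref{m3lattices}); so $\alg M_{3,3}\notin\QQ(\alg M_n)\sse\VV(\alg M_n)$, contradicting $\alg M_{3,3}\in\vv Q$. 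Therefore $\alg M_{3,3}^+\in\vv Q$.

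Putting the two steps together, the pair $\alg A=\alg M_{3,3}$ and $\alg B=\alg M_{3,3}^+\in\vv Q$ witnesses the failure of condition~(3) of Theorem \ref{primitiveQ}, so $\vv Q$ is not primitive. (When $\vv Q$ happens to be locally finite one can phrase the same step via Theorem \ref{mainstructural} or Lemma \ref{universal}(3): $\alg M_{3,3}$ is a finite relative subdirectly irreducible algebra that is not weakly projective in $\vv Q$, because $[\vv Q:\alg M_{3,3}]$ contains $\alg M_{3,3}^+$ but not its homomorphic image $\alg M_{3,3}$.) The two non-routine ingredients are the recognition that the splitting theorem is precisely what promotes $\alg M_{3,3}\in\vv Q$ to $\alg M_{3,3}^+\in\vv Q$, and the minor verification that homomorphic images cannot increase the length of a lattice.
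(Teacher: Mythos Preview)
Your argument is correct, but you take a needlessly heavy route to the one nontrivial step, namely $\alg M_{3,3}^+\in\vv Q$. The paper obtains this directly and elementarily: since $\mathbf 2\in\vv Q$ trivially and $\alg M_{3,3}\in\vv Q$ by hypothesis, and since $\alg M_{3,3}^+\le_{sd}\mathbf 2\times\alg M_{3,3}$, closure of $\vv Q$ under $\SU\PP$ gives $\alg M_{3,3}^+\in\vv Q$ immediately. You instead invoke Gorbunov's splitting theorem and then argue (via J\'onsson's Lemma and a length-preservation check) that $\alg M_{3,3}\notin\QQ(\alg M_n)$; this is valid but imports a substantial external result to do a job that a one-line subdirect-product computation handles. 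On the other hand, your use of Theorem~\ref{primitiveQ}(3) at the end is arguably cleaner than the paper's appeal to Theorem~\ref{mainstructural}, since the latter is stated for locally finite quasivarieties while $\vv Q$ is arbitrary here; Theorem~\ref{primitiveQ}(3) avoids that wrinkle.
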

\begin{proof} Clearly the two element lattice $\mathbf 2 \in \vv Q$ and it is easy to check that $\alg M_{3,3}^+ \le_{sd} \mathbf 2 \times \alg M_{3,3}$ so $\alg M_{3,3}^+ \in \vv Q$ and  $\alg M_{3,3} \in \HH(\alg M_{3,3}^+)$.
Since $\alg M_{3,3}$ cannot be embedded in $\alg M_{3,3}^+$, in $\vv Q$ there is a simple finite (so finitely presented, since lattices have finite type) algebra that is not weakly projective. By Theorem \ref{mainstructural}, $\vv Q$ is not primitive.
\end{proof}

Therefore to find a variety of modular lattices that is structurally complete but not primitive it is enough to find a finite lattice $\alg F$ such that $\alg M_{3,3}^+ \in
\VV(\alg F)$ but $\vv K=\{\alg F\}$ satisfies the hypotheses of Lemma \ref{lemma: wpstructcomplete}.  Bergman in \cite{Bergman1991} observed that the {\em Fano lattice} $\alg F$ has exactly those characteristics; the Fano lattice is the (modular) lattice of subspaces of $(\mathbb Z_2)^3$ seen as a vector space on $\mathbb Z_2$ and it is displayed in Figure \ref{fano}.

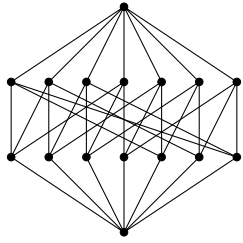
\begin{figure}[htbp]
\begin{center}
\begin{tikzpicture}[scale=1]
\draw (0,0) -- (-1.5,1) -- (-1.5,2) -- (0,3) -- (-1,2) -- (-1,1) -- (0,0) -- (-.5,1) -- (-.5,2) -- (0,3) -- (0,2) -- (0,1) -- (0,0) ;
\draw (0,0) -- (1.5,1) -- (1.5,2) -- (0,3) -- (1,2) -- (1,1) -- (0,0) -- (.5,1) -- (.5,2) -- (0,3)  ;
\draw (-1.5,2) -- (.5,1) -- (1,2);
\draw (-1.5,2) -- (1.5,1);
\draw (-1.5,1) -- (-1,2) -- (1,1) -- (1.5,2);
\draw (-1.5,1) -- (0,2) -- (-.5,1);
\draw (-1,1) -- (-.5,2) -- (1.5,1);
\draw (-1,1) -- (.5,2) -- (0,1);
\draw (-.5,1) -- (1,2);
\draw (0,1) -- (1.5,2);
\draw[fill] (0,0) circle [radius=0.05];
\draw[fill] (0,1) circle [radius=0.05];
\draw[fill] (-.5,1) circle [radius=0.05];
\draw[fill] (-1,1) circle [radius=0.05];
\draw[fill] (-1.5,1) circle [radius=0.05];
\draw[fill] (0.5,1) circle [radius=0.05];
\draw[fill] (1,1) circle [radius=0.05];
\draw[fill] (1.5,1) circle [radius=0.05];
\draw[fill] (0,2) circle [radius=0.05];
\draw[fill] (-.5,2) circle [radius=0.05];
\draw[fill] (-1,2) circle [radius=0.05];
\draw[fill] (-1.5,2) circle [radius=0.05];
\draw[fill] (0.5,2) circle [radius=0.05];
\draw[fill] (1,2) circle [radius=0.05];
\draw[fill] (1.5,2) circle [radius=0.05];
\draw[fill] (0,3) circle [radius=0.05];
\end{tikzpicture}
\caption{The Fano lattice}\label{fano}
\end{center}
\end{figure}

Now:
\begin{enumerate}
\ib $\alg F$ is projective in $\VV(\alg F)$ \cite{HermannHuhn1976};
\ib the subdirectly irreducible members of $\VV (\alg F)$ are exactly  $\mathbf 2, \alg M_3, \alg M_{3,3}, \alg F$ and they are all subalgebras of $\alg F$.
\end{enumerate}
It follows that $\alg F$ does not satisfies (W) (since $\alg M_{3,3}$ does not), $\VV(\alg F)$ is structurally complete and (since  $\alg M_{3,3} \in \VV(\alg F)$)  not primitive by Lemma \ref{lemmaM33}; also $\QQ(\alg F)$ is structurally complete but, since $\alg M_{3,3} \in \QQ(\alg F)$, it cannot be primitive as well.

Primitive varieties of lattices have been studied in depth in \cite{JipsenNation2022}; there the authors proved the following theorem that explains the behavior we have seen above.

\begin{theorem}[\cite{JipsenNation2022}] If  $\alg A$ is a lattice satisfying (W), then $\VV(\alg A)$ is primitive if and only if  every subdirectly irreducible lattice in $\HH\SU(\alg A)$ satisfies (W).
\end{theorem}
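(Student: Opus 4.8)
The plan is to apply the general characterization of primitivity for locally finite quasivarieties, namely Theorem~\ref{mainstructural}, which says that a locally finite quasivariety of finite type is primitive if and only if every finite relative subdirectly irreducible algebra in it is weakly projective. Since $\alg A$ is a lattice (so we are in a finite-type, congruence distributive setting) and $\VV(\alg A)$ need not be locally finite in general, the first thing I would check is that the hypothesis $\VV(\alg A)$ \emph{is} locally finite under assumption (W): indeed any lattice variety generated by a finite lattice satisfying (W) is finitely generated hence locally finite, so Theorem~\ref{mainstructural} does apply. (If $\alg A$ itself is not assumed finite, one instead works directly: (W) is a universal sentence valid in free lattices, and a lattice satisfying (W) need not be finite, so the cleaner route is to bypass local finiteness and argue via weak projectivity of the subdirectly irreducibles directly, using Proposition~\ref{lemma: wp implies p in varieties} to pass from weak projectivity to projectivity where needed.)

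For the forward direction, suppose $\VV(\alg A)$ is primitive and let $\alg S$ be a subdirectly irreducible lattice in $\HH\SU(\alg A)$. By J\'onsson's Lemma (Theorem~\ref{birkhoff}(2)), since lattices are congruence distributive, the subdirectly irreducible members of $\VV(\alg A)$ all lie in $\HH\SU\PP_u(\alg A) = \HH\SU(\alg A)$ (the last equality because $\alg A$ is finite, so $\PP_u(\alg A)=\II(\alg A)$); hence $\alg S \in \VV(\alg A)$. By Theorem~\ref{mainstructural}, $\alg S$ is weakly projective in $\VV(\alg A)$, and then by Proposition~\ref{lemma: wp implies p in varieties} it is in fact projective in $\VV(\alg A)$. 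A finite projective lattice satisfies (W) by the Davey--Sands result \cite{DaveySands1977} quoted in Lemma~\ref{whitman} (a finite lattice is finitely projective in the variety of all lattices iff it satisfies (W), and projectivity in $\VV(\alg A)$ implies projectivity in the variety of all lattices by the remark following the finitely-presented projectivity theorem). Hence $\alg S$ satisfies (W).

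For the converse, assume every subdirectly irreducible lattice in $\HH\SU(\alg A)$ satisfies (W). Let $\alg S$ be a finite relative subdirectly irreducible algebra of $\VV(\alg A)$; in a variety this is just a finite subdirectly irreducible algebra, which by the argument above lies in $\HH\SU(\alg A)$ and hence satisfies (W). By \cite{DaveySands1977} such a finite lattice is finitely projective in the variety of all lattices, and since $\VV(\alg A)$ is a subvariety of the variety of all lattices it follows that $\alg S$ is projective, a fortiori weakly projective, in $\VV(\alg A)$. By Theorem~\ref{mainstructural} (conditions (\ref{mainstructural1})$\Leftrightarrow$(\ref{mainstructural3})), $\VV(\alg A)$ is primitive. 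The one delicate point I expect to be the main obstacle is the transfer of projectivity between $\VV(\alg A)$ and the variety of all lattices in the direction needed for the converse: projectivity in a subvariety is a priori weaker than projectivity in a larger variety, so one must argue that a finite (W)-lattice which is projective in \emph{all} lattices is still projective inside $\VV(\alg A)$ whenever it belongs to $\VV(\alg A)$ — this uses that a retract of a free lattice which happens to lie in $\VV(\alg A)$ is also a retract of the corresponding free algebra of $\VV(\alg A)$, via composing with the natural surjection, so the two notions of projectivity coincide on members of $\VV(\alg A)$. With that identification in hand, both implications reduce to a bookkeeping application of Theorem~\ref{mainstructural}, J\'onsson's Lemma, and the Davey--Sands characterization.
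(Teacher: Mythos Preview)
The paper does not prove this theorem; it is simply cited from \cite{JipsenNation2022}. So there is no paper proof to compare against, and your proposal must stand on its own.

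Your converse direction is fine (at least when $\alg A$ is finite, so that Theorem~\ref{mainstructural} applies): a finite subdirectly irreducible in $\VV(\alg A)$ lies in $\HH\SU(\alg A)$ by J\'onsson's Lemma, satisfies (W) by hypothesis, is therefore projective in the variety of all lattices by Davey--Sands, hence projective (a fortiori weakly projective) in the subvariety $\VV(\alg A)$, and Theorem~\ref{mainstructural} finishes. Your worry about this transfer of projectivity is misplaced: projectivity in a larger variety trivially restricts to projectivity in any subvariety containing the algebra.

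Your forward direction, however, contains a genuine error. You argue: $\alg S$ is projective in $\VV(\alg A)$, hence projective in the variety of all lattices, hence satisfies (W) by Davey--Sands. The middle step is false. Projectivity in a subvariety does \emph{not} imply projectivity in a larger variety; the remark you invoke (that projectives in $\vv Q$ and in $\VV(\vv Q)$ coincide) concerns a quasivariety and the variety it generates, which share the same free algebras. It says nothing about $\VV(\alg A)$ versus the variety of all lattices, whose free algebras are different. Concretely, $\alg S$ being a retract of $\alg F_{\VV(\alg A)}(X)$ gives no retract from $\alg F_{\vv L}(X)$.

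The correct forward argument is both simpler and avoids projectivity entirely. If $\alg S$ is subdirectly irreducible in $\HH\SU(\alg A)$, pick $\alg B \le \alg A$ with $\alg S \in \HH(\alg B)$. By Theorem~\ref{primitiveQ}(3), primitivity of $\VV(\alg A)$ gives $\alg S \in \II\SU\PP_u(\alg B)$. But $\alg B \le \alg A$ satisfies (W), and (W) is a universal sentence, hence preserved under $\II$, $\SU$, and $\PP_u$; therefore $\alg S$ satisfies (W). This also sidesteps your uneasiness about whether $\alg A$ is finite: neither local finiteness nor Theorem~\ref{mainstructural} is needed for this direction.
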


We believe that many of the techniques in \cite{JipsenNation2022} could be adapted to gain more understanding of primitive quasivarieties of lattices, but proceeding along this path would make this part  too close to being a paper in lattice theory, and we have chosen a different focus. We only borrow an example from \cite{JipsenNation2022} that shows that Lemma \ref{whitman} cannot be inverted for quasivarieties.
Let $\alg H^+, \alg H$ be the lattices in Figure \ref{H}.

It is easily seen that the pair $\alg H^+, \alg H$ behaves almost like the pair $\alg M_{3,3}^+, \alg M_{3,3}$: $\alg H^+$ satisfies (W) (so $\QQ(\alg H^+)$ is primitive), $\alg H$ does not satisfy (W) and $\alg H^+ \le_{sd} \mathbf 2 \times \alg H$. As above we can conclude that $\VV(\alg H^+)$ is not primitive.  However $\VV(\alg H)$ is primitive \cite{JipsenNation2022} so $\QQ(\alg H)$ is a primitive quasivariety generated by a finite lattice not satisfying (W).

\begin{figure}[htbp]
\begin{center}
\begin{tikzpicture}[scale=.7]
\draw (0,0) -- (-.5,1) -- (0,2) --(.5,1) -- (0,0);
\draw (0,2) -- (-.5,3) -- (0,4) -- (.5,3) --(0,2);
\draw (0,0) -- (2,2) -- (0,4);
\draw[fill] (0,0) circle [radius=0.05];
\draw[fill] (-.5,1) circle [radius=0.05];
\draw[fill] (0,2) circle [radius=0.05];
\draw[fill] (.5,1) circle [radius=0.05];
\draw[fill] (-.5,3) circle [radius=0.05];
\draw[fill] (0,4) circle [radius=0.05];
\draw[fill] (.5,3) circle [radius=0.05];
\draw[fill] (2,2) circle [radius=0.05];
\node[below] at (.5,0) {\footnotesize  $\alg H$};
\draw (6,0) -- (5.5,1) -- (6,2) --(6.5,1) -- (6,0);
\draw (6,2) -- (6,2.5);
\draw (6,2.5) -- (5.5,3.5) -- (6,4.5) --(6.5,3.5) -- (6,2.5);
\draw (6,0) -- (8,2.25) -- (6,4.5);
\draw[fill] (6,0) circle [radius=0.05];
\draw[fill] (5.5,1) circle [radius=0.05];
\draw[fill] (6,2) circle [radius=0.05];
\draw[fill] (6.5,1) circle [radius=0.05];
\draw[fill] (6,2.5) circle [radius=0.05];
\draw[fill] (5.5,3.5) circle [radius=0.05];
\draw[fill] (6,4.5) circle [radius=0.05];
\draw[fill] (6.5,3.5) circle [radius=0.05];
\draw[fill] (8,2.25) circle [radius=0.05];
\node[below] at (6.5,0) {\footnotesize  $\alg H^+$};
\end{tikzpicture}
\caption{$\alg H$ and $\alg H^+$}\label{H}
\end{center}
\end{figure}

\subsubsection{Bounded lattices}
We now focus on applications of our results in varieties of bounded lattices.
A {\em bounded} lattice is a lattice with two constants, $0$ and $1$, that represent the top and the bottom of the lattice. Bounded lattices form a variety $\vv L^b$ that shares many features with variety of lattices. In particular, let $\mathbf 2^b$ be the two element bounded lattice, then the variety  of bounded distributive lattices is $\vv D^b = \II\SU\PP(\mathbf 2^b)$. Therefore
$$
\QQ(\alg F_{\vv D_b}(\o)) \sse \vv D^b = \II\SU\PP(\mathbf 2^b) \sse \QQ(\alg F_{\vv D_b}(\o))
$$
and by Theorem \ref{structural}, the variety of bounded distributive lattices $\vv D^b$ is structurally complete, as shown in \cite{DzikStronkowski2016}.
In \cite{BergmanMcKenzie1990} it is shown that locally finite, congruence modular, minimal varieties are q-minimal; since these hypotheses apply to $\vv D^b$, the latter is also primitive. However, it is not non-negative universally complete; it is a nice exercise in general algebra to show that  for any variety $\vv V$ of bounded lattices, $1$ is join irreducible in $\alg F_{\vv V}(\o)$. It follows that
$$
\{x \join y \app 1\} \Rightarrow \{x \app 1, y \app 1\}
$$
is an active universal sentence that is admissible in $\vv V$. But it is clearly not derivable, since any nontrivial variety of bounded lattices contains
$\mathbf 2^b \times \mathbf 2^b$ which does not satisfy the universal sentence. 

\begin{proposition}\label{prop: bdl not nnu}
	No nontrivial variety of bounded lattices is  active universally complete.
\end{proposition}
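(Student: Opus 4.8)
The plan is to produce, for each nontrivial variety $\vv V$ of bounded lattices, a single active admissible universal sentence that fails in $\vv V$, which immediately shows that $\vv V$ is not active universally complete. The natural candidate is exactly the clause already isolated in the paragraph preceding the statement,
\[
\Phi \ :=\ \{x \join y \app 1\} \ \Rightarrow\ \{x \app 1,\ y \app 1\},
\]
and the three things I would check are that $\Phi$ is active, that it is admissible in $\vv V$, and that it is not valid in $\vv V$.

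First I would observe that $\Phi$ is active: the substitution sending both $x$ and $y$ to the constant $1$ unifies the premise, since $1 \join 1 \app 1$ holds in every bounded lattice, so $\Phi$ is not passive. For admissibility, I would invoke Lemma \ref{admissible}: $\Phi$ is admissible in $\vv V$ if and only if $\alg F_{\vv V}(\o) \vDash \Phi$. The remark just before the proposition records precisely that $1$ is join irreducible in $\alg F_{\vv V}(\o)$ — i.e., if $a \join b = 1$ in the free algebra then $a = 1$ or $b = 1$ — and this is exactly the validity of $\Phi$ in $\alg F_{\vv V}(\o)$, so $\Phi$ is admissible. Finally, $\vv V$ being nontrivial contains $\mathbf 2^b$ and hence $\mathbf 2^b \times \mathbf 2^b$; assigning $x \mapsto (1,0)$ and $y \mapsto (0,1)$ gives $x \join y = (1,1)$, the top, while neither $x$ nor $y$ equals the top, so $\vv V \not\vDash \Phi$.

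Putting these together, $\Phi$ witnesses that $\vv V$ has an active admissible universal sentence that is not derivable, whence $\vv V$ is not active universally complete. I do not expect any real obstacle here: the only substantive ingredient is the join irreducibility of $1$ in free bounded lattices, which is taken as already established in the preceding discussion, and the only points requiring a little care are routing the admissibility claim through the free algebra via Lemma \ref{admissible} and verifying that $\mathbf 2^b \times \mathbf 2^b$ genuinely refutes $\Phi$.
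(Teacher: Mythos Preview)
Your proposal is correct and follows exactly the paper's approach: the paper uses the same clause $\{x \join y \app 1\} \Rightarrow \{x \app 1, y \app 1\}$, notes it is active and admissible (via join irreducibility of $1$ in $\alg F_{\vv V}(\o)$), and refutes it in $\mathbf 2^b \times \mathbf 2^b$. The only minor addition in your write-up is the explicit witness $x,y \mapsto 1$ for activeness and the explicit assignment $(1,0),(0,1)$ in $\mathbf 2^b \times \mathbf 2^b$, both of which the paper leaves implicit.
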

 Actually something more is true; if $\vv V$ is a variety of bounded lattices that is structurally complete, then by Theorem \ref{thm: structexact}, each finite subdirectly irreducible algebra  $\alg A \in \vv V$ must satisfy the above universal sentence, i.e. $1$ must be join irreducible in $\alg A$. But the bounded lattices $\alg N_5^b$ and $\alg M_3^b$ do not satisfy that, so any structurally complete variety of bounded lattice must omit them both. As in the unbounded case, this means that the variety must be the variety of bounded distributive lattices. Thus:
\begin{proposition}[\cite{DzikStronkowski2016}]\label{proo: bdl structural} 
	The variety of bounded distributive lattices is the only (active) structurally complete variety of bounded lattices.
\end{proposition}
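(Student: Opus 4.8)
The plan is to prove the two directions separately, and to obtain both the ``structurally complete'' and ``actively structurally complete'' versions at once by working with the weaker hypothesis in the nontrivial direction. For the ``if'' direction I would just recall the computation made right before the statement: since $\vv D^b = \II\SU\PP(\mathbf 2^b)$,
$$
\QQ(\alg F_{\vv D^b}(\o)) \sse \vv D^b = \II\SU\PP(\mathbf 2^b) \sse \QQ(\alg F_{\vv D^b}(\o)),
$$
so $\vv D^b = \QQ(\alg F_{\vv D^b}(\o))$ is structurally complete by Theorem~\ref{structural}, and a fortiori actively structurally complete.

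For the converse I would let $\vv V$ be a nontrivial, actively structurally complete variety of bounded lattices and aim to show $\vv V = \vv D^b$; since structural completeness implies active structural completeness, this covers both versions. Because $\vv D^b$ is generated by $\mathbf 2^b$, which is simple and has no proper subalgebra, $\vv D^b$ has no proper nontrivial subvariety, so it suffices to prove that $\vv V$ is distributive. The core step would be the following lemma: \emph{a non-distributive variety of bounded lattices contains $\alg M_3^b$ or $\alg N_5^b$}. To prove it I would pick a non-distributive $\alg L \in \vv V$, use the Dedekind--Birkhoff criterion to find a sublattice $\alg S$ of the lattice reduct of $\alg L$ isomorphic to $\alg M_3$ or to $\alg N_5$, with $u = \min S$ and $v = \max S$, and note that $K = S \cup \{0_\alg L, 1_\alg L\}$ is closed under $\join$ and $\meet$ and contains the two constants, hence is the universe of a bounded-lattice subalgebra $\alg K \le \alg L$; thus $\alg K \in \vv V$. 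The map $\varphi\colon \alg K \to \alg S^b$ (where $\alg S^b$ denotes $\alg S$ equipped with $u$ and $v$ as its distinguished bounds) that is the identity on $S$ and sends $0_\alg L \mapsto u$, $1_\alg L \mapsto v$ is a surjective homomorphism of bounded lattices, immediately since $0_\alg L$ lies below and $1_\alg L$ above every element of $S$. Hence $\alg M_3^b$ (resp.\ $\alg N_5^b$) lies in $\HH(\alg K) \sse \vv V$.

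With the lemma available I would finish as follows. Assume $\vv V$ is non-distributive; by the lemma I may suppose $\alg M_3^b \in \vv V$ (the $\alg N_5^b$ case is identical). Since $\alg M_3^b$ is simple, it is relative subdirectly irreducible in $\vv V$; and since $\vv V$ is nontrivial, $\alg F_\vv V = \alg F_\vv V(\emptyset) = \mathbf 2^b$. Active structural completeness, in the form recalled above that $\alg A \times \alg F_\vv Q \in \II\SU\PP_u(\alg F_\vv Q(\o))$ for every $\alg A \in \vv Q_{rsi}$, then forces $\alg M_3^b \times \mathbf 2^b \in \II\SU\PP_u(\alg F_\vv V(\o))$. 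On the other hand $1$ is join irreducible in $\alg F_\vv V(\o)$, i.e.\ the universal sentence $\{x \join y \app 1\} \Rightarrow \{x \app 1, y \app 1\}$ is valid in $\alg F_\vv V(\o)$; being a universal sentence it is preserved by $\II$, $\SU$ and $\PP_u$, hence holds in every algebra of $\II\SU\PP_u(\alg F_\vv V(\o))$. But it fails in $\alg M_3^b \times \mathbf 2^b$, where $(1,1) = (1,0) \join (0,1)$ and neither factor is the top. This contradiction shows $\vv V$ must be distributive, so $\vv V = \vv D^b$.

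The hard part will be the lemma, or rather making the passage from $\alg S$ through $\alg K$ back to $\alg M_3^b$ or $\alg N_5^b$ fully rigorous: one has to check closure of $K$, that $\varphi$ genuinely is a bounded-lattice homomorphism, and that it identifies nothing beyond $0_\alg L$ with $u$ and $1_\alg L$ with $v$. This is elementary but needs to be spelled out. The only external input is the standard fact (flagged above as an exercise) that the top is join irreducible in every free bounded lattice; granting that, the remainder uses only operators and results already in hand.
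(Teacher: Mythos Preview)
Your argument is correct and the core idea---using that $1$ is join irreducible in every free bounded lattice to exclude $\alg M_3^b$ and $\alg N_5^b$---is exactly the paper's. The differences are in execution, and they are in your favour. First, the paper simply asserts ``as in the unbounded case'' that omitting $\alg M_3^b$ and $\alg N_5^b$ forces distributivity; you actually prove the bounded analogue of the Dedekind--Birkhoff criterion by passing through $\alg K = \alg S \cup\{0,1\}$ and quotienting onto $\alg S^b$, which is the honest thing to do since a sublattice of the reduct need not be a bounded-lattice subalgebra. Second, the paper's inline argument only treats structural completeness (and does so by invoking Theorem~\ref{thm: structexact}, whose hypotheses include local finiteness, which is not available for arbitrary varieties of bounded lattices); you instead work from the weaker hypothesis of \emph{active} structural completeness and invoke condition~(6) of the Dzik--Stronkowski characterization, getting $\alg M_3^b \times \mathbf 2^b \in \II\SU\PP_u(\alg F_{\vv V}(\o))$ directly. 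This both covers the parenthetical ``(active)'' in the proposition and sidesteps the local-finiteness issue. A minor simplification: the universal sentence already fails in $\alg M_3^b$ itself, which embeds in $\alg M_3^b\times\mathbf 2^b$, so you could shorten the final step; but what you wrote is fine.
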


We have seen that active structural completeness does not have much meaning in bounded lattices.
Passive structural completeness has more content, as we are now going to show. Notice that any variety of bounded lattices is Koll\'ar and $\alg F_\vv V= \mathbf 2^b$ for any variety $\vv V$ of bounded lattices.
Since $\mathbf 2^b$ is simple and has no proper subalgebras, any simple bounded lattice not isomorphic with $\mathbf 2^b$ is not unifiable; in particular  if a variety $\vv V$ contains a finite simple lattice $\alg L$ different
from $\mathbf 2^b$, then $\VV(\alg L)$  cannot be passive structurally completeby Corollary \ref{semideglf}, and hence neither can $\vv V$.

We will use this fact to show that the only variety of bounded modular lattices that is passive structurally complete is the one we already know to possess that property, i.e.  the variety $\vv D^b$ of bounded distributive lattices.
A key step is to show that $\alg M^b_3$ is splitting in the variety of bounded modular lattices; in the unbounded case, this follows from the fact that $\alg M_3$ is projective and subdirectly irreducible. However, $\alg M_3^b$ is not projective in the variety of bounded modular lattices.
Indeed, the lattice in Figure \ref{notproj} is a bounded modular lattice having $\alg M^b_3$ as homomorphic image, but it has no subalgebra isomorphic with $\alg M^b_3$, which hence cannot be a retract.

\begin{figure}[htbp]
\begin{center}
\begin{tikzpicture}[scale=.6]
\draw (0,0)-- (0,1) --(-1,2)-- (0,3) -- (0,1) -- (1,2) -- (0,3) -- (0,4);
\draw[fill] (0,0) circle [radius=0.05];
\draw[fill] (0,1) circle [radius=0.05];
\draw[fill] (0,2) circle [radius=0.05];
\draw[fill] (0,3) circle [radius=0.05];
\draw[fill] (0,4) circle [radius=0.05];
\draw[fill] (1,2) circle [radius=0.05];
\draw[fill] (-1,2) circle [radius=0.05];
\node[below] at (0,0) {\footnotesize  $0$};
\node[above] at (0,4) {\footnotesize  $1$};
\end{tikzpicture}
\end{center}
\end{figure}\label{notproj}

However we can use A. Day idea in \cite{Day1975}; a finite algebra $\alg A$ is {\em finitely projected} in a variety $\vv V$ if for any $\alg B \in \vv V$
if $f: \alg B \longrightarrow \alg A$ is surjective, then there is a finite subalgebra $\alg C$ of $\alg B$ with $f(\alg C) \cong \alg A$. Clearly any finite projective lattice is finitely projected.
A finite algebra $\alg A$  {\em splitting} in a variety  $\vv V$ if $\alg A \in \vv V$ and there is a subvariety $\vv W_\alg A \sse \vv V$ such that for any variety $\vv U \sse \vv V$ either $\alg A \in \vv U$
or $\vv U \sse \vv W_\alg A$. This simply means  that the lattice of subvarieties of $\vv V$ is the disjoint union of the filter generated by $\VV(\alg A)$ and the ideal generated by $\vv W_\alg A$.
The key result is:

\begin{theorem}\label{day}(\cite{Day1975}, Theorem 3.7) If $\vv V$ is a congruence distributive variety, then any finitely projected subdirectly irreducible algebra in $\vv V$ is splitting in $\vv V$.
\end{theorem}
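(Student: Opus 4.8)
The strategy is to construct the splitting companion of $\alg A$ by brute force, as the varietal join of all subvarieties of $\vv V$ that omit $\alg A$, and then to check --- this is where the two hypotheses enter --- that this join still omits $\alg A$. So the plan is to set
$$
\vv W_\alg A = \VV\bigl( \bigcup\{ \vv U \sse \vv V : \vv U \text{ is a subvariety and } \alg A \notin \vv U \} \bigr),
$$
which is by construction a subvariety of $\vv V$ containing every subvariety that omits $\alg A$. Granting the claim $\alg A \notin \vv W_\alg A$, we are done: every subvariety $\vv U \sse \vv V$ either contains $\alg A$ or, omitting it, is contained in $\vv W_\alg A$, which is exactly the splitting condition of the definition above.

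To prove $\alg A \notin \vv W_\alg A$ I would argue by contradiction. Write $\vv C = \bigcup\{\vv U \sse \vv V : \alg A \notin \vv U\}$ and suppose $\alg A \in \VV(\vv C)$. Since $\alg A$ is subdirectly irreducible and $\VV(\vv C)$, as a subvariety of $\vv V$, is congruence distributive, J\'onsson's Lemma (Theorem \ref{birkhoff}(2)) yields $\alg A \in \HH\SU\PP_u(\vv C)$. Fix accordingly an ultraproduct $\alg D = \prod_{i \in I}\alg B_i/U$ with each $\alg B_i \in \vv C$, a subalgebra $\alg D' \le \alg D$, and a surjection $f\colon \alg D' \twoheadrightarrow \alg A$. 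Because $\vv V$ is closed under $\PP_u$ and $\SU$ we have $\alg D' \in \vv V$, so finite projectedness of $\alg A$ provides a \emph{finite} subalgebra $\alg C \le \alg D'$ with $f(\alg C) \cong \alg A$; hence $\alg A \in \HH\SU(\alg C)$ with $\alg C$ finite and $\alg C \le \alg D$.

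Now, $\alg C$ being finite, it embeds in the ultraproduct $\alg D$ only if it embeds in $\alg B_i$ for a $U$-large, in particular nonempty, set of indices $i$ --- the standard ultraproduct argument already used in the proof of Lemma \ref{universal}(1), applied here to the existential first-order sentence describing the diagram of $\alg C$. Choosing such an index, we get $\alg A \in \HH\SU(\alg C) \sse \HH\SU(\alg B_i)$, while $\alg B_i$ belongs to some subvariety $\vv U \sse \vv V$ with $\alg A \notin \vv U$; since $\vv U$ is closed under $\HH$ and $\SU$, this forces $\alg A \in \vv U$, a contradiction. Therefore $\alg A \notin \vv W_\alg A$, and the theorem follows.

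The main obstacle, and the only genuine use of the hypotheses, is the passage from the possibly infinite preimage $\alg D'$ to a \emph{finite} algebra $\alg C$ still mapping onto $\alg A$: this is precisely what ``finitely projected'' buys, and it is indispensable because the ultraproduct argument requires a finite algebra so that ``$\alg C$ embeds'' becomes a first-order (indeed existential) property of the ultrafactors. Congruence distributivity is used only to invoke J\'onsson's Lemma --- the step that confines $\alg A$ to $\HH\SU\PP_u(\vv C)$ rather than to the much larger $\HH\SU\PP(\vv C)$ --- and it is exactly this point that fails for varieties that are congruence modular but not congruence distributive, which is why the theorem is stated under the stronger hypothesis.
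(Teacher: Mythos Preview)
The paper does not supply a proof of this result --- it is quoted from Day's original paper --- so there is nothing to compare against directly. Your argument is precisely the standard proof: define $\vv W_\alg A$ as the join of all subvarieties omitting $\alg A$, and use J\'onsson's Lemma together with finite projectedness to rule out $\alg A \in \vv W_\alg A$. The logic is sound and the roles of the two hypotheses are identified correctly.

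One small caveat worth flagging: the step ``$\alg C$ finite and $\alg C \le \prod_i \alg B_i/U$ implies $\alg C$ embeds in some $\alg B_i$'' relies on expressing the diagram of $\alg C$ as a single existential first-order sentence, which requires the signature to be finite (this is exactly why Lemma~\ref{universal}(1), which you cite, carries that hypothesis). The theorem as stated here does not mention finite type, and indeed every application in the paper is to lattices or bounded lattices, where the issue is moot; but strictly speaking your argument, as written, needs that assumption. This is a standard wrinkle in presentations of Day's theorem rather than an error on your part.
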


\begin{lemma}\label{finpro} Let $\vv V^b$ be a variety of bounded lattices and let $\vv V$ be the variety  of lattice subreducts of $\vv V^b$. If $\alg L$ is  finitely projected in $\vv V$, then
$\alg L^b$ is finitely projected in $\vv V^b$.
\end{lemma}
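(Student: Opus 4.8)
The plan is to reduce the claim to the hypothesis on $\alg L$ by passing to lattice reducts, and then to recover a finite \emph{bounded} sublattice by re-adjoining the two bounds of the ambient algebra. First I would fix $\alg B^b \in \vv V^b$ and a surjective homomorphism of bounded lattices $f\colon \alg B^b \twoheadrightarrow \alg L^b$, and pass to the $\{\meet,\join\}$-reducts: write $\alg B$ and $\alg L$ for these, and $0,1$ for the bottom and top of $\alg B^b$. Since $\alg B$ is the lattice reduct of a member of $\vv V^b$, it lies in $\vv V$, and $f$ is (the same function, hence) a surjective lattice homomorphism $\alg B \twoheadrightarrow \alg L$. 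Then I would invoke the hypothesis that $\alg L$ is finitely projected in $\vv V$ to obtain a finite sublattice $\alg C_0 \le \alg B$ with $f(\alg C_0) \cong \alg L$; as $f(\alg C_0)$ is a subalgebra of the finite lattice $\alg L$ that is isomorphic to it, in fact $f(\alg C_0) = \alg L$, i.e. $f$ carries $C_0$ onto $L$.

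The key step is to enlarge $\alg C_0$ to a finite bounded sublattice of $\alg B^b$ without spoiling surjectivity of the restriction. Here the decisive observation is that adjoining the \emph{global} bounds $0$ and $1$ is ``free'': for every $a$ one has $0 \meet a = 0$, $0 \join a = a$, $1 \meet a = a$, $1 \join a = 1$, so $C_0 \cup \{0,1\}$ is already closed under $\meet$ and $\join$ and contains the constants, hence is the universe of a finite bounded sublattice $\alg C^b \le \alg B^b$ having at most two more elements than $\alg C_0$. Since $f$ preserves the constants, $f(\alg C^b) = f(C_0) \cup \{0^{\alg L^b}, 1^{\alg L^b}\} = L \cup \{0^{\alg L^b},1^{\alg L^b}\} = L$, so $f(\alg C^b) = \alg L^b$ and a fortiori $f(\alg C^b) \cong \alg L^b$, which is exactly what finite projectedness of $\alg L^b$ in $\vv V^b$ requires.

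The main (and really only) subtlety is that in an arbitrary lattice the sublattice \emph{generated} by a finite set need not be finite, so one must resist forming the sublattice generated by $C_0 \cup \{0,1\}$ via iterated term operations; the point of the computation above is precisely that no such iteration is needed, because meeting or joining with a global bound never produces a new element. The remaining ingredients — that the lattice reduct of $\alg B^b$ belongs to $\vv V$, and that ``$f(\alg C_0)\cong\alg L$ with $\alg L$ finite'' forces $f\restriction C_0$ to be onto $\alg L$ — are routine, and no congruence-theoretic input is needed for this lemma (congruence distributivity will only enter later, when Theorem \ref{day} is applied to the splitting algebra $\alg M_3^b$).
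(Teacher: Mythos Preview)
Your proof is correct and follows essentially the same approach as the paper's: pass to lattice reducts, use finite projectedness of $\alg L$ to get a finite sublattice $\alg C_0$ mapping onto $\alg L$, then observe that $C_0\cup\{0,1\}$ is already closed under the lattice operations and hence is a finite bounded sublattice whose $f$-image is $\alg L^b$. Your write-up is in fact slightly cleaner than the paper's on two points: you note explicitly that adjoining the global bounds requires no iterated closure (so finiteness is immediate), and you avoid the paper's mildly confusing phrase ``extend $f$ to $\hat f$'' --- since $f$ is already a bounded-lattice homomorphism on all of $\alg B^b$, no extension is needed.
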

\begin{proof} 
The fact that $\vv V$ is indeed a variety is easy to check.
Let now $\alg A^b \in \vv V^b$ and suppose that there is an onto homomorphism $f: \alg A^b \longrightarrow \alg L^b$;  then $f$ is onto from $\alg A$ to $\alg L$ and since
$\alg L$ is finitely projected in $\vv V$ there is a subalgebra $\alg B$ of $\alg A$ with $f(\alg B) \cong \alg L$. But   $B\cup\{0,1\}$ is the universe of a finite subalgebra $\alg C$
of $\alg A^b$. Extend $f$ to $\hat f$ by setting $\hat f(0)=0$ and $\hat f(1)=1$; then $\hat f(\alg C) \cong \alg L^b$ and so $\alg L^b$ is finitely projected in $\vv V^b$.
\end{proof}

\begin{theorem}
 A variety of modular bounded lattices is passive structurally complete if and only if it is the variety of bounded distributive lattices.
\end{theorem}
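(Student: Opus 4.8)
The plan is to prove the backward implication first and then the forward one via a splitting argument. \emph{Backward:} $\vv D^b$ is structurally complete by Proposition~\ref{proo: bdl structural}, and since every passive rule is in particular a rule, structural completeness entails passive structural completeness; so $\vv D^b$ is passive structurally complete. For the forward direction, let $\vv V^b$ be a nontrivial variety of bounded modular lattices that is passive structurally complete; the goal is to force $\vv V^b = \vv D^b$, and the strategy is to show that $\alg M_3^b$ cannot lie in $\vv V^b$ and that this already determines $\vv V^b$.

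The main step is to establish that $\alg M_3^b$ is splitting in the variety $\vv M^b$ of all bounded modular lattices, with conjugate variety $\vv D^b$. I would use that $\alg M_3$ is projective, hence finitely projected, in the variety $\vv M$ of modular lattices (this is the unbounded case, $\alg M_3$ being projective and subdirectly irreducible); since $\vv M$ is exactly the variety of lattice subreducts of $\vv M^b$ --- every modular lattice embeds into a bounded one by adjoining a fresh top and bottom --- Lemma~\ref{finpro} gives that $\alg M_3^b$ is finitely projected in $\vv M^b$. As $\alg M_3^b$ is simple (its congruence lattice is that of $\alg M_3$, the constants imposing no extra constraint) it is subdirectly irreducible, and $\vv M^b$ is congruence distributive, so Day's Theorem~\ref{day} yields that $\alg M_3^b$ is splitting in $\vv M^b$; let $\vv W^b$ be its conjugate, the largest subvariety of $\vv M^b$ omitting $\alg M_3^b$. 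That $\vv W^b = \vv D^b$ I would check in two steps. Since the lattice reduct of $\alg M_3^b$ is non-distributive, $\alg M_3^b \notin \vv D^b$, so $\vv D^b \sse \vv W^b$. Conversely, if $\alg A^b \in \vv M^b$ is not distributive, then by Birkhoff's $\{\alg N_5, \alg M_3\}$-theorem together with modularity its lattice reduct contains a sublattice $\{p, a, b, c, q\} \cong \alg M_3$ with bottom $p$ and top $q$, and the subalgebra of $\alg A^b$ generated by $\{a, b, c\}$ has at most seven elements, namely $p, a, b, c, q$ together with the bottom and top of $\alg A^b$; collapsing the interval from the bottom of $\alg A^b$ up to $p$ and the interval from $q$ up to the top exhibits $\alg M_3^b$ as a homomorphic image of this subalgebra, so $\alg M_3^b \in \HH\SU(\alg A^b)$. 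Hence any subvariety of $\vv M^b$ omitting $\alg M_3^b$ is distributive, giving $\vv W^b \sse \vv D^b$ and therefore $\vv W^b = \vv D^b$.

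To finish, observe that $\alg M_3^b$ is finite of finite type, hence finitely presented in any variety of bounded lattices containing it; it is nontrivial and, being simple with more than two elements, has no homomorphism onto $\mathbf 2^b$ (which has no one-element subalgebra). Since $\alg F_{\vv V^b} = \mathbf 2^b$, Lemma~\ref{free} shows $\alg M_3^b$ is not unifiable in $\vv V^b$, so if $\alg M_3^b \in \vv V^b$ then $\vv V^b$ fails to be passive structurally complete by Theorem~\ref{thm:PUC}; thus $\alg M_3^b \notin \vv V^b$, and by the splitting $\vv V^b \sse \vv W^b = \vv D^b$, whence $\vv V^b = \vv D^b$. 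I expect the main obstacle to be the identification $\vv W^b = \vv D^b$, and precisely the ``lifting'' within it: that a bounded modular variety whose lattice reducts are already non-distributive must contain $\alg M_3^b$ itself, not merely $\alg M_3$ among its reducts. This is exactly what the explicit generation-and-collapse argument above supplies, and it is also the point at which the slick reduct-only reasoning of the unbounded case breaks down, since $\alg M_3^b$ is not projective in $\vv M^b$ (witness the bounded modular lattice of Figure~\ref{notproj}).
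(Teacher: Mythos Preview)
Your proof is correct and follows essentially the same route as the paper: establish that $\alg M_3^b$ is splitting in bounded modular lattices via Lemma~\ref{finpro} and Day's Theorem~\ref{day}, then use that $\alg M_3^b$ is simple and non-unifiable to exclude it from any passive structurally complete variety. The paper simply asserts that the conjugate variety of the splitting is $\vv D^b$, whereas you supply the explicit generation-and-collapse argument; this is a useful addition rather than a different approach.
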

\begin{proof} $\vv D^b$ is structurally complete, hence passive structurally complete. Conversely observe that $\alg M_3$ is projective in the variety of modular lattices, so $\alg M_3^b$ is finitely
projected in the variety  of bounded modular lattices. Hence, by Theorem \ref{day},  $\alg M^b_3$ is splitting in the variety, which means that for any variety $\vv V$ of bounded modular lattices, either $\alg M_3^b \in \vv V$ or
$\vv V$ is $\vv D^b$. But if $\alg M^b_3 \in \vv V$ then $\vv V$ cannot be passive universally complete, since $\alg M_3^b$ is simple. The conclusion follows.
\end{proof}

In order to find other relevant varieties of bounded lattices that are passive structurally complete, we are going to take a closer look at flat lattices.
Finding flat bounded lattices is not hard since
the lattice of subvarieties of lattices has been studied thoroughly and a lot is known about it (an excellent survey is \cite{JipsenRose1992}). Clearly $\alg N_5$ is flat and hence so is $\alg N_5^b$;
however we know exactly all the covers of the minimal nondistributive varieties of lattices (which is of course $\VV(\alg N_5)$). There are 15
finite subdirectly irreducible nonsimple lattices, commonly called $\alg L_1,\dots,\alg L_{15}$ (some of them are in Figure \ref{lattices}) that generate all the
 join irreducible (in the lattice of subvarieties) covers of $\VV(\alg N_5)$. It is easy to see their bounded versions  all are join irreducible covers of $\VV(\alg N^b_5)$ in the lattice
 of subvarieties of bounded lattices. We suspect that they are also the only join irreducible covers; one needs only to check that the (rather long) proof for lattices \cite{JonssonRival1979} goes through
  for bounded lattices but  we leave this simple but tedious task to the reader. In any case for $i=1,\dots,15$ the subdirectly irreducible algebras in $\VV(\alg L_i^b)$ are exactly $\mathbf 2^b, \alg N_5^b$ and $\alg L_i^b$
  (via a straightforward application of J\'onsson Lemma); so each $\alg L_i^b$ is flat and each $\VV(\alg L_i^b)$ is passively structurally complete (by Theorem \ref{fgflat}).

\begin{figure}[htbp]
\begin{center}
\begin{tikzpicture}[scale=.7]
\draw[fill] (0,0) circle [radius=0.05];
\draw[fill] (-1,1) circle [radius=0.05];
\draw[fill] (0,1) circle [radius=0.05];
\draw[fill] (1,1) circle [radius=0.05];
\draw[fill] (-1,2) circle [radius=0.05];
\draw[fill] (0,2) circle [radius=0.05];
\draw[fill] (1,2) circle [radius=0.05];
\draw[fill] (0,3) circle [radius=0.05];
\draw[fill] (-.5,0.5) circle [radius=0.05];
\draw (0,0) -- (-1,1) -- (-1,2) -- (0,3) -- (1,2) -- (1,1) -- (0,0);
\draw (-1,2) -- (0,1) -- (0,0) -- (0,1) -- (1,2);
\draw (-1,1) -- (0,2) -- (0,3) -- (0,2) -- (1,1);
\node at (1,0) {\footnotesize $\alg L_{13}$};
\draw[fill] (5,0) circle [radius=0.05];
\draw[fill] (4,1) circle [radius=0.05];
\draw[fill] (5,1) circle [radius=0.05];
\draw[fill] (6,1) circle [radius=0.05];
\draw[fill] (4,2) circle [radius=0.05];
\draw[fill] (5,2) circle [radius=0.05];
\draw[fill] (6,2) circle [radius=0.05];
\draw[fill] (5,3) circle [radius=0.05];
\draw[fill] (5.5,2.5) circle [radius=0.05];
\draw (5,0) -- (4,1) -- (4,2) -- (5,3) -- (6,2) -- (6,1) -- (5,0);
\draw (4,2) -- (5,1) -- (5,0) -- (5,1) -- (6,2);
\draw (4,1) -- (5,2) -- (5,3) -- (5,2) -- (6,1);
\node at (6,0) {\footnotesize $\alg L_{14}$};
\draw[fill] (10,0) circle [radius=0.05];
\draw[fill] (8.5,1.5) circle [radius=0.05];
\draw[fill] (11.5,1.5) circle [radius=0.05];
\draw[fill] (10,3) circle [radius=0.05];
\draw[fill] (9.5,0.5) circle [radius=0.05];
\draw[fill] (10.5,0.5) circle [radius=0.05];
\draw[fill] (10,1) circle [radius=0.05];
\draw[fill] (9.5,2.5) circle [radius=0.05];
\draw[fill] (10.5,2.5) circle [radius=0.05];
\draw[fill] (10,2) circle [radius=0.05];
\draw (10,0) -- (8.5,1.5) -- (10,3) -- (11.5,1.5) -- (10,0);
\draw (9.5,2.5) -- (10,2) -- (10,1) -- (10,2) -- (10.5,2.5);
\draw (9.5,0.5)-- (10,1) -- (10.5,.5);
\node at (11,0) {\footnotesize $\alg L_{15}$};
\draw[fill] (2.5,5) circle [radius=0.05];
\draw[fill] (1,6.5) circle [radius=0.05];
\draw[fill] (4,6.5) circle [radius=0.05];
\draw[fill] (2.5,8) circle [radius=0.05];
\draw[fill] (2.5,6.5) circle [radius=0.05];
\draw[fill] (1.75,7.25) circle [radius=0.05];
\draw[fill] (3.25,7.25) circle [radius=0.05];
\draw (2.5,5) -- (1,6.5) -- (2.5,8) -- (4,6.5) -- (2.5,5);
\draw (1.75,7.25) -- (2.5,6.5) -- (2.5,5) -- (2.5,6.5) -- (3.25,7.25);
\node at (3.5,5) {\footnotesize $\alg L_{1}$};
\draw[fill] (7.5,5) circle [radius=0.05];
\draw[fill] (6,6.5) circle [radius=0.05];
\draw[fill] (9,6.5) circle [radius=0.05];
\draw[fill] (7.5,8) circle [radius=0.05];
\draw[fill] (7.5,6.5) circle [radius=0.05];
\draw[fill] (6.75,5.75) circle [radius=0.05];
\draw[fill] (8.25,5.75) circle [radius=0.05];
\draw (7.5,5) -- (6,6.5) -- (7.5,8) -- (9,6.5) -- (7.5,5);
\draw (6.75,5.75) -- (7.5,6.5)-- (7.5,8) -- (7.5,6.5) -- (8.25,5.75);
\node at (8.5,5) {\footnotesize $\alg L_{2}$};
\end{tikzpicture}
\caption{}\label{lattices}
\end{center}
\end{figure}

Let's make more progress: consider the rules
\begin{align*}
&x \meet y \app x \meet z \quad\Rightarrow\quad x \meet y \app x \meet (y \join z)\tag{$SD_\meet$}\\
&x \join y \app x \join z \quad\Rightarrow\quad x\join y \app x \join (y \meet z)\tag{$SD_\join$}.
\end{align*}
A lattice is {\em meet semidistributive} if it satisfies $SD_\meet$, {\em join semidistributive} if it satisfies
$SD_\meet$ and {\em semidistributive} if it satisfies both.
Clearly (meet/join) semidistributive lattices form  quasivarieties called $\mathsf{SD}_\meet$, $\mathsf{SD}_\join$ and $\mathsf{SD}$ respectively, and so
do their bounded versions.
It is a standard exercise to show that homomorphic
images of a finite (meet/join) semidistributive lattices are (meet/join) semidistributive.  It is also possible to show none of the three quasivariety (and their bounded versions) is a variety (see \cite{JipsenRose1992} p. 82 for an easy argument); they  are also not locally finite since for instance  $\alg F =\alg F_\mathsf{SD}(x,y,z)$ is infinite; hence $\alg F^b$ is a bounded infinite three-generated lattice and thus $\mathsf{SD}^b$ is not locally finite as well. A variety $\vv V$ of (bounded) lattices is (meet/join) semidistributive if $\vv V \sse \mathsf{SD}$ ($\vv V \sse \mathsf{SD}/\meet$\, /\,$\vv V \sse \mathsf{SD}/\join$).

We need a little bit of lattice theory.  A filter of $\alg L$ is an upset $F$ of $\alg L$ that is closed under meet; a filter is {\em prime} if
$ a \join b \in F$ implies $a \in F$ or $b \in F$. An {\em ideal} $I$ of $\alg L$ is the dual concept, i.e. a downset that is closed under join; an ideal is {\em prime} if $a \meet b \in I$ implies $a \in I$ or $b \in I$. The following lemma is straightforward.

\begin{lemma} If $F$ is a prime filter of $\alg L$ ($I$ is a prime ideal of $\alg L$), then $L \setminus F$ is a prime ideal of $\alg L$ ($\alg L\setminus I$ is a prime filter of $\alg L$).
\end{lemma}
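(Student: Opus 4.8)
The plan is to verify directly that the set-theoretic complement of a prime filter satisfies the three defining conditions of a prime ideal of $\alg L$: being a downset, being closed under joins, and satisfying the primeness condition for meets. Each of the three will be obtained by contraposition, transferring the matching closure property of $F$ (upset, meet-closed, join-prime) to the complement (downset, join-closed, meet-prime).

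First I would check that $L \setminus F$ is a downset: if $b \le a$ and $b \in F$, then $a \in F$ because $F$ is an upset, so contraposing gives $a \notin F \Rightarrow b \notin F$. Next, closure under joins: if $a,b \notin F$ but $a \join b \in F$, then primeness of $F$ forces $a \in F$ or $b \in F$, a contradiction; hence $a \join b \notin F$ whenever $a, b \notin F$. Finally, primeness of the complement for meets: if $a \meet b \notin F$ while both $a \in F$ and $b \in F$, then $a \meet b \in F$ since $F$ is closed under meets, a contradiction; so $a \meet b \notin F$ implies $a \notin F$ or $b \notin F$.

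The dual assertion — that $\alg L \setminus I$ is a prime filter whenever $I$ is a prime ideal — follows by the identical argument with the roles of $\meet$ and $\join$, and of upset and downset, interchanged; alternatively it is immediate by passing to the order dual of $\alg L$. There is no genuine obstacle: the statement is a routine exercise, and the only point requiring a little care is bookkeeping, namely matching each closure property of $F$ to the property of the complement it yields.
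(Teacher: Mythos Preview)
Your proof is correct and is exactly the natural verification the paper has in mind; the paper itself omits the argument entirely, declaring the lemma ``straightforward'', and your three contrapositive checks (upset $\to$ downset, join-prime $\to$ join-closed, meet-closed $\to$ meet-prime) together with the duality remark constitute precisely that routine verification.
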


\begin{lemma}  Any bounded (meet/join) semidistributive lattice is unifiable in the variety of bounded lattices.
\end{lemma}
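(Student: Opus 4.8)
The plan is to reduce unifiability to the existence of a suitable prime filter (dually, a prime ideal) and then extract one using the semidistributive law. First I would invoke Lemma \ref{free}: since $\alg F_{\vv L^b} = \mathbf 2^b$, a bounded lattice $\alg L$ is unifiable in the variety $\vv L^b$ of bounded lattices if and only if there is a homomorphism $h\colon \alg L \to \mathbf 2^b$. Such an $h$ amounts to a nonempty proper prime filter $F$ of $\alg L$: put $h(x) = 1$ for $x \in F$ and $h(x) = 0$ otherwise, so that $h$ preserves meets because $F$ is a filter, preserves joins because $F$ is an up-set that is prime, and satisfies $h(1) = 1$, $h(0) = 0$ because $1 \in F$ and $0 \notin F$. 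Thus, assuming as we may that $\alg L$ is nontrivial, it suffices to produce such an $F$ when $\alg L$ is meet-semidistributive; the join-semidistributive case will be the order dual, producing instead a nonempty proper prime ideal and using the prime-ideal/prime-filter duality recorded just above.

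Next I would construct $F$ by Zorn's Lemma. The family of filters of $\alg L$ omitting $0$ is nonempty --- it contains $\{1\}$ --- and is closed under unions of chains, so it has a maximal element $F$; maximality forces $F$ nonempty, hence $1 \in F$. To see $F$ is prime, suppose $a \join b \in F$ with $a \notin F$ and $b \notin F$. By maximality the filter generated by $F \cup \{a\}$, which is $\{x \in L : x \ge f \meet a \text{ for some } f \in F\}$, must contain $0$; hence $f_1 \meet a = 0$ for some $f_1 \in F$, and likewise $f_2 \meet b = 0$ for some $f_2 \in F$. Putting $f = f_1 \meet f_2 \in F$ we get $f \meet a = 0 = f \meet b$, and then the meet-semidistributive law $SD_\meet$ gives $f \meet (a \join b) = 0$. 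Since $f$ and $a \join b$ both lie in $F$, this puts $0 = f \meet (a \join b)$ in $F$, a contradiction. So $F$ is prime, $h$ as above is the desired homomorphism onto $\mathbf 2^b$, and $\alg L$ is unifiable.

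I do not anticipate a genuine obstacle: the whole content is the observation that the classical ``maximal filter avoiding a point'' construction --- which in an arbitrary bounded lattice need not yield a prime filter (e.g.\ in $\alg M_3$) --- works here precisely because meet-semidistributivity is exactly the identity needed to combine $f \meet a = 0$ and $f \meet b = 0$ into $f \meet (a \join b) = 0$. The only points demanding a little care are the explicit description of the filter generated by $F \cup \{a\}$, and spelling out that the join-semidistributive case really is just the order-dual argument (maximal ideals missing $1$, the law $SD_\join$, and the duality lemma above).
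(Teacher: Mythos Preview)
Your proposal is correct and follows essentially the same route as the paper: reduce unifiability to the existence of a homomorphism onto $\mathbf 2^b$, obtain a maximal proper filter via Zorn's Lemma, and use $SD_\meet$ to show it is prime by combining $f\meet a = 0$ and $f\meet b = 0$ into $f\meet(a\join b)=0$; the join-semidistributive case is handled dually in both. Your write-up is in fact slightly more careful than the paper's (you spell out the filter generated by $F\cup\{a\}$ and explicitly set aside the trivial lattice).
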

\begin{proof} Let $\alg L$ be bounded and meet semidistributive. Since $\alg L$ is lower bounded by $0$ a standard application of Zorn Lemma yields a maximal proper filter $F$ of $\alg L$;  we claim that $F$ is also prime. Let $a,b \notin F$; then the filter generated by $F \cup \{a\}$ must be the entire lattice.
Hence there must be a $c \in F$ with $c \meet a =0$; similarly there must be a $d \in F$ with $d \meet b = 0$. Let $e = c \meet d$; then $e \in F$ and $e\meet a = e \meet b = 0$ and by meet semidistributivity
$e \meet (a \join b) = 0$. But if $a \join b \in F$, then $0 \in F$, a clear contradiction. Hence $a \join b \notin F$ and $F$ is prime.

Let now $\f: \alg L\Longrightarrow \mathbf 2^b$ defined by
$$
\f(x) = \left\{
          \begin{array}{ll}
            1, & \hbox{if $x \in F$;} \\
            0, & \hbox{if $x \notin F$.}
          \end{array}
        \right.
$$
Using the fact that $F$ is prime and $L \setminus F$ is prime it is straightforward to check that $\f$ is a homomorphism. Therefore $\alg L$ is unifiable.

A dual proof shows that the conclusion holds for join semidistributivity  and a fortiori for semidistributivity.
\end{proof}

\begin{proposition}
	Any bounded finite (meet/join) semidistributive lattice  is flat.
\end{proposition}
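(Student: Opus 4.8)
The plan is to unwind the definition of flatness. Recall that an algebra is flat in the variety $\vv L^b$ of bounded lattices when $\HH\SU$ of it contains no simple algebra other than $\alg F_{\vv L^b} = \mathbf 2^b$. So, fixing a finite bounded (meet/join) semidistributive lattice $\alg L$, I would take an arbitrary simple algebra $\alg M \in \HH\SU(\alg L)$ and show $\alg M \cong \mathbf 2^b$.

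The first step is to observe that every member of $\HH\SU(\alg L)$ is again a finite bounded (meet/join) semidistributive lattice. Finiteness is preserved by $\SU$ and $\HH$; the conditions $SD_\meet$ and $SD_\join$ are quasiequations and so are inherited by subalgebras; and homomorphic images of finite (meet/join) semidistributive lattices are (meet/join) semidistributive by the standard fact recalled just before the proposition. Hence $\alg M$ is a finite bounded (meet/join) semidistributive lattice.

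The second step is to invoke the preceding lemma: any bounded (meet/join) semidistributive lattice is unifiable in $\vv L^b$, i.e.\ admits a homomorphism $h\colon \alg M \longrightarrow \mathbf 2^b$. Since $h$ preserves the constants $0,1$ and these are distinct in $\mathbf 2^b$, the image $h(\alg M)$ is a subalgebra of $\mathbf 2^b$ containing both $0$ and $1$, hence all of $\mathbf 2^b$; so $h$ is surjective. Its kernel $\theta$ then satisfies $\alg M/\theta \cong \mathbf 2^b$, which is nontrivial, so $\theta \neq 1_{\alg M}$; as $\alg M$ is simple this forces $\theta = 0_{\alg M}$, so $h$ is also injective, whence $\alg M \cong \mathbf 2^b$. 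This gives that the only simple member of $\HH\SU(\alg L)$ is $\mathbf 2^b = \alg F_{\vv L^b}$, i.e.\ $\alg L$ is flat.

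I do not expect any genuine obstacle here: the argument is short once the two preceding results (closure of semidistributivity under $\HH\SU$ on finite lattices, and unifiability of bounded semidistributive lattices) are available. The only point requiring a little care is the passage from "$h$ is a homomorphism into $\mathbf 2^b$" to "$h$ is an isomorphism," which uses that $\mathbf 2^b$ has no proper subalgebras and that simplicity of $\alg M$ (in particular its nontriviality) rules out the kernel being the top congruence.
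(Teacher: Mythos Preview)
Your proof is correct and follows essentially the same route as the paper's: show that every member of $\HH\SU(\alg L)$ is again finite and (meet/join) semidistributive, invoke the preceding lemma to get a homomorphism onto $\mathbf 2^b$, and then use simplicity to force this map to be an isomorphism. The paper compresses the last step into the single remark ``if simple, it must be equal to $\mathbf 2^b$,'' whereas you spell out the kernel argument explicitly; otherwise the arguments coincide.
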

\begin{proof}  If $\alg L$ is finite and (meet/join) semidistributive, every lattice in $\HH\SU(\alg L)$ is finite and (meet/join) semidistributive. So it is unifiable and, if simple, it must be equal to $\alg 2^b$; therefore
$\alg L$ is flat.
\end{proof}

\begin{corollary}\label{cor: sdbl is ps} Every locally finite (meet/join) semidistributive variety of bounded lattices is passive structurally complete.
\end{corollary}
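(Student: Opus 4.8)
This corollary is essentially a bookkeeping consequence of the results just established, so the plan is simply to check that the hypotheses of the relevant ``flatness'' theorem are met. First I would recall, from the discussion opening Subsection~\ref{sec:lattices}, that every variety of bounded lattices is Koll\'ar; indeed $\alg F_{\vv V} = \mathbf 2^b$ for every nontrivial variety $\vv V$ of bounded lattices, and $\mathbf 2^b$ has no trivial subalgebra. Hence a locally finite (meet/join) semidistributive variety $\vv V$ of bounded lattices is, in particular, a locally finite Koll\'ar variety.

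Next I would exploit local finiteness: every finitely generated algebra $\alg A \in \vv V$ is finite. Since (meet/join) semidistributivity is witnessed by the quasiequation $SD_\meet$ (resp.\ $SD_\join$), which by hypothesis holds throughout $\vv V$, each such $\alg A$ is a bounded finite (meet/join) semidistributive lattice; by the preceding Proposition it is therefore flat. Thus every finitely generated member of $\vv V$ is flat, and the theorem immediately preceding Theorem~\ref{fgflat} --- a Koll\'ar variety all of whose finitely generated algebras are flat is passive structurally complete --- yields the conclusion.

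For completeness I would also record the alternative, flatness-free route: by the corollary specializing Theorem~\ref{thm:PUC} to locally finite quasivarieties, passive structural completeness of $\vv V$ is equivalent to unifiability of every finite algebra in $\vv V$; since $\alg F_{\vv V} = \mathbf 2^b$, this coincides with unifiability in the variety of all bounded lattices, and the preceding Lemma --- a maximal, hence by (meet/join) semidistributivity prime, filter furnishes a homomorphism onto $\mathbf 2^b$ --- supplies exactly that. There is no genuine obstacle at the level of this corollary: the real work, namely the Zorn's-Lemma construction of a prime filter in a (meet/join) semidistributive bounded lattice and the general passage from ``all finitely generated algebras flat'' to passive structural completeness in Koll\'ar varieties, has already been carried out. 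The only points needing a moment's care are that ``finite and (meet/join) semidistributive'' is precisely the hypothesis of the Proposition, and that flatness is insensitive to the ambient variety here because $\alg F_{\vv V}$ is always $\mathbf 2^b$.
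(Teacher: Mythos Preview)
Your proof is correct and follows the same approach the paper intends: the corollary is placed immediately after the Proposition showing that finite bounded (meet/join) semidistributive lattices are flat, precisely so that the theorem preceding Theorem~\ref{fgflat} applies to a locally finite Koll\'ar variety. Your alternative route via the unifiability characterization is equally valid and is in fact what underlies the flatness argument itself.
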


In \cite{Lee1985} several (complex) sets of equations implying semidistributivity are studied; one of them is useful to us, since it describes a class of locally finite varieties.
 The description is interesting in that involves some of the $\alg L_i's$ we have introduced before.

\begin{theorem} \cite{Lee1985} There exists a finite set $\Gamma$ of lattices equations such that, if $\vv V$ is any variety of lattices such that $\vv V \vDash \Gamma$, then the following hold:
\begin{enumerate}
\item $\vv V$ is semidistributive;
\item $\vv V$ is locally finite;
\item only $\alg L_{13}, \alg L_{14}, \alg L_{15} \in \vv V$.
\end{enumerate}
\end{theorem}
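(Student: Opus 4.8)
This is a theorem of Lee, so the plan is to outline the route of \cite{Lee1985}. The first move is to pin down the explicit finite set $\Gamma$: it is a list of lattice inclusions $p \le q$ between concrete terms, chosen so that each one fails in a particular family of ``forbidden'' subdirectly irreducible lattices (the modular pieces $\alg M_3,\alg M_4,\dots$ and the covers $\alg L_1,\dots,\alg L_{12}$ of $\VV(\alg N_5)$, together with their infinite relatives) while remaining valid in $\alg 2$, $\alg N_5$, $\alg L_{13}$, $\alg L_{14}$, $\alg L_{15}$. With $\Gamma$ in hand, claim (1) is pure equational calculation: one derives $SD_\meet$ and $SD_\join$ from $\Gamma$ by term manipulation. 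Equivalently, since the variety of lattices is congruence distributive, by J\'onsson's Lemma (Theorem \ref{birkhoff}(2)) it suffices to verify that $\Gamma$ fails in every finite subdirectly irreducible lattice violating $SD_\meet$ or $SD_\join$; I expect this to be routine but lengthy, which is exactly why $\Gamma$ has to be ``complex''.

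The substance of the theorem is in (2) and (3), and the strategy there is to obtain complete control of $\vv V_{si}$ for any $\vv V \vDash \Gamma$. The plan is to use the structure theory of semidistributive lattices --- Day's interval-doubling construction, lower/upper bounded homomorphic images of free lattices, and the J\'onsson--Rival description of the covers of $\VV(\alg N_5)$ --- to argue that a subdirectly irreducible lattice satisfying $\Gamma$ has bounded length and bounded breadth, hence bounded cardinality. Consequently $\vv V_{si}$ is a finite set of finite lattices, and a further finite inspection identifies it as a subset of $\{\alg 2,\alg N_5,\alg L_{13},\alg L_{14},\alg L_{15}\}$; statement (3) is then immediate. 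Statement (2) follows because, by Birkhoff's theorem (Theorem \ref{birkhoff}(1)), $\vv V$ is generated as a variety by the finitely many finite lattices in $\vv V_{si}$, and any variety of finite type generated by a single finite algebra --- hence by the finite product of finitely many such --- is locally finite.

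The main obstacle is the structural step just described: showing that $\Gamma$ is strong enough to collapse \emph{every} subdirectly irreducible model, including a priori infinite ones, down to the finite list. This is where the delicate lattice theory sits; each identity in $\Gamma$ is responsible for excluding one kind of obstruction, and the difficulty is confirming that together they rule out everything except the permitted survivors $\alg L_{13},\alg L_{14},\alg L_{15}$ (and the automatically present $\alg 2$ and $\alg N_5$). By contrast, checking that $\alg L_{13},\alg L_{14},\alg L_{15}$ satisfy every member of $\Gamma$ is a finite computation, and the derivation of semidistributivity from $\Gamma$, though tedious, is elementary.
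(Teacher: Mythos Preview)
The paper does not prove this theorem: it is stated with a citation to \cite{Lee1985} and used as a black box, with no argument given. So there is nothing in the paper to compare your proposal against.

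Your outline is a plausible high-level sketch of how Lee's argument runs, but as written it is a plan rather than a proof: the specific identities in $\Gamma$ are not produced, the structural step bounding the size of subdirectly irreducibles is asserted rather than carried out, and the ``finite inspection'' identifying $\vv V_{si}$ is left implicit. If the goal were to actually reprove Lee's theorem, each of these would need to be filled in with substantial lattice-theoretic work; if the goal is to match the paper, then a bare citation to \cite{Lee1985} is all that is required.
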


A variety satisfying $\Gamma$ is called {\em almost distributive} and it is straightforward to check that a similar result holds for varieties of bounded lattices.
Therefore:
\begin{proposition}
	Every almost distributive variety of bounded lattices is passive structurally complete.
\end{proposition}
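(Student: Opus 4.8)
The plan is to obtain the statement as an immediate consequence of the bounded analogue of Lee's theorem together with Corollary~\ref{cor: sdbl is ps}. First I would unwind the definition: a variety $\vv V^b$ of bounded lattices is almost distributive precisely when it satisfies the (bounded transcription of the) finite set $\Gamma$ of lattice equations furnished by Lee's theorem. As already indicated in the text, the conclusions of that theorem carry over unchanged to bounded lattices; in particular, from $\vv V^b \vDash \Gamma$ one reads off that $\vv V^b$ is semidistributive and locally finite.

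Granting this, the argument closes in one line: semidistributivity entails both $SD_\meet$ and $SD_\join$, so $\vv V^b$ is a locally finite semidistributive variety of bounded lattices, and Corollary~\ref{cor: sdbl is ps} yields that $\vv V^b$ is passive structurally complete. Since $\vv L^b$ is a Koll\'ar variety and $\alg F_{\vv V^b} = \mathbf 2^b$, one could equally route the last step through Theorem~\ref{fgflat} and the preceding propositions on flatness, but the direct appeal to Corollary~\ref{cor: sdbl is ps} is cleanest.

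The only real content is therefore the transfer of Lee's theorem from $\vv L$ to $\vv L^b$, and I expect this to be genuinely routine rather than a true obstacle. Semidistributivity is forced by the identities in $\Gamma$ and is indifferent to the presence of the two constants $0,1$; and local finiteness survives because a finitely generated bounded lattice differs from the corresponding lattice only by adjoining $0$ and $1$, so the finitely generated relatively free bounded algebras modulo $\Gamma$ remain finite. This is the ``straightforward check'' alluded to above, which I would carry out by following Lee's proof step by step and observing that each bound on term length — equivalently, on the size of the relevant congruence classes — is preserved after adding the bounds.
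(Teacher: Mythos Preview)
Your proposal is correct and follows exactly the paper's approach: the paper notes that the bounded analogue of Lee's theorem is ``straightforward to check,'' deduces that any almost distributive variety of bounded lattices is semidistributive and locally finite, and then invokes Corollary~\ref{cor: sdbl is ps}. Your discussion of how the transfer to the bounded setting goes through (constants do not affect the identities in $\Gamma$, and adjoining $0,1$ preserves finiteness of finitely generated algebras) matches the spirit of what the paper leaves implicit.
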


We close this subsection with a couple of observations; first $\VV(\alg L_1^b,\alg L_2^b)$ is a variety of bounded lattices that
is passive structurally complete (by Theorem \ref{fgflat}) but neither meet nor join semidistributive. Next, what about infinite flat (bounded) lattices?
 We stress that in \cite{McKenzie1994} there are several examples of this kind and we believe that a careful analysis of the proofs therein could give some insight
  on how to construct a non locally finite variety of bounded lattices that it is passive structurally complete. But again, this is not a paper in lattice theory; therefore we defer this investigation.

\subsection{Substructural logics and residuated lattices}\label{sec:FL}

Originally, {\em substructural logics} were introduced as logics which, when formulated as Gentzen-style systems, lack some (including ``none'' as a special case) of the three basic {\em structural rules} (i.e.  exchange, weakening and contraction) of classical logic.  Nowadays, substructural logics are usually intended as those logics whose equivalent algebraic semantics are residuated structures, and they encompass most of the interesting non-classical logics: intuitionistic logic, basic logic, fuzzy logics, relevance logics and many other systems. Precisely, by substructural logics we mean here the axiomatic extensions of the Full Lambek calculus $\mathcal{FL}$ (see \cite{GJKO} for details and a survey on substructural logics).
All these logics are {\em strongly algebraizable}: their equivalent algebraic semantics are all {\em varieties} of $\mathsf{FL}$-algebras, particular residuated lattices that we shall now define.

A {\em residuated lattice} is an algebra  $\alg A = \la A,\join,\meet,\cdot,\lr,\rr, 1\ra$ where
\begin{enumerate}
\item $\la A, \join, \meet\ra $ is a lattice;
\item $\la A, \cdot,1\ra$ is a monoid;
\item $\lr$ and $\rr$ are the right and left divisions w.r.t. $\cdot$, i.e., $x \cdot y \leq z$ iff $y \leq x \rr z$ iff $x \leq z \lr y$, where $\leq$ is given by the lattice ordering.
\end{enumerate}
Residuated lattices form a variety $\mathsf{RL}$ and an equational axiomatization, together with many equations holding in these very rich structures, can be found in \cite{BlountTsinakis2003}.

A residuated lattice $\alg A$ is {\em integral} if it satisfies the equation $x \le 1$; it  is {\em commutative} if $\cdot$ is commutative, and in this case the divisions coincide: $x \backslash y = y / x$, and they are usually denoted with $x \to y$.
The classes of residuated lattices that satisfy any combination of integrality and commutativity are subvarieties of $\mathsf{RL}$. We shall call the variety of integral residuated lattices $\mathsf{IRL}$, commutative residuated lattices $\mathsf{CRL}$, and their intersection $\mathsf{CIRL}$.

Residuated lattices with an extra constant $0$ in the language are called $\mathsf{FL}$-algebras, since they are the equivalent algebraic semantics of the Full Lambek calculus $\mathcal{FL}$. Residuated lattices are then the equivalent algebraic semantics of $0$-free fragment of $\mathcal{FL}$, $\mathcal{FL}^{+}$. An $\mathsf{FL}$-algebra is \emph{$0$-bounded} if it satisfies the inequality $0 \leq x$ and the variety of zero-bounded $\mathsf{FL}$-algebras is denoted by $\mathsf{FL}_o$;
integral and $0$-bounded $\mathcal{FL}$-algebras are called $\mathcal {FL}_w$ algebras (since they are the equivalent algebraic semantics of the Full Lambek Calculus with weakening), and we call its commutative subvariety $\mathcal{FL}_{ew}$.

Restricting ourselves to the commutative case there is another interesting equation:
$$
(x \imp y) \join (y \imp x) \app 1.
$$
It can be shown (see \cite{BlountTsinakis2003} and \cite{JipsenTsinakis2002}) that a subvariety of $\mathsf{FL_{ew}}$ or $\mathsf{CIRL}$ satisfies the above equation if and only if any algebra therein is a subdirect product of totally ordered algebras, and this implies that all the subdirectly irreducible algebras are totally ordered. Such varieties are called {\em representable}  and the subvariety axiomatized by that equation  is the largest subvariety of $\mathsf{FL_{ew}}$ or $\mathsf{CIRL}$ that is representable. The representable subvariety of $\mathsf{FL}_{ew}$  is usually denoted by $\MTL$, since it is the equivalent algebraic semantics of Esteva-Godo's {\em Monoidal t-norm  based logic} \cite{EstevaGodo2001}.

\subsubsection{Active universal completeness}
We have already seen examples of subvarieties of $\mathsf{FL}_{ew}$-algebras that are active universally complete, but those were all locally finite subvarieties of $\mathsf{BL}$-algebras, that is, $\mathsf{MTL}$-algebras satisfying the divisibility equation: $x \land y = x (x \to y)$. In this section we will display a different class of examples.  If $\alg A$ is any algebra a congruence $\th \in \Con A$ is a {\bf factor congruence} if there is a $\th' \in \Con A$ such that $\th \join \th' = 1_\alg A$, $\th \meet \th' = 0_\alg A$ and $\th,\th'$ permute.
It is an easy exercise in general algebra to show that in this case $\alg A \cong \alg A/\th \times \alg A/\th'$; note that $1_\alg A$ and $0_\alg A$ are factor congruences that gives a trivial decomposition. A less known fact (that appears in \cite{Citkin2018a}) is:

\begin{lemma}\label{factor} Let $\alg A$ be any algebra and $\th$ a factor congruence; then $\alg A/\th$ is a retract of $\alg A$ if and only if there is a homomorphism $h: \alg A/\th \longrightarrow \alg A/\th'$.
\end{lemma}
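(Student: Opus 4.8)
The statement to prove is: for an algebra $\alg A$ and a factor congruence $\th$ (with complementary factor congruence $\th'$), the quotient $\alg A/\th$ is a retract of $\alg A$ if and only if there is a homomorphism $h \colon \alg A/\th \longrightarrow \alg A/\th'$. The natural framework is the isomorphism $\alg A \cong \alg A/\th \times \alg A/\th'$ guaranteed by the factor-congruence hypothesis, together with the universal property of the product. Write $\pi \colon \alg A \to \alg A/\th$ and $\pi' \colon \alg A \to \alg A/\th'$ for the two natural surjections; the map $\langle \pi, \pi' \rangle \colon \alg A \to \alg A/\th \times \alg A/\th'$ is the witnessing isomorphism, and I will work throughout by identifying $\alg A$ with this product.

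First I would prove the easy direction: if $h \colon \alg A/\th \to \alg A/\th'$ is a homomorphism, define $f \colon \alg A/\th \to \alg A$ by $f(a/\th) = (a/\th,\, h(a/\th))$, i.e.\ $f = \langle \mathrm{id}_{\alg A/\th}, h \rangle$ composed with the identification $\alg A \cong \alg A/\th \times \alg A/\th'$. This is a homomorphism by the universal property of the product, and $\pi f = \mathrm{id}_{\alg A/\th}$ by construction; hence $f$ is a section of the surjection $\pi$, so $\alg A/\th$ is a retract of $\alg A$.

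For the converse, suppose $\alg A/\th$ is a retract of $\alg A$, witnessed by $g \colon \alg A \to \alg A/\th$ and $f \colon \alg A/\th \to \alg A$ with $gf = \mathrm{id}_{\alg A/\th}$. The key observation is that $\th$, being a factor congruence, is a \emph{complemented} element in the congruence lattice, and in such a situation the retraction can be taken to be $\pi$ itself. Concretely: using $\alg A \cong \alg A/\th \times \alg A/\th'$, any section $f \colon \alg A/\th \to \alg A$ of \emph{some} retraction has the form $f = \langle f_1, f_2\rangle$ with $f_1 \colon \alg A/\th \to \alg A/\th$ and $f_2 \colon \alg A/\th \to \alg A/\th'$; then set $h := f_2 \circ f_1^{-1}$ provided $f_1$ is invertible. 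The point I need is that $f_1$ is an isomorphism: since $gf = \mathrm{id}$ and $g$ is a homomorphism $\alg A/\th \times \alg A/\th' \to \alg A/\th$, composing with $f$ and using that $\alg A/\th$ is subdirectly embedded via the product projections, one deduces $f_1$ is surjective; injectivity of $f_1$ follows because $f$ itself is injective (a section is always injective) and $\pi_1 f = f_1$, so $\ker f_1 \supseteq \ker f = 0$. Hence $f_1$ is an automorphism of $\alg A/\th$, and $h := f_2 \circ f_1^{-1} \colon \alg A/\th \to \alg A/\th'$ is the desired homomorphism. The cleanest phrasing avoids $f_1^{-1}$ entirely: observe that the composite $\alg A/\th \xrightarrow{f} \alg A \xrightarrow{\pi'} \alg A/\th'$ together with $\alg A/\th \xrightarrow{f} \alg A \xrightarrow{\pi} \alg A/\th$ reconstitute $f$, and the second composite is a retraction-compatible automorphism, so precomposing the first with its inverse yields $h$.

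The main obstacle I anticipate is making precise the claim that $\pi_1 f = f_1$ is invertible — i.e.\ ruling out the possibility that the given retraction $g, f$ is "skew" relative to the product decomposition, so that $f$ does not split off a clean homomorphism to $\alg A/\th'$. This is handled by the argument above (injectivity of sections plus surjectivity forced by $gf = \mathrm{id}$), but I should double-check the surjectivity step: it is possible that one only gets $f_1$ surjective after replacing $g$ by $\pi$, which is legitimate since once $f_1$ is invertible, $f' := f \circ f_1^{-1}$ is a section of $\pi$. I would organize the write-up so that the converse direction first replaces the arbitrary retraction by one along $\pi$ (using that $f_1$ is an isomorphism, shown via injectivity of $f$ and a short diagram chase with $gf = \mathrm{id}$), and only then reads off $h = \pi' \circ f'$. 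Everything else is a routine verification that the relevant maps are homomorphisms, using only the universal property of finite products in a variety and the fact that $\th, \th'$ being complementary permuting factor congruences yields $\alg A \cong \alg A/\th \times \alg A/\th'$.
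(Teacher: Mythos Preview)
Your forward direction is fine and matches the paper. The converse direction, however, has a genuine gap: your claim that $f_1 = \pi \circ f$ is an isomorphism is neither needed nor correct in general. For injectivity you write ``$\ker f_1 \supseteq \ker f = 0$'', but this only gives $\ker f_1 \supseteq 0$, which is vacuous; indeed from the injectivity of $f = \langle f_1, f_2\rangle$ you only get $\ker f_1 \cap \ker f_2 = 0$, not $\ker f_1 = 0$. Surjectivity fares no better: take $\alg A = \mathbb Z_2 \times \mathbb Z_2$ as abelian groups with $\th,\th'$ the kernels of the two projections, section $f(x) = (0,x)$ and retraction $g(a,b)=b$; then $gf = \mathrm{id}$ but $f_1$ is the zero map.

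The fix is to drop the whole $f_1^{-1}$ manoeuvre. Given any section $f\colon \alg A/\th \to \alg A$ (of any retraction whatsoever), simply set $h := \pi' \circ f \colon \alg A/\th \to \alg A/\th'$; this is a homomorphism as a composite of homomorphisms, and you are done. This is exactly what the paper does: writing $f(a/\th) = (u/\th, v/\th')$ under the identification $\alg A \cong \alg A/\th \times \alg A/\th'$, one puts $h(a/\th) = v/\th'$. No invertibility of $f_1$ is required, and the converse becomes a one-line observation.
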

\begin{proof} Suppose first that there is a homomorphism $h: \alg A/\th \longrightarrow \alg A/\th'$. 
Since $\alg A \cong \alg A/\th \times \alg A/\th'$ for $u \in A$, $u = (a/\th,b/\th')$, we set $f(u) = a/\th$; then $f:\alg A \longrightarrow \alg A/\th$ is clearly an epimorphism, since $(a/\th,a/\th') \in A$ for all $a \in A$.  Let
$$
g(a/\th) = (a/\th,h(a/\th)).
$$
One can check that $g$ is a homomorphism with standard calculations and clearly $fg= id_{\alg A/\th}$. Hence $\alg A/\th$ is a retract of $\alg A$.

Conversely suppose that $f,g$ witness a retraction from $\alg A/\th$ in $\alg A$; then if $g(a/\th)= (u/\th,v/\th')$, set $h(a/\th) = v/\th'$.  It is then easy to see that $h$ is a homomorphism and the thesis holds.
\end{proof}

Observe that in any $\mathsf{FL}$-algebra, every compact (i.e., finitely generated) congruence is principal; as a matter of fact if $\alg A$ is in $\mathsf{FL}$, $X = \{(a_1,b_1),\dots,(a_n,b_n)\}$ is a finite set of pairs from $A$  and
$p = \bigwedge_{i=1}^n [ (a_i \backslash b_i) \meet (b_i \backslash a_i) \meet 1]$ then $\cg_\alg A(X) = \cg_\alg A(p,1)$.

\begin{theorem}\label{thm: discr almost universal} Let $\vv Q$ be a quasivariety of $\mathsf{FL}_w$-algebras in which every principal congruence is a factor congruence; then $\vv Q$ has projective unifiers.
\end{theorem}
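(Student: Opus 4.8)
The plan is to prove directly that every finitely presented unifiable $\alg A\in\vv Q$ is a retract of a finitely generated free algebra, hence projective in $\vv Q$. We may assume $\vv Q$ is nontrivial, since otherwise the only finitely presented algebra is trivial and hence projective. Because $\mathsf{FL}_w$-algebras satisfy $0\le x\le 1$, the smallest free algebra $\alg F_\vv Q=\alg F_\vv Q(\emptyset)$ is the two-element algebra $\two$, and for every $\alg B\in\vv Q$ there is a (unique) homomorphism $\two\longrightarrow\alg B$.

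First I would fix a finite presentation $\alg A\cong\alg F_\vv Q(X)/\psi$ with $X$ finite and $\psi=\theta_\vv Q(\Sigma)$ for a finite set of equations $\Sigma$. Since $\vv Q\sse\mathsf{FL}$, the relative congruence $\psi$ is finitely generated, hence principal: collapsing the finitely many pairs coming from $\Sigma$ is the same as collapsing the single pair $(p,1)$, where $p$ is the meet of the biresiduals of those pairs with $1$ (this is the fact recalled just before the statement, which passes to relative congruences). By the hypothesis of the theorem, applied to the free algebra $\alg F_\vv Q(X)\in\vv Q$, the principal congruence $\psi$ is a factor congruence: there is a complementary factor congruence $\psi'$ with
\[
\alg F_\vv Q(X)\cong \alg F_\vv Q(X)/\psi\times \alg F_\vv Q(X)/\psi' \;=\; \alg A\times\alg C,\qquad \alg C:=\alg F_\vv Q(X)/\psi'.
\]
Here $\alg C\in\vv Q$, since a quasivariety is closed under direct factors (Horn sentences are reflected by direct products).

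Next I would produce a homomorphism $h:\alg A\longrightarrow\alg C$. As $\alg A$ is unifiable, Lemma \ref{free} gives a homomorphism $u:\alg A\longrightarrow\alg F_\vv Q=\two$; composing with the canonical homomorphism $\two\longrightarrow\alg C$ yields $h$. Now Lemma \ref{factor}, applied to the factor congruence $\psi$ of $\alg F_\vv Q(X)$ (with complement $\psi'$), shows that $\alg A=\alg F_\vv Q(X)/\psi$ is a retract of $\alg F_\vv Q(X)$. Since $\alg F_\vv Q(X)$ is free on a finite set, hence projective in $\vv Q$, and retracts of projective algebras are projective, $\alg A$ is projective in $\vv Q$. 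As $\alg A$ was an arbitrary finitely presented unifiable algebra, $\vv Q$ has projective unifiers.

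The ingredients ``compact congruences of $\mathsf{FL}$-algebras are principal'' and ``quasivarieties are closed under direct factors'' are routine; the main point requiring care is organizational rather than technical — one should apply the factor-congruence hypothesis at the free algebra $\alg F_\vv Q(X)$ (a member of $\vv Q$) and then invoke Lemma \ref{factor} in the direction that produces $\alg A$ as a retract, rather than attempting to decompose $\alg A$ itself and argue that its factors are projective; and one should be mindful that ``principal congruence'' is being used here in the relative sense appropriate to the quasivariety. I do not anticipate a genuine obstacle.
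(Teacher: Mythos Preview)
Your proposal is correct and follows essentially the same route as the paper's proof: reduce the presenting congruence to a principal one, apply the factor-congruence hypothesis at $\alg F_\vv Q(X)$, manufacture a homomorphism $\alg A\to\alg F_\vv Q(X)/\psi'$ via $\two$ using unifiability, and invoke Lemma~\ref{factor} to exhibit $\alg A$ as a retract of the free algebra. Your treatment is in fact slightly tidier than the paper's, which splits into cases according to whether $\psi'=1$; you avoid this by observing that the canonical map $\two\to\alg C$ exists regardless of whether $\alg C$ is trivial.
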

\begin{proof} Let $\alg F_\vv Q(X)/\th$ be a finitely presented unifiable algebra in $\vv Q$; then there is an onto homomorphism from $\alg F_\vv Q(X)/\th(\Sigma)$ to $\alg F_\vv Q = \alg 2$.
Now $\th = \th(\Sigma)$ is a principal congruence, hence it is a factor congruence with witness $\th'$, i.e. $\alg F_\vv Q(X) \cong \alg F_\vv Q(X)/\th \times \alg F_\vv Q(X)/\th'$. If $\th' = 1_\alg A$, then $\alg F_\vv Q(X) = \alg F_\vv Q(X)/\th$
and so it is projective. Otherwise $\alg F_\vv Q = \alg 2$ is embeddable in $\alg F_\vv Q(X)/\th'$; hence there is a homomorphism from from $\alg F_\vv Q(X)/\th$ to $\alg F_\vv Q(X)/\th'$. By Lemma \ref{factor} $\alg F_\vv Q(X)/\th$ is a retract of $\alg F_\vv Q(X)$, i.e. it is projective.
\end{proof}
So any quasivariety of $\mathsf{FL}_{w}$-algebras with the property that every principal congruence is a factor congruence is active universally complete (Theorem \ref{lemma: ex unif auc}); really it is active primitive universally complete, because
$\alg F_\vv Q$ is the two-element algebra for any quasivariety $\vv Q$ of $\mathsf{FL}_{w}$-algebras (Theorem \ref{thm: activeprimitive}).
We observe in passing that for any $\mathsf{FL}_{w}$ algebra  every factor congruence is principal; this is because every variety of $\mathsf{FL}_{w}$-algebras is Koll\'ar and
congruence distributive.
Discriminator varieties of $\mathsf{FL}_{ew}$-algebras have been completely described in \cite{Kowalski2005}; as a consequence we have:

\begin{theorem} For a variety $\vv V$ of $\mathsf{FL}_{ew}$-algebras the following are equivalent:
\begin{enumerate}
\item $\vv V$ is a discriminator variety;
\item $\vv V$ is semisimple, i.e. all the subdirectly irreducible members of $\vv V$ are simple;
\item there is an $n \in \mathbb N$ such that $\vv V \vDash x \join \neg x^n \approx 1$;
\item for any $\alg A \in \vv V$ every compact (i.e. principal) congruence is a factor congruence.
\end{enumerate}
\end{theorem}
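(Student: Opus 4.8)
The plan is to establish the cycle $(1)\Rightarrow(2)\Rightarrow(3)\Rightarrow(1)$, which yields the equivalence of the first three items, and then to adjoin the short implications $(3)\Rightarrow(4)\Rightarrow(2)$. Two of these are immediate. For $(1)\Rightarrow(2)$ I would use the textbook fact that a discriminator variety is semisimple: if $t$ is the discriminator term on $\vv V_{si}$ and $\theta\ne 0_\alg A$ is a congruence of a subdirectly irreducible $\alg A$, pick $a\ne b$ with $a\,\theta\,b$; then $c=t(a,a,c)\,\theta\,t(a,b,c)=a$ for every $c$, so $\theta=1_\alg A$. For $(4)\Rightarrow(2)$ I would observe that in a subdirectly irreducible algebra the only factor congruences are $0_\alg A$ and $1_\alg A$ (a complement of a nonzero congruence must be $0_\alg A$, since the monolith lies below every nonzero congruence); since $\Con A$ is algebraic and compact congruences of $\mathsf{FL}$-algebras are principal, hypothesis $(4)$ then forces every nonzero congruence of a subdirectly irreducible $\alg A\in\vv V$ to be $1_\alg A$, i.e.\ every subdirectly irreducible in $\vv V$ is simple. (Observe that $(4)$ is exactly the hypothesis of Theorem~\ref{thm: discr almost universal}.)

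For $(3)\Rightarrow(1)$ I would first extract semisimplicity and then exhibit an explicit discriminator term. In a subdirectly irreducible $\alg A\models x\vee\neg x^n\approx 1$, for $a<1$ the standard identity $\theta(b,1)\wedge\theta(c,1)=\theta(b\vee c,1)$ (valid for $b,c\le 1$ in any commutative integral residuated lattice) gives $\theta(a,1)\wedge\theta(\neg a^n,1)=\theta(a\vee\neg a^n,1)=0_\alg A$; since $\theta(a,1)\ne 0_\alg A$ contains the monolith, we get $\theta(\neg a^n,1)=0_\alg A$, i.e.\ $a^n=0$, and then $\theta(a,1)=\theta(a^n,1)=\theta(0,1)=1_\alg A$, so $\alg A$ is simple. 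Hence on every simple member of $\vv V$ the term $e(x,y):=(x\imp y)\wedge(y\imp x)$ satisfies $e(a,b)^n=1$ if $a=b$ and $e(a,b)^n=0$ otherwise, so
$$t(x,y,z):=(x\wedge\neg(e(x,y)^n))\vee(z\wedge\neg\neg(e(x,y)^n))$$
evaluates to $x$ when $x\ne y$ and to $z$ when $x=y$; it is therefore the discriminator on $\vv V_{si}$, and $\vv V$ is a discriminator variety.

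For $(3)\Rightarrow(4)$, take $\alg A\in\vv V$ and a compact, hence principal, congruence $\theta=\theta(p,1)$ with $p\le 1$ (as recalled just before the theorem). Put $q:=\neg p^n$. Then $\theta(p,1)\wedge\theta(q,1)=\theta(p\vee\neg p^n,1)=0_\alg A$ by $(3)$; moreover $\theta(p,1)=\theta(p^n,1)$, and the join of the two congruences collapsing $p^n$ and $\neg p^n$ to $1$ contains $p^n\cdot\neg p^n=0$, hence equals $1_\alg A$. Since $\mathsf{FL}_{ew}$-algebras are congruence-permutable, $\theta(p,1)$ and $\theta(q,1)$ permute, so they form a pair of complementary factor congruences and $\theta$ is a factor congruence.

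The genuinely substantial step, and the one I expect to be the main obstacle, is $(2)\Rightarrow(3)$: passing from ``every subdirectly irreducible is simple'' to a single equation. The natural route is a compactness/ultraproduct argument --- if no exponent works, pick $\alg S_m\in\vv V_{si}$ with $a_m<1$ and $a_m^m\ne 0$, form a nonprincipal ultraproduct $\alg S=\prod_m\alg S_m/U\in\vv V$ containing an $a<1$ with $a^k\ne 0$ for all $k$, and derive a contradiction by producing from $\alg S$ a subdirectly irreducible non-simple homomorphic image. Carrying out this last step requires the analysis of the radical of $\mathsf{FL}_{ew}$-algebras, and for this implication I would lean on the classification in \cite{Kowalski2005} rather than redevelop it here.
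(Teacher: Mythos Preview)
Your argument is correct and complete. The paper's proof is more terse: it cites \cite{Kowalski2005} for the full equivalence of (1), (2), (3) and then closes the square with $(1)\Rightarrow(4)$ (writing down, for an arbitrary discriminator variety, the explicit complement $\gamma_\alg A(a,b)$ of $\theta_\alg A(a,b)$ built from the discriminator term) and $(4)\Rightarrow(2)$ (the monolith of a subdirectly irreducible is principal, hence a factor congruence, hence $1_\alg A$). Your route differs in two places: you supply direct arguments for $(1)\Rightarrow(2)$ and $(3)\Rightarrow(1)$ rather than citing Kowalski (keeping the citation only for the genuinely hard $(2)\Rightarrow(3)$), and you prove $(3)\Rightarrow(4)$ by exhibiting $\theta(\neg p^n,1)$ as the complement of $\theta(p,1)$ using the equation, instead of going through the discriminator. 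What this buys you is a more self-contained treatment of the easy implications and an explicit discriminator term tailored to $\mathsf{FL}_{ew}$; what the paper's $(1)\Rightarrow(4)$ buys is a proof that works verbatim in any discriminator variety. Your use of congruence permutability of $\mathsf{FL}_{ew}$ in $(3)\Rightarrow(4)$ is legitimate (the term $((x\to y)\to z)\wedge((z\to y)\to x)\wedge(x\vee z)$ is a Mal'cev term), matching the paper's appeal to the same fact for discriminator varieties in its $(1)\Rightarrow(4)$. The identity $\theta(b,1)\wedge\theta(c,1)=\theta(b\vee c,1)$ that you invoke in $(3)\Rightarrow(1)$ is correct in $\mathsf{FL}_{ew}$ (expand $(b\vee c)^{k+m-1}$ using distributivity of $\cdot$ over $\vee$), though it is worth a line of justification.
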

\begin{proof} The equivalence of (1), (2) and (3) has been proved in \cite{Kowalski2005}. Assume then (1);  it is well-known that in every discriminator variety every principal congruence is a factor congruence. In fact if $\vv V$ is a discriminator variety with discriminator term $t(x,y,z)$ let for any $\alg A \in \vv V$ and $a,b \in A$
\begin{align*}
&\th_\alg A(a,b) = \{(u,v): t(a,b,u)=t(a,b,v)\}  \\
&\gamma_\alg A(a,b)= \{(u,v): t(a,t(a,b,u),u) = t(a,t(a,b,v),v)\}.
\end{align*}
Using the properties of the discriminator term it is easy to verify that they are congruences and the complement of each other; since discriminator varieties are congruence permutable they are factor congruences and (4) holds.

Conversely assume (4) and let $\alg A$ be a subdirectly irreducible member of $\vv V$. Let $\mu_\alg A$ be the minimal nontrivial congruence of $\alg A$; then $\mu_\alg A$ is principal, so it must be a factor congruence. This is
possible if and only if $\mu_\alg A=1_\alg A$; therefore $\alg A$ is simple, and $\vv V$ is semisimple.
\end{proof}
\begin{corollary}
	Every discriminator (or, equivalently, semisimple) variety $\vv V$ of $\mathsf{FL}_{ew}$-algebras is active primitive universal.
\end{corollary}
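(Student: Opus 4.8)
The plan is to deduce the corollary directly from the preceding theorem and from the general results on active universal completeness established earlier in the paper. First I would observe that the equivalences in the theorem just proved give us, for a discriminator (equivalently, semisimple) variety $\vv V$ of $\mathsf{FL}_{ew}$-algebras, that for every $\alg A \in \vv V$ every principal (compact) congruence is a factor congruence; this is exactly condition (4). But $\vv V$ is in particular a quasivariety of $\mathsf{FL}_{ew}$-algebras, hence of $\mathsf{FL}_w$-algebras, so Theorem~\ref{thm: discr almost universal} applies and tells us that $\vv V$ has projective unifiers. The same reasoning applies to every subquasivariety $\vv Q$ of $\vv V$: being a discriminator (semisimple) variety is not required for the subquasivariety, only the property that every principal congruence in every member is a factor congruence, and this property is clearly inherited by subquasivarieties since it is a property of individual algebras. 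Thus every subquasivariety $\vv Q \sse \vv V$ has projective unifiers.

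Next I would invoke the machinery for hereditary active universal completeness. By Theorem~\ref{thm: activeprimitive}, a quasivariety $\vv Q$ such that $\alg F_\vv Q = \alg F_{\vv Q'}$ for all $\vv Q' \sse \vv Q$ and having projective unifiers is active primitive universal. So it remains to check the condition $\alg F_\vv Q = \alg F_{\vv Q'}$ for all subquasivarieties. Here I would use the fact, already noted in the excerpt in the discussion following Theorem~\ref{thm: discr almost universal}, that $\alg F_\vv Q$ is the two-element algebra $\alg 2$ for \emph{every} (nontrivial) quasivariety $\vv Q$ of $\mathsf{FL}_w$-algebras: since there is a constant $0$ and a constant $1$, $\alg F_\vv Q = \alg F_\vv Q(\emptyset)$, and in any $\mathsf{FL}_w$-algebra the subalgebra generated by the constants collapses to $\{0,1\}$ with the Boolean structure (integrality forces $1$ on top, $0$-boundedness forces $0$ on the bottom, and all the residuated operations on $\{0,1\}$ are determined). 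Hence $\alg F_{\vv Q'} = \alg 2 = \alg F_\vv V$ for every $\vv Q' \sse \vv V$, and the hypothesis of Theorem~\ref{thm: activeprimitive} is met.

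Putting the two pieces together: $\vv V$ satisfies the hypotheses of Theorem~\ref{thm: activeprimitive}, so it is active primitive universal, which by definition means every subquasivariety of $\vv V$ is active universally complete. I would close by remarking, if desired, that the equivalence with semisimplicity is exactly what the just-proved theorem supplies, so the corollary holds in both formulations. I do not anticipate a serious obstacle here — the corollary is essentially an assembly of Theorem~\ref{thm: discr almost universal}, Theorem~\ref{lemma: ex unif auc} (projective unifiers $\Rightarrow$ active universally complete), Theorem~\ref{thm: activeprimitive}, and the observation about least free algebras in $\mathsf{FL}_w$; the only point requiring a moment's care is verifying that the ``principal $=$ factor congruence'' property genuinely passes to all subquasivarieties, but since it is a first-order property of single algebras this is immediate.
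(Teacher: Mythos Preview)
Your proposal is correct and follows essentially the same route as the paper: the text immediately preceding the corollary already spells out that Theorem~\ref{thm: discr almost universal} gives projective unifiers, and that Theorem~\ref{thm: activeprimitive} applies because $\alg F_\vv Q = \alg 2$ for every quasivariety of $\mathsf{FL}_w$-algebras. One minor remark: your detour arguing that every subquasivariety inherits the factor-congruence property and hence has projective unifiers is unnecessary (Theorem~\ref{thm: activeprimitive} only asks that $\vv V$ itself have projective unifiers, and handles the descent to subquasivarieties internally via Lemma~\ref{lemma:dzik2}); moreover that detour would require some care, since Theorem~\ref{thm: discr almost universal} is stated for principal \emph{absolute} congruences, while finitely presented algebras in a proper subquasivariety are quotients by \emph{relative} congruences --- but since you do not actually rely on this step, the argument stands.
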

We observe that Theorem \ref{thm: discr almost universal} does not add anything as far as $\mathsf{BL}$-algebras are concerned; in fact any discriminator variety of $\mathsf{FL}_{ew}$-algebras must satisfy $x^n \app x^{n+1}$ for some $n$ \cite{Kowalski2005} and the varieties of $\mathsf{BL}$-algebras with that property are exactly the locally finite varieties, which we already pointed out are active universally complete.

\subsubsection{Passive  structural completeness}
A particularly interesting application of our characterization of passive structurally complete varieties is in the subvariety of integral and $0$-bounded FL-algebras.
Let us rephrase Theorem \ref{thm:PUC} in this setting. First, using residuation it is easy to see that every finite set of identities in $\mathsf{FL}$ is equivalent to a single identity. Moreover, in every subquasivariety $\vv Q$ of $\mathsf{FL}_w$, the smallest free algebra $\alg F_{\vv Q}$ is the 2-element Boolean algebra $\alg 2$, and its generated quasivariety is the variety of Boolean algebras.
\begin{corollary}\label{cor:psctriv}
	Let $\vv Q$ be a quasivariety of $\mathsf{FL}_w$-algebras, then the following are equivalent:
	\begin{enumerate}
		\item $\vv Q$ is passive structurally complete;
	\item every trivializing identity in the variety of Boolean algebras is trivializing in $\vv Q$;
	\item every nontrivial finitely presented algebra is unifiable.
	\item every nontrivial algebra is unifiable.
	\end{enumerate}
\end{corollary}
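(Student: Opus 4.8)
The plan is to obtain this as a direct specialization of Corollary \ref{cor:PUCPu} (equivalently Theorem \ref{thm:PUC}) once we pin down the shape of the smallest free algebra. First I would check that for every nontrivial quasivariety $\vv Q$ of $\mathsf{FL}_w$-algebras one has $\alg F_\vv Q = \alg F_\vv Q(\emptyset) \cong \alg 2$, the two-element Boolean algebra: the subalgebra generated by the constants $0,1$ in any nontrivial $\mathsf{FL}_w$-algebra is, thanks to integrality and $0$-boundedness, a copy of $\alg 2$ (a routine check of the operation tables on $\{0,1\}$), and $\alg 2$ itself lies in $\vv Q$, so the empty-generated free algebra is exactly $\alg 2$. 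In particular $\alg F_\vv Q$ is finite, whence $\II(\alg F_\vv Q)=\II\PP_u(\alg F_\vv Q)$ and Corollary \ref{cor:PUCPu} applies. (If $\vv Q$ is trivial all four displayed conditions hold vacuously, so we may assume $\vv Q$ nontrivial.)

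Corollary \ref{cor:PUCPu} then hands us the equivalence of: $\vv Q$ passively structurally complete; ``$\alg F_\vv Q\models\Sigma\Rightarrow\emptyset$ implies $\Sigma$ trivializing in $\vv Q$''; ``$\Sigma$ trivializing in $\QQ(\alg F_\vv Q)$ implies $\Sigma$ trivializing in $\vv Q$'' (the alternative ``$\alg F_\vv Q$ trivial'' being excluded); every nontrivial finitely presented algebra unifiable; every nontrivial algebra unifiable. Items (1), (3), (4) of the statement are literally three of these. For item (2) it remains only to note that $\QQ(\alg F_\vv Q)=\QQ(\alg 2)$ is precisely the variety of Boolean algebras, so the third clause above says that every finite set of identities trivializing in the variety of Boolean algebras is trivializing in $\vv Q$; and since (as recalled before the corollary) every finite set of identities over the language of $\mathsf{FL}$ is, by residuation, equivalent to a single identity, this is exactly statement (2). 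This closes the chain of equivalences.

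The argument is essentially bookkeeping on top of the already established general characterization, so the only genuinely delicate points — and where I would be most careful — are (a) the \emph{uniform} identification $\alg F_\vv Q\cong\alg 2$ over \emph{all} subquasivarieties of $\mathsf{FL}_w$, i.e.\ that passing to a smaller quasivariety neither enlarges nor collapses the empty-generated free algebra, and (b) the harmless passage between finite sets of equations and single equations in the definition of ``trivializing''. Both are immediate from the equational theory of $\mathsf{FL}_w$ recalled above.
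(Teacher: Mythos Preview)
Your proposal is correct and follows essentially the same route as the paper: identify $\alg F_{\vv Q}\cong\alg 2$ for every nontrivial subquasivariety of $\mathsf{FL}_w$, note that $\QQ(\alg 2)$ is the variety of Boolean algebras, reduce finite sets of identities to single identities via residuation, and then read off the equivalences from Corollary~\ref{cor:PUCPu}. Your treatment is in fact a bit more explicit than the paper's (which presents the result as a direct rephrasing of Theorem~\ref{thm:PUC} after recording those same facts), but the content is the same.
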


The previous corollary has a possibly more transparent shape from the point of view of the logics.
Let us call a formula $\varphi$ in the language of $\mathsf{FL}$-algebras \emph{explosive in a logic $\cc L$}, with consequence relation $\vdash_{\cc L}$, if $\varphi \vdash_{\cc L} \delta$ for all formulas $\delta$ in the language of $\cc L$. Moreover, we call $\varphi$ a \emph{contradiction in $\cc L$} if $\varphi \vdash_{\cc L} 0$. Since $\mathsf{FL}_w$-algebras are $0$-bounded, it is clear that contradictions coincide with explosive formulas in all axiomatic extensions of $\cc F\cc L_w$.

\begin{corollary}\label{cor:PUClogic}
	Let $\mathcal{L}$ be an axiomatic extension of $\mathcal{FL}_w$, then the following are equivalent:
	\begin{enumerate}
		\item $\mathcal{L}$ is passively structurally complete.
	\item Every contradiction of classical logic is explosive in $\mathcal{L}$.
	\item Every passive rule of $\mathcal{L}$ has explosive premises.
	\end{enumerate}
\end{corollary}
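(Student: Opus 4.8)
The plan is to read the three conditions off the algebraic characterisation in Corollary~\ref{cor:psctriv} via the Blok--Pigozzi translation, after setting up a short dictionary between the logical vocabulary (contradiction, explosive formula, passive rule) and the algebraic one (trivializing set of identities, unifiability). Since $\mathcal{L}$ is an axiomatic extension of $\mathcal{FL}_w$, its equivalent algebraic semantics $\vv Q_{\cc L}$ is a subvariety of $\mathsf{FL}_w$-algebras, and by Theorems~\ref{admissiblederivable} and~\ref{activepassive} the logic $\mathcal{L}$ is passively structurally complete if and only if $\vv Q_{\cc L}$ is. We will use the standard algebraization datum on $\mathsf{FL}_w$-algebras, namely $\tau(\varphi)=\{\varphi\approx 1\}$ (integrality collapses $\varphi\wedge 1$ to $\varphi$), the fact recalled before Corollary~\ref{cor:psctriv} that $\alg F_{\vv Q_{\cc L}}=\alg 2$ and $\QQ(\alg 2)$ is the variety of Boolean algebras, and the observation, also recalled there, that by residuation every finite set of identities in this language is equivalent to a single one of the shape $\psi\approx 1$.

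The dictionary is as follows. Because $\mathsf{FL}_w$-algebras are integral and $0$-bounded, the identity $0\approx 1$ forces triviality; hence a formula $\varphi$ is explosive in $\mathcal{L}$ -- equivalently, as noted in the excerpt, $\varphi\vdash_{\mathcal L}0$, i.e.\ $\varphi$ is a contradiction of $\mathcal{L}$ -- if and only if $\{\varphi\approx 1\}$ is trivializing in $\vv Q_{\cc L}$; and $\varphi$ is a contradiction of classical logic if and only if $\{\varphi\approx 1\}$ is trivializing in the variety of Boolean algebras, which is $\QQ(\alg F_{\vv Q_{\cc L}})$. Since, as recalled, every trivializing finite set of identities is equivalent to one of the form $\psi\approx 1$, the statement ``every contradiction of classical logic is explosive in $\mathcal{L}$'' says exactly that every trivializing identity in $\QQ(\alg F_{\vv Q_{\cc L}})$ is trivializing in $\vv Q_{\cc L}$, which is condition (2) of Corollary~\ref{cor:psctriv}. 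This yields $(1)\Leftrightarrow(2)$.

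For $(1)\Leftrightarrow(3)$, first translate ``passive rule''. By Proposition~\ref{activepassive} a rule $\Sigma\Rightarrow\gamma$ is passive iff $\alg F_{\vv Q_{\cc L}}(X)/\theta_{\vv Q_{\cc L}}(\tau(\Sigma))$ is not unifiable, and by Lemma~\ref{prop:trivial} this happens iff $\alg 2\models\tau(\Sigma)\Rightarrow\emptyset$, iff $\tau(\Sigma)$ is trivializing in the variety of Boolean algebras; equivalently, the conjunction of the formulas in $\Sigma$ is a contradiction of classical logic. Then ``$\Sigma$ has explosive premises'' means precisely that $\tau(\Sigma)$ is trivializing in $\vv Q_{\cc L}$. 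Moreover every non-unifiable $\Sigma$ is the premise of some passive rule (take $\Sigma\Rightarrow\delta$ with $\delta$ arbitrary), so quantifying over passive rules is the same as quantifying over non-unifiable premise sets. Hence ``every passive rule of $\mathcal{L}$ has explosive premises'' is once more condition (2) of Corollary~\ref{cor:psctriv}, closing the chain of equivalences.

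I do not expect a genuine obstacle here: all the substance is already packed into Corollary~\ref{cor:psctriv}. The only points needing care are purely notational, namely making the translation $\tau$ explicit, checking that integrality together with $0$-boundedness makes ``explosive'', ``contradiction of $\mathcal{L}$'', and ``trivializing via $0\approx 1$'' coincide, and verifying that the two reductions -- a finite set of identities to a single identity of the form $\psi\approx 1$, and a non-unifiable premise set to the premise of a passive rule -- are harmless. I would present these as brief lemmas or inline remarks and then invoke Corollary~\ref{cor:psctriv} to conclude.
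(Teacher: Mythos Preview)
Your proposal is correct and follows exactly the route the paper intends: Corollary~\ref{cor:PUClogic} is stated in the paper without proof, as a direct logical rephrasing of Corollary~\ref{cor:psctriv} via the Blok--Pigozzi translation together with the observations that $\alg F_{\vv Q_{\cc L}}=\alg 2$, that contradictions and explosive formulas coincide in extensions of $\mathcal{FL}_w$, and that finite sets of identities reduce to single ones. Your dictionary and the reductions you spell out are precisely the implicit steps the paper leaves to the reader.
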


Let us first explore the consequences of the equivalence between (1) and (2) in Corollary \ref{cor:PUClogic}. It is well known that intuitionistic logic is passively structurally complete (reported by Wronski at the 51st Conference on the History of Logic, Krakow, 2005). This is easily seen by Corollary \ref{cor:PUClogic}, indeed, observe that any contradiction of classical logic $\varphi$ is such
that its negation $\neg \varphi$ is a theorem of classical logic. Using the Glivenko translation and the deduction theorem, we obtain that $\varphi$ is explosive in intuitionistic logic as well, which is then passively structurally complete.
We will now show how this argument can be extended to a wide class of logics.

Let us write the negations corresponding to the two divisions as $\neg x = x \backslash 0$ and $\sim x = 0 / x$.
Following \cite{GalatosOno2006,GalatosOno2009}, we say that two logics $\mathcal{L}_1$ and $\mathcal{L}_2$ are \emph{Glivenko equivalent} if for all formuals $\varphi$: $$\vdash_{\mathcal{L}_1} \neg \varphi  \;\;\mbox{ iff }\;\; \vdash_{\mathcal{L}_2} \neg \varphi$$(equivalently, $\vdash_{\mathcal{L}_1} \sim \varphi  \mbox{ iff } \vdash_{\mathcal{L}_2} \sim \varphi$).
Given a logic $\cc L$, we call \emph{Glivenko logic of $\cc L$} the smallest substructural logic that is Glivenko equivalent to $\cc L$ . Moreover, we call \emph{Glivenko logic of $\cc L$ with respect to $\cc L'$}, and denote it with $\cc G_{\cc L'}(\cc L)$ the smallest extension of $\cc L'$ that is Glivenko equivalent to $\cc L$ (all these notions make sense by the results in \cite{GalatosOno2006,GalatosOno2009}). $\cc G_{\cc L'}(\cc L)$ is axiomatized relatively to $\cc L'$ by the set of axioms $\{ \neg \sim \varphi : \vdash_{\cc L}\varphi\}$, or equivalently by the set $\{  \sim \neg \varphi : \vdash_{\cc L}\varphi\}$. 

Here we are interested in the Glivenko equivalent of classical logic with respect to $\mathcal{FL}_w$. From the algebraic perspective, this corresponds to the largest subvariety of $\mathsf{FL}_w$ that is Glivenko equivalent to Boolean algebras, $\mathsf{G}_{\mathsf{FL}_w}(\vv B)$. The latter is axiomatized in \cite[Corollary 8.33]{GJKO} as the subvariety of $\mathsf{FL}_w$ satisfying:
\begin{enumerate}
	\item $\sim (x \land y) = \sim (xy)$
	\item $\sim (x \backslash y) = \sim (\neg x \lor y)$
	\item $\neg (x \backslash y) = \neg (\sim x \lor y)$
	\item $\sim(x \backslash y) = \sim (\neg \sim x \backslash \neg \sim y)$
	\item $\sim(x / y) = \sim (\neg \sim x / \neg \sim y)$.
\end{enumerate}

\begin{theorem}
	Every axiomatic extension $\cc L$ of the Glivenko logic of classical logic with respect to $\mathcal{FL}_w$ is passively structurally complete.
\end{theorem}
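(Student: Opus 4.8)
The plan is to apply Corollary~\ref{cor:PUClogic}: it suffices to show that every contradiction of classical logic is explosive in $\cc L$. So let $\varphi$ be a formula with $\vdash_{\op{CL}} \neg \varphi$, i.e.\ $\neg \varphi$ is a theorem of classical logic (equivalently, $\varphi$ is a classical contradiction). I want to conclude $\varphi \vdash_{\cc L} 0$. First I would reduce to the base case $\cc L = \cc G_{\cc F\cc L_w}(\op{CL})$: since $\cc L$ is an axiomatic extension of $\cc G_{\cc F\cc L_w}(\op{CL})$, its consequence relation extends the latter's, so if $\varphi$ is explosive in $\cc G_{\cc F\cc L_w}(\op{CL})$ it is explosive in $\cc L$; hence passive structural completeness of the base logic gives it for all its axiomatic extensions, by Corollary~\ref{cor:PUClogic}.

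Next I would unwind what it means to work in $\cc G_{\cc F\cc L_w}(\op{CL})$. By definition of the Glivenko logic with respect to $\cc F\cc L_w$, this logic is Glivenko equivalent to classical logic, meaning $\vdash_{\cc G_{\cc F\cc L_w}(\op{CL})} \neg \psi$ iff $\vdash_{\op{CL}} \neg \psi$ for all $\psi$. In particular, since $\neg \varphi$ is a classical theorem, $\neg \varphi$ is a theorem of $\cc G_{\cc F\cc L_w}(\op{CL})$. Now in any $\mathsf{FL}_w$-algebra we have $x \le \neg\!\sim\! x$ and the law $x \cdot \neg x \le 0$ (since $\neg x = x\backslash 0$ and integrality/$0$-boundedness give $0$ as the bottom). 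The key computation: from $\vdash \neg \varphi$ we get, using that $\cc F\cc L_w$ has the (parametrized local) deduction theorem and the algebraic fact $\varphi \cdot \neg\varphi \le 0$, that $\varphi \vdash 0$. Concretely, in the equivalent algebraic semantics $\vv Q = \mathsf{G}_{\mathsf{FL}_w}(\vv B)$, validity of $\neg\varphi$ means $\neg\varphi^{\alg A} = 1$ for every $\alg A \in \vv Q$ under every assignment; then for any assignment making $\varphi$ true (i.e.\ $\varphi^{\alg A} \ge 1$, hence $= 1$ by integrality), we get $0 = 0^{\alg A} \ge \varphi^{\alg A}\cdot \neg\varphi^{\alg A} = 1 \cdot 1 = 1$, so $\alg A$ is trivial. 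Translating back through algebraizability, $\varphi \vdash_{\cc L} 0$, i.e.\ $\varphi$ is explosive.

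Alternatively, and perhaps more cleanly, I would argue directly on the algebraic side via Corollary~\ref{cor:psctriv}: I must show every identity trivializing in Boolean algebras is trivializing in $\vv Q = \mathsf{G}_{\mathsf{FL}_w}(\vv B)$ (and hence in every subquasivariety). An identity $\Sigma$ (equivalently a single term $t$, with $t \approx 1$) is trivializing in $\vv B$ iff $\vv B \models (t\approx 1) \Rightarrow (x \approx y)$, which amounts to: $\neg t$ is a theorem of classical logic (the only Boolean algebra validating $t \approx 1$ is trivial means $t$ is a "contradiction" in the Boolean sense). By Glivenko equivalence, $\neg t$ is then a theorem of the logic of $\vv Q$, i.e.\ $\vv Q \models \neg t \approx 1$; combined with $t \approx 1$ this forces $t \cdot \neg t \approx 1$, but $t\cdot \neg t \le 0 \le x$ in every algebra of $\vv Q$, so $1 \le x$, forcing triviality. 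Hence $\Sigma$ is trivializing in $\vv Q$, and by Corollary~\ref{cor:psctriv} $\vv Q$ — hence $\cc L$ and all its axiomatic extensions — is passively structurally complete.

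The main obstacle I anticipate is making precise the passage "$\neg t$ valid and $t$ valid imply triviality" at the level of the Glivenko variety: one must be careful that $\neg\!\sim$ versus $\sim\!\neg$ and the noncommutativity of $\cc F\cc L_w$ do not cause the double-negation bookkeeping to go wrong. The clean way is to note $t \le \neg\!\sim\! t$, and that $\vdash_{\cc G}\varphi$ for classical theorems $\varphi$ is only guaranteed \emph{up to double negation} — so strictly one gets $\vdash_{\cc G} \neg\!\sim\! t'$ for the classical theorem $t' = \neg t$, i.e.\ $\neg\!\sim\!\neg t$ is a theorem; but $\neg\!\sim\!\neg t = \neg(\neg\!\sim\! t)$-type simplifications (or directly $\sim\! t \cdot \neg\!\sim\!\neg t \le 0$ etc.) still yield that $t \approx 1$ together with this theorem collapses the algebra. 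This is the one place where the explicit axioms (1)--(5) of $\mathsf{G}_{\mathsf{FL}_w}(\vv B)$ listed above, or equivalently the cited results of \cite{GalatosOno2006,GalatosOno2009,GJKO}, have to be invoked to push the double negation through; everything else is routine manipulation in residuated lattices plus a direct appeal to Corollary~\ref{cor:psctriv} or~\ref{cor:PUClogic}.
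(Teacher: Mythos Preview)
Your approach is essentially the same as the paper's: use Glivenko equivalence to transfer $\vdash_{\op{CL}}\neg\varphi$ to $\vdash_{\cc L}\neg\varphi$, then conclude $\varphi\vdash_{\cc L}0$ and invoke Corollary~\ref{cor:PUClogic}. One remark: your final worry about double negations is unnecessary, since the definition of Glivenko equivalence in the paper is precisely that $\vdash_{\cc L_1}\neg\psi$ iff $\vdash_{\cc L_2}\neg\psi$, so from $\vdash_{\op{CL}}\neg\varphi$ you get $\vdash_{\cc G}\neg\varphi$ directly (not merely $\vdash_{\cc G}\neg\!\sim\!\neg\varphi$), and hence $\vdash_{\cc L}\neg\varphi$ since $\cc L$ extends $\cc G$.
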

\begin{proof}
Consider a contradiction of classical logic $\varphi$, by the deduction theorem $\vdash_{\cc C\cc L} \neg \varphi$ (where $\vdash_{\cc C\cc L}$ is the consequence relation of classical logic). Since $\cc L$ is Glivenko equivalent to classical logic, $\vdash_{\mathcal{L}} \neg \varphi$.
It can be easily checked that this implies that $\varphi \vdash_{\mathcal{L}} 0$ (it is a consequence of the parametrized local deduction theorem which holds in every extension of $\mathcal{FL}$ \cite{GJKO}, but it is also straightforward to see in models). Thus $\varphi$ is a contradiction of $\cc L$, or equivalently it is explosive in $\cc L$, which is then passively structurally complete by Corollary \ref{cor:PUClogic}.
\end{proof}
Thus, every subvariety of $\mathsf{G}_{\mathsf{FL}_w}(\vv B)$ is passive structurally complete. In particular, the commutative subvariety $\mathsf{G}_{\mathsf{FL}_{ew}}(\vv B)$ is the variety of pseudocomplemented $\mathsf{FL}_{ew}$-algebras (\cite{FazioStJohn2022}), axiomatized by $$x \land \neg x \approx 0.$$
Examples of passive structurally complete varieties then include Heyting algebras, Stonean MTL-algebras and as a consequence, e.g., product algebras and G\"odel algebras.

We observe that these are not all of the passive structural complete varieties of $\mathsf{FL}_{w}$ (nor of $\mathsf{FL}_{ew}$). Let us indeed obtain a different kind of examples.
\begin{definition}
	We say that a variety $\vv V$ has a \emph{Boolean retraction term} if there exists a term $t$ in the language of residuated lattices (i.e., $0$-free) such that, for every $\alg A \in \vv V$, $t$ defines an idempotent endomorphism on $\alg A$ whose image is the Boolean skeleton of $\alg A$, that is, the set of complemented elements of $\alg A$. 
\end{definition}
Varieties with a Boolean retraction term have been studied at length by Cignoli and Torrens in a series of papers, see in particular \cite{CignoliTorrens2012}.  
These are all varieties in which all nontrivial algebras retract onto a nontrivial Boolean algebra, thus they satisfy the hypotheses of Corollary \ref{cor:psctriv} and they are passive structurally complete. 
Some of these varieties have been shown in \cite{AglianoUgolini2022a} to have projective unifiers, thus they satisfy Theorem \ref{thm:unifiable2} and they are non-negative universally complete.
Among those we cite some varieties of interest in the realm of many-valued logics: the variety of product algebras, the variety generated by perfect MV-algebras, the variety $\mathsf{NM^-}$ of nilpotent minimum algebras without negation fixpoint and some varieties that have been called {\em nilpotent product} in \cite{AglianoUgolini2019a} or \cite{AglianoUgolini2019b}.

We will see that in the representable variety of $\mathsf{FL}_{ew}$, $\mathsf{MTL}$, we can fully characterize passive structurally complete varieties as those with a Boolean retraction term.
By \cite{CignoliTorrens2012}, the largest subvariety of $\MTL$ with a Boolean retraction term is axiomatized relatively to $\MTL$ by the Di Nola-Lettieri equation:
\begin{equation}
	(x + x)^2 = x^2 + x^2 \tag{DL}
\end{equation}
where $x + y = \neg(\neg x \cdot \neg y)$. The latter identity has been introduced by Di Nola and Lettieri to axiomatize within MV-algebras the variety generated by the Chang algebra.
This variety is called $\mathsf{sDL}$ in \cite{Ugolini2018} ($\mathsf{BP}_0$ in \cite{Noguera2007,AFU2017}), and it includes, for instance: 
pseudocomplemented MTL-algebras (also called SMTL-algebras), and thus G\"odel algebras and product algebras; involutive $\mathsf{BP}_0$-algebras and thus the variety generated by perfect MV-algebras and nilpotent minimum algebras without negation fixpoint.

 Let us say that an element of an $\mathsf{FL}_{ew}$-algebra $\alg A$ has {\em finite order $n$} if $x^n = 0$, and {\em infinite order} if there is no such $n$. We call \emph{perfect} an algebra $\alg A \in \mathsf{FL}_{ew}$ such that, for all $a \in A$, $a$ has finite order if and only if $\neg a$ has infinite order. Now, $\mathsf{sDL}$ turns out to be the variety generated by the perfect chains (see \cite{Ugolini2018,AFU2017}).
\begin{lemma}\label{lemma:perfectchain}
	A chain $\alg A \in \mathsf{FL}_{ew}$ is perfect if and only if there is no element with finite order $ a \in A$ such that $a \geq \neg a$.
\end{lemma}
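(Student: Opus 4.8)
The plan is to prove both directions of the biconditional by unravelling the definition of perfect chain in terms of the order of elements. Recall that in an $\mathsf{FL}_{ew}$-chain $\alg A$, an element $a$ has finite order if $a^n = 0$ for some $n$, and infinite order otherwise; and $\alg A$ is perfect when for every $a \in A$, $a$ has finite order if and only if $\neg a$ has infinite order. I would first record two elementary observations about chains that will be used repeatedly. First, since the algebra is integral and totally ordered, $x^{n+1} \le x^n$ always, so the set of elements of finite order is an order ideal (downward closed): if $a$ has finite order and $b \le a$ then $b^n \le a^n = 0$. Dually the elements of infinite order form an order filter. Second, in any $\mathsf{FL}_{ew}$-chain, $0$ has finite order and $1$ has infinite order (in a nontrivial algebra), and $a \wedge \neg a$ always has finite order: indeed $a \wedge \neg a \le \neg a$ gives $(a\wedge\neg a)^2 \le a \cdot \neg a \le 0$, so in fact every element below $\neg a$ that is also below $a$ squares to $0$.

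For the forward direction, suppose $\alg A$ is perfect and, for contradiction, that there is an element $a$ of finite order with $a \ge \neg a$. Since the finite-order elements form a downward-closed set and $a$ has finite order, $\neg a \le a$ also has finite order. But then by perfectness (applied to $a$) $\neg a$ has infinite order, a contradiction. Hence no such $a$ exists.

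For the converse, assume there is no finite-order element $a$ with $a \ge \neg a$; I must show $\alg A$ is perfect, i.e. for every $a$, exactly one of $a, \neg a$ has finite order. First I show they cannot both have finite order: if $a$ and $\neg a$ both had finite order, then by totality either $a \ge \neg a$ or $\neg a \ge a$; in the first case $a$ is a finite-order element above its negation, contradicting the hypothesis; in the second case $\neg a$ is a finite-order element and $\neg a \ge a$, and since $a \le \neg\neg a$ always, we get $\neg a \ge a$... — more carefully, I use that $\neg a$ has finite order together with $\neg a \ge a \ge$ nothing useful directly, so instead I argue: $\neg a \ge a$ implies $\neg a \cdot \neg a \ge \neg a \cdot a$; better, apply the hypothesis to the element $\neg a$ itself: $\neg a$ has finite order, and I need $\neg a \ge \neg\neg a$. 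Since $\neg a \ge a$ and negation is order-reversing, $\neg\neg a \ge \neg(\neg a) $... this gives $\neg\neg a \le \neg a$ is not automatic. The clean route: if both $a$ and $\neg a$ have finite order, say $a^n = \neg a^n = 0$, then $(a \vee \neg a)$ has finite order too (as $(a\vee\neg a)^{2n} \le a^n \vee \neg a^n = 0$ using chain distributivity of product over join, or simply $(a\vee\neg a) \in \{a,\neg a\}$ by totality). But $a \vee \neg a \ge \neg(a\vee\neg a) = \neg a \wedge \neg\neg a$, and more to the point $a \vee \neg a$ is a finite-order element satisfying $a\vee\neg a \ge \neg(a \vee \neg a)$ since $\neg(a\vee \neg a) = \neg a \wedge \neg\neg a \le \neg a \le a \vee \neg a$; this contradicts the hypothesis. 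Conversely they cannot both have infinite order: by totality $a \vee \neg a \in \{a, \neg a\}$ has infinite order, but $a \wedge \neg a$ has finite order (observation above), and the complementary element $\neg(a\wedge\neg a) = \neg a \vee \neg\neg a \ge a \vee \neg a$ — hmm, this needs the dual statement. The symmetric way: if both $a$ and $\neg a$ had infinite order, consider $b = a \wedge \neg a$, which has finite order; then $\neg b = \neg a \vee \neg \neg a \ge \neg a$ and $\ge a$... The hypothesis is one-sided, so I should instead apply it after noting $\alg A$ is nontrivial and use that the finite-order ideal and infinite-order filter partition $A$ once we know no element straddles; the real content is that the hypothesis forbids the "boundary" behaviour. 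I expect this converse direction — specifically showing $a, \neg a$ cannot both have infinite order using only the one-sided hypothesis — to be the main obstacle, and the key trick will be to apply the hypothesis to $a \wedge \neg a$ (which has finite order) versus its negation $\neg a \vee \neg\neg a$, together with the observation that $\neg(a\wedge\neg a) \ge a \wedge \neg a$ always holds, forcing the needed contradiction via order-reversal and the structure of the chain.
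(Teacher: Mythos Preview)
Your forward direction is correct and matches the paper's argument. For the converse, your ``both finite'' case via $a\vee\neg a$ works, though the paper handles it more directly: if both $a$ and $\neg a$ have finite order and $a < \neg a$, then $\neg\neg a \le \neg a$ by antitonicity, so $\neg a$ itself is a finite-order element above its own negation.

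The real issue is your ``both infinite'' case, which you flag as the main obstacle. It is not an obstacle at all, and it does \emph{not} require the one-sided hypothesis: in any $\mathsf{FL}_{ew}$-chain it is simply impossible for both $a$ and $\neg a$ to have infinite order. You already recorded that $(a\wedge\neg a)^2 \le a\cdot\neg a = 0$, so $a\wedge\neg a$ has finite order. But in a chain $a\wedge\neg a \in \{a,\neg a\}$, hence one of them has finite order --- done. You had both ingredients in your preliminary observations and never combined them. The paper uses exactly this fact (stated as ``for every element $x$ it cannot be that both $x$ and $\neg x$ have infinite order'') and then proceeds by contrapositive: if $\alg A$ is not perfect, the only way the biconditional can fail is that some $a$ and $\neg a$ are both of finite order, and then either $a$ or $\neg a$ witnesses the desired element. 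Your attempt to derive the ``both infinite'' impossibility from the hypothesis via $\neg(a\wedge\neg a)$ is unnecessary and is where your argument stalls.
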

\begin{proof}
	By order preservation, if there is an element $ a \in A, a \geq \neg a, a^n = 0$, then both $a$ and its negation have finite order, thus the chain is not perfect. Suppose now a chain $\alg A$ is not perfect. Observing that for every element $x \in A$ it cannot be that both $x$ and $\neg x$ gave infinite order, we get that there is an element $a \in A$ such that both $a$ and its negation $\neg a$ have finite order. If $a \not\geq \neg a$, since $\alg A$ is a chain, $a < \neg a$. Then $\neg\neg a \leq \neg a$, and they both have finite order.
\end{proof}
\begin{theorem}\label{thm: MTL collapse}
	For a  subvariety $\vv V$ of $\mathsf{MTL}$ the following are equivalent:
\begin{enumerate}
\item $\vv V$ is passive structurally complete;
\item $\vv V$ is a subvariety of $\mathsf{sDL}$.
\end{enumerate}
\end{theorem}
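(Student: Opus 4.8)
The plan is to run both implications through Corollary \ref{cor:psctriv}: since $\vv V$ is a quasivariety of $\mathsf{FL}_w$-algebras with $\alg F_\vv V=\alg 2$, and $\alg 2$ is simple with no proper subalgebra, the assertion ``$\vv V$ is passive structurally complete'' is equivalent to ``every nontrivial $\alg A\in\vv V$ admits a surjective homomorphism onto $\alg 2$'' (by Corollary \ref{cor:psctriv} together with Lemma \ref{free}). So the whole argument reduces to controlling, on the two sides of the equivalence, which algebras map onto $\alg 2$.

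For $(2)\Rightarrow(1)$ I would use the Cignoli--Torrens Boolean retraction term. By \cite{CignoliTorrens2012} the variety $\mathsf{sDL}$ has a $0$-free term $t$ defining, on every member, an idempotent endomorphism onto the Boolean skeleton; the same $t$ then does so on every $\alg A\in\vv V\sse\mathsf{sDL}$, and when $\alg A$ is nontrivial its skeleton $B(\alg A)$ is a nontrivial Boolean algebra. Composing the retraction $\alg A\twoheadrightarrow B(\alg A)$ with any quotient $B(\alg A)\twoheadrightarrow\alg 2$ yields $\alg A\twoheadrightarrow\alg 2$; hence every nontrivial algebra of $\vv V$ is unifiable and Corollary \ref{cor:psctriv} gives passive structural completeness. (Equivalently: $\mathsf{sDL}$ is passive structurally complete by the discussion preceding this theorem, and this property is inherited by subvarieties since it only asks for a homomorphism onto $\alg 2$, which stays available.)

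For $(1)\Rightarrow(2)$ I would argue by contraposition. Assume $\vv V\not\sse\mathsf{sDL}$; since $\mathsf{sDL}$ is axiomatized over $\MTL$ by (DL), $\vv V\not\models$ (DL), and as $\vv V$ is representable some subdirectly irreducible, hence some $\MTL$-chain, $\alg A\in\vv V$ fails (DL). Every perfect $\MTL$-chain belongs to $\mathsf{sDL}$ and therefore satisfies (DL), so $\alg A$ is not perfect; by Lemma \ref{lemma:perfectchain} there is an element $a$ of finite order with $\neg a\le a$, say $a^n=0$. Now I would check that $\alg A$ has no surjection onto $\alg 2$: if $h\colon\alg A\twoheadrightarrow\alg 2$ and $F=h^{-1}(\{1\})$ (a proper lattice filter closed under $\cdot$, since $h(0)=0\notin F$), then $h(\neg x)=\neg h(x)$ for every $x$ because $\neg x=x\to 0$; so $h(a)=0$ would give $\neg a\in F$, and upward closure together with $\neg a\le a$ would force $a\in F$, a contradiction; hence $h(a)=1$, so $a^n=0\in F$, contradicting properness of $F$. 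Thus $\alg A$ is a nontrivial, non-unifiable member of $\vv V$, and Corollary \ref{cor:psctriv} shows $\vv V$ is not passive structurally complete.

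I do not expect a genuinely hard step: the proof is an assembly of Corollary \ref{cor:psctriv}, Lemma \ref{lemma:perfectchain}, and the known description of $\mathsf{sDL}$ (the variety generated by perfect chains, axiomatized over $\MTL$ by (DL), possessing a Boolean retraction term). The two points that need care are quoting ``perfect chain $\Rightarrow$ satisfies (DL)'' cleanly from these descriptions, and carrying out the elementary filter computation in $(1)\Rightarrow(2)$ with the correct residuated-lattice facts ($\neg x=x\to 0$, filters upward closed and closed under $\cdot$, $\alg 2$ simple).
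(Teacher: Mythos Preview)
Your proposal is correct and follows the same overall strategy as the paper: both directions go through Corollary~\ref{cor:psctriv}, the forward direction uses the Boolean retraction term available in $\mathsf{sDL}$, and the contrapositive direction produces a non-perfect chain via Lemma~\ref{lemma:perfectchain} and extracts an element $a$ with $\neg a\le a$ and $a^n=0$.

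The only difference is in how the contrapositive is finished. The paper uses clause~(2) of Corollary~\ref{cor:psctriv}: it exhibits the explicit identity $\neg(x\lor\neg x)^n\approx 1$, observes it is trivializing in Boolean algebras (since there $x\lor\neg x=1$), and checks it is satisfied nontrivially in $\alg A$ via $x\mapsto a$. You instead use clause~(4), arguing directly that $\alg A$ admits no homomorphism onto $\alg 2$ by a short filter computation. These are two sides of the same coin---your semantic argument is exactly what certifies that the paper's identity is non-unifiable---so neither approach buys anything the other does not; the paper's version has the mild advantage of naming the witnessing passive rule explicitly, while yours avoids having to guess the right term.
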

\begin{proof} 
Since subvarieties of $\mathsf{sDL}$ have a Boolean retraction term (2) implies (1) by Corollary \ref{cor:psctriv}.
Suppose now that $\vv V \nsubseteq \mathsf{sDL}$. Then there is a chain $\alg A$ in $\vv V$ that is not perfect. By Lemma \ref{lemma:perfectchain}, there exists $a \in A, a \geq \neg a, a^n = 0 \mbox{ for some } n \in \mathbb{N}.$
	 Thus, $\neg(a \lor \neg a)^n = 1$.
	 But the identity $\neg(x \lor \neg x)^n = 0$ holds in Boolean algebras. Thus $\neg(x \lor \neg x)^n \approx 1$ is trivializing in Boolean algebras but not in $\vv V$. By Corollary \ref{cor:psctriv}, $\vv V$ is not passive structurally complete and thus (1) implies (2). 
\end{proof} 

Notice that the previous theorem also implies that a variety of MTL-algebras that is not a subvariety of $\mathsf{sDL}$ cannot be structurally complete.

We mention that structural completeness in subvarieties of $\mathsf{MTL}$ (or their logical counterparts) has been studied by several authors: e.g., \cite{Wojtylak1976} and \cite{Gispert2016} for  \L ukasiewicz logics,  \cite{DzikWronski1973} G\"odel logic, and \cite{CintulaMetcalfe2009} for fuzzy logics in the MTL framework; in the latter the authors show for instance that all subvarieties of pseudocomplemented MTL-algebras ($\mathsf{SMTL}$) are passive structurally complete. This result is here obtained as a consequence of Theorem \ref{thm: MTL collapse}, since $\mathsf{SMTL}$ is a subvariety of $\mathsf{sDL}$. From the results mentioned above and the characterzation theorem, it also follows that the only varieties of MV-algebras (the equivalent algebraic semantics of infinite-valued \L ukasiewicz logic) that are structurally complete are Boolean algebras and the variety generated by perfect MV-algebras (this result has been obtained following a different path in \cite{Gispert2016}).

We also remark that a variety of $\mathsf{FL}_{ew}$-algebras can be at most non-negative universally complete since trivial algebras are finitely presented and not unifiable (unifiability is a necessary condition for universal completeness by Theorem \ref{lemma: FQ trivial}); by Proposition \ref{prop: nnu iff ps and au} this happens if and only if the variety is active universally complete and passive structurally complete.
Thus, for instance, a semisimple variety of $\mathsf{FL}_{ew}$-algebras satisfying the conditions in Corollary \ref{cor:psctriv} would be non-negative universally complete.
We stress that this observation  is not of particular interest in MTL-algebras, since the only discriminator variety in $\mathsf{sDL}$ is the variety of Boolean algebras. Indeed, consider a chain $\alg A$ in a discriminator variety $\vv V$ in $\mathsf{sDL}$. Then there is some $n \in \mathbb{N}$ such that $\vv V \models x \lor \neg x^n \approx 1$. Let now $a \in A$; either $a$ has finite order, and then from $a \lor \neg a^n$ we obtain that $a = 1$, or $a$ has infinite order, and then $\neg a$ has finite order. So by the analogous reasoning $\neg a = 1$. Therefore $\alg A$ is the two-element chain, and $\vv V$ is the variety of Boolean algebras.

\section{Conclusions}\label{sec: conclusions}

 In  Figure \ref{classes} we display several classes of varieties that we have considered in this paper (and the labels should be self explanatory); we are dropping the hereditary subclasses to avoid clutter. Observe that this is really a meet semilattice under inclusion.

\begin{figure}[htbp]
\begin{center}
\begin{tikzpicture}[scale=1.3]
\node at (0,0) {\footnotesize $U$};
\node at (-1,1) {\footnotesize  $NNU$};
\node at (-2,2) {\footnotesize  $AU$};
\node at (0,2) { \footnotesize $S$};
\node at (1,1) {\footnotesize $S+PU$};
\node at (2,2) {\footnotesize $PU$};
\node at (1,3) {\footnotesize  $PS$};
\node at (-1,3) {\footnotesize $AS$};
\draw  (0.2,0.2)-- (0.8,0.8);
\draw  (-0.2,0.2)-- (-0.8,0.8);
\draw  (1.2,1.2)-- (1.8,1.8);
\draw  (-1.2,1.2)-- (-1.8,1.8);
\draw  (0.8,1.2)-- (0.2,1.8);
\draw  (1.8,2.2)-- (1.2,2.8);
\draw  (-.2,2.2)-- (-0.8,2.8);
\draw  (-1.8,2.2)-- (-1.2,2.8);
\draw  (-.8,1.2)-- (-.2,1.8);
\draw  (.2,2.2)-- (.8,2.8);
\end{tikzpicture}
\caption{The universal and structural completeness classes.}\label{classes}
\end{center}
\end{figure}
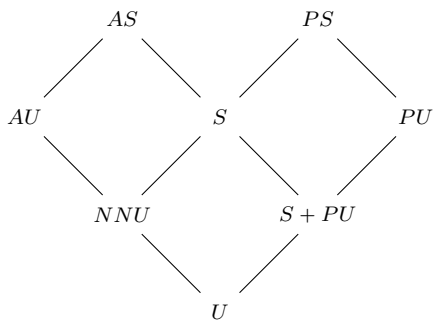

Almost all the classes are provably distinct.
\begin{enumerate}
\item The variety of bounded distributive lattice is structurally complete (Proposition \ref{proo: bdl structural}) but it is neither passive universally complete, since it is Koll\'ar and the least free algebra is not trivial,
nor  non-negative universally complete (Proposition \ref{prop: bdl not nnu}). Hence $S \ne NNU,S+PU$.
\item The variety of Boolean algebras is non-negative universally complete but not universally complete (Example \ref{ex: boolean algebras}) so $NNU\ne U$.
\item Any locally finite variety of $\mathsf{BL}$-algebras is active universally complete and some of them are not non-negative universally complete (Example \ref{ex: BL-algebras au}), so $AU \ne NNU$.
\item The variety  in Example \ref{ex: as not au} is active structurally complete but not active universally complete, hence $AS \ne AU$.
\item Any locally finite variety of bounded semidistributive lattices different from the distributive variety is passive structurally complete (Corollary \ref{cor: sdbl is ps}) but not structurally complete, since the only
structurally complete variety of bounded distributive lattices is the distributive variety (Proposition \ref{proo: bdl structural}); as above it is also not passive universally complete.
Hence $PS \ne S, PU$.
\item The variety $\VV(\alg M_{3,3}^+)$ (Section \ref{sec:lattices}) is passive universally complete, as any variety of lattices, but it is not structurally complete since $\QQ(\alg M_{3,3}^+) \nsubseteq \VV(\alg M_{3,3}^+)$; hence
$PU \ne S+PU$.
\item  Example 7.11 in \cite{DzikStronkowski2016} shows that $AS \ne S$.
\end{enumerate}

Moreover for the primitive counterparts:
\begin{enumerate}
\item the variety $\VV(\alg F)$ generated by the Fano lattice is structurally complete and passive universally complete but not primitive (Section \ref{sec:lattices}).
\item the variety of De Morgan lattices (Example \ref{ex: De Morgan}) is active universally complete but not active primitive universal.
\item the variety of injective monounary algebras is active structurally complete but not active primitive structural (Example 7.2 in \cite{DzikStronkowski2016}).
\end{enumerate}

There are three examples that we were not able to find,  which would guarantee total separation of all the classes we have considered:
\begin{enumerate}
\item A (quasi)variety that is structurally complete and passive universally complete, but not universally complete.
\item A non-negative universally complete (quasi)variety such that not all subquasivarieties are non-negative universally complete.
\item A universally complete variety which is not primitive universal.
\end{enumerate}
The natural example for (3) would be a locally finite variety with exact  unifiers having a subvariety without exact unifiers.  However we are stuck because of lack of examples: we have only one unifiable locally finite variety with exact (non projective) unifiers, i.e.  the variety of distributive lattices, which is trivially primitive universal.  A similar situation happens for (2); all the examples of non-negative universally complete varieties we have are either equationally complete and congruence distributive (so they do not have nontrivial subquasivarieties), or else are active universally complete just by consequence of their characterization (such as the subvarieties of $\mathsf{FL}_{ew}$ in Section \ref{sec:FL}). Then we have Stone algebras that are not equationally complete but the  only nontrivial subvariety is the variety of Boolean algebras, that is non-negative universally complete. Now from Corollary \ref{cor: VQ pu Q pu} it is immediate that every subquasivariety of $\mathsf{ST}$ is non-negative universally complete.  In conclusion a deeper investigation of universally complete and non-negative universally complete varieties is needed.

For (1) the situation is (slightly) easier to tackle: any primitive variety of lattices that is not universally complete gives a counterexample. While it seems impossible that all the primitive varieties in Section \ref{sec: primlattices} are universally complete, actually proving that one it is not does not seem easy. This is due basically to the lack of information on free algebras in specific varieties of lattices, such as for instance $\VV(\alg M_3)$; note that this variety is locally finite and hence all the finitely generated free algebras are finite. But we are not aware of any characterization.

\providecommand{\bysame}{\leavevmode\hbox to3em{\hrulefill}\thinspace}
\providecommand{\MR}{\relax\ifhmode\unskip\space\fi MR }
\providecommand{\MRhref}[2]{%
  \href{http://www.ams.org/mathscinet-getitem?mr=#1}{#2}
}
\providecommand{\href}[2]{#2}

\end{document}